\definecolor{refkey}{gray}{.75}
\definecolor{labelkey}{gray}{.5}
\newtheorem{Theorem}{Theorem}[section]
\newtheorem{TheoremA}{Theorem}
\newtheorem{Lemma}[Theorem]{Lemma}
\newtheorem{Proposition}[Theorem]{Proposition}
\newtheorem{Corollary}[Theorem]{Corollary}
\newtheorem{Remark}[Theorem]{Remark}
\newtheorem{Claim}[Theorem]{Claim}
\newtheorem{Definition}[Theorem]{Definition}
\newtheorem{Warning}{Warning}[section]
 \definecolor{darkgreen}{rgb}{0,0.4,0}
\definecolor{light}{gray}{0.9}
\newcommand{\cA}{\ensuremath{\mathcal A}}
\newcommand{\cB}{\ensuremath{\mathcal B}}
\newcommand{\cC}{\ensuremath{\mathcal C}}
\newcommand{\cG}{\ensuremath{\mathcal G}}
\newcommand{\cH}{\ensuremath{\mathcal H}}
\newcommand{\cK}{\ensuremath{\mathcal K}}
\newcommand{\cL}{\ensuremath{\mathcal L}}
\newcommand{\cM}{\ensuremath{\mathcal M}}
\newcommand{\cN}{\ensuremath{\mathcal N}}
\newcommand{\cP}{\ensuremath{\mathcal P}}
\newcommand{\cR}{\ensuremath{\mathcal R}}
\newcommand{\cV}{\ensuremath{\mathcal V}}
\newcommand{\cW}{\ensuremath{\mathcal W}}
\newcommand{\bbE}{{\ensuremath{\mathbb E}} }
\newcommand{\bbL}{{\ensuremath{\mathbb L}} }
\newcommand{\bbN}{{\ensuremath{\mathbb N}} }
\newcommand{\bbP}{{\ensuremath{\mathbb P}} }
\newcommand{\bbQ}{{\ensuremath{\mathbb Q}} }
\newcommand{\bbR}{{\ensuremath{\mathbb R}} }
\newcommand{\bbZ}{{\ensuremath{\mathbb Z}} }
\let\a=\alpha \let\b=\beta   \let\d=\delta  \let\e=\varepsilon
 \let\g=\gamma       \let\l=\lambda
      \let\o=\omega      
\let\r=\rho  \let\s=\sigma \let\t=\tau   
  \let\z=\zeta
   \let\G=\Gamma   
\let\O=\Omega      
\newcommand{\rosso}{\textcolor{black}}
\newcommand{\da}{\downarrow}
\newcommand{\toup}{\rightharpoonup}
\newcommand{\be}{\begin{equation}}
\newcommand{\en}{\end{equation}}
\author[A.~Faggionato]{Alessandra Faggionato}
\address{Alessandra Faggionato.
  Dipartimento di Matematica, Universit\`a di Roma `La Sapienza'
  P.le Aldo Moro 2, 00185 Roma, Italy}
\email{faggiona@mat.uniroma1.it}
\newcommand{\ra}{\rangle}
\newcommand{\la}{\langle}
\title[Stochastic homogenization   in amorphous media]{Stochastic homogenization   in amorphous media and  applications to exclusion processes}
\begin{document}

\begin{abstract}
We consider   random walks on marked simple  point processes   with symmetric jump rates and unbounded jump range. We prove   homogenization  properties   of   the associated  Markov generators.
 As an application, we derive the   hydrodynamic limit of the simple exclusion process given by  multiple random walks as above,    with  hard--core interaction, on a marked Poisson point process. The above results cover  Mott variable range hopping, which is a fundamental mechanism of phonon--induced  electron conduction in amorphous solids  as doped semiconductors. Our techniques, based on an  extension of  two-scale convergence,  can be adapted to other  models, as e.g. the random conductance model.

\smallskip

\noindent {\em Keywords}:
marked simple point process,  Palm distribution, random walks,   Mott variable range hopping, stochastic homogenization, two-scale convergence,  exclusion process, hydrodynamic limit.
   
\smallskip

\noindent{\em AMS 2010 Subject Classification}: 
60G55, 
60H25, 
60K37, 
	35B27.   

\thanks{This work   has been  supported  by  PRIN
  20155PAWZB ``Large Scale Random Structures". }

\end{abstract}

\medskip

\maketitle

 \textcolor{blue}{\emph{This manuscript is and will remain unpublished. After the submission to a journal, in the non-short waiting time, I have further developed the methods presented here getting  new and stronger results.
Then it  became scientifically more reasonable to withdraw the submission and present directly the new results, in two separate papers for space reasons.  }}
 
 \textcolor{blue}{\emph{The homogenization results and the hydrodynamics results 
contained in this manuscript will be presented in two future works in a more general context.}}


%
%

\section{Introduction}
We consider stochastic jump dynamics in a random environment, not necessarly 
 with an underlying lattice structure, where jumps can be  arbitrarily long.   
A fundamental example comes from Mott variable range hopping (v.r.h.),  which is at the basis of 
electron transport   in disordered solids in the regime of strong Anderson localization, as doped semiconductors \cite{MD,POF,SE}.   Starting from a quantum modelization, in the regime of low density of impurities  one arrives to a classical model given by  a family  of non--interacting random walkers hopping on the sites of a marked simple point process (cf. \cite{AHL,ABS,FSS,MA}).  
The latter is given by a random subset $\{(x_i,E_i)\}\subset \bbR^d \times \bbR$ sampled as follows. The  locally finite subset $\{x_i\}\subset \bbR^d$ is given by a simple point process with  stationary and ergodic  (w.r.t. to spatial translations) law. For example, $\{x_i\}$ can be a Poisson point process on $\bbR^d$. Given $\{x_i\}$, to each point $x_i$ one associates independently   a random variable $E_i$ (called \emph{mark}) according to a fixed distribution $\nu$. 
Given a realization $\{(x_i,E_i)\}$  of the environment, Mott v.r.h. can be described in terms of  a time--continuous random walk with state space $\{x_i\}$. The    probability  rate for a jump from $x_i$ to $x_j$ is then  given by $ c_{x_i,
x_j}(\o) = \exp \left\{
 -\g |x_i-x_j| -\beta (|E_i|+|E_j|+|E_i-E_j|)\right\} $,
 where $\b$ denotes the inverse temperature and $\g>0$ is a fixed constant.
The presence of long jumps and the special energy dependence in the jump rates is fundamental to explain the anomalous decay of conductivity in strongly disordered amorphous solids, which follows the so called Mott's law (cf. \cite{AHL,ABS,FM,FSS,MA} and references therein). In general, we will refer to Mott v.r.h. when the  jump rates have the form 
\be\label{germania}
  c_{x_i,
x_j}(\o) = \exp \left\{
 -\g |x_i-x_j| -u (E_i,E_j)\right\} \,,
\en
for some symmetric bounded function $u$.

Stochastic jump dynamics, where hopping takes place  on marked simple point processes,  is relevant  e.g. also in population dynamics. If sites are given for example by a Poisson point process, then the medium is genuinely amorphous.  However, we include in our analysis also hybrid environments given by diluted lattices as e.g. site percolation. Indeed,  the random set $\{x_i\}$, where $x_i:=x+y_i$, $x$ is chosen with uniform probability among $[0,1]^d$ and $\{y_i\}$ is the realization of a site percolation on $\bbZ^d$, has stationary and ergodic law.

Of course, there are also other relevant models of stochastic jump dynamics with  jump rates having unbounded range, as for example the conductance model on $\bbZ^d$ where the random walk hops among sites of $\bbZ^d$ and the probability rate for a jump from $x$ to $y$ is given by a random number, called conductance \cite{FHS}. Another example is given by random walks on Delaunay triangulations \cite{Ro}.  We focus here on hopping  on marked simple point processes with jump rates not necessarily of the form \eqref{germania}, including the case of  random walks on Delaunay triangulations,  but our proofs and results can be adapted to other models as the random conductance model. 

The main part of our work is devoted to prove homogenization results for the random walk (cf. Theorems \ref{teo1} and \ref{teo2}).  We denote  by $\bbL_\o$ the generator of the random walk with environment $\o$ and by  $\bbL^\e_\o$  its version under an $\e$--parametrized  space rescaling. We consider the Poisson equation $\l u_\e - \bbL^\e_\o u_\e=f_\e$ with $\l>0$   and show the convergence of $u_\e $ to the solution $u$ of the effective equation $\l u - \nabla \cdot D \nabla u = f$ if $f_\e $ converges to $f$. Above $D$ is the effective diffusion matrix, having a variational characterization. We also prove the convergence of the associated  gradients,  energies and  semigroups.

The proof of Theorems \ref{teo1} and \ref{teo2} is based  on two--scale convergence, a notion introduced by G.~Nguetseng \cite{Nu}  and developed by G.~Allaire \cite{A}. In particular, our proof is inspired by the method developed in \cite{ZP} for random  differential operators on singular structures. Two--scale convergence has already been applied in random walks in random environment in \cite{F1,FHS,MP}.  Due to the jumps in  several directions, the standard  discrete gradients have  to  be replaced by \emph{amorphous gradients}. Roughly, given a random function  $v(\o)$,  its amorphous gradient keeps knowledge of all the differences $v(\t_x\o)-v(\t_y\o)$ as the sites $x,y$ vary among the sites of the marked simple point process, where $\t_z\o$ denotes 
 the  environment obtained from $\o$ by a translation along the vector $z$ (an analogous version holds for functions on $\bbR^d$). The  two--scale convergence is a genuine $L^2$--concept but  due to  the presence of infinite possible jumps 
a key technical obstruction to the analysis in \cite{ZP} appears. Even  when the gradient is square integrable, its contraction (obtained by a weighted averaging 
with weights given by the jump rates,
cf. Def.~\ref{artic}) is not necessarily square integrable.  We have been able to overcome this difficulty by means of  a cut-off procedure developed in Sections \ref{cut-off1} and \ref{cut-off2}.  We stress that the presence of long jumps  leads to new technical problems also in other spots when trying to adapt the strategy in \cite{ZP} to the present context.
  We point out that  in  \cite{FHS} (cf. Theorem 2.1 there)  the authors derive homogenization for the non--massive Poisson equation on a finite box with Dirichlet boundary conditions   for the random conductance model with possible unbounded jump range. Although dealing with arbitrary long jumps, the above obstruction does not emerge in \cite{FHS} since stronger conditions are assumed there (to derive  also  spectral homogenization).  Under these stronger assumptions, the availability of bounds in uniform norm allows to proceed without developing any cut-off procedure.
%
%
%
We also point out that homogenization results for random walks on Delaunay triangulations have been obtained also in \cite{H} under the condition that  the diameters of the Voronoi cells  are uniformly bounded both  from below and from above (see Condition 1.2 in \cite{H}). Under this condition our analysis would be much simplified. 
    Finally, we mention also \cite{PZ} for other results on homogenization for non--local operators.

As an application of the homogenization results presented in Theorems \ref{teo1} and \ref{teo2}  we prove the hydrodynamic limit of the exclusion process obtained by taking multiple random walks as above with the addition of the hard-core interaction, when the sites $\{x_i\}$ are given by  a Poisson point process on $\bbR^d$. It is known (cf. e.g. \cite{F0,F1} and references therein) that the proof of the exclusion process with symmetric jumps rates in a random environment  can be obtained using  homogenization properties  of the Markov generator of a single random walk.
 We point out that, as a further application of Theorems \ref{teo1} and \ref{teo2} in interacting particle systems,  one could prove the hydrodynamic limit of  zero--range processes on marked simple  point processes similarly to what done in \cite{F2} using - between others - homogenization. This further application will be presented in a separate work. 


\bigskip

\emph{The arxiv version contains  supplementary proofs collected in   Appendix  \ref{aggiuntina}}

\section{Notation and setting}
In this section we fix our notation concerning point processes and state our main assumptions. 

\smallskip

We fix a  Polish space $S$ (e.g. $S\subset \bbR$)  and we 
 denote by  
   $\O$ the space of  locally finite subsets $\o \subset \bbR^d\times S$ such that for each $x\in \bbR^d$ there exists at most one element $s\in S$ with $(x,s)\in \o$.    
We write a generic $\o \in \O$ as 
 $\o= \{ ( x_i , s_i)\}$ ($s_i$ is called the  \emph{mark} at the point  $x_i$) and we set  $\hat\o :=\{x_i\}$. 
 We will identify the sets $\o= \{ ( x_i , s_i)\}$ and $\hat\o =\{x_i\}$ 
 with the counting measures $\sum _i  \d_{(x_i,s_i)}$ and $\sum_i \d_{x_i}$, respectively.  
 On $\O$ one defines a special metric $d$
 such that the following facts are equivalent: 
 (i)   a sequence  $(\o_n)$ converges to  $\o$ in $(\O, d)$, (ii) 
$\lim _{n\to \infty} \int _{\bbR^d\times S} f(x,s) d\o_n (x,s) = \int_{\bbR^d\times S}  f (x,s) d \o (x,s) \,,
$ for any bounded continuous function $f: \bbR^d  \times S\to \bbR$ vanishing outside a bounded set and (iii) 
$\lim_{n \to \infty} \o _n (A)=\o(A)$ for any bounded  Borel set  $A\subset \bbR^d \times S $ with $\o(\partial A)=0$
 (see \cite[App.~A2.6 and Sect.~7.1]{DV}). Moreover, the 
  $\s$--algebra $\cB(\O)$ of  Borel sets of $(\O,d)$ is generated by the sets $\{ \o (A)= k\}$ with $A$ and  $k$ varying respectively among the Borel sets of $\bbR^d\times S$  and the nonnegative integers. In addition, $(\O,d)$ is a separable metric space. Indeed, the above  distance $d$ is defined on the larger space $\cN$ of counting measures  $\mu=\sum _{i} k_i \d_{(x_i,s_i)}$,  where $k_i\in \bbN$  and $\{(x_i,s_i)\} $ is a locally finite subset of $\bbR^d\times S$, and one can prove that $(\cN,d)$ is a Polish space having $\O$ as Borel subset \cite[Cor.~7.1.IV, App.~A2.6.I]{DV}.
Finally,  
given $x\in \bbR^d$ we define the translation $\t_x:\O \to \O$ as 
\[ \t_x \o:= \{ ( x_i -x, s_i)\}
\; \text{ if  }\;\o= \{ ( x_i , s_i)\}\,.
\]

\medskip

We consider now a 
 \emph{marked simple point process}, which  is a measurable function from a probability space to  the measurable space $\left(\O,\cB(\O)\right)$. 
 We denote by $\cP$  its law  and by $\bbE[\cdot]$ the associated expectation. $\cP$ is therefore a probability measure on $\O$.  We assume that $\cP$ is stationary and ergodic w.r.t. translations.  Stationarity means  that $\cP (\t_x A)=A$ for any $A\in \cB( \O)$, while ergodicity means that  $\cP(A)\in\{0,1\}$
 for any  $A\in \cB( \O)$ such that 
 $\t_x A=A$. 
 Due  to our main assumptions stated below, $\cP$ 
 will have  finite positive  intensity $m$, i.e. 
  \begin{equation}\label{bimbi}
m:= \bbE\bigl[ |\hat \o \cap [0,1]^d|\bigr]\in (0, +\infty)\,.
\end{equation}
   As a consequence,  the Palm distribution $\cP_0$ associated to $\cP$ is well defined \cite[Chp.~12]{DV}.  Roughly, $\cP_0$ can be thought as $\cP$ conditioned to the event $\O_0$, where    \be 
    \O_0:=\{ \o \in \O\,:\, 0 \in \hat \o\}\,.
   \en
    $\cP_0$ is a probability measure with support inside  
    $\O_0$ and it can be  characterized by the identity 
\begin{equation}\label{marlena}
 \cP_0(A)= \frac{1}{m} \int _{\O}\cP(d\o)\int_{[0,1]^d}  d\hat{\o} (x) \mathds{1}_A(\t_x \o)\,, \qquad \forall A\subset \O_0 \text{ Borel}\,.
 \end{equation}
  The above identity  \eqref{marlena}  is a special case of the so--called 
 Campbell's formula (cf.~\cite[Eq.~(12.2.4)]{DV}): for any  nonnegative Borel function $f: \bbR^d\times \O \to[0,\infty) $ it holds
 \begin{equation}\label{campanello}
 \int_{\bbR^d}dx  \int _{\O_0} \cP_0 ( d\o) f(x, \o) =\frac{1}{m} \int _{\O}\cP(d\o)\int_{\bbR^d}  d\hat{\o} (x) f(x, \t_x \o) \,.
 \end{equation}
  An alternative characterization of $\cP_0$ is  described in \cite[Section 1.2]{ZP}.

Below, we write $\bbE_0[\cdot]$ for the expectation w.r.t. $\cP_0$.

\medskip
 
In addition to the marked simple point process with law $\cP$ we  fix  a nonnegative  Borel function
\[
\bbR^d\times \bbR^d \times \O \ni (x,y,\o) \mapsto c_{x,y} (\o)\in [0,+\infty)\,.
\]
The value of $ c_{x,y} (\o)$ will be relevant only when $x,y \in \hat \o$.

\smallskip

\medskip

{\bf Assumptions.} 
We make the following assumptions:
\begin{itemize}
\item[(A1)] the law $\cP$ of the marked point process is stationary and ergodic w.r.t. spatial translations;
 \item[(A2)] $\cP$ has finite positive intensity, i.e.
 \begin{equation}\label{mom_palma0}
0< \bbE\bigl[ |\hat \o \cap [0,1]^d|  \bigr]<+\infty \,;
 \end{equation}
\item[(A3)] $\cP ( \o \in \O:  \t_x\o\not = \t_y \o   \; \forall x\not =y \text{ in }\hat \o )=1$;
\item[(A4)] the weights $c_{x,y}(\o)$ are symmetric and covariant, i.e. $c_{x,y}(\o)=c_{y,x}(\o)$  $\forall x,y\in \hat\o$   and 
$
 c_{x,y}(\o) = c_{x-a, y-a}( \t_a \o)$  $\forall x,y \in \hat \o$ and $\forall a \in \bbR^d$;
\item[(A5)]  it holds 
\begin{equation}\label{moma}
 \l_0  \in L^2(\cP_0)\,,\;\; 
 \l_1 \in L^2 (\cP_0)\,,\;\;
 \l_2 \in L^1(\cP_0)\,,
 \end{equation} where    \begin{equation}\label{organic}
  \l_k(\o):=\int _{\bbR^d} d\hat \o (x) c_{0,x}(\o)|x|^k
 \end{equation}
 and  $|x|$ denotes the  norm of $x\in \bbR^d$;
 \item[(A6)] the function 
$
  F_*(\o) := \int d \hat \o (y) \int d \hat \o (z) c_{0,y} (\o) c_{y,z}(\o)
$
belongs to $L^1(\cP_0)$;
 \item[(A7)] the weights $c_{x,y}(\o)$ induce irreducibility for $\cP$--a.a. $\o\in \O$: for $\cP$--a.a. $\o\in \O$, given any $x,y \in \hat \o$ there exists a  path $x=x_0,x_1, \dots, x_{n-1}, x_n =y$ such that $x_i \in \hat \o$ and  $c_{x_i, x_{i+1}}(\o) >0$ for all $i=0,1, \dots, n-1$.
\end{itemize}

 \subsection{Comments on the  assumptions} \label{ranatina}

We recall that Mott v.r.h. corresponds to jump rates of the form \eqref{germania}.
\begin{Lemma}\label{re} The following holds:
\begin{itemize}
\item[(i)] Having that  $\cP$ is stationary, the ergodicity of $\cP$ is equivalent to the ergodicity of $\cP_0$ w.r.t. point--shifts, i.e. to the fact that $\cP_0(A)\in \{0,1\}$ for any Borel subset $A \subset \O_0$ such that $\o \in A$ if and only if $\t_x \o \in A$ for all $x \in \hat \o$.
\item[(ii)] The upper bound in Assumption (A2) is equivalent to the following fact: 
 $\bbE \bigl[ |\hat \o \cap U |  \bigr]<+\infty$ for any bounded Borel set $U \subset \bbR^d$. 
 \item[(iii)] Assumption (A3) is equivalent to the identity
  \begin{equation}\label{SS3}
\cP_0( \o \in \O_0\,:\, \t_x \o\not =\t_y \o \;\forall  x\not=y \text{ in } \hat\o) =1\,.
\end{equation}
\item[(iv)] Suppose that, for some function $h: \bbR_+\to\bbR_+$,  $c_{x,y}(\o) \leq h(|u-v|)$ for  any $u,v\in \bbZ^d$ and  any $x,y \in\hat \o$ with $x\in u+[0,1]^d$, $y\in v+[0,1]^d$.
   Then Assumption (A6) is implied by the bounds
 \be\label{spugna}
  \bbE\bigl[ |\hat \o \cap [0,1]^d | ^{3} \bigr]<+\infty\,, \qquad \sum_{u \in \bbZ^d}h (|u|)<+\infty\,.
 \en
\item[(v)] For Mott v.r.h. Assumptions (A4) and (A7) are always satisfied,  (A5) is equivalent to the bound $\bbE \bigl[ |\hat \o \cap [0,1]^d | ^{3} \bigr]<+\infty$, which implies (A6).

\item[(vi)] Assumption (A7) is equivalent to the fact that  the weights $c_{x,y}(\o)$ induce irreducibility for $\cP_0$--a.a. $\o\in \O_0$.

\end{itemize}
\end{Lemma}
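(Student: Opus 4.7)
The plan is to prove all six parts as consequences of Campbell's formula \eqref{campanello}, stationarity, and the covariance (A4), with (iv)--(v) being the main computational ingredient.

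Parts (i), (ii), and (vi) are structural. Part (ii) follows by covering any bounded Borel set $U$ with finitely many integer translates of $[0,1]^d$ and invoking stationarity. For (i) and (vi) the key is the standard correspondence between translation-invariant events on $\O$ and point-shift invariant events on $\O_0$: given a point-shift invariant $A\subset\O_0$, the set $\tilde A:=\{\o\in\O:\t_x\o\in A\text{ for some }x\in\hat\o\}$ is translation invariant, and Campbell's formula applied to $f(x,\o)=\mathds{1}_{\tilde A}(\o)\mathds{1}_{[0,1]^d}(x)$ yields $\cP(\tilde A)=\cP_0(A)$; conversely a translation-invariant $B$ restricts to a point-shift invariant $B\cap\O_0$. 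This gives (i), and applied to the translation-invariant event ``irreducibility holds'' yields (vi). Part (iii) is obtained in the same spirit, by applying Campbell's formula to $f(x,\o)=\mathds{1}_{[0,1]^d}(x)\mathds{1}\{\exists\, y\neq 0\text{ in }\hat\o,\ \t_y\o=\o\}$, which identifies the $\cP$-probability of the negation of (A3) with a constant multiple of the $\cP_0$-probability of the negation of \eqref{SS3}.

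Part (iv) is the main computational step. Using Campbell's formula and covariance (A4),
\[
\bbE_0[F_*] \;=\; \frac{1}{m}\,\bbE\Bigl[\sum_{x\in\hat\o\cap[0,1]^d}\sum_{y,z\in\hat\o} c_{x,y}(\o)\, c_{y,z}(\o)\Bigr].
\]
Decomposing the $y,z$ sums over cubes $v+[0,1]^d$ and $w+[0,1]^d$ with $v,w\in\bbZ^d$, bounding $c_{x,y}(\o)\leq h(|v|)$ and $c_{y,z}(\o)\leq h(|v-w|)$, and using stationarity together with H\"older's inequality to control
\[
\bbE\bigl[|\hat\o\cap[0,1]^d|\cdot|\hat\o\cap(v+[0,1]^d)|\cdot|\hat\o\cap(w+[0,1]^d)|\bigr]\leq \bbE\bigl[|\hat\o\cap[0,1]^d|^3\bigr],
\]
one obtains, after summing over $v,w$ via the change of variable $u=v-w$,
\[
\bbE_0[F_*]\;\leq\; \frac{1}{m}\,\bbE\bigl[|\hat\o\cap[0,1]^d|^3\bigr]\Bigl(\sum_{u\in\bbZ^d} h(|u|)\Bigr)^{\!2}<\infty.
\]

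Finally, (v) is a direct check for Mott v.r.h.: symmetry and covariance of the rate \eqref{germania} are immediate, and irreducibility holds because the exponential is strictly positive. For (A5), each $|x|^k e^{-\g|x|}$ is absorbed into $C_k\, e^{-\g|x|/2}$, so the argument of (iv) reduces $\bbE_0[\l_k^p]$ ($p=2$ for $k=0,1$, $p=1$ for $k=2$) to a third moment of $|\hat\o\cap[0,1]^d|$; the converse implication follows by restricting the $\l_k$-integrals to $x$ in a small neighborhood of $0$ after conditioning on the marks lying in a fixed bounded set of positive $\nu$-measure. The main obstacle throughout the lemma is keeping sharp, in (iv)--(v), the exponent $3$ in the moment bound: one must remember that the Palm formula already contributes one factor (the integration measure $d\hat\o(x)$ on $[0,1]^d$) in addition to the two factors coming from $y$ and $z$, and then use the summability of $h$ to decouple the spatial sums.
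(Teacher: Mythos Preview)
Your proposal is correct and follows essentially the same route as the paper: Campbell's formula and the $\cP\leftrightarrow\cP_0$ correspondence for (i), (iii), (vi); cube decomposition plus a third-moment bound for (iv); and the \cite{FSS}-type equivalence for (v). Two small remarks. In (i), the specific Campbell identity you write does not literally give $\cP(\tilde A)=\cP_0(A)$ (it gives $\cP_0(A)=\tfrac{1}{m}\bbE[\mathds 1_{\tilde A}\,|\hat\o\cap[0,1]^d|]$); what one actually obtains, and what suffices, is the $0$--$1$ equivalence $\cP_0(A)=1\Leftrightarrow\cP(\tilde A)=1$, which is exactly the content of Lemma~\ref{matteo} (equivalently \cite[Lemma~1]{FSS}). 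In (iv) the paper bounds $F_*(\t_x\o)$ via enlarged counts $M_u:=\hat\o(u+[0,2]^d)$ and the inequality $abc\le C(a^3+b^3+c^3)$; your variant---using covariance (A4) to rewrite $F_*(\t_x\o)=\sum_{y,z\in\hat\o}c_{x,y}(\o)c_{y,z}(\o)$ first and then H\"older on $\bbE[N_0N_vN_w]$---is slightly cleaner and avoids doubling the cubes.
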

We postpone the proof of Lemma \ref{re} to Appendix \ref{dimo}. We point out that the proof of the above Item (iii)  is based on Lemmas 1 and 2 in \cite{FSS}. The same arguments can be adapted to treat more general jumps rates.



  In the rest we will frequently used the symmetry and covariance of Assumption (A4) without explicit mention.
%
%
%
%
%
%
\section{Main results}\label{MR}

In what follows,  given a measure $\cM$, we will denote by $\la \cdot, \cdot \ra _{\cM}$   the scalar product   in $L^2(\cM)$.

\subsection{The effective diffusive  matrix $D$}
\begin{Definition} We define the effective diffusive  matrix  as 
the $d\times d$ symmetric matrix $D$ such that
 \begin{equation}\label{def_D}
 a \cdot Da =\inf _{ f\in L^\infty(\cP_0) } \frac{1}{2}\int d\cP_0(\o)\int d\hat \o (x) c_{0,x}(\o) \left
 (a\cdot x - \nabla f (\o, x) 
\right)^2\,,
 \end{equation}
 where $\nabla f (\o, x) := f(\t_x \o) - f(\o)$. 
 \end{Definition}
 Above, and in what follows, we will denote by $a\cdot b$ the scalar product of the vectors $a$ and $b$.
\begin{Warning}\label{divido}
Since $D$ is symmetric, $D$ can be diagonalized. Let $ v_1,\dots, v_d$ be a basis of eigenvectors of $D$ and let  $\l_i$ be the eigenvalue of $v_i$.  
To simplify the notation, 
at cost of an orthogonal change of variables and without loss of generality, 
 we assume that 
\begin{align}
& \text{span}\{ e_1,e_2, \dots, e_{d_*}\}= \text{span}\{ v_i \,:\,\l_i>0\}\,,\label{lavita1}\\
& \text{span} \{ e_{d_*+1}, \dots, e_d\} = \text{span}\{ v_i \,:\,\l_i=0\}\label{lavita2}\,,
\end{align}
where $d-d_* $ is the dimension of the null eigenspace of $D$ and $e_1, \dots, e_d$ is the canonical basis of $\bbR^d$. In particular, $d_*\in \{0,1,\dots, d\}$.
Note that if $D$ is strictly positive, then $d_*=d$ and there is no need to change variables.
\end{Warning}

\subsection{The microscopic measure $\mu^\e_\o$}
Given  $\e>0$ and $\o \in \O$  we define  $\mu _\o^\e $ as the Radon measure on $\bbR^d$ 
 \begin{equation}\label{franci}
 \mu_\o^\e := \e^d \sum _{a\in \hat \o } \d_{ \e a }\,.
 \end{equation}
For 
 $\cP$--a.a. $\o $ the measure $ \mu _\o^\e $ converges vaguely to  $m dx $,  where $m$ is the intensity (cf. \eqref{bimbi}), i.e.  $\cP$--a.s. it holds
 \begin{equation}\label{scivolo}
  \lim_{\e\downarrow 0} \int d\mu_\o^\e  (x)   \varphi(x) = \int dx\,m \varphi(x) \; \qquad\forall \varphi \in C_c (\bbR^d)\,.
 \end{equation}
 The above convergence indeed follows from a stronger result which is at the basis of $2$--scale convergence:
\begin{Proposition}\label{prop_ergodico}
 Let  $g: \O_0\to \bbR$ be a Borel function with $\|g\|_{L^1(\cP_0)}<+\infty$. Then there exists a translation invariant   Borel subset $\cA[g]\subset \O$  such that $\cP(\cA[g])=1$ and such that,  for any $\o\in \cA[g]$ and any  $\varphi \in C_c (\bbR^d)$, it holds
\begin{equation}\label{limitone}
\lim_{\e\da 0} \int  d  \mu_\o^\e  (x)  \varphi (x ) g(\t_{x/\e} \o )=
\int  dx\,m\varphi (x) \cdot \bbE_0[g]\,.
\end{equation}
\end{Proposition}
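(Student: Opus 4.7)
The plan is to reduce the statement to the spatial ergodic theorem for stationary ergodic point processes and then pass from indicators of boxes to an arbitrary $\varphi\in C_c(\bbR^d)$ by sup-norm approximation, with the error controlled by a companion ergodic theorem for $|g|$. Without loss of generality assume $g\geq 0$; the general case follows by splitting $g=g^+-g^-$. Set
$$F(\o):=\sum_{x\in\hat\o\cap[0,1)^d}g(\t_x\o).$$
Campbell's formula \eqref{campanello}, applied to $f(x,\o)=\mathds{1}_{[0,1)^d}(x)g(\o)$, gives $\bbE[F]=m\bbE_0[g]<+\infty$. The $\bbR^d$-Birkhoff theorem applied to $F$, together with a boundary comparison between $N^{-d}\int_{[0,N]^d}F(\t_y\o)\,dy$ and $N^{-d}\sum_{x\in\hat\o\cap[0,N)^d}g(\t_x\o)$ (handled by truncating $g$ at level $M$ and then sending $M\to\infty$), yields, for $\cP$-a.e.\ $\o$,
$$\lim_{N\to\infty}\frac{1}{N^d}\sum_{x\in\hat\o\cap[0,N)^d}g(\t_x\o)=m\bbE_0[g].$$
Running the same argument on each dyadic lattice $M^{-1}\bbZ^d$ and combining via inclusion–exclusion, one obtains on a single full-measure set the rescaled statement
$$\lim_{\e\da 0}\e^d\sum_{x\in\hat\o,\,\e x\in B}g(\t_x\o)=m\bbE_0[g]\cdot|B|$$
for every half-open box $B$ with rational vertices. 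Performing the identical construction for $|g|\in L^1(\cP_0)$ produces a full-measure set $\cA[g]$ on which both limits hold for all rational boxes.

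To upgrade from boxes to $\varphi\in C_c(\bbR^d)$, fix $\o\in\cA[g]$ and let $B_R$ contain $\mathrm{supp}\,\varphi$. Given $\delta>0$, uniform continuity provides a step function $\psi_\delta=\sum_i c_i\mathds{1}_{B_i}$ with rational $B_i\subset B_R$ satisfying $\|\varphi-\psi_\delta\|_\infty\leq\delta$. By linearity, the rescaled limit gives \eqref{limitone} for $\psi_\delta$ in place of $\varphi$; the discrepancy between the two integrals against $\mu_\o^\e$ is bounded by $\delta\cdot\e^d\sum_{\e x\in B_R}|g(\t_x\o)|$, whose $\limsup$ as $\e\da 0$ equals $\delta\cdot m\bbE_0[|g|]\cdot|B_R|$ by membership in $\cA[g]$; letting $\delta\to 0$ concludes. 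Translation invariance of $\cA[g]$ is verified by a sandwich argument: the change of variable $y=x+a$ transforms the sum at $\t_a\o$ against $\mathds{1}_B$ into $\e^d\sum_{\e y\in B+\e a}g(\t_y\o)$, which for small $\e$ is squeezed between the analogous sums over rational super- and sub-boxes of $B$, both of which converge to $m\bbE_0[g]\cdot|B|$ by $\o\in\cA[g]$.

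The main obstacle I foresee is precisely the sup-norm approximation step: since $g$ is only assumed to lie in $L^1(\cP_0)$, the error from approximating $\varphi$ by a step function cannot be absorbed by a naive uniform bound, and the companion ergodic theorem for $|g|$ as a tail control is essential. Beyond that, the proof is routine bookkeeping around Campbell's formula and the Birkhoff theorem.
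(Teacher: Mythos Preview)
Your proposal is correct and follows essentially the same route as the paper: first establish \eqref{limitone} for indicators of rational boxes via an ergodic theorem, then pass to general $\varphi\in C_c(\bbR^d)$ by sup-norm approximation with step functions, and finally verify translation invariance by a sandwich argument. The paper's proof (in Appendix~\ref{dimo} together with Lemmas~\ref{asterix1} and~\ref{sorrisino} in Appendix~\ref{aggiuntina}) differs only in how the box limit is obtained: it cites the ready-made ergodic theorem for stationary point processes over convex averaging sequences \cite[Prop.~12.2.VI]{DV}, whereas you rederive this from the $\bbR^d$-Birkhoff theorem applied to the auxiliary function $F$ plus a boundary comparison. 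Your path is slightly longer but more self-contained; the paper's is shorter but relies on an external reference. Both are standard.
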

The above fact can be derived by Tempel'man's ergodic theorem for weighted means as discussed in Appendix \ref{dimo}.

 \smallskip
 
We recall the definition of weak and strong convergence for functions belonging to different functional spaces:
\begin{Definition}\label{debole_forte} Fix $\o\in \O$ and a 
 family of $\e$--parametrized  functions $v_\e \in L^2( \mu^\e_\o)$. We say  that the family $\{v_\e\}$  \emph{converges weakly} to the function  $v\in L^2( m dx)$, and write  $v_\e \rightharpoonup v$, if
\begin{equation}\label{recinto}
\limsup  _{\e \da 0} \| v_\e\| _{L^2(\mu^\e_\o) } <+\infty
\end{equation}and 
\begin{equation}\label{deboluccio}
\lim _{\e \da 0} \int d  \mu^\e _\o (x)   v_\e (x) \varphi (x)= 
\int dx\,m  v(x) \varphi(x) \end{equation}
for all $\varphi \in C_c(\bbR^d)$. 
We  say that  the  family $\{v_\e\}$  \emph{converges strongly} to  $v\in L^2( m dx)$, and write $v_\e\to v$,  if  in addition to \eqref{recinto}  it holds
\begin{equation}\label{fortezza}
\lim _{\e \da 0} \int  d  \mu^\e _\o (x)   v_\e (x) g_\e (x)= 
\int dx\, m  v(x) g(x) \,,\end{equation}
for any family of functions $g_\e \in  L^2(\mu^\e_\o)$ weakly converging to $g\in L^2( m dx) $.
\end{Definition}

In general, when \eqref{recinto} is satisfied, one simply says that the family $\{v_\e\}$ is bounded.

\begin{Remark}\label{forte} One  can prove (cf.~\cite[Prop.~1.1]{Z}) that $v_\e \to v$ if and only if 
$v_\e \rightharpoonup v$ and $\lim _{\e\downarrow 0} \int_{\bbR^d}
 v_\e(x)^2 d \mu_\o^\e(x)=
\int _{\bbR^d} v(x)^2 m dx $.
\end{Remark}

\subsection{The microscopic measure $\nu^\e_\o$ and microscopic gradients}
We   define $\nu _\o ^\e $ as the Radon measure on $\bbR^d\times \bbR^d$ given by
\begin{equation}\label{nunu}
\begin{split}
 \nu_\o^\e
:&= \e^d  \int d  \hat\o (a)  \int d( \widehat {\t_a\o} )(z)  c_{a,a+z}( \o)  \d _{(\e a  ,z)} \\
&= \e ^d \int d \hat \o (a) \int d \hat \o (b) c_{a,b}(\o) \d_{(\e a, b-a)}\,.
\end{split}
 \end{equation}

 Given $\o \in \O$ and a real function $v$ whose domain contains  $\e \hat \o$,  we define the \emph{microscopic gradient} $\nabla_\e  v $  as the function 
 \begin{equation}\label{ricola}
 \nabla_\e v (x,z)= \frac{ v(x+\e z)- v(x)  }{\e}\,, \qquad x\in \e \hat \o, \; z \in \t_{x/\e} \hat{\o}\,.
 \end{equation}
 Note that if $v:\bbR^d \to \bbR$ is defined  $\mu^\e_\o$--a.e., then $\nabla_\e v$ is defined $\nu^\e_\o$--a.e.
 
 \smallskip

 We say that $v \in  H^1_{\o,\e}  $ if $v \in  L^2(\mu _\o ^\e) $ and $\nabla_\e  v  \in L^2( \nu_\o ^\e)$. Moreover, we endow the space  $H^1_{\o,\e} $ with the norm
 \[ 
 \|v\|_{H^1_{\o, \e} }:= \|v\| _{ L^2(\mu _\o ^\e) }+ \| \nabla_\e v \|_{  L^2(\nu_\o ^\e)}\,.
 \]
 Equivalently, we can identify $H^1_{\o,\e}$ with the subspace
 \[H_{\o,\e}:= \{ (v, \nabla_\e  v)\,:\, v \in L^2(\mu _\o ^\e) \,,\; \nabla_\e v \in L^2( \nu_\o ^\e)\}
 \]
 of the product Hilbert space $L^2(\mu _\o ^\e) \times L^2( \nu_\o ^\e)$. 
  \begin{Lemma}\label{dis}The space  $H_{\o,\e}$ is a closed subspace of $L^2( \mu _\o ^\e) \times L^2(\nu_\o ^\e)$. In particular,  $H_{\o,\e}$ and  $H^1_{\o,\e}$ are Hilbert spaces.
 \end{Lemma}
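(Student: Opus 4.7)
The plan is to establish closedness of $H_{\o,\e}$ as a subspace of the product Hilbert space $L^2(\mu_\o^\e) \times L^2(\nu_\o^\e)$; the Hilbert space property then follows automatically, and $H^1_{\o,\e}$ inherits it through the natural linear bijection $v \mapsto (v, \nabla_\e v)$ (whose norm coincides with $\|v\|_{H^1_{\o,\e}}$ up to the standard equivalence between sum and Pythagorean norms).

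The key observation I would exploit is that both $\mu_\o^\e$ and $\nu_\o^\e$ are purely atomic, with $\mu_\o^\e$ putting mass $\e^d$ at every point of $\e \hat\o$ and $\nu_\o^\e$ putting mass $\e^d c_{a,b}(\o)$ at each $(\e a, b-a)$ with $a, b \in \hat\o$. Consequently, $L^2$--convergence with respect to either measure implies pointwise convergence at every atom of strictly positive mass.

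The concrete step goes as follows. Suppose $(v_n, \nabla_\e v_n) \in H_{\o,\e}$ converges to some $(v, w)$ in the product space. From pointwise convergence on $\e \hat\o$ (applied twice at $\e a$ and $\e b$), I obtain
\[
\nabla_\e v_n(\e a, b - a) = \frac{v_n(\e b) - v_n(\e a)}{\e} \longrightarrow \frac{v(\e b) - v(\e a)}{\e} = \nabla_\e v(\e a, b-a)
\]
for every $a, b \in \hat\o$. On the other hand, at every atom $(\e a, b-a)$ with $c_{a,b}(\o) > 0$, the $L^2(\nu_\o^\e)$--convergence forces $\nabla_\e v_n(\e a, b-a) \to w(\e a, b-a)$. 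Comparing the two limits, $w$ coincides with $\nabla_\e v$ on every atom of $\nu_\o^\e$, hence $w = \nabla_\e v$ as elements of $L^2(\nu_\o^\e)$, and therefore $(v, w) \in H_{\o,\e}$.

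I do not expect a genuine obstacle: the purely atomic structure of the two measures trivialises the otherwise delicate issue of passing a difference quotient through an $L^2$--limit. The only point worth a moment of care is that the identity $w = \nabla_\e v$ needs to be verified only on the atoms of $\nu_\o^\e$ (those with $c_{a,b}(\o) > 0$), so the formal values of $\nabla_\e v$ on pairs with vanishing weight play no role.
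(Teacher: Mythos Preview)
Your proof is correct and follows essentially the same route as the paper's: both exploit the purely atomic structure of $\mu_\o^\e$ to pass from $L^2$--convergence to pointwise convergence at each site, then compare the resulting pointwise limit of $\nabla_\e v_n$ with the $L^2(\nu_\o^\e)$--limit $w$. Your version is in fact marginally cleaner, since you use directly that $L^2$--convergence with respect to an atomic measure forces pointwise convergence at every atom of positive mass, whereas the paper passes through a subsequence to extract $\nu_\o^\e$--a.s.\ convergence.
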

The proof is simple and given in Appendix \ref{dimo} for completeness.

\smallskip
 
 We introduce a notion of weak and strong convergence for microscopic gradients.


 \begin{Definition}
 We introduce the space  $H^1_*(m dx)$ given by the functions $f \in L^2(m dx)$  such that the weak derivative $\partial_i f$ belongs to  $L^2(m dx)$  $\forall i\in \{1,\dots, d_*\}$, 
 endowed with the norm $\| f\| _{H^1_*( m dx)}:= \| f\|_{L^2(mdx)} + \sum_{i=1}^{d_*} \| \partial _i f\|_{L^2(m dx)} $.  
 Moreover, given $f\in H^1_*(m dx)$, we set
 \be
 \nabla_* f= (\partial_1 f, \dots, \partial _{d_*} f , 0,\dots, 0)\in L^2(m dx)^d\,.
 \en 
\end{Definition} 
We point out that $H^1_*(m dx)$ is an Hilbert space (it is enough to adapt the standard proof for $H^1(dx)$). Moreover, $C_c^\infty (\bbR^d)$ is dense in $H^1_*(m dx)$. Without  assumptions \eqref{lavita1} and \eqref{lavita2}, to define $H^1_*(mdx)$  one should consider the  weak derivatives along the vectors $\{v_i\,:\,\l_i>0\}$, which would make the notation heavier. 
 

\begin{Definition}\label{debole_forte_grad}   Fix $\o\in \O$ and a 
 family of $\e$--parametrized  functions $v_\e \in L^2( \mu^\e_\o)$. We say  that the family $\{\nabla_\e v_\e\}$  \emph{converges weakly} to the vector-valued function  $w$   belonging to the product space $ L^2( m dx)^d$  and with values in $\bbR^{d}$, and write  $\nabla_\e v_\e \rightharpoonup w$, if
\begin{equation}\label{recinto_grad}
\limsup  _{\e \da 0} \| \nabla_\e v_\e\| _{L^2(\nu^\e_\o) } <+\infty
\end{equation}and 
\begin{equation}\label{deboluccio_grad}
\lim _{\e \da 0} \frac{1}{2} \int   d \nu^\e _\o (x,z ) \nabla_\e v_\e (x,z)  \nabla_\e \varphi  (x,z )  = 
\int   dx\,m D w(x) \cdot \rosso{ \nabla_*} \varphi(x)   \end{equation}
for all $\varphi \in C^1_c(\bbR^d)$. 
We  say that   family $\{\nabla _\e v_\e\}$  \emph{converges strongly} to  $w$ \rosso{as above}, and write $\nabla_\e v_\e\to w$,  if  in addition to \eqref{recinto_grad}  it holds
\begin{equation}\label{fortezza_grad}
\lim _{\e \da 0}  \frac{1}{2} \int d  \nu^\e _\o (x,z)    \nabla_\e v_\e (x,z)  \nabla_\e g_\e (x,z )  = 
\int  dx\,m D w(x) \cdot \rosso{ \nabla_*} g(x) 
 \end{equation}
for any family of functions $g_\e \in  L^2(\mu^\e_\o)$ with $g_\e \toup g \in \rosso{L^2(m dx)}$ such that $g_\e \in H^1_{\o,\e}$ and \rosso{$g\in   H^1_* ( m dx)$}.
\end{Definition}
Denoting by $\varphi_\e$  the restriction of $\varphi$ to $\e \hat \o$,
due to Lemma \ref{ascoli}   in Section \ref{robot}, for $\cP$--a.a. $\o$  and in particular for all $\o \in \O_{\rm typ}$ defined below, any function  $\varphi\in C_c^1(\bbR^d)$ has the property that   $\varphi _\e \in H^1_{\o,\e}$. Trivially we have  $L^2(\mu^\e_\o)\ni \varphi_\e \to \varphi \in L^2( mdx )$. In particular, for such  environments $\o \in \O_{\rm typ}$,  if $\nabla _\e v_\e \to w$ then $\nabla_\e v_\e\toup w$.

\subsection{Difference operators}

We consider the operator $\bbL_{\o}^\e$ defined as 
\begin{equation}\
\bbL_{\o}^\e f(  \e a  ):= 
\e^{-2} \int  d\hat \o (y) c_{a,y}(\o) \bigl( f(\e y) - f (\e a) \bigr)\,, \qquad \e a \in \e \hat \o\,,
\end{equation}
for functions $f: \e \hat \o \to \bbR $ for which the series in the  r.h.s. is absolutely convergent for each $ a\in  \hat \o$. Note that  this property is fulfilled if  e.g. $f$ has compact support. Moreover, if $f,g$ have compact support, then the scalar product   $\la -\bbL _\o^\e f , g \ra_{\mu_\o^\e}$ in $L^2(\mu^\e_\o)$ is well defined (indeed, one deals only with finite sums) and it holds
 \begin{equation*}
 \begin{split}
 \la -\bbL_{\o}^\e f , g \ra_{\mu_\o^\e} & = 
 \frac{\e^{d-2}}{2} \int  \hat \o (da) \int  \hat \o (dy) c_{a,y}(\o)\bigl[ f(\e y) - f (\e a) \bigr]\bigl[ g(\e y) - g(\e a) \bigr]
 \\& =\frac{1}{2} \la \nabla _\e f , \nabla _\e g \ra _{\nu_\o ^\e}\,.
\end{split}
\end{equation*}
The above identity suggests a weak formulation of the equation
$-\bbL_\o^\e u+\l u =f$:
\begin{Definition}
Given $f\in L^2 (\mu_\o^\e)$  and $\l>0$,  a weak solution $u$ of the equation
\begin{equation}\label{strepitio}
-\bbL^\e_\o u+\l u =f
\end{equation}
is a function $u \in H^1_{\o,\e}$ such that 
\begin{equation}\label{salvoep}
\frac{1}{2} \la \nabla_\e v, \nabla_\e u \ra _{\nu_\o ^\e}+\l  \la v, u \ra _{\mu_\o^\e}= \la v, f \ra_{\mu_\o^\e}\qquad \forall v \in H^1_{\o,\e} \,.
\end{equation} 
\end{Definition}
By the  Lax--Milgram theorem \cite{Br}, given  $f\in L^2 ( \mu_\o^\e)$  the weak  solution $u $ of \eqref{strepitio} exists and is unique.

\smallskip

We now move to the effective equation, where $D$ denotes the  \emph{effective diffusion matrix} introduced in \eqref{def_D}.
\begin{Definition}
Given \rosso{$f\in L^2 (m dx)$} and $\l>0$, a weak solution $u$ of the equation
\begin{equation}\label{strepitioeff}
-\rosso{\nabla_* \cdot  D \nabla_* }u+\l u =f
\end{equation}
is a function $u \in \rosso{H^1_*( m dx) }$ such that 
\begin{equation}\label{delizia}
   \int   D \rosso{\nabla_*} v(x) \cdot  \rosso{\nabla_*} u (x) dx + \l \int   v(x) u(x) dx = \int  v(x) f(x)dx  \,, \qquad \forall v \in \rosso{H^1_*( m dx )}\,.
\end{equation} 
\end{Definition}
Again, by the   Lax--Milgram theorem, given  $f\in \rosso{L^2 (m dx)}$  the weak  solution $u $ of \eqref{strepitio} exists and is unique. \rosso{To check coercivity for the Lax--Milgram theorem, for $d_*\not =0$ one uses \eqref{lavita1} to get that  $\int   D \nabla_*  v(x) \cdot  \nabla_* u (x) dx\geq C \int \nabla_* v(x)\cdot
  \nabla_* u(x) dx $, where $C:=\min\{ \l_i\,:\, \l_i>0\} $.}

\smallskip

We can now state our first main results on homogenization:
\begin{TheoremA} \label{teo1} Let Assumptions (A1),...,\rosso{(A7)} be satisfied. Then
there exists a Borel  subset $\O_{\rm typ}\subset \O$, of so called \emph{typical environments}, fulfilling the following properties. $\O_{\rm typ}$ is translation invariant  and   $\cP(\O_{\rm typ})=1$. Moreover, given any   $\l>0$, $f_\e \in L^2(\mu^\e_\o)$ and  $f\in L^2( dx)$,  let  $u_\e$ and $u$ be defined  as the weak solutions,    respectively in $H^1_{\o, \e}$ and \rosso{$H^1_*(mdx)$}, of the equations 
\begin{align}
&-\bbL^\e_\o u_\e+\l u_\e =f_\e\,,\label{eq1}
\\
&
-\rosso{\nabla_*\cdot  D \nabla_*} u +\l u  =f\,.\label{eq2}
\end{align}
Then, for 
  any $\o\in \O_{\rm typ}$, we have:\begin{itemize}
\item[(i)] {\bf Convergence of solutions} (cf. Def. \ref{debole_forte}):
\begin{align}
f_\e \toup f \;\Longrightarrow \; u_\e \toup u \,,
\label{limite1}
\\
f_\e \to f  \;\Longrightarrow \; u_\e \to u\,.
\label{limite2}
\end{align}

\item[(ii)]  {\bf Convergence of flows} (cf. Def. \ref{debole_forte_grad}):
\begin{align}
f_\e \toup f \;\Longrightarrow \; \nabla_\e u_\e \toup \rosso{\nabla_* } u \,,
\label{limite3}
\\
f_\e \to f  \;\Longrightarrow \; \nabla_\e u_\e \to \rosso{ \nabla_*} u\,.
\label{limite4}
\end{align}

\item[(iii)] {\bf Convergence of energies}:
\begin{equation}\label{limite5}
f_\e \to f  \;\Longrightarrow \; \frac{1}{2}\la \nabla_\e u_\e , \nabla_\e u_\e \ra_{\nu_\o ^\e} \to \int dx\, m  \rosso{\nabla_*} u (x) \cdot D\rosso{ \nabla_*} u (x) \,. 
\end{equation}
\end{itemize}
\end{TheoremA}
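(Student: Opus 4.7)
The plan is to follow the two--scale convergence strategy of \cite{ZP}, adapted to the amorphous--gradient setting, with a truncation procedure to absorb the long jumps.

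First I would extract a priori bounds. Testing the weak formulation \eqref{salvoep} with $v=u_\e$ gives
\[
\tfrac{1}{2}\|\nabla_\e u_\e\|_{L^2(\nu_\o^\e)}^2+\l\|u_\e\|_{L^2(\mu_\o^\e)}^2=\la u_\e,f_\e\ra_{\mu_\o^\e},
\]
so by Cauchy--Schwarz and the boundedness of $\|f_\e\|_{\mu_\o^\e}$ (implied by $f_\e\toup f$) one obtains uniform bounds on $\|u_\e\|_{\mu_\o^\e}$ and $\|\nabla_\e u_\e\|_{\nu_\o^\e}$, valid for every $\o$ in a translation invariant full--measure set $\O_{\rm typ}$ obtained by intersecting the sets $\cA[g]$ of Proposition \ref{prop_ergodico} for countably many Borel functions $g$ built from $c_{0,x}(\o)$, $\l_0$, $\l_1$, $\l_2$ and the cell--problem correctors.

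Next I would invoke a two--scale compactness statement saying that, along subsequences, $u_\e\toup u$ in $L^2(m\,dx)$ and the pair $(u_\e,\nabla_\e u_\e)$ two--scale converges to $(u(x),\nabla_* u(x)+v(x,\o,z))$, where $v(x,\cdot,\cdot)$ lies in the $L^2$--closure of amorphous gradients of bounded cylinder functions of $\o$, the standard ``potential'' subspace on the Palm space. The essential technical point, and the main obstacle, is precisely here: since $\l_1$ only belongs to $L^2(\cP_0)$, the contraction $\int c_{0,z}(\o)\,z\,v(\o,z)\,d\hat\o(z)$ of a square--integrable amorphous gradient need not be square integrable, so the arguments of \cite{ZP} cannot be quoted verbatim. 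I would handle this by the cut--off procedure of Sections \ref{cut-off1} and \ref{cut-off2}: approximate $\nabla_\e u_\e$ by its truncation $\nabla_\e u_\e\cdot\mathds{1}_{|z|\le R}$, perform the identification of the two--scale limit and its corrector in the bounded--range setting, and then remove the cut--off using the $L^2(\cP_0)$ bound on $\l_1$ and the a priori energy bound.

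Having the decomposition $\nabla_\e u_\e\rightsquigarrow \nabla_* u(x)+v(x,\o,z)$, I would pass to the two--scale limit in \eqref{salvoep} using test functions of the form $\varphi(x)+\e\sum_i\psi_i(x)\phi_i(\t_{x/\e}\o)$ with $\varphi,\psi_i\in C^1_c(\bbR^d)$ and $\phi_i\in L^\infty(\cP_0)$. Proposition \ref{prop_ergodico} and its gradient analogue turn the two integrals over $\mu^\e_\o$ and $\nu^\e_\o$ into integrals against $m\,dx\otimes\cP_0$. Choosing $\varphi\equiv 0$ shows that $v(x,\cdot,\cdot)$ solves the cell problem $a=\nabla_* u(x)$ in the variational characterization \eqref{def_D}; choosing $\psi_i\equiv 0$ yields the homogenized equation \eqref{delizia}. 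Uniqueness of the weak solution in $H^1_*(m\,dx)$, already noted after \eqref{delizia}, implies that the whole family (not merely a subsequence) converges, proving \eqref{limite1} and \eqref{limite3}.

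Finally, for the strong statements \eqref{limite2}, \eqref{limite4}, \eqref{limite5}, I would test \eqref{salvoep} with $u_\e$ and \eqref{delizia} with $u$ to obtain the energy identities
\[
\tfrac{1}{2}\|\nabla_\e u_\e\|_{\nu_\o^\e}^2+\l\|u_\e\|_{\mu_\o^\e}^2=\la u_\e,f_\e\ra_{\mu_\o^\e},\qquad \int m\,\nabla_* u\cdot D\nabla_* u\,dx+\l\int m u^2\,dx=\int m u f\,dx.
\]
Since $f_\e\to f$ strongly and $u_\e\toup u$, the right--hand sides converge. Combined with the lower semicontinuity of each quadratic form under weak/two--scale convergence (which yields $\liminf\ge$ on each term), the energy identities force equality of the limits term by term. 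This gives \eqref{limite5} and, together with Remark \ref{forte}, the strong convergence $u_\e\to u$ of \eqref{limite2}; polarising \eqref{limite5} against general $g_\e\toup g$ with $g\in H^1_*(m\,dx)$ then yields \eqref{limite4}.
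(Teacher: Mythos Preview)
Your overall architecture matches the paper's: a priori bounds from testing with $u_\e$, two-scale compactness giving the structure theorem for the limit of $(u_\e,\nabla_\e u_\e)$, identification via oscillating test functions $\varphi(x)+\e\psi(x)g(\t_{x/\e}\o)$, and an energy argument for the strong statements. Two points deserve comment.

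\textbf{The cut-off.} There is a real gap in your implementation. You propose to truncate $\nabla_\e u_\e$ by $\mathds{1}_{|z|\le R}$ and work in the bounded-range setting. But $\nabla_\e u_\e\cdot\mathds{1}_{|z|\le R}$ is no longer a discrete gradient of anything, so the structure theorem (the analogue of Prop.~\ref{oro}: the two-scale limit of the gradient is $\nabla_* u(x)\cdot z$ plus a potential form) cannot be established for it --- that structure depends essentially on the gradient identity, via integration-by-parts against solenoidal test forms. The paper's cut-off is quite different: it truncates the \emph{values} of the solutions, $[u_\e]_M$, so that $\nabla_\e[u_\e]_M$ is still a genuine gradient and satisfies $|\nabla_\e[u_\e]_M|\le|\nabla_\e u_\e|$; simultaneously it truncates the test forms $b$ to $[b]_k$, making the contractions $\widehat{[b]_k}$ bounded hence usable in Prop.~\ref{prop_ergodico}. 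The two cut-offs are removed via Lemmas \ref{lemma1}, \ref{lemma2}, whose proofs use (A6) to control the ``dangerous'' cross terms. Your removal step (``using the $L^2(\cP_0)$ bound on $\l_1$'') is not enough: the quantity that has to be controlled is $\int d\mu^\e_\o\,[\sum_a c_{y,a}(\o)\varphi(\e a)^2]^2$, and this is where $F_*\in L^1(\cP_0)$ of (A6) enters. Also note the two-scale limit of $\nabla_\e u_\e$ is $\nabla_* u(x)\cdot z+v_1(x,\o,z)$, not $\nabla_* u(x)+v$; the linear-in-$z$ factor is what links the corrector to the matrix $D$ via \eqref{solare789}.

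\textbf{Strong convergence.} Your argument for \eqref{limite2} and \eqref{limite5} via the energy identity plus lower semicontinuity is correct and slightly different from the paper. The paper instead introduces the dual problems $-\bbL^\e_\o v_\e+\l v_\e=u_\e$, $-\nabla_*\cdot D\nabla_* v+\l v=u$, and uses the algebraic identity $\la u_\e,u_\e\ra_{\mu^\e_\o}=\la v_\e,f_\e\ra_{\mu^\e_\o}$ together with $v_\e\toup v$ (from \eqref{limite1}) and $f_\e\to f$ to get convergence of $\|u_\e\|^2$ directly; then \eqref{limite5} follows from \eqref{limite4}. Your route avoids the auxiliary problem but requires the two-scale lower semicontinuity $\liminf\frac{1}{2}\|\nabla_\e u_\e\|^2_{\nu^\e_\o}\ge\int m\,\nabla_* u\cdot D\nabla_* u$, which holds once the corrector identification $u_1(x,\cdot)=v^{\nabla_* u(x)}$ is in hand (by Lemma \ref{compatto2} and \eqref{giallo}). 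For \eqref{limite4}, ``polarising \eqref{limite5}'' is not the right mechanism: one simply tests the weak formulation \eqref{salvoep} with $g_\e$ and uses $u_\e\to u$, $f_\e\to f$ and the homogenized equation \eqref{delizia}, exactly as in the paper.
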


\begin{Remark}\label{gelatino}
Let $\o \in \O_{\rm typ}$. Then it is trivial to check that, 
for any $f \in C_c(\bbR^d)$,   $L^2(\mu^\e _\o ) \ni f \to f \in L^2(mdx)$. By taking $f_\e:=f$ and using  \eqref{limite2}, we get that   $u_\e \to u$, where $u_\e$ and $u$ are defined as the weak solutions of \eqref{eq1} and \eqref{eq2}, respectively. \end{Remark}
\medskip

Given $\o \in \O_{\rm typ}$ it holds $\l_0(\t_z\o)<+\infty$ for any $z \in \hat \o$ (cf.  (S12) in Def.~\ref{budda} in Section \ref{topo}).  This allows to define,  up to a possible explosion time,   the random walk $(X_{\o, t}^{ \e})_{t\geq 0}$ on $\e \hat \o$ with probability rate for a jump from $\e y$ to $\e z$ given by $\e^{-2} c_{y,z} (\o )$. Since $\bbE[\l_0]<+\infty$ by (A5), as discussed in \cite[Sec.~3.3.1]{CFP}, for $\cP_0$--a.a. $\o\in \O_0$ explosion does not take place when the random walk starts in $0 \in \e \hat \o$.
Due to Lemma \ref{matteo} presented below, we conclude that for $\cP$--a.a. $\o$ explosion does not take place when the random walk $(X_{\o, t}^{ \e})_{t\geq 0}$  starts in any point $ \e y \in \e \hat \o$. From now on, we refine the definition of $\O_{\rm typ}$ including the property that for $\o \in \O_{\rm typ}$ the  above random walk   on $\e \hat \o$  is well defined and does not explode, whatever its initial point in $\e \hat \o$.

 We write $\bigl(P^\e _{\o ,t}\bigr)_{t \geq 0}$ for the $L^2(\mu^\e _\o)$--Markov semigroup associated to the random walk $(X_{\o, t}^{ \e})_{t\geq 0}$ on $\e \hat \o$. In particular, $P^\e _{\o ,t}=e^{t \bbL^\e _\o }$. Similarly we write $\bigl( P_t \bigr)_{t \geq 0} $ for the  Markov semigroup on $L^2( m dx)$ associated to the \rosso{(possibly degenerate)} Brownian motion on $\bbR^d$  with diffusion matrix \rosso{$2 D$} given by \eqref{def_D}. Note that, by Warning \ref{divido},  this Brownian motion is not degenerate in the first $d_*$ coordinates, while no motion is present in the remaining coordinates. In particular, writing $p_t(\cdot,\cdot )$ for the probability transition kernel of the Brownian motion on $\bbR^{d_*}$ with non--degenerate diffusion matrix $(2D_{i,j})_{1\leq i,j \leq d_*}$, it holds 
 \be\label{pesciolini}
 P_t f(x',x'') = \int _{\bbR^{d_*}} p_t ( x', y) f( y, x'' ) dy\,\qquad (x', x'')\in \bbR^{d_*} \times \bbR^{d-d_*}= \bbR^d\,. 
 \en 

\begin{TheoremA}\label{teo2}  Let Assumptions (A1),...,\rosso{(A7)} be satisfied. 
Take  $\o \in \O_{\rm typ}$ and $f \in C_c (\bbR^d)$. Then  it holds
\begin{equation}\label{marvel0}
 L^2(\mu^\e _\o ) \ni P^\e_{\o,t} f \to P_t f \in L^2(mdx)\,.
 \end{equation}
  For each $k \in \bbZ^d$ define the random variable  $N_k(\o)$ as $N_k(\o):= \hat \o ( k + [0,1)^d)$. Suppose that at least one of the following conditions is satisfied:
 \begin{itemize}
 \item[(i)] for $\cP$--a.a. $\o$ 
  $\exists C(\o)>0$ such that $\sup _{k \in \bbZ^d} N_k(\o) \leq C(\o)$;
  \item[(ii)]  $\bbE[N_0^2]<\infty$ and  there exists $C_0 \geq 0$, and there exists $\a >0$ if $d=1$, such that 
 \[
 \text{Cov}\,(N_k, N_k') \leq  
 \begin{cases} C_0 |k-k'| ^{-1} & \text{ for } d\geq 2 \\
 C_0|k-k'| ^{-1-\a} & \text{ for } d=1
 \end{cases}
 \]
  for any $k \not = k'$ in $\bbZ^d$.
 \end{itemize}
Then there exists a Borel set $\O_\sharp\subset \O $ with  $\cP( \O_\sharp)=1$  such that for any $ \o \in \O_\sharp \cap \O_{\rm typ}$ and any $f \in C_c(\bbR^d)$ it holds:
\begin{align}
& \lim_{\e \da 0} \int \bigl | P^\e_{\o,t} f(x) - P_t f (x) \bigr|^2 d \mu^\e_\o (x)=0\,,\label{marvel1}\\
& \lim_{\e \da 0} \int \bigl | P^\e_{\o,t} f(x) - P_t f (x) \bigr| d \mu^\e_\o (x)=0\,.\label{marvel2}
\end{align}
\end{TheoremA}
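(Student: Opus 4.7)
\textbf{Strong $2$-scale convergence of semigroups, \eqref{marvel0}.} The operators $\bbL^\e_\o$ and $\nabla_*\cdot D\nabla_*$ are nonpositive self-adjoint generators of $L^2$-contraction semigroups, and Theorem~\ref{teo1}(i) provides strong resolvent convergence $(\l I - \bbL^\e_\o)^{-1}f_\e \to (\l I - \nabla_*\cdot D\nabla_*)^{-1}f$ whenever $f_\e \to f$ strongly. The plan is to invoke the varying-Hilbert-space Trotter--Kato theorem (as in the Kuwae--Shioya framework for Mosco convergence): setting $f_\e := f|_{\e\hat\o}$ yields $f_\e \to f$ strongly by Remark~\ref{gelatino}; writing $P^\e_{\o,t}$ as a norm limit of the Yosida approximants $(n/t)^n(R^\e_{n/t})^n$ and using strong convergence of each resolvent factor together with uniform contractivity, one interchanges limits to conclude $P^\e_{\o,t}f \to P_tf$ strongly, uniformly for $t$ in compact intervals.

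\textbf{An empirical-measure lemma.} The key preliminary step for the second half of the theorem is the following upgrade of \eqref{scivolo}: under either (i) or (ii), there is a Borel set $\O_\sharp$ with $\cP(\O_\sharp)=1$ such that for every $\o\in\O_\sharp$ and every bounded continuous $\varphi\in L^1(\bbR^d)$,
\[
\lim_{\e\da 0}\int\varphi\,d\mu^\e_\o \;=\; m\int\varphi\,dx\,.
\]
Under (i) the uniform bound $N_k(\o)\le C(\o)$ gives $\mu^\e_\o(A)\le C(\o)|A|$ for $A$ a union of $\e$-sized cells, so truncating $\varphi$ at a large radius $R$ reduces the main term to \eqref{scivolo} while the density bound controls the tail. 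Under (ii), the covariance hypothesis furnishes $L^2$-variance bounds for the cube sums $\sum_{|k|\le N}(N_k-m)$ decaying polynomially in $N$ (in $d=1$, the exponent $-1-\a$ is precisely what makes the variance series summable), and Borel--Cantelli along a geometric subsequence yields quantitative almost-sure control.

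\textbf{From \eqref{marvel0} to \eqref{marvel1}.} Expanding the square,
\[
\int|P^\e_{\o,t}f - P_tf|^2\,d\mu^\e_\o \;=\; \int|P^\e_{\o,t}f|^2\,d\mu^\e_\o + \int|P_tf|^2\,d\mu^\e_\o - 2\int P^\e_{\o,t}f\cdot P_tf\,d\mu^\e_\o,
\]
the first term tends to $m\int|P_tf|^2\,dx$ by \eqref{marvel0} together with Remark~\ref{forte}. Since $f\in C_c$, formula \eqref{pesciolini} shows $P_tf$ is bounded, continuous and in $L^1\cap L^2(m\,dx)$, so the empirical-measure lemma handles the second term. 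For the cross term, approximate $P_tf$ uniformly on $B_R$ by $\phi_R\in C_c^\infty$: the main piece $\int P^\e_{\o,t}f\cdot\phi_R\,d\mu^\e_\o$ passes to $m\int P_tf\cdot\phi_R\,dx$ by \eqref{marvel0}, while the error $\int P^\e_{\o,t}f\cdot(P_tf-\phi_R)\,d\mu^\e_\o$ is controlled by Cauchy--Schwarz, the $L^2$-contraction bound $\|P^\e_{\o,t}f\|_{L^2(\mu^\e_\o)}\le\|f\|_{L^2(\mu^\e_\o)}$ and the empirical-measure lemma applied to $|P_tf-\phi_R|^2$.

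\textbf{From \eqref{marvel1} to \eqref{marvel2}, and main obstacles.} With $\phi_R\in C_c^\infty$ equal to $1$ on $B_R$, split
\[
\int|P^\e_{\o,t}f - P_tf|\,d\mu^\e_\o \;\le\; \int\phi_R|P^\e_{\o,t}f - P_tf|\,d\mu^\e_\o + \int(1-\phi_R)|P^\e_{\o,t}f|\,d\mu^\e_\o + \int(1-\phi_R)|P_tf|\,d\mu^\e_\o.
\]
The first summand vanishes as $\e\da 0$ by \eqref{marvel1} together with Cauchy--Schwarz and the boundedness of $\mu^\e_\o(B_{R+1})$; the third vanishes first in $\e$ by the empirical-measure lemma and then in $R\to\infty$ since $P_tf\in L^1(m\,dx)$. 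For the middle term, the self-adjointness of $P^\e_{\o,t}$ on $L^2(\mu^\e_\o)$ and mass conservation $P^\e_{\o,t}1=1$ allow (after a truncation/monotone-convergence argument to bypass the fact that $1-\phi_R\notin L^2(\mu^\e_\o)$) rewriting it as $\int|f|\,d\mu^\e_\o - \int|f|\cdot P^\e_{\o,t}\phi_R\,d\mu^\e_\o$; both integrals are over the compact support of $|f|$, so they pass to the limit via \eqref{marvel0} and yield $m\int|f|P_t(1-\phi_R)\,dx$, which tends to $0$ as $R\to\infty$. The \emph{main obstacles} I anticipate are: (a) the degenerate case $d_*<d$, where $P_tf$ is smooth only in the first $d_*$ coordinates and the $C_c^\infty$ approximation above must respect this anisotropy; and (b) the quantitative LLN under hypothesis (ii) --- the exponent $-1-\a$ in $d=1$ is tight and the Borel--Cantelli argument must be carried out carefully to obtain the uniform-in-$\e$ tail control needed in the last two paragraphs.
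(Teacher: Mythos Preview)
Your proposal is correct and follows essentially the same route as the paper. The paper derives \eqref{marvel0} from Theorem~\ref{teo1} via the abstract semigroup convergence result \cite[Thm.~9.2]{ZP} (the varying-space Trotter--Kato argument you describe), proves a tail lemma (Lemma~\ref{pre_teo2}: $\lim_{\ell\uparrow\infty}\varlimsup_{\e\da 0}\int d\mu^\e_\o(x)\,\psi(|x|)\mathds{1}_{\{|x|\ge\ell\}}=0$ for monotone radial $\psi$) by exactly the dyadic Borel--Cantelli variance argument you sketch under (ii), and then expands the square as you do for \eqref{marvel1}, approximating $P_tf$ by compactly supported functions and using \eqref{marvel0} plus Remark~\ref{forte}.

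Two minor remarks. First, your ``empirical-measure lemma'' is stated for arbitrary bounded continuous $L^1$ functions, whereas the paper's Lemma~\ref{pre_teo2} is a tail estimate for monotone radial profiles only; the monotonicity is used to pass from $\e$ to the nearest dyadic scale (your concern (b)). This is not a gap, since in the application one only needs it for $|P_tf|$ and $|P_tf|^2$, which are dominated by such a profile (in the degenerate case $d_*<d$, $P_tf$ even has compact support in the last $d-d_*$ coordinates, so concern (a) is harmless). Second, for \eqref{marvel2} the paper simply cites \cite[Cor.~2.5]{F1}; your duality argument for the middle term works once you insert the pointwise bound $|P^\e_{\o,t}f|\le P^\e_{\o,t}|f|$ before invoking self-adjointness.
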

Note that any  marked Poisson point process satisfies the above  Condition (ii),  while any marked  diluted lattice satisfies Condition (i).

Theorem \ref{teo2} is obtained from Theorem \ref{teo1} by the strategy developed in  \cite{F1}. The proof is given in Section \ref{dim_teo2}.

\subsection{Hydrodynamic limit  of simple exclusion processes}

We discuss here how the above homogenization results together with the strategy in \cite{F0,F1} can be applied to obtain the hydrodynamic limit of exclusion processes \cite{L} on marked simple point processes.  The method used to get Theorem \ref{teo3} is rather general and can used to treat other exclusion processes.

Given $\o \in \O$ we consider the exclusion process on $\hat\o$ with formal generator 
\be\label{ringo}
\cL_\o f(\eta ) = \sum_{x\in \hat \o} \sum_{y\in \hat \o}c_{x,y}(\o) \eta_x ( 1- \eta_y) \left[ f( \eta ^{x,y})- f(\eta)\right]\,,\qquad \eta \in \{0,1\}^{\hat \o}\,,
\en
where 
\[
\eta^{x,y}_z=\begin{cases}
\eta_y & \text{ if } z=x\,,\\
\eta_x & \text{ if } z=y\,,\\
\eta_z & \text{ otherwise}\,.
\end{cases}
\]
Given a probability measure $\mathfrak{m}$ on $\{0,1\}^{\hat \o}$,
we write $\bbP_{\o,\mathfrak{m}}$ for the  above exclusion process with initial distribution $\mathfrak{m}$ and we write $\eta(t)$ for the particle configuration at time $t$.

\begin{TheoremA}\label{teo3} 
 Consider an ergodic stationary marked simple point process $\cP$ on $\bbR^d$ such that the law of its  spatial support $\hat\o$ 
 is   a Poisson point process with intensity $m>0$.  Take jump rates satisfying assumptions (A4)--\rosso{(A7)} and such that, for $\cP$--a.a. $\o$, 
 $c_{x,y}(\o) \leq g(|x-y|)$   for any $x,y \in \hat \o$, where $g(|x|)$ is a fixed bounded function in $L^1(dx)$ (for example take  Mott v.r.h.). 
 Then for $\cP$--a.a. $\o$ the above 
exclusion process is well defined for any initial distribution and  the following holds:

Let $\rho_0: \bbR^d \to [0,1]$ be a Borel function and let $\{\mathfrak{m}_\e\}$ be an $\e$--parametrized  family of probability measures on $\{0,1\}^{\hat{\o}}$ such that, for all $\d>0$ and all $\varphi\in C_c(\bbR^d)$, it holds
\be\label{marzolino}
\lim_{\e\da 0} \mathfrak{m}_\e\Big( \Big| \e^d \sum_{x \in \hat \o} \varphi (\e x) \eta_x -\int _{\bbR^d} \varphi(x) \rho_0(x) dx \Big|>\e\Big)=0\,.
\en
Then for all $t>0$,  $\varphi \in C_c(\bbR^d)$ and $ \d>0$ we have 
\be\label{pasqualino}
\lim_{\e\da 0} \bbP_{  \o,\mathfrak{m}_\e } \Big(\Big|  \e^d \sum_{x \in \hat \o} \varphi (\e x) \eta_x(\e^{-2} t) - \int _{\bbR^d} \varphi(x) \rho(x,t) dx\Big| >\d 
\Big)=0\,,
\en
where $\rho:\bbR^d\times[0,\infty)\to \bbR$  \rosso{is given by $\rho(x,t)= P_t \rho_0 (x)$  (cf.~\eqref{pesciolini}). $\rho$}
solves the heat equation \rosso{$\partial_t \rho = \nabla_*\cdot ( D \nabla_* \rho)$ for $t>0$} with boundary condition $\rho_0$ at $t=0$. \rosso{Above,}  $D$ is the effective diffusion matrix given by \eqref{def_D}. 
\end{TheoremA}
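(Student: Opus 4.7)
The plan is to follow the duality-based strategy of \cite{F0,F1}, reducing the hydrodynamic limit to the one--particle semigroup homogenization of Theorem \ref{teo2} together with a second--moment bound. The starting point is that the domination $c_{x,y}(\o)\leq g(|x-y|)$ with $g\in L^1(dx)$, combined with the Poisson structure of $\hat\o$, guarantees that for $\cP$--a.a.\ $\o$ the total exit rate $\sum_{y\in\hat\o}c_{x,y}(\o)$ is locally bounded in $x$, so the exclusion process can be constructed via the standard Liggett--Andjel graphical representation.

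The main structural ingredient is the self--duality of the symmetric SEP: for every finite $A\subset\hat\o$ and every $\eta\in\{0,1\}^{\hat\o}$,
\[
\bbE_{\o,\eta}\Bigl[\prod_{x\in A}\eta_x(t)\Bigr] = \bbE^{(|A|)}_{\o,A}\Bigl[\prod_{x\in A(t)}\eta_x(0)\Bigr],
\]
where $A(t)$ is the configuration at time $t$ of the $|A|$--particle SSEP started at $A$. At level one this reads $\bbE_{\o,\eta}[\eta_x(t)]=\sum_y p^\o_t(x,y)\eta_y$, with $p^\o_t$ the single--particle transition kernel. Setting $h_\e(\e y):=\bbE_{\mathfrak{m}_\e}[\eta_y]$ and $\xi^\e_t(\varphi):=\e^d\sum_{x\in\hat\o}\varphi(\e x)\eta_x(\e^{-2}t)$, the diffusive rescaling combined with the symmetry of $P^{\o,\e}_t$ gives
\[
\bbE_{\o,\mathfrak{m}_\e}[\xi^\e_t(\varphi)] = \int\varphi\,P^{\o,\e}_t h_\e\,d\mu^\e_\o = \int h_\e\,P^{\o,\e}_t \varphi\,d\mu^\e_\o.
\]
Theorem \ref{teo2} gives $P^{\o,\e}_t\varphi\to P_t\varphi$ strongly in $L^2(mdx)$, while assumption \eqref{marzolino} together with $0\leq h_\e\leq 1$ gives $h_\e \toup \rho_0/m$ in the sense of Definition \ref{debole_forte} (after truncating $h_\e$ to a ball $B_R$ containing the effective range of the one--particle walk at time $t$, and letting $R\to\infty$). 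The weak/strong pairing and self--adjointness of $P_t$ on $L^2(mdx)$ then yield $\bbE_{\o,\mathfrak{m}_\e}[\xi^\e_t(\varphi)]\to\int\varphi(x)\rho(x,t)\,dx$.

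To conclude via Chebyshev, it remains to prove $\var_{\o,\mathfrak{m}_\e}[\xi^\e_t(\varphi)]\to 0$. The diagonal contribution is $O(\e^d)$ since $|\eta_x(t)|\leq 1$ and $|\hat\o\cap\e^{-1}\mathrm{supp}\,\varphi|=O(\e^{-d})$ by ergodicity. For the off--diagonal part, the two--particle self--duality represents $\bbE[\eta_x(t)\eta_y(t)]$ as an expectation over the two--particle SSEP started at $(x,y)$, and the covariance splits into an \emph{interaction} term (measuring the deviation of two--particle exclusion from two independent walks) and an \emph{initial--correlation} term. The initial--correlation term vanishes thanks to \eqref{marzolino}, and the interaction term is controlled by combining the attractivity of SSEP with the asymptotic factorization of the product of two independent one--particle semigroups provided by Theorem \ref{teo2}.

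The main obstacle is this variance bound in the long--range regime. In Mott v.r.h., two exclusion particles can interact through a single long jump, so the classical local collision--count argument used for nearest--neighbour SSEP does not apply directly. A more robust route --- essentially the one in \cite{F1} --- is to apply Theorem \ref{teo1} to the symmetric two--particle generator on $\hat\o\times\hat\o\setminus\mathrm{diag}$ and to identify the limit two--point function as the solution of the product effective PDE on $\bbR^{2d}$ with initial datum $\rho_0(x)\rho_0(y)$; factorization of this limit then yields covariance $\to 0$. Verifying that (A1)--(A7) lift appropriately to the two--particle setting --- in particular deriving the analogues of (A5)--(A6) from the $L^1$--bound on $g$ --- is the most delicate technical point.
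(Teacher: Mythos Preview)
Your overall strategy---reduce to the single-particle semigroup via duality and then control fluctuations---is sound, but the route you take for the variance differs substantially from the paper's, and your version is both harder and left with an acknowledged gap.

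The paper does \emph{not} use two-particle duality or a two-particle homogenization. Instead, after a localization step it relies on the Nagy-type pathwise representation
\[
\eta_x(t)=\sum_{y\in\hat\o}p_\o(t,x,y)\eta_y(0)+\sum_{y\in\hat\o}\int_0^t p_\o(t-s,x,y)\,dM_y(s),
\]
where the $M_y$ are explicit martingales built from the Poisson clocks. The first sum is exactly $\sum_y \eta_y(0)p_\o(t,y,x)$ by symmetry of the kernel, matching your level-one computation. The second sum, after multiplying by $\e^d\varphi(\e x)$ and summing over $x$, has second moment bounded by a Dirichlet-form quantity that telescopes to $\tfrac{\e^d}{4}\|\varphi\|^2_{L^2(\mu^\e_\o)}\to 0$. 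This uses only the contractivity of the \emph{one}-particle semigroup and completely bypasses any two-particle analysis. The localization step---reducing to initial configurations with finitely many particles so that the Nagy representation makes sense---is carried out via a random-connection-model coupling (Lemma~\ref{igro1} and Corollary~\ref{lancia}), which shows that the graphical-construction graph has only finite components on time windows of length $t_0$; this is where the Poisson hypothesis and the $L^1$ bound on $g$ are actually used.

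Your proposal to apply Theorem~\ref{teo1} to the two-particle generator on $\hat\o\times\hat\o\setminus\mathrm{diag}$ is in principle workable, but it requires recasting the entire framework (Palm measure, Assumptions (A1)--(A7), the typical-environment set) on a point process in $\bbR^{2d}$ with the exclusion constraint, which you correctly flag as delicate and do not carry out. Also, your justification of well-definedness (``total exit rate locally bounded'') is weaker than what is needed: for unbounded-range rates on a Poisson cloud, local boundedness of $\lambda_0$ is not automatic pathwise, and in any case the graphical construction requires the finite-cluster property, not just finite exit rates. Finally, your tail-truncation argument for the mean (``ball $B_R$ containing the effective range'') is informal in the long-jump setting; the paper handles this through the quantitative tail estimate \eqref{claudio2} of Lemma~\ref{pre_teo2}.

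In short: the paper's martingale route is strictly simpler and self-contained within the one-particle homogenization already proved; your two-particle route would require a genuine extension of Theorems~\ref{teo1}--\ref{teo2} that you have not supplied.
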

The proof of the above theorem is given in Section \ref{ida}.

\begin{Remark} 
The conditions that $\hat \o$ is a Poisson point process and that   $\cP$--a.s.
 $c_{x,y}(\o) \leq g(|x-y|)$, with $g$ as in Theorem \ref{teo3}, are used only  to derive Lemma \ref{igro1}. The rest of the derivation of Theorem \ref{teo3} relies on Lemma \ref{igro1} and not on these specific conditions, which could be replaced by something else, more suited to  other models.
\end{Remark}

 \section{Preliminary facts on the Palm distribution $\cP_0$}\label{corteggiare}

We recall a fact frequently used in the rest (see \cite[Lemma 1--(ii)]{FSS}): given a  translation invariant  subset $A\subset \O$,
it holds then $\cP(A)=1 $ if and only if $ \cP_0(A)=1$.

\begin{Lemma}\label{matteo}
Given a Borel subset $A\subset \O_0$, the following facts are equivalent:
\begin{itemize}
\item[(i)] $\cP_0(A)=1$;
\item[(ii)] $\cP\left(\o \in \O\,:\, \t_x \o \in A \; \forall x \in \hat\o\right)=1$;
\item[(iii)]  $\cP_0\left(\o \in \O_0\,:\, \t_x \o \in A \; \forall x \in \hat\o\right)=1$.
\end{itemize}
\end{Lemma}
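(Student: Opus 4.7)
The plan is to prove the cyclic chain $\mathrm{(iii)}\Rightarrow\mathrm{(i)}\Rightarrow\mathrm{(ii)}\Rightarrow\mathrm{(iii)}$. I introduce the auxiliary sets $B:=\{\o\in \O : \t_x\o\in A \;\forall x\in \hat\o\}$ and $A':=B\cap \O_0$, so that (ii) reads $\cP(B)=1$ and (iii) reads $\cP_0(A')=1$. The implication $\mathrm{(iii)}\Rightarrow\mathrm{(i)}$ is then immediate: for $\o\in A'$ one has $0\in \hat\o$, so taking $x=0$ in the defining condition of $B$ gives $\o=\t_0\o\in A$; thus $A'\subset A$ and $\cP_0(A)\geq \cP_0(A')=1$.

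For $\mathrm{(i)}\Rightarrow\mathrm{(ii)}$ I would apply Campbell's formula \eqref{campanello} to the nonnegative Borel function $f_n(x,\o):=\mathds{1}_{K_n}(x)\mathds{1}_{A^c}(\o)$, where $K_n:=[-n,n]^d$. This yields
\[
|K_n|\,\cP_0(A^c)=\frac{1}{m}\int_\O \cP(d\o)\int_{K_n} d\hat\o(x)\,\mathds{1}_{A^c}(\t_x\o),
\]
and assumption (i) forces the left-hand side to vanish. Consequently, for $\cP$--a.a.\ $\o$ no point of $\hat\o\cap K_n$ has its translate in $A^c$; taking the countable intersection over $n\in \bbN$ yields $\cP(B)=1$, which is (ii).

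For $\mathrm{(ii)}\Rightarrow\mathrm{(iii)}$ I would first verify that $B$ is translation invariant: if $\o\in B$ and $y\in \bbR^d$, any point of $\widehat{\t_y\o}=\hat\o-y$ is of the form $z-y$ with $z\in \hat\o$, and $\t_{z-y}(\t_y\o)=\t_z\o\in A$. Invoking the general fact recalled at the beginning of Section \ref{corteggiare} (that translation invariant subsets of $\O$ have the same probability under $\cP$ and $\cP_0$), $\cP(B)=1$ gives $\cP_0(B)=1$; since $\cP_0$ is concentrated on $\O_0$ we conclude $\cP_0(A')=\cP_0(B\cap \O_0)=\cP_0(B)=1$. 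The only point that requires thought rather than mere bookkeeping is the passage from the single cube $[0,1]^d$ appearing in the Palm characterization \eqref{marlena} to all of $\bbR^d$, which is precisely why I would write Campbell's identity with the expanding sequence $K_n$ in the second step.
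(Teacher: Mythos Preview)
Your proof is correct and follows essentially the same approach as the paper. The paper proves $\mathrm{(ii)}\Rightarrow\mathrm{(i)}$ directly from the Palm characterization \eqref{marlena}, proves $\mathrm{(i)}\Rightarrow\mathrm{(ii)}$ via Campbell's formula with $\mathds{1}_A$ on expanding cubes (dual to your use of $\mathds{1}_{A^c}$), and then invokes \cite[Lemma~1--(ii)]{FSS} for the equivalence $\mathrm{(ii)}\Leftrightarrow\mathrm{(iii)}$; your cyclic chain and your explicit verification of the translation invariance of $B$ amount to the same argument with only cosmetic rearrangements.
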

\begin{proof}  By identity \eqref{marlena},   (ii) implies (i). If   (i) holds, by  Campbell's identity  \eqref{campanello} with
 $ f(x,\o):=(2\ell)^{-d} \mathds{1}_{[-\ell,\ell]^d}  (x) \mathds{1}_A (\o )$ and $\ell>0$, we get 
 \begin{equation}\label{fabio} 1=\cP_0(A)=  \frac{1}{m (2\ell)^d} \int _\O\cP(d\o) \int_{[-\ell, \ell]^d}d \hat{\o}(x) \mathds{1}_A (\t_x \o)\,.
 \end{equation}
Hence, we obtain 
 \begin{equation}\label{fabio57} 
 \int _\O\cP(d\o) \int_{[-\ell, \ell]^d} d\hat{\o}(x) \left(1-\mathds{1}_A (\t_x \o)\right)=0\qquad \forall \ell>0 \,,
 \end{equation}
 which implies  (ii). This proves that (i) implies (ii).
  
Finally, the equivalence between (ii) and (iii) follows from \cite[Lemma 1--(ii)]{FSS}.
 \end{proof}
 
 \begin{Lemma}\label{eleonora}
 Let $f\in L^1(\cP_0)$. Let $B:= \{ \o\in \O\,:\, |f(\t_x \o  )|<+\infty \; \forall x \in \hat \o \}$. Then $B$ is translation invariant, $\cP(B)=1$ and $\cP_0(B)=1$.
  \end{Lemma}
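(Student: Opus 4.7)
The plan is to reduce directly to Lemma \ref{matteo}. Define the Borel set
\[
A := \{\o \in \O_0 \,:\, |f(\o)|<+\infty\}\,.
\]
Since $f\in L^1(\cP_0)$, we have $\cP_0(A)=1$. Observe that, by definition,
\[
B = \{\o \in \O \,:\, \t_x \o \in A \;\forall x \in \hat \o\}\,,
\]
because $x\in \hat\o$ implies $\t_x\o\in \O_0$, so the condition $|f(\t_x\o)|<+\infty$ is the same as $\t_x\o\in A$. Applying the equivalence (i)$\Leftrightarrow$(ii) of Lemma \ref{matteo} gives $\cP(B)=1$, and the equivalence (i)$\Leftrightarrow$(iii) applied to $A$ (noting $B\cap \O_0=\{\o \in \O_0:\t_x \o\in A\;\forall x \in \hat\o\}$) gives $\cP_0(B)=1$.

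It remains to verify translation invariance of $B$. Fix $y\in \bbR^d$ and $\o\in \O$. Using $\t_{x}\t_y = \t_{x+y}$ and $\widehat{\t_y\o}=\hat\o-y$, one has for every $x'\in \widehat{\t_y\o}$, writing $x'=x-y$ with $x\in \hat \o$,
\[
\t_{x'}(\t_y\o)=\t_{x-y}\t_y\o=\t_{x}\o\,.
\]
Hence $\o\in B$ iff $\t_y\o\in B$, proving that $\t_y B = B$ for every $y\in \bbR^d$.

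There is no real obstacle here; the statement is essentially a corollary of Lemma \ref{matteo} once the correct choice of $A$ is made. The only point to be careful about is checking that $B$ is actually Borel (which follows from measurability of $f$ and the standard measurability of the evaluation map $(\o,x)\mapsto \t_x\o$, via a countable enumeration of the points of $\hat\o$), and verifying that the identification $B=\{\t_x\o\in A\;\forall x\in\hat\o\}$ is exact so that Lemma \ref{matteo} applies verbatim.
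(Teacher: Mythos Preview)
Your proof is correct and follows essentially the same approach as the paper: define $A=\{\o\in\O_0:|f(\o)|<+\infty\}$, use $f\in L^1(\cP_0)$ to get $\cP_0(A)=1$, then apply Lemma~\ref{matteo} to conclude $\cP(B)=\cP_0(B)=1$, with translation invariance read off from the definition of $B$. The paper's version is terser, but the argument is identical.
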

 \begin{proof}
 We define $A:=\{ \o \in \O_0\,:\, |f(\o)| <+\infty \}$. Since $f\in L^1(\cP_0)$, we have $\cP_0(A)=1$.   By applying 
Lemma \ref{matteo} we get that $\cP(B)=1$ and $\cP_0(B)=1$. The translation invariance of $B$ follows immediately from the definition of $B$.
 \end{proof}
 
In what follows we will use the following properties of the Palm distribution $\cP_0$ obtained by extending \cite[Lemma 1--(i)]{FSS}:
\begin{Lemma}\label{lucertola1} 
Let $k:\O_0\times \O_0 \to \bbR$ be a Borel function such that (i)  at least one of the functions   $\int d \hat{\o} (x) |k(\o, \t_x \o) | $ and $\int d\hat{\o}(x) |k(\t_x\o,\o)|$ is  in $L^1(\cP_0)$, or (ii) $k(\o,\o')\geq 0$. Then
\begin{equation}\label{prugna1}
\int d \cP_0(\o) \int d\hat{\o}(x) k(\o, \t_x \o) = \int d \cP_0(\o) \int d \hat{\o}(x) k (\t_x\o, \o)\,.
\end{equation}
\end{Lemma}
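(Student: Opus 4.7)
The plan is to reduce to the nonnegative case and then recast the identity as the equality of two boundary contributions of a single measure on $\bbR^d\times\bbR^d$ that is invariant under the diagonal action of translations. First, I would handle the reduction: assuming hypothesis (i), say that $\int d\hat\o(x)|k(\o,\t_x\o)|\in L^1(\cP_0)$, I would apply the nonnegative version to $|k|$ to deduce that also $\int d\hat\o(x)|k(\t_x\o,\o)|\in L^1(\cP_0)$, split $k=k^+-k^-$, and conclude by linearity. Thus it suffices to prove \eqref{prugna1} for $k\geq 0$.

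Next I would use the Palm identity \eqref{marlena} (or equivalently Campbell's formula \eqref{campanello} with a suitable product test function) to rewrite both sides as integrals against $\cP$. Applying it to the function $\o'\mapsto \int d\widehat{\o'}(x)\,k(\o',\t_x\o')$ on $\O_0$ and substituting $z=x+y$ (so that $x\in\widehat{\t_y\o}=\hat\o-y$ corresponds to $z\in\hat\o$) gives
\begin{equation*}
\text{LHS of \eqref{prugna1}}=\frac{1}{m}\int d\cP(\o)\int_{[0,1)^d}d\hat\o(y)\int d\hat\o(z)\,k(\t_y\o,\t_z\o),
\end{equation*}
and exactly the same manipulation applied to $\o'\mapsto\int d\widehat{\o'}(x)k(\t_x\o',\o')$ produces, after swapping the dummy names $y\leftrightarrow z$,
\begin{equation*}
\text{RHS of \eqref{prugna1}}=\frac{1}{m}\int d\cP(\o)\int d\hat\o(y)\int_{[0,1)^d}d\hat\o(z)\,k(\t_y\o,\t_z\o).
\end{equation*}

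I would then introduce the Borel measure $\mu$ on $\bbR^d\times\bbR^d$ defined by
\begin{equation*}
\mu(A\times B):=\int d\cP(\o)\int_A d\hat\o(y)\int_B d\hat\o(z)\,k(\t_y\o,\t_z\o),
\end{equation*}
so that the two expressions above are $\tfrac{1}{m}\mu([0,1)^d\times\bbR^d)$ and $\tfrac{1}{m}\mu(\bbR^d\times[0,1)^d)$ respectively. The key property is the diagonal translation invariance $\mu((A+a)\times(B+a))=\mu(A\times B)$ for every $a\in\bbR^d$; this follows by applying stationarity of $\cP$ to the inner integrand, substituting $y\to y-a$, $z\to z-a$, and using the covariance relation $k(\t_{y-a}\t_a\o,\t_{z-a}\t_a\o)=k(\t_y\o,\t_z\o)$.

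Finally, decomposing $\bbR^d=\bigsqcup_{u\in\bbZ^d}([0,1)^d+u)$ and using countable additivity together with diagonal invariance applied with shift $-u$, I get
\begin{equation*}
\mu([0,1)^d\times\bbR^d)=\sum_{u\in\bbZ^d}\mu\bigl([0,1)^d\times([0,1)^d+u)\bigr)=\sum_{u\in\bbZ^d}\mu\bigl(([0,1)^d-u)\times[0,1)^d\bigr)=\mu(\bbR^d\times[0,1)^d),
\end{equation*}
which is exactly the identity LHS$=$RHS. The main technical point I expect is justifying that the measure-theoretic operations (notably the countable sum and the change-of-variables step following \eqref{marlena}) remain valid even when $\mu$ is not $\sigma$-finite, which is why I deliberately avoid any disintegration argument and work directly with the cube-by-cube decomposition; under hypothesis (i) finiteness is automatic, and under hypothesis (ii) both sides may be $+\infty$ but are still equal by the same chain of identities in $[0,+\infty]$.
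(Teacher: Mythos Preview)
Your argument is correct. Both you and the paper reduce case (i) to case (ii) in the same way (apply the nonnegative statement to $|k|$, then split $k=k^+-k^-$). The difference is in how case (ii) is handled.

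The paper does not prove case (ii) from scratch: it cites \cite[Lemma~1--(i)]{FSS} for the situation where \emph{both} integrability conditions hold, and then reduces the general nonnegative case to that one by a truncation $k_n(\o,\o'):=k(\o,\o')\wedge n$ restricted to $\o'=\t_x\o$ with $|x|\le n$. That truncation step relies on Assumption~(A3) (through \eqref{SS3}) to ensure the cut--off in the $\o'$ variable really corresponds to a cut--off in $x$, so that both sides become bounded by $n\,\bbE_0[\hat\o(\{|x|\le n\})]<\infty$ and the cited lemma applies; one then passes to the limit by monotone convergence.

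Your route is a direct mass--transport argument: rewrite each side via \eqref{marlena} as $\tfrac{1}{m}\mu([0,1)^d\times\bbR^d)$ and $\tfrac{1}{m}\mu(\bbR^d\times[0,1)^d)$ for the diagonally translation--invariant measure $\mu$, and match the two by the cube decomposition. This is self--contained (no appeal to \cite{FSS}), works in $[0,+\infty]$ without any preliminary finiteness, and in fact does not use Assumption~(A3) at all. The paper's approach buys nothing extra here; yours is the cleaner proof.
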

\begin{proof} Case  (i) with both functions in $L^1(\cP_0)$ corresponds to   \cite[Lemma 1--(i)]{FSS}. We now consider case (ii).
Given $n\in \bbN$ we define $k_n(\o,\o')$ as 
\[ k_n (\o,\o'):=
\begin{cases} 
k (\o,\o') \wedge n & \text{ if $ \o' = \t_x \o $ for some $x$ with $|x|\leq n$}\,,\\
0 & \text{ otherwise}\,.
\end{cases}
\]
Note that, given $\o\in \O_0$ and $x\in \hat \o$, it holds  $0 \leq  k_n(\o, \t_x \o) \nearrow   k(\o,\t_x\o)$ and $0\leq  k_n(\t_x \o,  \o) \nearrow  k(\t_x \o,\o)$. 
Hence, by monotone convergence, to get \eqref{prugna1} it is enough to prove the same identity with $k_n$ instead of $k$.
To this aim we observe that,  due to Assumption (A3) and Lemma \ref{re} (cf. \eqref{SS3}), for $\cP_0$--a.a. $\o$ we have $k_n(\o,\t_x  \o)=0=k(\t_x \o,\o)$ if $x \in \hat \o$ and $|x|>n$.  Hence, 
we can bound both $ \bbE_0\bigl[ \int d \hat{\o} (x) k_n(\o, \t_x \o) \bigr] $ and $\bbE_0\bigl[ \int d\hat{\o} (x) k_n(\t_x \o, \o) \bigr]$ by $ n  \bbE_0\bigl[ \int d\hat{\o} (x) \mathds{1}_{\{ |x| \leq n\}}  \bigr] <+\infty$. At this point,   \eqref{prugna1} with $k_n$ instead of $k$ follows from \eqref{prugna1} proved in  case (i) under the condition that both the functions considered in case (i) belong to $L^1(\cP_0)$.
This concludes the proof of \eqref{prugna1} in case (ii). We can now prove the thesis in case (i) in full generality. Indeed, by \eqref{prugna1} proved in case (ii), for any function $k(\o,\o')$ we have that $\int d \hat{\o} (x) |k(\o, \t_x \o) | =\int d \hat{\o}(x) |k(\t_x\o,\o)|$. Hence, if one of these two  integrals is finite, then both are finite since equal.
\end{proof} 
%
%
%
%
%
\section{Space of square integrable forms}\label{sec_quadro}
We define $\nu$ as the Radon measure on $\O \times \bbR^d$ such that 
\begin{equation}\label{labirinto}
 \int d \nu (\o, z) g (\o, z) = \int  d \cP_0 (\o)\int d  \hat \o (z) c_{0,z}(\o) g( \o, z) 
 \end{equation} 
 for any nonnegative Borel function $g(\o,z)$. We point out that, by Assumption (A5), $\nu$ has finite total mass:
$ \nu(\O\times \bbR^d )=\bbE_0[\l_0]<+\infty$.
  Elements of  $L^2 ( \nu)$
 are called \emph{square integrable forms}.

 \smallskip
 
 Given a  function $u:\O_0\to \bbR$  we define the   function $\nabla u: \O \times \bbR^d \to \bbR$ as 
 \begin{equation}\label{cantone}
 \nabla u (\o, z):= u (\t_z \o)-u (\o)\,.
 \end{equation}
 Note that if $u,f :\O_0\to \bbR$  are such that  $u=f$  $\cP_0$--a.s., then $\nabla u= \nabla f $ $\nu$--a.s. Indeed,  by Lemma \ref{matteo}, setting $A:=\{ \o\in \O_0\,:\, u(\o)= f(\o)\}$ and $\tilde A :=\{ \o \in \O_0\,:\, u(\t_z \o) = f(\t_z \o) \; \forall z \in \hat \o\}$, we have that $\cP_0(A)=1$ and therefore   $\cP_0(\tilde A)=1$,  thus implying that $\nabla u= \nabla f $ $\nu$--a.s.. In particular, 
 if $u$ is defined  $\cP_0$--a.s., then $\nabla u$ is well defined $\nu$--a.s.
 
  If $u $ is bounded and measurable (i.e. Borel), then $\nabla u \in L^2(\nu)$.
The subspace of \emph{potential forms} $L^2_{\rm pot} (\nu)$ is defined as   the following closure in $L^2(\nu)$:
 \[ L^2_{\rm pot} (\nu) :=\overline{ \{ \nabla u\,:\, u \text{ is  bounded and measurable} \}}\,.
 \]
 The subspace of \emph{solenoidal forms} $L^2_{\rm sol} (\nu)$ is defined as the orthogonal complement of $L^2_{\rm pot} (\nu)$ in $L^2(\nu)$.
  
    \subsection{The subspace $H^1_{\rm env} $}\label{h1omega}
 We define 
 \be \label{anita} H^1 _{\rm env} :=\{   u \in L^2(\cP_0)\,:\,  \nabla u \in L^2(\nu) \}\,.
 \en We endow $ H^1 _{\rm env} $ with  the norm
 $\| u\|_{H^1 _{\rm env}} :=  \|u\|_{L^2(\cP_0)} +\|\nabla u \|_{L^2(\nu)}$.
 It is convenient to introduce also the space 
  \[
H_{\rm env} := \{   (u, \nabla u)\,:\, u  \in L^2(\cP_0) \text{ and } \nabla u \in L^2(\nu) \}\subset  L^2(\cP_0) \times L^2(\nu)\]
 endowed with  the norm $\|(u,\nabla u )\|_{H_{\rm env} }:=  \|u\|_{L^2(\cP_0)} +\|\nabla u \|_{L^2(\nu)}$. In particular,  $ H^1_{\rm env} $ and $H_{\rm env} $ are isomorphic   spaces.
 
 \begin{Lemma}
 The space $H_{\rm env} $ is a closed subspace of  $L^2(\cP_0) \times L^2(\nu)$, hence $H _{\rm env} $ and $H^1_{\rm env} $ are  Hilbert spaces.
 \end{Lemma}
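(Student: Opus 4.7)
\emph{Plan.} The map $u\mapsto (u,\nabla u)$ is by construction a bijective isometry between $H^1_{\rm env}$ and $H_{\rm env}$ when both are equipped with the stated norms, so the Hilbert property of the two spaces is equivalent. Hence the whole statement reduces to showing that $H_{\rm env}$ is a closed linear subspace of the product Hilbert space $L^2(\cP_0)\times L^2(\nu)$. Linearity is obvious because $\nabla$ is a linear operator on its domain, so the task is purely closedness.

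To establish closedness, suppose $(u_n,\nabla u_n)\in H_{\rm env}$ converges to some $(u,v)\in L^2(\cP_0)\times L^2(\nu)$. The goal is to identify $v=\nabla u$ as elements of $L^2(\nu)$, for then $(u,v)=(u,\nabla u)\in H_{\rm env}$. Extracting subsequences twice we may assume $u_n\to u$ pointwise $\cP_0$-a.s.\ and, simultaneously, $\nabla u_n\to v$ pointwise $\nu$-a.s.

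The decisive step is to pass this pointwise convergence on $\O_0$ to pointwise convergence at all the translated configurations $\t_z\o$, $z\in\hat\o$, since this is what $\nu$-a.e.\ convergence of $\nabla u_n$ requires. Set $A:=\{\o\in\O_0\,:\,u_n(\o)\to u(\o)\}$, so that $\cP_0(A)=1$. By Lemma \ref{matteo} applied to $A$,
\[
\cP_0\bigl(\{\o\in\O_0\,:\,\t_z\o\in A\ \forall z\in\hat\o\}\bigr)=1.
\]
For any such $\o$, using that $0\in\hat\o$, one has both $u_n(\o)\to u(\o)$ and $u_n(\t_z\o)\to u(\t_z\o)$ for every $z\in\hat\o$, so $\nabla u_n(\o,z)=u_n(\t_z\o)-u_n(\o)\to u(\t_z\o)-u(\o)=\nabla u(\o,z)$. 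In view of the definition \eqref{labirinto} of $\nu$, this is pointwise convergence $\nu$-a.s. Comparing with $\nabla u_n\to v$ $\nu$-a.s.\ yields $v=\nabla u$ $\nu$-a.s., which is the desired identification.

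The only nontrivial point is the transfer of almost-sure convergence from the base $\cP_0$ to the fibered measure $\nu$, and the heavy lifting there is already done by Lemma \ref{matteo} (ultimately by Campbell's identity \eqref{campanello}); the rest of the argument is a standard closed-graph-style extraction of subsequences.
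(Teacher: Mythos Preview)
Your proof is correct and follows essentially the same route as the paper: extract subsequences for pointwise convergence, invoke Lemma \ref{matteo} to propagate $\cP_0$-a.s.\ convergence of $u_n$ to convergence at all translates $\t_z\o$, and compare the resulting $\nu$-a.s.\ limit of $\nabla u_n$ with $v$. The paper's argument is identical in structure and in its reliance on Lemma \ref{matteo}.
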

 \begin{proof}
 Suppose that $(u_n, \nabla u_n)$ converges to $(u,g)$ in $L^2(\cP_0) \times L^2(\nu)$. At cost to extract a subsequence, there
  exists a Borel set $A \subset \O_0$ with $\cP_0(A)=1$ such that the following holds for any $\o \in A$:  $u_n(\o) \to u(\o)$ and 
 $u_n(\t_z \o)- u_n (\o)  \to g (\o, z) $ for all $z \in \hat \o$ with $c_{0,z}(\o)>0$.
  By  Lemma \ref{matteo}, since  $u_n(\o) \to u(\o)$ for all $\o \in A$,  we conclude that $u_n (\t_z \o) \to u(\t_z \o)$ for all $z\in \hat{\o}$ for  $\cP_0$--a.a. $\o$. Hence it must be $g(\o,z)= u (\t_z \o)- u (\o)  $  for $\cP_0$--a.a. $\o$ and for all $z \in \hat \o$ with $c_{0,z}(\o)>0$. This proves that $\nabla u \in L^2(\nu)$ and that $(u_n, \nabla u_n)\to (u, \nabla u)$. 
  \end{proof}
  
  We fix some simple notation which will be useful also later. Given $M>0$ and $a \in \bbR$,  we define $[a]_M$ as 
  \begin{equation}\label{taglio}
  [a]_M= M \mathds{1}_{\{a>M\}} +a \mathds{1}_{\{ |a|\leq M \}} -M \mathds{1} _{\{a<-M\}}\,.
  \end{equation}
Given $a\geq b$, it holds  $a-b \geq [a]_M - [b]_M\geq 0$. Hence, for any $a, b\in \bbR$, it holds
   \begin{align}
&   \left |  [a]_M - [b]_M   \right |\leq\left |a-b\right |\,,  \label{r101}\\
&  \left | [ a-b]- [[a]_M - [b]_M ]  \right| \leq \left|a-b\right|
   \,.\label{r102}
  \end{align}

  \begin{Lemma}\label{denso}
  The subspace 
  $\{ (u, \nabla u) \,: u  \text{ is bounded and measurable} \}$  is a dense subspace of $H_{\rm env} $.
  \end{Lemma}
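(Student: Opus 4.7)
The plan is to approximate a general $(u, \nabla u) \in H_{\rm env}$ by truncations of $u$ at level $n$, namely $u_n := [u]_n$ as in \eqref{taglio}. Each $u_n$ is a bounded Borel function, so $(u_n, \nabla u_n)$ lies in the subspace in question, and I claim $(u_n, \nabla u_n) \to (u, \nabla u)$ in $H_{\rm env}$ as $n \to \infty$.

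For the first component, since $|u_n| \le |u|$ pointwise and $u_n \to u$ pointwise, dominated convergence in $L^2(\cP_0)$ (with dominant $|u| \in L^2(\cP_0)$) yields $u_n \to u$ in $L^2(\cP_0)$. For the gradient component, I would first invoke Lemma \ref{eleonora} applied to $u \in L^1(\cP_0)$ (using that $\cP_0$ is a probability measure, so $L^2(\cP_0) \subset L^1(\cP_0)$) to obtain a translation-invariant full-measure event on which $u(\t_z \o)$ is finite for every $z \in \hat \o$. On this event, for $\nu$-a.e.\ $(\o, z)$ one has the pointwise identity
\begin{equation*}
\nabla u_n(\o, z) = [u(\t_z \o)]_n - [u(\o)]_n \;\longrightarrow\; u(\t_z \o) - u(\o) = \nabla u(\o, z).
\end{equation*}

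The domination needed to close the argument is exactly inequality \eqref{r101}: applying it with $a = u(\t_z \o)$, $b = u(\o)$ gives $|\nabla u_n(\o, z)| \le |u(\t_z \o) - u(\o)| = |\nabla u(\o, z)|$ for all $n$. Since $\nabla u \in L^2(\nu)$ by hypothesis, dominated convergence in $L^2(\nu)$ yields $\nabla u_n \to \nabla u$ in $L^2(\nu)$. Combining both components, $(u_n, \nabla u_n) \to (u, \nabla u)$ in $H_{\rm env}$, which proves density.

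I do not expect any serious obstacle here: the only subtle point is making sure $\nabla u_n$ makes sense pointwise $\nu$-a.e., which is handled by Lemma \ref{eleonora}, and then everything reduces to two applications of dominated convergence, with the uniform bound on truncated differences supplied by the already established inequality \eqref{r101}.
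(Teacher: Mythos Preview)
Your proof is correct and follows essentially the same route as the paper: approximate $u$ by its truncations $[u]_M$ and apply dominated convergence twice. The only cosmetic difference is that the paper invokes \eqref{r102} to bound $|\nabla u - \nabla [u]_M| \leq |\nabla u|$ directly, whereas you use \eqref{r101} together with an explicit pointwise-finiteness argument via Lemma~\ref{eleonora}; both lead to the same dominated convergence step.
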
 
  \begin{proof} If  $u$ is  bounded and measurable, then  $u\in H^1_{\rm env} $ (as $\nu$ has finite total mass).
  Let us take now a generic $u\in H^1(\cP_0) $ and show that $[u]_M\to u$ in $ H^1_{\rm env}  $.  Since $|u-[u]_M |\leq |u|$, by dominated convergence we have that $\| u - u_M \|_{L^2(\cP_0)}\to 0$. Due to \eqref{r102}, $|\nabla u - \nabla [u]_M| \leq |\nabla u|$.  Hence, by dominated convergence, we get  that $\| \nabla u - \nabla [u]_M\|_{L^2(\nu)} \to 0$. 
  \end{proof}
  

\subsection{Divergence}\label{div_omega}

\begin{Definition}\label{def_div}
Given a square integrable form $v\in L^2(\nu)$  we define its divergence  ${\rm div} \, v \in L^1(\cP_0)$ as 
\begin{equation}\label{emma}
{\rm div}\, v(\o)= \int d \hat{\o} (z) c_{0,z}(\o) ( v(\o,z)-  v(\t_z \o, -z) )\,.
\end{equation}
\end{Definition}

By applying Lemma \ref{lucertola1} with $k(\o, \t_z\o) := c_{0,z}(\o) |v( \o, z)| $ (such a $k$ exists  by (A3) and \eqref{SS3}), Schwarz inequality and \eqref{moma}, one gets for any $v\in L^2(\nu)$    that 
\begin{equation}\label{fuoco1}
\begin{split}
&\int d\cP_0(\o)
\int  d\hat{\o} (z) c_{0,z}(\o) \bigl(| v(\o,z)|+|  v(\t_z \o, -z)| \bigr)\\
& =
2  \|v\|_{L^1(\nu )} \leq 2 \bbE_0[\l_0] ^{1/2} \|v\|_{L^2(\nu)} < +\infty\,.
\end{split}
\end{equation}
In particular, the definition of divergence  is well posed and  the map $L^2(\nu) \ni v\mapsto {\rm div} v \in L^1(\cP_0)$ is continuous.

\smallskip

\begin{Lemma}\label{ponte} For any  $v \in L^2(\nu)$ and any  bounded and measurable function $u:\O_0 \to \bbR$, it holds
\begin{equation}\label{italia}
\int  d \cP_0(\o)   {\rm div} \,v(\o)  u (\o)= - \int d \nu(\o, z) v( \o, z) \nabla u (\o, z) \,.
\end{equation}
 \end{Lemma}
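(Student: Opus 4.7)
The plan is a direct computation: expand the left-hand side via Definition \ref{def_div}, split the resulting expression into two pieces, and use Lemma \ref{lucertola1} on the second piece to relabel $\o \leftrightarrow \t_z \o$ so that $c_{0,z}(\o) v(\o,z)$ becomes a common factor. Before splitting, I would observe that, since $u$ is bounded and $v \in L^2(\nu)$, the bound \eqref{fuoco1} shows that the integrand is absolutely integrable, so we may freely split
\begin{equation*}
\int d\cP_0(\o)\, u(\o)\,\text{div}\,v(\o) = \int d\cP_0(\o) \int d \hat \o(z) c_{0,z}(\o) v(\o,z) u(\o) - \int d \cP_0(\o) \int d \hat \o(z) c_{0,z}(\o) v(\t_z \o,-z) u(\o).
\end{equation*}

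For the second piece, I would apply Lemma \ref{lucertola1}. The function to symmetrize is $c_{0,z}(\o) v(\t_z\o,-z)u(\o)$, which needs to be written as $k(\o,\t_z\o)$ for some $k:\O_0\times\O_0\to\bbR$. By Assumption (A3) and the identity \eqref{SS3} in Lemma \ref{re}, for $\cP_0$-a.e. $\o$ the map $z \mapsto \t_z\o$ is injective on $\hat\o$, so given a pair $(\o,\o')$ there is at most one $z \in \hat\o$ with $\o'=\t_z\o$; set $k(\o,\o'):= c_{0,z}(\o) v(\o',-z) u(\o)$ on this set and $k(\o,\o'):=0$ otherwise. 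Then $k(\o,\t_z\o)$ recovers the desired integrand, while the swapped version evaluates to
\begin{equation*}
k(\t_z\o,\o) = c_{0,-z}(\t_z\o)\, v(\o,z)\, u(\t_z\o) = c_{0,z}(\o)\, v(\o,z)\, u(\t_z\o),
\end{equation*}
where the last step uses covariance and symmetry of the weights (Assumption (A4)): $c_{0,-z}(\t_z\o) = c_{z,0}(\o) = c_{0,z}(\o)$. Absolute integrability required by case (i) of Lemma \ref{lucertola1} holds since $\|u\|_\infty < \infty$ and $\int c_{0,z}(\o)|v(\t_z\o,-z)|\, d\hat\o(z) \in L^1(\cP_0)$ by the argument behind \eqref{fuoco1} (Schwarz plus $\l_0 \in L^2(\cP_0)$).

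Putting everything together gives
\begin{equation*}
\int d\cP_0(\o)\, u(\o)\, \text{div}\,v(\o) = \int d\cP_0(\o) \int d\hat\o(z)\, c_{0,z}(\o)\, v(\o,z)\,\bigl(u(\o) - u(\t_z\o)\bigr) = -\int d\nu(\o,z)\, v(\o,z)\,\nabla u(\o,z),
\end{equation*}
which is \eqref{italia}.

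The only subtle point is making sure the kernel $k$ is well defined on $\O_0\times\O_0$ (so that applying Lemma \ref{lucertola1} is legitimate); this rests on the no-coincidence property \eqref{SS3}, which guarantees that $z\in\hat\o$ is uniquely determined by the pair $(\o,\t_z\o)$. Everything else is a bookkeeping exercise involving symmetry and covariance of $c_{x,y}$ and dominated integration.
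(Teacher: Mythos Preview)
Your proof is correct and follows essentially the same approach as the paper: both arguments hinge on applying Lemma \ref{lucertola1} to the kernel $k(\o,\t_z\o)=c_{0,z}(\o)v(\o,z)u(\t_z\o)$ (equivalently, its swapped version, which is what you wrote) to convert the term involving $v(\t_z\o,-z)u(\o)$ into one involving $v(\o,z)u(\t_z\o)$, after which the two pieces combine to give $-\nabla u$. The only cosmetic difference is that the paper starts from the right-hand side of \eqref{italia} while you start from the left, but the key step and its justification via \eqref{fuoco1}, (A3)/\eqref{SS3}, and (A4) are identical.
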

 \begin{proof} 
  By definition we have 
\begin{equation}\label{udine1}
\begin{split}
\int d\nu(\o, z)  v( \o, z) \nabla u (\o, z)  = \int d\nu(\o, z) v( \o, z) \bigl( u (\t_z\o)-u(\o) \bigr) \,.
\end{split}
\end{equation}
 We apply now  Lemma \ref{lucertola1}  to $k(\o, \o')$ such that  
$k(\o, \t_z \o) = c_{0,z}(\o) v( \o, z)  u (\t_z\o) 
$. Note that the definition is well posed due to (A3) and \eqref{SS3}.  Due to \eqref{fuoco1}  and since $u$ is bounded,  the conditions of  Lemma \ref{lucertola1}  are satisfied.   Since $k(\t_z \o,\o)= c_{0,z} (\o) v(\t_z \o,-z) u (\o) $,
 \eqref{prugna1} becomes 
 \begin{equation}\label{udine3}
 \begin{split}
&  \int _{\O }\cP_0(d\o)  \int _{\bbR^d}\hat{\o}(dz) c_{0,z}(\o) v( \o, z)  u (\t_z\o) \\
& = \int _{\O }\cP_0(d\o)  \int _{\bbR^d}\hat{\o}(dz)  c_{0,z} (\o) v(\t_z \o,-z) u (\o)\,.
\end{split}
\end{equation}
As  a consequence of \eqref{udine1} and \eqref{udine3} we get that \eqref{italia} is satisfied.
\end{proof}

Trivially, the above result implies the following:
\begin{Corollary}\label{grazioso} Given a square integrable form $v\in L^2(\nu)$, we have that  $v\in L^2_{\rm sol}(\nu)$ if and only if ${\rm div}\, v=0$ $\cP_0$--a.s.
\end{Corollary}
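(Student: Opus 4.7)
The plan is to unwind the definitions and apply Lemma \ref{ponte} directly, using the fact that bounded measurable functions whose gradients generate $L^2_{\rm pot}(\nu)$ are also dense enough in $L^\infty(\cP_0)$ to test vanishing in $L^1(\cP_0)$.

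First I would rewrite the condition $v\in L^2_{\rm sol}(\nu)$ in its defining form: by definition of the solenoidal subspace as the orthogonal complement of $L^2_{\rm pot}(\nu)$, and by the definition of $L^2_{\rm pot}(\nu)$ as the $L^2(\nu)$-closure of $\{\nabla u : u \text{ bounded and measurable}\}$, we have $v\in L^2_{\rm sol}(\nu)$ if and only if
\begin{equation*}
\int d\nu(\o,z)\, v(\o,z)\, \nabla u(\o,z) = 0 \qquad \text{for every bounded measurable } u:\O_0\to\bbR.
\end{equation*}
(The continuity of the inner product in $L^2(\nu)$ lets us pass from a dense generating set to its closure.)

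Next I would apply Lemma \ref{ponte}, which rewrites that integral as $-\int d\cP_0(\o)\, \mathrm{div}\, v(\o)\, u(\o)$. Thus $v\in L^2_{\rm sol}(\nu)$ is equivalent to
\begin{equation*}
\int d\cP_0(\o)\, \mathrm{div}\, v(\o)\, u(\o) = 0 \qquad \text{for every bounded measurable } u:\O_0\to\bbR.
\end{equation*}
One direction of the corollary is now immediate: if $\mathrm{div}\, v=0$ $\cP_0$--a.s., the above identity holds trivially, so $v\in L^2_{\rm sol}(\nu)$.

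For the converse I would invoke the standard duality between $L^1$ and $L^\infty$: since $\mathrm{div}\, v\in L^1(\cP_0)$ (as recorded just before Lemma \ref{ponte} via inequality \eqref{fuoco1}), testing against $u(\o):=\mathrm{sgn}\bigl(\mathrm{div}\, v(\o)\bigr)$, which is bounded and measurable, yields $\int |\mathrm{div}\, v|\, d\cP_0=0$, whence $\mathrm{div}\, v=0$ $\cP_0$--a.s. No technical obstacle is expected: the only step requiring a bit of care is making sure the test functions used in the definition of $L^2_{\rm pot}(\nu)$ (bounded and measurable on $\O_0$) are the same class that suffices to distinguish $L^1(\cP_0)$ elements, which is the case since $L^\infty(\cP_0)$ separates $L^1(\cP_0)$.
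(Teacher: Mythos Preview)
Your proof is correct and is exactly the argument the paper has in mind: the paper does not spell out a proof but simply writes ``Trivially, the above result implies the following'' right after Lemma~\ref{ponte}, and your unwinding of definitions together with the $L^1$--$L^\infty$ duality (testing against $u=\mathrm{sgn}(\mathrm{div}\,v)$) is the standard way to make that triviality explicit.
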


We recall that, since $\cP$ is ergodic, then $\cP_0$ is ergodic w.r.t. point--shifts (cf. Lemma \ref{re}--(i)). As a consequence, $u={\rm constant} $ $\cP_0$--a.s.   if 
 $u: \O_0 \to \bbR$ is a Borel function such that  for $\cP_0$--a.a. $\o$  it holds $u(\o)= u(\t_x \o)$ for all $x\in \hat{\o}$. %
%

\smallskip

Using Assumption (A7) we get:
\begin{Lemma}\label{garibaldi_100}
If $\nabla u =0$  $\nu$--a.e.,  then   $u={\rm constant} $ $\cP_0$--a.s.
\end{Lemma}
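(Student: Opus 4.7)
The plan is to translate the $\nu$-a.e.\ hypothesis into a pointwise statement along the point-shift orbit of each $\cP_0$-typical environment, then chain equalities of $u$ along the paths supplied by (A7), and finally invoke the point-shift ergodicity of $\cP_0$ recalled just before the lemma.

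First I would unpack the hypothesis. From the definition \eqref{labirinto} of $\nu$, the assumption $\nabla u = 0$ $\nu$-a.e.\ yields a Borel set $A_0 \subset \O_0$ with $\cP_0(A_0)=1$ such that $u(\t_z\o) = u(\o)$ for every $\o \in A_0$ and every $z \in \hat\o$ with $c_{0,z}(\o) > 0$. In order to iterate this along a path inside $\hat\o$, I would then apply Lemma \ref{matteo} to $A_0$ to obtain a Borel set $B \subset A_0$, still with $\cP_0(B)=1$, such that $\t_x\o \in A_0$ for every $\o \in B$ and every $x \in \hat\o$. Shrinking $B$ further via Lemma \ref{re}(vi), I may also assume that (A7) holds for every $\o \in B$.

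Now I would fix $\o \in B$ and $y \in \hat\o$, pick an irreducibility path $0 = x_0, x_1, \dots, x_n = y$ in $\hat\o$ with $c_{x_i, x_{i+1}}(\o) > 0$, and chain along it. Covariance (A4) gives $c_{0,\,x_{i+1}-x_i}(\t_{x_i}\o) = c_{x_i,x_{i+1}}(\o) > 0$, with $x_{i+1}-x_i$ lying in $\widehat{\t_{x_i}\o}$; since $\t_{x_i}\o \in A_0$, the reformulated hypothesis gives $u(\t_{x_{i+1}}\o) = u(\t_{x_i}\o)$. Telescoping yields $u(\t_y\o) = u(\o)$ for every $y \in \hat\o$, and the point-shift ergodicity of $\cP_0$ recalled just before the lemma then forces $u$ to be $\cP_0$-a.s.\ constant.

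The one subtle step is the null-set bookkeeping in the second paragraph: the bare $\nu$-a.e.\ statement controls $u(\t_z\o)$ only for $z$ with $c_{0,z}(\o) > 0$, whereas the path argument requires the same control at each intermediate translate $\t_{x_i}\o$ along the path; Lemma \ref{matteo} is the essential device that bridges this gap, upgrading a $\cP_0$-full set to one which is stable under point-shifts along $\hat\o$.
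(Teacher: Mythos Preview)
Your proof is correct and follows essentially the same route as the paper's: both extract the set $A_0$ (the paper calls it $A$), upgrade it via Lemma~\ref{matteo} to a point-shift-stable set, chain along an (A7)-path using covariance, and conclude by point-shift ergodicity of $\cP_0$. You spell out the telescoping step along the path more explicitly than the paper does, but the structure and the key ingredients are identical.
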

\begin{proof}
We define 
\begin{align*}
 A:&=\{ \o \in \O_0\,:\, u(\t_z \o)=u(\o) \;\; \forall z \in \hat \o \text{ with } c_{0,z}(\o)>0\}\,,\\
 \tilde A:& = \{ \o \in \O_0\,:\, \t_z \o \in A \;\; \forall z \in \hat \o\}\\
 & = \{ \o \in \O_0\,:\, u(\t_y \o)=u(\t_z \o) \; \forall y,z \in \hat \o \text{ with } c_{z,y}(\o)>0\}\,.
\end{align*}
The property that $\nabla u =0$  $\nu$--a.e. is equivalent to $\cP_0(A)=1$. By Lemma \ref{matteo} we get that $\cP_0( \tilde A)=1$. Due to assumption (A7) and Lemma \ref{re}--(vi), we have that $u(\t_y \o)=u(\o)$ for all $y\in \hat \o$ if $\o \in \tilde A$.  Since $\cP_0$ is ergodic w.r.t. point-shifts, we conclude that  $u={\rm constant} $ $\cP_0$--a.s.
\end{proof}
%
%

The proof of the following lemma is similar to  the proof of  \cite[Lemma 2.5]{ZP}.
For completeness  we have provided it in 
 Appendix \ref{aggiuntina}. 
 Recall \eqref{anita}. 
\begin{Lemma}\label{minerale} Let $\z\in L^2(\cP_0)$ be orthogonal to  all functions 
$ g \in L^2(\cP_0) $ with $g = {\rm div} (\nabla u)$ for some  $ u \in H^1_{\rm env}$.
 Then $\z\in H^1_{\rm env}$ and 
$\nabla \z=0$ in $L^2(\nu)$.
\end{Lemma}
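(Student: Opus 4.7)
The plan is to realize $\zeta$ as a resolvent fixed point. For every $\lambda > 0$ introduce the continuous symmetric bilinear form on $H^1_{\rm env}$
\[
B_\lambda(u,v) := \int \nabla u \cdot \nabla v \, d\nu + \lambda \int u\, v \, d\cP_0,
\]
which is coercive since $B_\lambda(u,u) \geq \min(1,\lambda)\bigl(\|u\|_{L^2(\cP_0)}^2 + \|\nabla u\|_{L^2(\nu)}^2\bigr)$, and this controls $\|u\|_{H^1_{\rm env}}^2$ up to a universal factor. Since $H^1_{\rm env}$ is Hilbert, the Lax--Milgram theorem produces a bounded resolvent $R_\lambda : L^2(\cP_0) \to H^1_{\rm env}$ satisfying $B_\lambda(R_\lambda f, v) = \langle f, v\rangle_{L^2(\cP_0)}$ for every $v \in H^1_{\rm env}$, and $R_\lambda$ is self-adjoint on $L^2(\cP_0)$ by symmetry of $B_\lambda$.

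The key step is to identify ${\rm div}(\nabla R_\lambda f)$. Every bounded measurable function $w$ lies in $H^1_{\rm env}$, since $\|w\|_{L^2(\cP_0)} \leq \|w\|_\infty$ and $\|\nabla w\|_{L^2(\nu)}^2 \leq 4\|w\|_\infty^2 \, \bbE_0[\lambda_0] < \infty$ by Assumption (A5). Testing the defining identity of $R_\lambda$ against such $w$ and rewriting $\int \nabla R_\lambda f \cdot \nabla w \, d\nu = -\int {\rm div}(\nabla R_\lambda f)\, w\, d\cP_0$ via Lemma \ref{ponte}, one obtains
\[
\int \bigl(-{\rm div}(\nabla R_\lambda f) + \lambda R_\lambda f - f\bigr)\, w \, d\cP_0 = 0
\]
for every bounded measurable $w$, whence
\[
{\rm div}(\nabla R_\lambda f) = \lambda R_\lambda f - f \quad \text{in } L^1(\cP_0).
\]
In particular this function actually lies in $L^2(\cP_0)$. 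This last point is what I expect to be the main obstacle, because it is exactly what makes the orthogonality hypothesis on $\zeta$ applicable with $g = {\rm div}(\nabla R_\lambda f)$ and $u = R_\lambda f \in H^1_{\rm env}$; without the explicit formula above, one has only $g \in L^1(\cP_0)$ and the hypothesis says nothing.

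The conclusion is then routine. Applying the orthogonality yields $0 = \int \zeta\bigl(\lambda R_\lambda f - f\bigr)\, d\cP_0$, i.e.\ $\lambda\langle \zeta, R_\lambda f\rangle = \langle \zeta, f\rangle$ for every $f \in L^2(\cP_0)$. Self-adjointness of $R_\lambda$ rewrites this as $\langle \lambda R_\lambda \zeta - \zeta, f\rangle = 0$ for every $f$, so $R_\lambda \zeta = \zeta/\lambda$ in $L^2(\cP_0)$. Since the range of $R_\lambda$ is contained in $H^1_{\rm env}$, this already proves $\zeta \in H^1_{\rm env}$. Finally, substituting $R_\lambda \zeta = \zeta/\lambda$ into $B_\lambda(R_\lambda \zeta, v) = \langle \zeta, v\rangle_{L^2(\cP_0)}$ cancels the $\lambda$-terms and leaves $\int \nabla \zeta \cdot \nabla v\, d\nu = 0$ for every $v \in H^1_{\rm env}$; choosing $v = \zeta$ yields $\|\nabla\zeta\|_{L^2(\nu)} = 0$, completing the proof.
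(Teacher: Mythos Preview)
Your proof is correct and follows essentially the same approach as the paper: both set up the Lax--Milgram resolvent (the paper takes $\lambda=1$), both use Lemma~\ref{ponte} against bounded test functions to identify ${\rm div}(\nabla R_\lambda f)=\lambda R_\lambda f-f\in L^2(\cP_0)$, and both then invoke the orthogonality hypothesis. Your endgame is a bit slicker---you exploit self-adjointness of $R_\lambda$ and test against all $f$ to get $R_\lambda\zeta=\zeta/\lambda$ directly, whereas the paper applies orthogonality only to $g={\rm div}(\nabla R_1\zeta)$ and then runs a short Cauchy--Schwarz-type argument to force $u=R_1\zeta$ to equal $\zeta$.
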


By combining Lemma \ref{garibaldi_100} and Lemma \ref{minerale} we get:
\begin{Lemma}\label{santanna} 
 The  functions $g\in L^2(\cP_0)$ of the form $g ={\rm div}\, v $ with $v\in L^2(\nu)$ are dense in $\{w \in L^2(\cP_0)\,:\, \bbE_0[w]=0\}$.
\end{Lemma}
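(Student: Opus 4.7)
The plan is to work by the standard density-via-orthogonal-complement argument in the Hilbert space $L^2(\cP_0)$. Let $V := \{\operatorname{div} v : v \in L^2(\nu)\}$ and $W := \{w \in L^2(\cP_0) : \bbE_0[w] = 0\}$. Since $W$ is a closed subspace of $L^2(\cP_0)$, density of $V$ in $W$ amounts to showing (i) $V \subseteq W$, and (ii) any $\zeta \in L^2(\cP_0)$ orthogonal to every element of $V$ must lie in the orthogonal complement of $W$, i.e.\ must be a $\cP_0$--a.s.\ constant.

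For (i), I would apply Lemma \ref{ponte} with the bounded measurable test function $u \equiv 1$. Since $\nabla u \equiv 0$, the right--hand side of \eqref{italia} vanishes, giving $\bbE_0[\operatorname{div} v] = 0$ for every $v \in L^2(\nu)$; hence $V \subseteq W$.

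For (ii), assume $\zeta \in L^2(\cP_0)$ is orthogonal in $L^2(\cP_0)$ to every $g \in V$. In particular $\zeta$ is orthogonal to the sub-family of forms $g = \operatorname{div}(\nabla u)$ with $u \in H^1_{\rm env}$, so the hypothesis of Lemma \ref{minerale} holds and yields $\zeta \in H^1_{\rm env}$ with $\nabla \zeta = 0$ in $L^2(\nu)$. Then Lemma \ref{garibaldi_100} forces $\zeta$ to be a constant $\cP_0$--a.s. This identifies $V^{\perp}$ (inside $L^2(\cP_0)$) with the one--dimensional space of constants, which is precisely $W^{\perp}$. Consequently the closure of $V$ equals $W$, which is exactly the statement of Lemma \ref{santanna}.

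The argument is essentially a bookkeeping assembly of three tools already proved: Lemma \ref{ponte} for the inclusion $V \subseteq W$, Lemma \ref{minerale} for promoting a $V$--orthogonal element to an element of $H^1_{\rm env}$ with vanishing gradient form, and Lemma \ref{garibaldi_100} (which in turn rests on the irreducibility Assumption (A7) and ergodicity of $\cP_0$ via Lemma \ref{re}) to conclude the constancy. I do not expect any genuine obstacle here; the only subtle point is making sure the orthogonality hypothesis in Lemma \ref{minerale} really is implied by the \emph{stronger} orthogonality to all of $V$, which is immediate because $\{\operatorname{div}(\nabla u) : u \in H^1_{\rm env}\} \subseteq V$.
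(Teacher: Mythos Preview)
Your proposal is correct and follows essentially the same approach as the paper: use Lemma~\ref{ponte} with $u\equiv 1$ to get $V\subseteq W$, then apply Lemma~\ref{minerale} and Lemma~\ref{garibaldi_100} to show that any $\zeta$ orthogonal to $V$ is constant. The paper phrases the second step as a proof by contradiction (taking $\zeta\in W$ nonzero and orthogonal to $V$, reaching $\zeta=0$), whereas you phrase it as identifying $V^\perp$ with the constants; these are the same argument.
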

\begin{proof}
 Lemma \ref{ponte} implies that  $\bbE_0[g]=0$ if  $g={\rm div}\,v $,  $v\in L^2(\nu)$.  
 Suppose the density fails. Then there exists $\z\in  L^2(\cP_0)$ different from zero  with $ \bbE_0[\z]=0$ and such that $\bbE_0 [ \z g]=0$ for any $g \in L^2(\cP_0)$ of the form $g= {\rm div} v$ with $v\in L^2(\nu)$. 
By  Lemma \ref{minerale},  we know that $\z\in H^1_{\rm env}$ and $\nabla \z=0$ $\nu$--a.s. 
By Lemma \ref{garibaldi_100} we get that $\z$ is constant $\cP_0$--a.s.  Since $\bbE_0[\z]=0$ it must be $\z=0$ $\cP_0$--a.s., which is absurd.
\end{proof}
%
%


\section{The   diffusion matrix $D$ and the quadratic form $q$}\label{sec_diff_matrix}

Since $\l_2 \in L^1(\cP_0)$ (see Assumption (A5)), given $a\in \bbR^d$   the form 
\begin{equation}\label{def_ua}
u_a(\o,z):= a\cdot z
\end{equation}
 is square integrable, i.e. it belongs to $L^2(\nu)$. 
 We note  that the symmetric diffusion matrix $D$  defined in \eqref{def_D} satisfies, for any $a\in \bbR^d$,
 \begin{equation}\label{giallo}
 \begin{split}
q(a):= a \cdot Da  
&= \inf_{ v\in L^2 _{\rm pot}(\nu) } \frac{1}{2} \int d\nu(\o, x) \left(u_a(x)+v(\o,x) \right)^2\\
& =  \inf_{ v\in L^2 _{\rm pot}(\nu) } \frac{1}{2}\| u_a+v \|^2_{L^2(\nu)}=\frac{1}{2} \| u_a+v ^a \|^2_{L^2(\nu)}\,,
\end{split}
 \end{equation}
 where  $v^a=-\Pi u_a$ and 
 $\Pi: L^2(\nu) \to L^2_{\rm pot}(\nu)$ denotes the orthogonal projection of $L^2(\nu)$ on $L^2_{\rm pot}(\nu)$.   Note that, as a consequence, the map $\bbR^d \ni a \mapsto v^a\in L^2_{\rm pot}(\nu)$ is linear. 
Moreover, $v^a$ is characterized by the property
\begin{equation}\label{jung}
v^a \in  L^2_{\rm pot}(\nu)\,, \qquad v^a+u_a\in L^2_{\rm sol}(\nu)\,.
\end{equation}
Hence we can write
 \begin{equation}\label{solitario}
 a\cdot Da=\frac{1}{2} \|  u_a+v^a \|_{L^2(\nu)}^2= \frac{1}{2}
 \la u_a, u_a + v^a \ra _{\nu}\,.
 \end{equation}
 The above identity can be rewritten as
 \begin{equation}\label{razzetto}
 a\cdot D a =\frac{1}{2} \int d\nu (\o, z) a\cdot z \bigl(a\cdot z + v^a( \o, z) \bigr)\,.
 \end{equation}
Since the two symmetric bilinear forms $(a,b) \mapsto a\cdot Db$ and \[
(a,b)\mapsto  \frac{1}{2}\int d\nu (\o, z) a\cdot z \bigl(b\cdot z + v^{b}( \o, z) \bigr)=\frac{1}{2}  \int d\nu \bigl (u_a+ v^{a}\bigr)
 \bigl( u_b + v^{b} \bigr) \] coincide on  diagonal terms by \eqref{razzetto}, we conclude that 
 \begin{equation}\label{solare789}
 Da =\frac{1}{2}  \int d\nu (\o, z)  z \bigl(a\cdot z + v^a( \o, z) \bigr)\qquad \forall a \in \bbR^d\,.
\end{equation}

Let us come back to the 
 quadratic form $q$ on $\bbR^d$ defined in \eqref{giallo}. By \eqref{giallo}
 its  kernel  ${\rm Ker}(q)$  is given by 
\begin{equation}\label{rosone}
{\rm Ker}(q):=\{a\in \bbR^d\,:\, q(a)=0\}=\{ a\in \bbR^d\,:\, u_a \in L^2_{\rm pot}(\nu)\}\,.
\end{equation}

\begin{Lemma}\label{rock}
It holds
\begin{equation}\label{jazz}
{\rm Ker}(q)^\perp=\Big\{  \int  d \nu (\o,z) b(\o, z) z  \,:\, b\in L^2_{\rm sol} (\nu) \Big\}\,. 
\end{equation}
\end{Lemma}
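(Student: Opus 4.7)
The proof reduces to a finite-dimensional duality argument, so there is no serious obstacle.
Denote the right-hand side by
\[
V:=\Big\{ \int d \nu(\o,z) b(\o,z)\,z \,:\, b \in L^2_{\rm sol}(\nu) \Big\}\subset \bbR^d.
\]
First I would check that $V$ is well defined and is a linear subspace of $\bbR^d$. Well-definedness follows because, by Assumption (A5), $u_{e_i}\in L^2(\nu)$ for $i=1,\dots,d$, so the $i$--th component of the integral is the finite inner product $\la u_{e_i}, b\ra_{\nu}$. Linearity of $b\mapsto \int b(\o,z)\,z\,d\nu(\o,z)$ then gives that $V$ is a subspace; being finite dimensional, it is automatically closed in $\bbR^d$.

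The heart of the proof is the identity
\begin{equation}\label{equivker}
V^\perp = {\rm Ker}(q),
\end{equation}
which I would establish as follows. A vector $a\in \bbR^d$ lies in $V^\perp$ iff, for every $b\in L^2_{\rm sol}(\nu)$,
\[
0 = a\cdot \int d\nu (\o,z) b(\o,z)\,z = \int d\nu(\o,z)\,(a\cdot z)\,b(\o,z)=\la u_a,b\ra_{\nu}\,.
\]
Hence $a\in V^\perp$ iff $u_a\in \bigl(L^2_{\rm sol}(\nu)\bigr)^\perp = L^2_{\rm pot}(\nu)$, and by the characterization \eqref{rosone} of the kernel this means exactly $a\in {\rm Ker}(q)$.

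Finally, taking orthogonal complements in \eqref{equivker} and using that $V$ is a closed subspace of the finite-dimensional space $\bbR^d$ (so that $V^{\perp\perp}=V$), I obtain
\[
{\rm Ker}(q)^\perp = (V^\perp)^\perp = V,
\]
which is the claimed identity. The only ingredients used are the characterization \eqref{rosone}, the orthogonal decomposition $L^2(\nu)=L^2_{\rm pot}(\nu)\oplus L^2_{\rm sol}(\nu)$, and the integrability $u_a\in L^2(\nu)$ guaranteed by Assumption (A5); no cut-off or approximation argument is needed.
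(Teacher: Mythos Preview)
Your proof is correct and follows essentially the same approach as the paper: both establish that $a\in{\rm Ker}(q)$ iff $a\cdot\eta_b=0$ for all $b\in L^2_{\rm sol}(\nu)$ via the identity $a\cdot\eta_b=\la u_a,b\ra_\nu$ together with \eqref{rosone} and the decomposition $L^2_{\rm pot}(\nu)=L^2_{\rm sol}(\nu)^\perp$. You make the final step $V^{\perp\perp}=V$ explicit (using that $V$ is a finite-dimensional, hence closed, subspace), whereas the paper leaves this implicit.
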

Note that, since $\l_2 \in L^1(\cP_0)$ by  (A5),   the integral in the r.h.s. of \eqref{jazz} is well defined. The above lemma corresponds to \cite[Prop.~5.1]{ZP}.
\begin{proof}
Let $b\in  L^2_{\rm sol}(\nu) $ and $\eta_b:=\int  d \nu (\o,z)  b(\o, z) z $. Then, given $a\in \bbR^d$, 
$a\cdot \eta_b =\la u_a , b \ra_{\nu}$. By \eqref{rosone},  $a \in  {\rm Ker}(q)$ if and only if $u_a\in L^2_{\rm pot}(\nu)= L^2_{\rm sol }(\nu)^\perp $.
Therefore $a \in  {\rm Ker}(q)$ if and only if  $a\cdot \eta_b =0$ for any $b \in L^2_{\rm sol}(\nu)$. \end{proof}

\rosso{
Due to Lemma \ref{rock} and Warning \ref{divido} we have:
\begin{Corollary}\label{jack}
$\text{Span}\{ e_1, \dots, e_{d_*}\}= \bigl\{  \int  d \nu (\o,z) b(\o, z) z  \,:\, b\in L^2_{\rm sol} (\nu) \bigr\}$.
\end{Corollary}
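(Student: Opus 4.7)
The plan is to combine Lemma \ref{rock} with Warning \ref{divido} by identifying the kernel of the quadratic form $q(a)=a\cdot Da$ explicitly in the chosen coordinates.

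First I would observe that $q$ is a nonnegative quadratic form on $\bbR^d$, since by \eqref{giallo} it is an infimum of squared $L^2(\nu)$--norms. Since $D$ is symmetric, it is therefore positive semi--definite, and for such matrices the standard identity $\text{Ker}(q) = \text{Ker}(D)$ holds (if $a\cdot Da=0$, then writing $a\cdot Da = \|D^{1/2}a\|^2$ forces $D^{1/2}a=0$, hence $Da=0$). Next I would invoke Warning \ref{divido}: by \eqref{lavita2} we have $\text{Ker}(D)=\text{span}\{e_{d_*+1},\dots,e_d\}$, and by \eqref{lavita1} its orthogonal complement in $\bbR^d$ is $\text{span}\{e_1,\dots,e_{d_*}\}$. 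Therefore
\[
\text{Ker}(q)^{\perp}=\text{span}\{e_1,\dots,e_{d_*}\}.
\]

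Finally I would apply Lemma \ref{rock}, which identifies $\text{Ker}(q)^\perp$ with $\{\int d\nu(\o,z)\,b(\o,z)\,z : b\in L^2_{\rm sol}(\nu)\}$, to conclude the stated equality. There is no real obstacle here: once the quadratic form $q$ is recognized to be positive semi--definite and the coordinate normalization from Warning \ref{divido} is recalled, the corollary follows by a one--line substitution. The only care needed is to keep the two conventions consistent: that the orthogonal complement is taken with respect to the Euclidean inner product on $\bbR^d$ (which is what Lemma \ref{rock} uses, as its proof pairs $a$ with $\eta_b$ through $a\cdot\eta_b=\langle u_a,b\rangle_\nu$).
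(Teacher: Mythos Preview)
Your proposal is correct and matches the paper's own justification, which simply states ``Due to Lemma \ref{rock} and Warning \ref{divido}''; you have merely spelled out the implicit step that $\text{Ker}(q)=\text{Ker}(D)$ for the positive semi--definite matrix $D$ and then read off $\text{Ker}(D)^\perp$ from \eqref{lavita1}--\eqref{lavita2}.
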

}

\section{The contraction $b(\o,z)\mapsto \hat b(\o) $ and the set  $\cA_1[b]$}\label{sec_cinghia}

\begin{Definition}\label{artic}  Let $b(\o,z): \O _0 \times \bbR^d\to \bbR$ be a Borel function  with $\|b\|_{L^1(\nu)}<+\infty$. We define the Borel function  $c_b:\O_0 \to [0,+\infty]$ as
\begin{equation}\label{magno}
c_b (\o):=  \int d \hat{\o}(z) c_{0,z}(\o)  |b(\o,z)|\,, \end{equation}
 the Borel function $\hat b: \O_0 \to \bbR$ as
\begin{equation}\label{zuppa}
\hat b(\o):= 
\begin{cases}
\int d \hat{\o}(z) c_{0,z}(\o) b(\o,z) & \text{ if } c_b (\o) <+\infty\,,\\
0 & \text{ if } c_b (\o) = +\infty\,,
\end{cases}
\end{equation}
and the Borel set $\cA_1[b]:= \{ \o\in \O\,:\, c_{b } ( \t _z \o) <+\infty\; \forall z\in \hat \o\}$.
\end{Definition}

\begin{Lemma}\label{cavallo}   Let $b(\o,z): \O _0 \times \bbR^d\to \bbR$ be a Borel function  with $\|b\|_{L^1(\nu)}<+\infty$. 
 Then 
 \begin{itemize}
 \item[(i)] 
$\| \hat b\|_{L^1(\cP_0)} \leq  \| b\|_{L^1(\nu)}= \| c_b \|_{L^1(\cP_0)} $ and  $\bbE_0[\hat b]= \nu(b)$;
\item[(ii)]
given $\o \in  \cA_1[b]$ and 
   $\varphi \in C_c(\bbR^d)$, it holds\begin{equation}\label{rino}
\int     d\mu_\o^\e (x) \varphi ( x) \hat b (\t_{x/\e} \o) = \int  d \nu_\o^\e (x, z) \varphi(x) b( \t_{x/\e} \o, z) 
\end{equation}
(the  series in the l.h.s. and in the r.h.s.  are absolutely convergent);
\item[(iii)]    $\cP( \cA_1[b])=\cP_0( \cA_1[b])=1$ and 
$ \cA_1[b]$ is translation invariant. 
\end{itemize}
\end{Lemma}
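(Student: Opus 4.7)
The plan is to treat the three items in order (i), (iii), (ii), since the measure-theoretic content of (i) and (iii) provides the finiteness needed to make (ii) a clean Fubini-style calculation.

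For part (i), I would just unfold the definitions. By definition of $\nu$ in \eqref{labirinto} applied to $|b|$,
\begin{equation*}
\|b\|_{L^1(\nu)} = \int d\cP_0(\o)\int d\hat\o(z)\, c_{0,z}(\o)|b(\o,z)| = \int d\cP_0(\o)\, c_b(\o) = \|c_b\|_{L^1(\cP_0)},
\end{equation*}
which gives the middle equality. In particular $c_b(\o)<+\infty$ for $\cP_0$-a.a.\ $\o$, so $\hat b$ is $\cP_0$-a.s.\ finite and the pointwise bound $|\hat b|\le c_b$ gives $\|\hat b\|_{L^1(\cP_0)}\le \|c_b\|_{L^1(\cP_0)}=\|b\|_{L^1(\nu)}$. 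Since the integrand defining $\hat b$ is $\cP_0\otimes\hat\o(dz)c_{0,z}$-integrable, Fubini gives $\bbE_0[\hat b]=\nu(b)$.

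For part (iii), the key tool is Lemma \ref{matteo}. Set $B:=\{\o\in\O_0 : c_b(\o)<+\infty\}$; by (i) we have $\cP_0(B)=1$. The definition of $\cA_1[b]$ reads exactly $\cA_1[b]=\{\o\in\O : \t_z\o\in B\ \forall z\in\hat\o\}$, so the equivalence (i) $\Leftrightarrow$ (ii) $\Leftrightarrow$ (iii) of Lemma \ref{matteo} immediately yields $\cP(\cA_1[b])=\cP_0(\cA_1[b])=1$. For translation invariance, fix $\o\in\cA_1[b]$ and $y\in\bbR^d$; using the group law $\t_z(\t_y\o)=\t_{y+z}\o$ and the fact that $z\in\widehat{\t_y\o}$ iff $y+z\in\hat\o$, the hypothesis $c_b(\t_{y+z}\o)<+\infty$ for all $y+z\in\hat\o$ translates directly into $c_b(\t_z(\t_y\o))<+\infty$ for all $z\in\widehat{\t_y\o}$, so $\t_y\o\in\cA_1[b]$.

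For part (ii), I expand both sides using the definitions \eqref{franci} and \eqref{nunu} of $\mu_\o^\e$ and $\nu_\o^\e$. The left-hand side becomes
\begin{equation*}
\e^d \sum_{a\in\hat\o} \varphi(\e a)\,\hat b(\t_a\o),
\end{equation*}
which is an absolutely convergent finite sum because $\varphi$ has compact support (so only finitely many $a\in\hat\o$ contribute) and $\o\in\cA_1[b]$ guarantees $c_b(\t_a\o)<+\infty$ for each such $a$, hence the integral defining $\hat b(\t_a\o)$ is absolutely convergent. The right-hand side, using the second expression in \eqref{nunu}, becomes
\begin{equation*}
\e^d \sum_{a\in\hat\o}\varphi(\e a) \int d(\widehat{\t_a\o})(z)\, c_{a,a+z}(\o)\, b(\t_a\o,z),
\end{equation*}
and the covariance of the rates from (A4) gives $c_{a,a+z}(\o)=c_{0,z}(\t_a\o)$, so the inner integral is exactly $\hat b(\t_a\o)$ by \eqref{zuppa}. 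The two sides agree term by term.

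I do not expect genuine obstacles here: the only moment that needs care is ensuring $\hat b(\t_a\o)$ is meaningfully defined for \emph{every} $a\in\hat\o$ arising in (ii), which is precisely why $\cA_1[b]$ is introduced and why part (iii) is proved first via Lemma \ref{matteo}. The rest is Fubini plus the covariance identity.
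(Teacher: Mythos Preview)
Your proof is correct and follows essentially the same approach as the paper, which simply declares (i) and (ii) ``trivial to check'' and handles (iii) via $\bbE_0[c_b]=\|b\|_{L^1(\nu)}<\infty$ together with Lemma~\ref{matteo}. Your version just fills in the details the paper omits (and reorders the items for expository clarity); one very minor slip is that the displayed formula you call ``the second expression in \eqref{nunu}'' is actually the first one, but this does not affect the argument.
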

\begin{proof}  It is trivial to check Item (i) and Item (ii).
%
  We move to Item (iii). We have 
$\bbE_0[ c_b] = \| b\| _{L^1(\nu)} <\infty$. This implies that 
  $\cP_0(\{\o\,:\, c_{b} (\o) <+\infty\})=1$ and  therefore $\cP( \cA_1[b])=\cP_0( \cA_1[b])=1$  by Lemma \ref{matteo}. The last property of  $\cA_1[b]$ follows immediately from  the definition.
 \end{proof}

We point out that, since $\nu $ has finite mass, $L^2(\nu) \subset L^1(\nu)$ and therefore Lemma \ref{cavallo} can be applied to $b$ with $\|b \|_{L^2(\nu) }<+\infty$.

%
%
%
%

\section{The transformation $b(\o,z)\mapsto \tilde b(\o,z) $}\label{hermione}
\begin{Definition}\label{ometto}
Given  a Borel function  $b :\O_0\times \bbR^d\to \bbR$ we set 
\begin{equation}
\tilde b (\o, z):=
\begin{cases}  b (\t_{z} \o, -z)  & \text{ if } z\in \hat \o\,,\\
0 & \text{ otherwise}\,.
\end{cases}
\end{equation}
\end{Definition}
%
%
%
%
\begin{Lemma}\label{gattonaZZ} Given  a Borel function  $b :\O_0\times \bbR^d\to \bbR$,
it  holds  $\tilde{\tilde b}(\o,z)=b(\o,z)$ if $z\in \hat \o$. If 
 $b \in L^2(\nu)$, then 
 $ \| b \| _{L^2(\nu)} =  \| \tilde b \| _{L^2(\nu)}$ and ${\rm div} \,\tilde b= - {\rm div}\, b$. \end{Lemma}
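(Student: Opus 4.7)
The plan is to dispose of the three assertions in order: first the pointwise involutivity $\tilde{\tilde b}=b$, then the divergence identity as an immediate corollary, and finally the $L^2(\nu)$-isometry via Lemma \ref{lucertola1}.

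For the involutivity I would fix $\o\in\O_0$ and $z\in\hat\o$, and simply unfold Definition \ref{ometto} twice. Since $\o\in\O_0$ means $0\in\hat\o$, the point $-z$ lies in $\widehat{\t_z\o}=\{x_i-z:x_i\in\hat\o\}$, so the first branch of the definition applies at $(\t_z\o,-z)$ and produces $\tilde b(\t_z\o,-z)=b(\t_{-z}\t_z\o,-(-z))=b(\o,z)$. This identity immediately yields the divergence statement: from Definition \ref{def_div},
\begin{equation*}
{\rm div}\,\tilde b(\o)=\int d\hat\o(z)\,c_{0,z}(\o)\bigl(\tilde b(\o,z)-\tilde b(\t_z\o,-z)\bigr)=\int d\hat\o(z)\,c_{0,z}(\o)\bigl(b(\t_z\o,-z)-b(\o,z)\bigr),
\end{equation*}
which is the negative of ${\rm div}\,b(\o)$. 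Absolute convergence of both series is guaranteed by \eqref{fuoco1} once $\tilde b\in L^2(\nu)$ is known, which is the content of the third assertion.

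The heart of the argument is the isometry, which I would establish via Lemma \ref{lucertola1} in its nonnegative form (case (ii)). By Assumption (A3) combined with Lemma \ref{re}--(iii) (see \eqref{SS3}), for $\cP_0$-a.a.\ $\o$ the vector $z\in\hat\o$ is uniquely determined by the pair $(\o,\t_z\o)$, so one can define a nonnegative Borel function $k$ on $\O_0\times\O_0$ by $k(\o,\t_z\o):=c_{0,z}(\o)\,b(\t_z\o,-z)^2$ and $k\equiv 0$ on pairs off the orbit. Then $\|\tilde b\|_{L^2(\nu)}^2=\int d\cP_0(\o)\int d\hat\o(z)\,k(\o,\t_z\o)$, and Lemma \ref{lucertola1} transforms this into $\int d\cP_0(\o)\int d\hat\o(z)\,k(\t_z\o,\o)$. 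Writing $\o=\t_{-z}(\t_z\o)$ with $-z\in\widehat{\t_z\o}$ gives $k(\t_z\o,\o)=c_{0,-z}(\t_z\o)\,b(\o,z)^2$, and covariance plus symmetry of the weights (Assumption (A4)) reduce $c_{0,-z}(\t_z\o)=c_{z,0}(\o)=c_{0,z}(\o)$. The resulting expression is exactly $\|b\|_{L^2(\nu)}^2$.

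The only real technical point sits in this last step: one must verify that $k$ is a well-defined Borel function of the pair $(\o,\o')$ (this is where (A3) is used), and then that the covariance identity $c_{0,-z}(\t_z\o)=c_{0,z}(\o)$ converts the swapped conductance back into the original one. With those two observations in hand the isometry, and hence the whole lemma, follows mechanically.
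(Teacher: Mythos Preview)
Your proof is correct and follows essentially the same approach as the paper. The paper also invokes Lemma~\ref{lucertola1} (with the nonnegative kernel $k(\o,\t_z\o):=c_{0,z}(\o)\,b(\o,z)^2$ rather than your $c_{0,z}(\o)\,b(\t_z\o,-z)^2$, i.e.\ the two roles of $b$ and $\tilde b$ swapped) together with (A3)/\eqref{SS3} and (A4) to obtain the isometry, and dismisses the involutivity and the divergence identity as immediate from the definitions---which you have simply spelled out in detail.
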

 \begin{proof}  Let  $b \in L^2(\nu)$. 
 We apply Lemma \ref{lucertola1} with  $k$ such that $k(\o, \t_z \o ) :=
c_{0,z} (\o) b (\o, z )^2$  if $\o \in \O_0$ and $z\in \hat \o$ (cf. Assumption (A3) and \eqref{SS3}).
If $z \in \hat  \o$ we have  $k(\t_z\o  , \o) =  c_{0,-z} (\t_z \o) b ( \t_z \o, -z)^2 = c_{0,z}(\o) \tilde b (\o, z)^2$.
 Then Lemma \ref{lucertola1} implies  that  $ \| b \| _{L^2(\nu)} =  \| \tilde b \| _{L^2(\nu)}$. The  other identities follow from the definitions.
\end{proof}


Recall the set $\cA[g]\subset \O$ introduced in Prop. \ref{prop_ergodico} and recall Definition \ref{artic}.


 \begin{Lemma}\label{gattonaZ} $ \,\,$\\
(i) 
 Let $b: \O_0 \times \bbR^d \to [0, +\infty]$ and  $\varphi, \psi:\bbR^d \to [0,+\infty] $  be
  Borel functions. Then, for each $\o \in \O$,  it holds
  \begin{equation}\label{micio2}
 \int d \nu^\e_\o (x, z) \varphi ( x) \psi (  x+\e z ) b(\t_{x/\e}\o, z)=
  \int d \nu^\e_\o (x, z) \psi (x) \varphi (  x+\e z  ) \tilde{b}(\t_{x/\e}\o, z)\,.
 \end{equation}
 (ii)  Let $b: \O_0 \times \bbR^d\to \bbR$ be a Borel function with $\| b \|_{L^1( \nu )}<+\infty$ and take $ \o \in \cA_1[b]\cap \cA_1[\tilde b]$.  
   Given functions  $\varphi, \psi:\bbR^d \to \bbR $   such that  at least one between $\varphi, \psi $ has  compact support  and the other is bounded,  identity \eqref{micio2} is still valid.
   Given now $\varphi $ with compact support and  $\psi$ bounded, it holds
 \begin{align}
&  \int d \nu^\e_\o (x, z) \nabla_\e  \varphi   ( x,z) \psi (   x+\e z ) b(\t_{x/\e}\o, z) \nonumber\\
 & \qquad \qquad \qquad  =
  - \int d \nu^\e_\o (x, z) 
  \nabla _\e \varphi  (x,z) 
  \psi (x) \tilde{b}(\t_{x/\e}\o, z)\,.\label{micio3}
   \end{align}
Moreover,  the above integrals  in  \eqref{micio2}, \eqref{micio3} (under the hypothesis of this  Item (ii)) 
 correspond to absolutely convergent  series and are therefore well defined. 
 
\end{Lemma}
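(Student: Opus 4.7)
The plan is to view $\int \cdot\, d\nu^\e_\o$ as a weighted double sum
\[
\int G(x,z)\, d\nu^\e_\o(x,z) \;=\; \e^d\!\sum_{a,b\in\hat\o} c_{a,b}(\o)\,G(\e a,\, b-a),
\]
and then relabel $a\leftrightarrow b$, exploiting two ingredients: the symmetry and covariance of the rates from Assumption~(A4), and the involution identity $b(\t_b\o,\,a-b)=\tilde b(\t_a\o,\,b-a)$ for $a,b\in\hat\o$, which is essentially the $\tilde{\tilde b}=b$ statement of Lemma~\ref{gattonaZZ}. Taking $G(x,z)=\varphi(x)\psi(x+\e z)b(\t_{x/\e}\o,z)$ and applying the swap transforms the LHS of \eqref{micio2} into its RHS directly; in the nonnegative setting of part~(i), Tonelli legitimates every rearrangement, so (i) closes immediately.

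For part~(ii) I would first certify absolute convergence of both sides of \eqref{micio2}, which is the only delicate point. Assuming $\varphi$ has compact support and $\psi$ is bounded (the symmetric case is identical), the LHS is bounded by
\[
\|\psi\|_\infty\,\e^d\!\sum_{a:\,\e a\in\mathrm{supp}\,\varphi}|\varphi(\e a)|\,c_{|b|}(\t_a\o),
\]
which is a finite sum of finite terms because $\o\in\cA_1[b]$ (recall Definition~\ref{artic}). For the RHS the compactly supported factor $\varphi(\e b)$ controls the outer sum; the inner sum over $a$ is re-indexed by $w=a-b$ and rewritten through the covariance $c_{a,b}(\o)=c_{0,w}(\t_b\o)$ and the involution as $\sum_w c_{0,w}(\t_b\o)|\tilde b(\t_b\o,w)|=c_{|\tilde b|}(\t_b\o)<+\infty$, finite thanks to $\o\in\cA_1[\tilde b]$. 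Once absolute convergence is secured, Fubini licenses the same relabeling as in (i) and \eqref{micio2} follows.

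To obtain \eqref{micio3} I would write $\nabla_\e\varphi(x,z)=\e^{-1}[\varphi(x+\e z)-\varphi(x)]$ and split the LHS into two absolutely convergent pieces. Applying the swap to the first piece gives
\[
\int d\nu^\e_\o\,\varphi(x+\e z)\psi(x+\e z)\,b(\t_{x/\e}\o,z)\;=\;\int d\nu^\e_\o\,\varphi(x)\psi(x)\,\tilde b(\t_{x/\e}\o,z),
\]
while the second piece is exactly \eqref{micio2} itself and produces $\int d\nu^\e_\o\,\psi(x)\varphi(x+\e z)\,\tilde b(\t_{x/\e}\o,z)$. Subtracting, dividing by $\e$ and factoring $\psi(x)$ reconstructs $-\int d\nu^\e_\o\,\nabla_\e\varphi(x,z)\,\psi(x)\,\tilde b(\t_{x/\e}\o,z)$, which is the claimed RHS of \eqref{micio3}. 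For the first of these two pieces one should notice that $\varphi\psi$ is itself compactly supported and bounded, so the absolute convergence argument of the previous paragraph applies with $\psi\equiv 1$ on the unshifted slot.

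The main technical obstacle is simply certifying absolute convergence of the double sum in the signed case, which is precisely where the compact-support hypothesis on one of $\varphi,\psi$ and the membership $\o\in\cA_1[b]\cap\cA_1[\tilde b]$ are used; after that, the whole proof is a purely algebraic relabeling built on symmetry, covariance and the involution $\tilde{\tilde b}=b$.
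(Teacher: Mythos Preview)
Your proposal is correct and follows essentially the same approach as the paper: expand $\nu^\e_\o$ as a double sum over $a,a'\in\hat\o$, use the symmetry $c_{a,a'}=c_{a',a}$ together with $b(\t_a\o,a'-a)=\tilde b(\t_{a'}\o,a-a')$, and justify the relabeling via Tonelli in (i) and via the finiteness of $c_{|b|}(\t_a\o)$, $c_{|\tilde b|}(\t_a\o)$ (from $\o\in\cA_1[b]\cap\cA_1[\tilde b]$) in (ii). The only cosmetic difference is that for \eqref{micio3} the paper performs the relabeling directly on the double sum (observing that $\nabla_\e\varphi$ flips sign), whereas you split $\nabla_\e\varphi$ and invoke \eqref{micio2} twice; both arguments are equivalent.
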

\begin{proof} 
 We check \eqref{micio2} for Item (ii) (the proof for Item (i) uses similar computations).  Since $c_{a,a'}(\o)= c_{a',a}(\o)$ and $b(\t_a \o, a'-a)= \tilde b(\t_{a'} \o, a-a')$ for all $a,a'\in \hat \o$, we can write  
 \begin{equation}\label{tom1}
 \begin{split}
 & \int d \nu^\e_\o (x, z) \varphi ( x) \psi (   x+\e z ) b(\t_{x/\e}\o, z)\\ 
 &= \e^d \sum_{a\in \hat \o} \sum_{a'\in  \hat \o } c_{a,a'}(\o) \varphi (\e a ) \psi (\e a') b( \t_a \o, a'-a)
 \\
 &= \e^d  \sum_{a'\in  \hat \o } \sum_{a\in \hat \o} c_{a',a}(\o)\psi (\e a')   \varphi (\e a ) \tilde b(\t_{a'} \o, a-a')
 \\
  &= \int d \nu^\e_\o (x, z) \psi ( x) \varphi ( x+ \e z  ) \tilde{b}(\t_{x/\e}\o, z)\,.
  \end{split}
 \end{equation}
 Since we deal with  infinite sums,  the above rearrangements have to be justified.  
  We recall  that $\varphi$ has compact support and $\psi $ is bounded, or viceversa.
 The same computations as above hold when taking  the modulus of all involved functions. To conclude that the series are absolutely convergent  we first suppose that  $\varphi$ has compact support and $\psi $ is bounded. Then  we can bound 
 \be\label{tom2}
  \int d \nu^\e_\o (x, z)| \varphi ( x)| | b(\t_{x/\e}\o, z)| \leq  \int d \mu^\e_\o (x) | \varphi  ( x)| c_b (\t_{x/\e}\o)\,.
  \en
  Since  $\o \in \cA_1[b]$   the integral in the r.h.s. corresponds to a finite sum of finite terms, hence the r.h.s of \eqref{tom2} is finite and all  the rearrangements in \eqref{tom1} are legal (recall that $\psi$ is bounded). 
  If $\psi$ has compact support and $\varphi$ is bounded, we do similar computations for $ \int d \nu^\e_\o (x, z) |\psi ( x)| |\tilde{b}(\t_{x/\e}\o, z)|$ and use that  $\o \in \cA_1[\tilde b]$  
  

 We now prove \eqref{micio3}.  We have 
\begin{equation}\label{tom3}
 \begin{split}
 & \int d \nu^\e_\o (x, z) \nabla_\e  \varphi  ( x,z) \psi (   x+\e z ) b(\t_{x/\e}\o, z)\\ 
 &= \e^d \sum_{a\in \hat \o} \sum_{a' \in  \hat \o } c_{a,a'}(\o) \frac{ \varphi (\e a')-\varphi ( \e a)}{\e} \psi (\e a') b( \t_a \o, a'-a)
 \\&
 =-  \e^d  \sum_{a' \in  \hat \o }  \sum_{a\in \hat \o} c_{a',a }(\o) 
 \frac{ \varphi (\e a)-\varphi ( \e a ')}{\e}  \psi (\e a')   \tilde b(\t_{a'} \o, a-a')
 \\
   &=- \int d \nu^\e_\o (x, z) 
  \nabla _\e \varphi  (x,z) 
  \psi ( x) \tilde{b}(\t_{x/\e}\o, z)\,.
  \end{split}
 \end{equation}
 Since we deal with  infinite sums,  the above arrangements have to be justified. 
  To show that   all series are absolutely convergent,
 as $\psi$ is bounded it is enough to show that 
 \begin{align}
 & \int  d \nu^\e_\o (x, z)  |\varphi (  x )| \, |  b(\t_{x/\e}\o, z)|<+\infty\,,\label{coop101}\\
 & \int  d \nu^\e_\o (x, z)  |\varphi (  x+\e z  )| \, |  b(\t_{x/\e}\o, z)|<+\infty\,.\label{coop102}
 \end{align}
The term \eqref{coop101} can be treated as  in \eqref{tom2} and lines after. Due Item (i),  the term \eqref{coop102} equals
$ \int  d \nu^\e_\o (x, z)  |\varphi (  x  )| \, | \tilde b(\t_{x/\e}\o, z)|$ and we are back to the previous case with $\tilde b$ instead of $b$.
\end{proof}

\begin{Definition}\label{naviglio} Let $b: \O_0\times \bbR^d \to \bbR$ be a  Borel function.
 If $\o \in   \cA_1[b]\cap \cA_1[\tilde b]\cap \O_0$,  we set ${\rm div}_* b  (\o) := \hat b (\o) - \hat{\tilde{b}}(\o) \in \bbR$.
\end{Definition}
\begin{Lemma}\label{arranco}
Let  $b: \O_0\times \bbR^d \to \bbR$ be a Borel function with $\|b\|_{ L^2(\nu)}<+\infty$. Then $\cP_0 ( \cA_1[b]\cap \cA_1[\tilde b]) =1$ and 
${\rm div}_* b  = {\rm div }\,b $ in $L^1(\cP_0)$.
\end{Lemma}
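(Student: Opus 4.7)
My plan is to break the statement into two parts and use the machinery just developed.

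First, I would establish $\cP_0(\cA_1[b]\cap\cA_1[\tilde b])=1$. Since $\nu$ has finite total mass ($\nu(\O\times\bbR^d)=\bbE_0[\l_0]<+\infty$ by Assumption (A5)), the Cauchy--Schwarz inequality gives $L^2(\nu)\subset L^1(\nu)$, so $b\in L^1(\nu)$. By Lemma \ref{gattonaZZ}, $\tilde b\in L^2(\nu)\subset L^1(\nu)$ as well. Lemma \ref{cavallo}(iii) applied separately to $b$ and to $\tilde b$ yields $\cP_0(\cA_1[b])=\cP_0(\cA_1[\tilde b])=1$, so their intersection has full $\cP_0$--measure.

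Second, I would show the pointwise identity ${\rm div}_* b(\o)={\rm div}\,b(\o)$ for every $\o\in\cA_1[b]\cap\cA_1[\tilde b]\cap\O_0$. Fix such an $\o$. Since $0\in\hat\o$, applying the defining condition of $\cA_1[b]$ with $z=0$ gives $c_b(\o)<+\infty$; likewise $c_{\tilde b}(\o)<+\infty$. The first says $\int d\hat\o(z)c_{0,z}(\o)|b(\o,z)|<+\infty$, hence $\hat b(\o)=\int d\hat\o(z)c_{0,z}(\o)b(\o,z)$ as an absolutely convergent integral. The second, using $\tilde b(\o,z)=b(\t_z\o,-z)$ for $z\in\hat\o$, gives $\int d\hat\o(z)c_{0,z}(\o)|b(\t_z\o,-z)|<+\infty$, so $\hat{\tilde b}(\o)=\int d\hat\o(z)c_{0,z}(\o)b(\t_z\o,-z)$ is also absolutely convergent. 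Both halves of the divergence integrand being absolutely integrable, I can split
\[
{\rm div}\,b(\o)=\int d\hat\o(z)c_{0,z}(\o)\bigl(b(\o,z)-b(\t_z\o,-z)\bigr)=\hat b(\o)-\hat{\tilde b}(\o)={\rm div}_*b(\o).
\]

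Finally, I would note that both sides belong to $L^1(\cP_0)$: by Lemma \ref{cavallo}(i), $\hat b$ and $\hat{\tilde b}$ are in $L^1(\cP_0)$, so ${\rm div}_*b\in L^1(\cP_0)$; and the bound \eqref{fuoco1} combined with Cauchy--Schwarz yields ${\rm div}\,b\in L^1(\cP_0)$ with $\|{\rm div}\,b\|_{L^1(\cP_0)}\le 2\bbE_0[\l_0]^{1/2}\|b\|_{L^2(\nu)}$. Hence the $\cP_0$--a.s. equality upgrades to equality in $L^1(\cP_0)$. There is no substantive obstacle here; the only subtle point is that splitting an integral of a difference into a difference of integrals is only legal when both pieces are absolutely convergent, and the sets $\cA_1[b]$ and $\cA_1[\tilde b]$ are precisely designed to ensure this.
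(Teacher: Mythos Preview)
Your proof is correct and follows essentially the same approach as the paper: use Lemma~\ref{gattonaZZ} to get $\tilde b\in L^2(\nu)\subset L^1(\nu)$, invoke Lemma~\ref{cavallo}(iii) for the full-measure statement, and then observe that on $\cA_1[b]\cap\cA_1[\tilde b]\cap\O_0$ the defining integral of ${\rm div}\,b$ splits as $\hat b-\hat{\tilde b}={\rm div}_*b$. The paper compresses this last step into ``follows from the definitions,'' while you have (appropriately) made explicit the absolute-convergence justification for splitting the integral.
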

\begin{proof}
By Lemma \ref{gattonaZZ} we have $  \| \tilde b \| _{L^2(\nu)}<\infty$. Hence, both $b$ and $\tilde b$ are $\nu$--integrable. By Lemma \ref{cavallo}--(iii) we get that $\cP_0 ( \cA_1[b]\cap \cA_1[\tilde b]) =1$ and therefore ${\rm div}_* b$ is defined $\cP_0$--a.s. The identity  ${\rm div}_* b  (\o) := \hat b (\o) - \hat{\tilde{b}}(\o)$ follows from the definitions of ${\rm div}\, b$ and $ {\rm div}_* b$. 
\end{proof}

\begin{Lemma}\label{tav}
Let  $b: \O_0\times \bbR^d \to \bbR$ be a Borel function with  $\|b\|_{ L^2(\nu)}<+\infty$ and such that  its class of equivalence in $L^2(\nu)$ belongs to $L^2_{\rm sol}(\nu)$. Let 
\be\label{eq_tav}
\cA_d [b]:= \{\o \in \cA_1[b] \cap \cA_1[ \tilde b]\,:\, {\rm div}_* b (\t_z \o) =0 \; \forall z \in \hat \o\}\,.
\en
Then  $\cP( \cA_d [b])=1$ and  $\cA_d [b]$ is translation invariant.
\end{Lemma}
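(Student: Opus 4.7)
\medskip
\noindent\textbf{Proof plan.} The plan is to reduce the claim to the unconditional identity $\mathrm{div}\,b=0$ in $L^1(\cP_0)$, and then use translation machinery to upgrade the pointwise statement to hold simultaneously along every point of $\hat\o$.

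First I would combine the solenoidal hypothesis with Corollary \ref{grazioso} and Lemma \ref{arranco}: since $b\in L^2_{\mathrm{sol}}(\nu)$, we have $\mathrm{div}\,b=0$ $\cP_0$-a.s., and $\|b\|_{L^2(\nu)}<+\infty$ implies $\cP_0(\cA_1[b]\cap\cA_1[\tilde b])=1$ with $\mathrm{div}_* b=\mathrm{div}\,b$ on this set. Consequently the Borel set
\[
B:=\bigl\{\o\in\cA_1[b]\cap\cA_1[\tilde b]\cap\O_0\,:\,\mathrm{div}_* b(\o)=0\bigr\}
\]
satisfies $\cP_0(B)=1$.

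Next I would apply Lemma \ref{matteo} to the set $B$: the set
\[
B':=\bigl\{\o\in\O\,:\,\t_z\o\in B\;\forall z\in\hat\o\bigr\}
\]
has $\cP(B')=1$. I then argue $B'\subset\cA_d[b]$. Indeed, fix $\o\in B'$; since $\cP$-a.s.\ $\hat\o\neq\emptyset$ we may pick some $z_0\in\hat\o$, and then $\t_{z_0}\o\in B\subset\cA_1[b]\cap\cA_1[\tilde b]$; by the translation invariance of $\cA_1[b]$ and $\cA_1[\tilde b]$ established in Lemma \ref{cavallo}(iii), this forces $\o\in\cA_1[b]\cap\cA_1[\tilde b]$. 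Moreover, by definition of $B$, $\t_z\o\in B$ gives $\mathrm{div}_* b(\t_z\o)=0$ for every $z\in\hat\o$. Hence $\o\in\cA_d[b]$, and $\cP(\cA_d[b])\geq\cP(B')=1$.

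Finally, for translation invariance, fix $\o\in\cA_d[b]$ and $a\in\bbR^d$. By Lemma \ref{cavallo}(iii) the sets $\cA_1[b]$ and $\cA_1[\tilde b]$ are translation invariant, so $\t_a\o\in\cA_1[b]\cap\cA_1[\tilde b]$. Any $z\in\widehat{\t_a\o}=\hat\o-a$ satisfies $z+a\in\hat\o$, and $\t_z(\t_a\o)=\t_{z+a}\o$, so $\mathrm{div}_* b(\t_z\t_a\o)=\mathrm{div}_* b(\t_{z+a}\o)=0$ by the assumption on $\o$. This gives $\t_a\o\in\cA_d[b]$, completing the proof.

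There is essentially no obstacle here: the only subtlety is verifying that elements of $B'$ lie in $\cA_1[b]\cap\cA_1[\tilde b]$ themselves (rather than only having translates in those sets), and this is immediate from Lemma \ref{cavallo}(iii).
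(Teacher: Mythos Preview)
Your proof is correct and follows essentially the same approach as the paper: define the $\cP_0$-full set where $\mathrm{div}_* b$ vanishes (via Corollary~\ref{grazioso} and Lemma~\ref{arranco}), push it to a $\cP$-full set via Lemma~\ref{matteo}, and identify the result with $\cA_d[b]$. The paper actually asserts equality $\tilde A=\cA_d[b]$ rather than just your inclusion $B'\subset\cA_d[b]$, but your inclusion already suffices, and your explicit handling of the translation-invariance step (which the paper leaves as ``follows easily from the definition'') is a welcome addition.
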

\begin{proof} By Corollary \ref{grazioso} and  Lemma \ref{arranco}, the set $A:= \{\o\in \cA_1[b] \cap \cA_1[ \tilde b]\cap\O_0\,:\, {\rm div}_*b ( \o) =0 \}$ has $\cP_0$--probability equal to $1$. By Lemma \ref{matteo}, $\cP(\tilde A)=1$ where $\tilde A:=\{ \o \in \O\,:\, \t_z \o \in A \; \forall z \in \hat \o\}$. To get that  $\cP( \cA_d [b])=1$ it is enough to observe that $\tilde A=  \cA_d [b]$. The translation invariance  follows easily  from the definition.
\end{proof}
\begin{Lemma}\label{lunetta}
Suppose that $b: \O_0\times \bbR^d \to \bbR$ is a Borel function with $\|b\|_{ L^2(\nu)}<+\infty$. Take $\o \in   \cA_1[b]\cap \cA_1[\tilde b]$.
 Then for any $\e>0$ and any $u:\bbR^d\to \bbR$ with compact support it holds
\begin{equation}\label{sea}
\int d \mu^\e_\o (x) u(x) {\rm div}_* b (\t_{x/\e} \o) = - \e \int d \nu ^\e_\o (x,z) \nabla_\e u(x,z) b ( \t_{x/\e} \o, z) \,.
\end{equation}
\end{Lemma}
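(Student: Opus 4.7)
My plan is to rewrite both sides of \eqref{sea} as integrals against $\nu^\e_\o$, at which point the identity reduces to an index swap that is already encoded in Lemma \ref{gattonaZ}. The starting point is to unpack Definition \ref{naviglio}: for every $x\in\e\hat\o$ one has ${\rm div}_* b(\t_{x/\e}\o) = \hat b(\t_{x/\e}\o) - \hat{\tilde b}(\t_{x/\e}\o)$. Because $\o\in\cA_1[b]\cap\cA_1[\tilde b]$ and these sets are translation invariant (Lemma \ref{cavallo}(iii)), both quantities on the right are finite, and since $u$ has compact support, the LHS of \eqref{sea} reduces to a finite sum and no convergence issues arise there.

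Since $\|b\|_{L^2(\nu)}<\infty$ implies $b\in L^1(\nu)$ (as $\nu$ has finite total mass), and the same holds for $\tilde b$ by Lemma \ref{gattonaZZ}, I would then apply identity \eqref{rino} of Lemma \ref{cavallo}(ii) separately to $b$ and to $\tilde b$ in order to obtain
\begin{equation*}
\int d\mu^\e_\o(x)\, u(x)\, {\rm div}_* b(\t_{x/\e}\o) = \int d\nu^\e_\o(x,z)\, u(x)\, \bigl[b(\t_{x/\e}\o,z) - \tilde b(\t_{x/\e}\o,z)\bigr].
\end{equation*}

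For the RHS of \eqref{sea}, I would expand $\nabla_\e u(x,z) = \e^{-1}[u(x+\e z) - u(x)]$ and split to write
\begin{equation*}
-\e \int d\nu^\e_\o(x,z)\, \nabla_\e u(x,z)\, b(\t_{x/\e}\o,z) = \int d\nu^\e_\o\, u(x)\, b - \int d\nu^\e_\o\, u(x+\e z)\, b.
\end{equation*}
Now I would invoke identity \eqref{micio2} of Lemma \ref{gattonaZ}(ii) with $\varphi\equiv 1$ (bounded) and $\psi = u$ (compactly supported) on the second integral; this converts $u(x+\e z)\, b(\t_{x/\e}\o,z)$ into $u(x)\, \tilde b(\t_{x/\e}\o,z)$. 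The result coincides term-by-term with the expression derived above for the LHS, completing the identification.

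The only delicate point is that each manipulation involves rearranging a double sum over $\hat\o\times\hat\o$; I expect this is the main (though mild) obstacle. Fortunately the absolute convergence needed to legitimize the swaps is exactly what the hypotheses provide: the combination $\o\in\cA_1[b]\cap\cA_1[\tilde b]$ together with $u\in C_c(\bbR^d)$ matches precisely the assumptions under which \eqref{rino} and \eqref{micio2} were established, so no new estimates are required and the argument reduces to a direct invocation of the two preceding lemmas.
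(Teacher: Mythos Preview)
Your proposal is correct and follows essentially the same route as the paper's proof: both expand the left-hand side via ${\rm div}_* b=\hat b-\hat{\tilde b}$ into $\int d\nu^\e_\o\, u(x)[b-\tilde b]$ and then use the swap identity \eqref{micio2} to match this with the expanded right-hand side. The only cosmetic difference is that the paper applies \eqref{micio2} to the $\tilde b$-term in the LHS to reach the RHS directly, whereas you apply it to the $u(x+\e z)b$-term in the RHS to reach the LHS; this is the same identity read in opposite directions. One small slip: at the end you write ``$u\in C_c(\bbR^d)$'', but the lemma only assumes $u$ has compact support (not continuity); this is harmless since \eqref{micio2} in Lemma \ref{gattonaZ}(ii) only needs compact support, and \eqref{rino}, though stated for $C_c$, is a finite-sum identity whose verification does not use continuity.
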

\begin{proof} Note that $\t_{x/\e} \o$ in the l.h.s. belongs to $ \cA_1[b]\cap \cA_1[\tilde b]\cap \O_0$. Hence, 
we can write the l.h.s. of \eqref{sea} as  
\begin{equation}\label{carte}
 \int d \nu ^\e _\o (x,z) u(x) b (\t_{x/\e} \o, z) - \int d \nu ^\e_\o (x,z) u(x) \tilde b ( \t_{x/\e} \o, z) \,.
 \end{equation}
Due to our assumptions  we are dealing with absolutely convergent series, hence the above rearrangements are free. By applying \eqref{micio2} to the r.h.s. of \eqref{carte} (see Item (ii) of Lemma \ref{gattonaZ}), 
we can rewrite \eqref{carte} as 
$\int d \nu ^\e_\o (x,z)   b ( \t_{x/\e} \o, z) [ u(x) - u(x+ \e z)]$ and this allows to conclude.
\end{proof}

\section{Typical environments}\label{topo}
Consider Proposition \ref{prop_ergodico}. We stress that the function   $g$ appearing there  is a given  function and not an element of $L^1(\cP_0)$ (which would be an equivalence class of functions equal $\cP_0$--a.s.). Indeed, the set $\cA[g]$ is defined in terms of $g$ and not of its equivalence class in $L^1(\cP_0)$.

Recall that the space $(\cN,d)$ is a Polish space, where $\cN$ is given by 
the counting measures $\mu $ on $\bbR^d\times S$ (i.e. $\mu$ is an integer--valued measure on the measurable space $\left(\bbR^d\times S, \cB(\bbR^d\times S) \right)$ such that $\mu$  is bounded on bounded sets, where $\cB(\bbR^d\times S) $ denotes the family of Borel subsets of $\bbR^d\times S$).  

\begin{Remark}\label{separati}
Since  $(\cN,d)$ is a separable metric space, the same holds for $(\O, d)$ and $(\O_0, d)$. By \cite[Theorem~4.13]{Br} we then 
 get that the spaces $L^p(\cP) $ and   $L^p(\cP_0) $ are separable for $1\leq p <+\infty$. 
\end{Remark}

\begin{Lemma}\label{separati_sole} The space $L^2(\nu)$ is separable. 
\end{Lemma}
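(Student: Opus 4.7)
The plan is to reduce separability of $L^2(\nu)$ to the standard fact invoked in Remark \ref{separati}, namely that $L^p$ of a finite Borel measure on a separable metric space is separable for $1\le p<+\infty$ (cf.~\cite[Theorem 4.13]{Br}). Two ingredients are needed: (i) $\O\times\bbR^d$ carries a natural separable metric structure, so its Borel $\sigma$--algebra is countably generated; (ii) $\nu$ is a finite Borel measure on this space.

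For (i), I recall from Remark \ref{separati} that $(\O,d)$ is a separable metric space; equipping $\O\times\bbR^d$ with the product metric (for instance $d((\o,z),(\o',z')):=d(\o,\o')+|z-z'|$), one obtains a separable metric space as well. It follows that the Borel $\sigma$--algebra $\cB(\O\times\bbR^d)$ is countably generated (indeed, one may use the countable base formed by products of elements in countable bases of $\O$ and of $\bbR^d$). For (ii), I use Assumption (A5) and \eqref{labirinto} applied to $g\equiv 1$:
\[
\nu(\O\times\bbR^d)=\int d\cP_0(\o)\int d\hat\o(z)\,c_{0,z}(\o)=\bbE_0[\l_0]<+\infty,
\]
so $\nu$ is a finite Borel measure on the separable metric space $\O\times\bbR^d$.

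Combining (i) and (ii), the measure space $(\O\times\bbR^d,\cB(\O\times\bbR^d),\nu)$ is a finite measure space with countably generated $\sigma$--algebra, so by the same result cited in Remark \ref{separati} the space $L^2(\nu)$ is separable. Alternatively, one may exhibit a countable dense subset explicitly: take a countable base $\{U_n\}$ of $\O\times\bbR^d$, consider the Boolean algebra $\cR$ they generate (which is countable), and observe that finite $\bbQ$--linear combinations of indicators $\mathds{1}_A$ with $A\in\cR$ are dense in $L^2(\nu)$ (approximate simple Borel functions by elements of $\cR$ via outer regularity of the finite Borel measure $\nu$, and then approximate arbitrary $L^2(\nu)$ functions by simple functions).

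The statement is essentially soft and no step should pose a genuine obstacle; the only thing to keep in mind is that one really works with the product Borel structure on $\O\times\bbR^d$ rather than with the measure as a random element over $\O$, so that a single direct application of the Brezis result suffices.
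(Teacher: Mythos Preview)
Your argument is correct and takes a genuinely different route from the paper's proof. The paper builds an explicit countable spanning family of the form $f_j(\o)\mathds{1}_{B_k}(z)$, with $\{f_j\}$ a countable dense set of bounded functions in $L^2(\cP_0)$ and $\{B_k\}$ rational balls in $\bbR^d$, and then proves density by an orthogonality argument: if $g\in L^2(\nu)$ is orthogonal to all such products, one deduces that for $\cP_0$--a.a.\ $\o$ and all $k$ the integral $\int d\hat\o(z)\,c_{0,z}(\o)\mathds{1}_{B_k}(z)g(\o,z)$ vanishes, and local finiteness of $\hat\o$ then forces $g=0$ $\nu$--a.e. Your approach bypasses this by observing that $\O\times\bbR^d$, equipped with the product metric, is itself a separable metric space, and that $\nu$ is a finite Borel measure on it (total mass $\bbE_0[\l_0]<\infty$ by (A5)); then the same abstract result already invoked in Remark~\ref{separati} applies directly. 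This is shorter and conceptually cleaner. The paper's hands-on construction has the minor advantage of exhibiting a dense set of tensor-type functions, but since the later use of separability (the construction of the countable set $\cH$ in Section~\ref{topo}) only requires the \emph{existence} of some countable dense subset, nothing is lost by your route. One small point worth making explicit is that for separable metric factors the Borel $\sigma$--algebra of the product coincides with the product of the Borel $\sigma$--algebras, so $\nu$ (defined via \eqref{labirinto} on product-measurable functions) is indeed a Borel measure on the product space; you essentially note this when invoking the countable base.
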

\begin{proof}
Due to Remark \ref{separati}, there exists a countable dense set $\{ f_j\}$  in $L^2(\cP_0)$. At cost  to  approximate, in $L^2(\cP_0)$, $f_j $ by $[f_j]_M$  as $M\to \infty$
(cf. \eqref{taglio}), we can suppose that  $f_j$ is bounded.  Let $\{B_k\}$ be the countable family of closed balls in $\bbR^d$ with rational radius and with center in  $\bbQ^d$. Since bounded, the map $(\o,z) \mapsto f_j(\o) \mathds{1}_{B_k}(z)$  belongs to $L^2(\nu)$. We claim that the subspace spanned by the above functions  $ f_j(\o) \mathds{1}_{B_k}(z)$ is dense in $L^2(\nu)$. To this aim, we only need to show that if $g\in L^2(\nu)$ is orthogonal to this subspace, then $g=0$ $\nu$--a.e. By orthogonality we have 
\[ \int d \cP_0(\o) f_j(\o) \int d \hat \o (z) c_{0,z} (\o)  \mathds{1}_{B_k}(z) g (\o, z)=0 \qquad \forall j,k\,.\]
As $\{f_j\}$ is dense in $L^2(\cP_0)$,  for $\cP_0$--a.a. $\o$ it holds 
$ \int d \hat \o (z) c_{0,z} (\o)  \mathds{1}_{B_k}(z) g (\o, z)=0$ for all $k$. As   $\hat \o$ is locally bounded, this implies that  $c_{0,z} (\o)  g (\o, z)=0$ whenever $z\in \hat \o$, for  $\cP_0$--a.a. $\o$.  Equivalently,  $g=0$  $\nu$--a.e.\end{proof}
\medskip

In the  construction of the functional sets  presented below,  we will use the separability of $L^2(\cP_0)$ and $L^2(\nu)$ without further mention.
The definition of these functional sets and  the typical environments (cf. Definition  \ref{budda}) consists of a long list of technical assumptions, which are necessary to justify several steps in the next sections (there, we will indicate explicitly which  technical assumption we are  using).

\smallskip
Recall  the set $\cA_1[b]$ introduced in Definition \ref{artic} and formula  \eqref{taglio} for $[a]_M$. 

\medskip

\noindent
$\bullet$ {\bf The functional sets $\cG_1,\cH_1$}. 
We fix a countable set $\cH_1$ of Borel functions $b: \O_0\times \bbR^d\to \bbR$ such that 
$\|b \|_{L^2(\nu)}<+\infty$ for any $b \in \cH_1$ and such that   $\{ {\rm div} \,b\,:\, b \in \cH_1\}$ is a dense subset of $\{ w \in L^2(\cP_0)\,:\, \bbE_0[ w]=0\}$ when thought of as set of $L^2$--functions (recall Lemma \ref{santanna}). 
For each $b \in \cH_1$ we define  the Borel function $g_b : \O_0 \to \bbR$ as 
\be \label{lupetto}
g_b (\o):= 
\begin{cases}
{\rm div}_* b (\o) & \text{ if } \o \in \cA_1[b]\cap \cA_1[\tilde b]\,,\\
0 & \text{ otherwise}\,.
\end{cases}
\en
Note that by Lemma \ref{arranco} $g_b={\rm div}\, b$ $\cP_0$--a.s.
Finally we set $\cG_1:=\{ g_b \,:\, b \in \cH_1\}$.

\medskip

\noindent
$\bullet$ {\bf The functional sets $\cG_2,\cH_2, \cH_3$}.  We fix a countable set $\cG_2$  of  bounded Borel functions 
$g: \O_0\to \bbR$ such that  the set $\{ \nabla g\,:\, g \in \cG_2\}$, thought in $L^2(\nu)$,  is  dense in $L^2_{\rm pot}(\nu)$
 (this is possible by the definition of $L^2_{\rm pot}(\nu)$).  We  define $\cH_2$ as the set of Borel functions $h:\O_0 \times \bbR^d\to \bbR$ such that $h=\nabla g$ for some $g\in \cG_2$. We define $\cH_3$ as  the set of Borel functions $h: \O_0 \times \bbR^d\to \bbR$ such that $h(\o,z)= g(\t_z \o) z_i$ for some $g\in \cG_2$ and some direction $i=1,\dots, d$. Note that, since 
  $\bbE_0[\l_2]<+\infty$ by (A5) and since $g$ is bounded,  $\|h\|_{L^2(\nu)} <+\infty$ for all $h \in \cH_3$.

\medskip

\noindent
$\bullet$ {\bf The functional set $\cW$}. We fix a  countable set $\cW$ of Borel functions $b:\O_0\times \bbR^d\to \bbR$ such that, thought of as subset of $L^2(\nu)$, $\cW$  is dense in $L^2_{\rm sol} (\nu)$.  By  Corollary \ref{grazioso} and Lemma~\ref{gattonaZZ},  $\tilde b \in L^2_{\rm sol}(\nu)$ for any $b \in L^2_{\rm sol}(\nu)$. Hence, at cost to enlarge $\cW$, we assume that $\tilde b \in \cW$ for any $b \in \cW$.
Since $L^2(\nu)$ is separable, such a set $\cW$ exists.

\medskip

\noindent
$\bullet$ {\bf The functional set $\cG$}. 
We fix a countable set $\cG$ of Borel functions $g:\O_0\to \bbR$ such that: 
\begin{itemize}
\item  $\|g\|_{L^2(\cP_0)}<+\infty$ for any $g\in \cG$.
\item  $ 1\in \cG$, $\cG_1\subset \cG$, $\cG_2\subset \cG$.
\item $\cG$, thought of as a subset of  $L^2(\cP_0)$, is dense in $L^2(\cP_0)$.
\item  For each $b \in \cW$, $M \in \bbN$ and coordinate $i=1,\dots,d$,  the function $f: \O_0 \to \bbR$ defined as 
\be\label{nonfamale}
f(\o):= \begin{cases}
\int d \hat \o (z) c_{0,z}(\o) z_i [b]_M(\o,z) & \text{if } \int d \hat \o (z) c_{0,z}(\o)|z_i|<+\infty \\
0 &\text{otherwise}
\end{cases}
\en
belongs to $\cG$.
Since $|f(\o)| \leq M \l_1(\o)$,  $\|f\|_{L^2(\cP_0)}<+\infty$ by (A5).
\item At cost to enlarge $\cG$ we assume that $[g]_M\in \cG$ for any $g\in \cG$ and $M\in \bbN$.
\end{itemize}

\medskip

\noindent
$\bullet$ {\bf The functional set $\cH$}. 
We fix a countable set of Borel functions $b : \O_0 \times\bbR^d\to \bbR$ such that  
\begin{itemize}
\item $\|b\|_{L^2(\nu)}<+\infty$  for any $b\in \cH$.
\item $\cH_1\cup  \cH_2 \cup \cH_3\cup  \cW \subset \cH$ and  $ 1\in \cH$.
\item for any $i=1,\dots, d$ the map $(\o,z) \mapsto z_i$ is in $\cH$ (recall (A5)).
\item $\cH$, thought as a subset of  $L^2(\nu)$, is dense in $L^2(\nu)$.
\item  At cost to enlarge $\cH$ we assume that $[b]_M\in \cH$ for any $b\in \cH $ and $M\in \bbN$.
\end{itemize}

Recall the set $\cA[g]$  in Proposition \ref{prop_ergodico} for a  Borel function  $g: \O_0\to \bbR$ with $\|g\|_{L^1(\cP_0)}<+\infty$. Recall  Definition \ref{artic} and in particular the set $\cA_1[b]$ for a  Borel form $b: \O_0 \times \bbR^d\to \bbR$ with $\|b\|_{L^1(\nu)}<+\infty$. Recall Definition \ref{ometto}. Given a function $f: \O\to [0,+\infty]$ such that  $\cP( f =+\infty)=0$,  we define $\cA[f]$ as $\cA[f_*]$, where $f_*: \O \to \bbR$ is defined as $f$ on $\{ f <+\infty\}$ and as $0$ on $\{ f =+\infty\}$. 

\begin{Definition}\label{budda}
We define $\O_{\rm typ}$ as the intersection of the following sets:
\begin{itemize}
\item[(S1)]  $\cA[g g']$ as  $g,g'$ vary  among $ \cG$. $\| g g'\|_{L^1(\cP_0) } <+\infty$  by Schwarz inequality, hence $\cA[g g']$ is well defined. Note also that $\cA[g]= \cA[g \cdot 1]$. 

\item[(S2)] $\cA_1[bb'] $ as $b,b'$ vary among $\cH$.   $\|b  b'\|_{L^1(\nu)}<+\infty$  
by Schwarz inequality.


\item[(S3)] $\cA[\widehat {bb'}]$ as $b,b'$ vary among $\cH$.  Note that $\|bb'\|_{L^1(\nu)}<+\infty$ by Schwarz inequality, hence $\|\widehat {bb'}\|_{L^1(\cP_0)} <+\infty$ by Lemma \ref{cavallo}.

\item[(S4)] $\cA_1[|b-[b]_M| ]$,  as $b$ varies in $\cW$ and $M$ varies in $\bbN$.  Since $\|b\|_{L^2(\nu)}<+\infty$, we have $\|b-[b]_M\|_{L^1(\nu)}<+\infty$.

\item[(S5)] $\cA[\widehat{ d}] \cap \cA[\widehat{ d^2}]$, where $d:=|b-[b]_M|$, as $b$ varies in $\cW$ and $M$ in $\bbN$. Since $\|b\|_{L^2(\nu)}<+\infty$, it holds  $\| \widehat{ d}\|_{ L^1(\cP_0)},\| \widehat{ d^2}\|_{ L^1(\cP_0)}<+\infty$ by Lemma \ref{cavallo}.

\item[ (S6)] $ \cA_1[b]\cap \cA_1[\tilde b]$ as $b$ varies in $\cH_1$. Since $\|b\|_{L^2(\nu)}<+\infty$, it holds $\|b\|_{L^1(\nu)}<+\infty$ and  $\|\tilde b\|_{L^1(\nu)}<+\infty$ (see  Lemma \ref{gattonaZZ}).

\item[(S7)] $\cA_1[ |z|^2]=\{ \o \in \O\,:\, \l _2(\t_z\o)<+\infty \; \forall z \in \hat \o\}$.  Since  $\|\l_2\|_{L^1(\cP_0)}<+\infty$ by (A5), the form $(\o,z) \mapsto |z|^2$ is $\nu$--integrable.

\item[(S8)] $\cA_1 [(\tilde b)^2] \cap \cA_1[ b^2]$ as $b$ varies in $\cH$. Note that $\| (\tilde b)^2\|_{L^1(\nu)}=\| b^2\|_{L^1(\nu)}<+\infty$ by Lemma \ref{gattonaZZ}.

\item[(S9)] $\cA[ \l_2]$. By Assumption (A5) $\|\l_2\|_{L^1(\cP_0)}<+\infty$.

\item[(S10)] $\cA [\widehat{(  \tilde b^2+ b^2 )}]$  as $b$ varies in $ \cH_1$. Note   that $\bbE_0[ \widehat{(  \tilde b^2+ b^2 )}]= \nu( \tilde b^2)+\nu (b^2) <\infty$ by Lemma \ref{gattonaZZ} and since $\|b\|_{L^2(\nu)}<+\infty$.

\item[(S11)] $\cA[\l_0^2]$. By (A5), $\| \l_0^2 \|_{L^1( \cP_0)} <+\infty$.

\item[(S12)] $\cA_1 [ 1]=\{ \o \in \O\,:\, \l _0(\t_z\o)<+\infty \; \forall z \in \hat \o\}$. By Assumption (A5)  $\|1\|_{L^1(\nu)}=\|\l_0\|_{L^1(\cP_0)}<+\infty$.

\item[(S13)] $\{\o\in \O\,:\, F_*(\t_z\o)<+\infty \; \forall z\in \hat \o\}\cap \cA[F_*]$ with $F_*$ as in (A6). By  (A6), $\|F_*\|_{L^1(\cP_0)}<+\infty$.

\item[(S14)] $\cA_d[b]$ as $b$ varies in $\cW$ (recall \eqref{eq_tav} in Lemma \ref{tav}). 

\item[(S15)] 
 $\cA_1[(b-[b]_M)^2 ]\cap \cA_1[(\tilde b-[\tilde b]_M)^2 ]$,  as $b$ varies in $\cW$ and $M$ varies in $\bbN$.  Note that both $(b-[b]_M)^2 $ and $(\tilde b-[\tilde b]_M)^2$ have bounded $L^1(\nu)$--norm since $\|b\|_{L^2(\nu)
}<+\infty$ and by Lemma \ref{gattonaZZ}. 
 
 \item[(S16)]  $\cA[h_\ell ]$ as $\ell$ varies in  $\bbN$, where $h_\ell (\o):= \int d \hat \o(z) c_{0,z}(\o) |z|^2  \mathds{1}_{\{ |z| \geq  \ell\}} $.
 We have $\|h_\ell \|_{L^1(\cP_0)}\leq \|\l_2\|_{L_1(\cP_0)}<+\infty$ by Assumption (A5).
 \item[(S17)] $\cA[\l_0]$. We have $\|\l_0\|_{L^1(\cP_0)} <+\infty$ by Assumption (A5).
\end{itemize}
\end{Definition}

\begin{Proposition}\label{stemino}
The above set $\O_{\rm typ}$ is a translation invariant Borel subset of $\O$ such that  $\cP(\O_{\rm typ})=1$.
\end{Proposition}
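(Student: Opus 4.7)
The proof is essentially a bookkeeping exercise: the set $\O_{\rm typ}$ is defined as a countable intersection of sets each of which belongs to one of three families whose key properties have already been established. Specifically, the sets appearing in items (S1)--(S17) are either of the form $\cA[g]$ for some Borel $g:\O_0\to\bbR$ with $\|g\|_{L^1(\cP_0)}<+\infty$ (to which Proposition \ref{prop_ergodico} applies), of the form $\cA_1[b]$ for a Borel form $b$ with $\|b\|_{L^1(\nu)}<+\infty$ (to which Lemma \ref{cavallo}--(iii) applies), or of the form $\cA_d[b]$ for some $b\in L^2_{\rm sol}(\nu)$ (to which Lemma \ref{tav} applies).

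The plan is first to verify, going through each item (S1)--(S17), that the integrability hypotheses required by the relevant lemma hold. This verification is essentially already contained in Definition \ref{budda} itself, where a one-line justification is attached to each item (typically a routine application of Schwarz's inequality, of Lemma \ref{gattonaZZ} for the identity $\|\tilde b\|_{L^2(\nu)}=\|b\|_{L^2(\nu)}$, or of Assumptions (A5)--(A6)). Next, I would recall Proposition \ref{prop_ergodico}, Lemma \ref{cavallo}--(iii) and Lemma \ref{tav} to conclude that each individual set is Borel, translation invariant, and of full $\cP$--measure. For the convention regarding functions $f:\O\to[0,+\infty]$ with $\cP(f=+\infty)=0$, one simply notes that by definition $\cA[f]=\cA[f_*]$ and that the bounds on $\|f\|_{L^1(\cP_0)}$ provided in the definition guarantee $\cP(f=+\infty)=0$.

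Second, I would argue that the indexing is countable. The sets $\cG$, $\cH$, $\cW$, and $\bbN$ are countable by construction; the items (S1)--(S17) involve these sets and their finite products, so the whole intersection defining $\O_{\rm typ}$ is countable. Since countable intersections of Borel sets are Borel, and countable intersections of translation invariant sets are translation invariant, $\O_{\rm typ}$ is a translation invariant Borel subset of $\O$. Since $\cP$ is a probability measure and each component set has $\cP$--measure $1$, the intersection also has $\cP$--measure $1$, which gives the claim.

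There is no serious obstacle here: the only subtle points are (i) checking that the functions built in (S4), (S5), (S10), (S15), (S16) have the claimed finite $L^1(\cP_0)$ or $L^1(\nu)$ norms (each time one invokes either $\|b\|_{L^2(\nu)}<+\infty$ combined with Schwarz, or Lemma \ref{gattonaZZ} to swap $b$ with $\tilde b$), and (ii) making sure one applies the right lemma to $\cA_d[b]$ in (S14), which requires $b\in L^2_{\rm sol}(\nu)$ — guaranteed by the choice $b\in\cW\subset L^2_{\rm sol}(\nu)$.
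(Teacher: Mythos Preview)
Your approach is correct and matches the paper's own proof, which likewise reduces everything to Proposition~\ref{prop_ergodico} for the $\cA[g]$ sets, Lemma~\ref{cavallo}--(iii) for the $\cA_1[b]$ sets, and Lemma~\ref{tav} for the $\cA_d[b]$ sets.

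There is one small oversight in your trichotomy: item (S13) is the intersection $\{\o\in\O\,:\,F_*(\t_z\o)<+\infty\;\forall z\in\hat\o\}\cap\cA[F_*]$, and the first of these two sets is not literally written as $\cA[g]$, $\cA_1[b]$, or $\cA_d[b]$. The paper invokes Lemma~\ref{eleonora} (applied to $f=F_*\in L^1(\cP_0)$ by (A6)) to handle this set directly. Alternatively, you could observe that $F_*(\o)=\int d\hat\o(y)\,c_{0,y}(\o)\lambda_0(\t_y\o)=c_b(\o)$ for $b(\o,y):=\lambda_0(\t_y\o)$, so that the set in question is exactly $\cA_1[b]$ with $\|b\|_{L^1(\nu)}=\bbE_0[F_*]<+\infty$, and then your three-case scheme goes through without modification. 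Either way the fix is routine; just be sure to say something about that set rather than let it slip through.
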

\begin{proof}
The claim  follows from Proposition \ref{prop_ergodico} for all sets (Sx) of the form $\cA[g]$, from   Lemma \ref{cavallo} 
for all sets (Sx) of the form $\cA_1[b]$,  from 
 Lemma   \ref{tav}  for all sets (Sx) of the form $\cA_d[b]$ and from Lemma \ref{eleonora} for $\{\o\in \O\,:\, F_*(\t_z\o)<+\infty \; \forall z\in \hat \o\}$ in (S13).  \end{proof}


\section{2-scale convergence of $v_\e \in L^2 (\mu^\e_{\tilde\o})$ and  of $w_\e \in L^2 (\nu^\e_{\tilde \o})$}\label{sec_2scale}

\subsection{2-scale convergence of $v_\e \in L^2 (\mu^\e_{\tilde\o})$}
\begin{Definition}\label{priscilla}
Fix   $\tilde \o\in \O_{\rm typ}$, an $\e$--parametrized  family $v_\e \in L^2( \mu^\e_{\tilde \o})$ and  a function $v \in L^2 \bigl(m dx \times \cP_0 \bigr)$.
\\
$\bullet$ We say that \emph{$v_\e$ is weakly 2-scale convergent to $v$}, and write 
$v_\e \stackrel{2}{\rightharpoonup} v$, 
if the family $\{v_\e\}$ is bounded, i.e.
$
 \limsup_{\e\downarrow 0}  \|v_\e\|_{L^2(\mu^\e_{\tilde \o})}<+\infty$, 
 and 
\begin{equation}\label{rabarbaro}
\lim _{\e\downarrow 0} \int d \mu _{\tilde \o}^\e (x)  v_\e (x) \varphi (x) g ( \t _{x/\e} \tilde{\o} ) =\int d\cP_0(\o)\int dx\, m v(x, \o) \varphi (x) g (\o)  \,,
\end{equation}
for any $\varphi \in C_c (\bbR^d)$ and any $g \in \cG$.\\
$\bullet$
 We say that \emph{$v_\e$ is strongly 2-scale convergent to $v$}, and write 
$v_\e \stackrel{2}{\to} v$, 
if the family $\{v_\e\}$ is bounded and 
\begin{equation}\label{gingsen}
\lim_{\e\downarrow 0} \int d\mu^\e_{\tilde \o}(x)   v_\e(x) u_\e(x) = \int d\cP_0(\o)\int  dx\,m   v(x, \o) u(x,\o)  
\end{equation}
whenever $u_\e \stackrel{2}{\rightharpoonup} u$. 
\end{Definition}

\begin{Lemma}\label{medaglia} Let $\tilde \o \in \O_{\rm typ}$. Then,
for any $\varphi\in C_c (\bbR^d) $ and $g \in \cG$, setting $v_\e(x):= \varphi(x) g(\t_{x/\e} \tilde \omega)$ it holds 
$v_\e \stackrel{2}{\rightharpoonup} \varphi(x) g(\o)$.
\end{Lemma}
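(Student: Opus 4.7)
The plan is to reduce both the boundedness condition and the limit \eqref{rabarbaro} in Definition \ref{priscilla} to a single application of Proposition \ref{prop_ergodico}, using the fact that $\tilde\omega \in \Omega_{\rm typ}$ lies in every set of the form $\mathcal{A}[g g']$ with $g,g' \in \mathcal{G}$ (cf.\ (S1) in Definition \ref{budda}).

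First, I would verify the boundedness condition
$\limsup_{\varepsilon\downarrow 0} \|v_\varepsilon\|_{L^2(\mu^\varepsilon_{\tilde\omega})} < +\infty$. Writing
\[
\|v_\varepsilon\|_{L^2(\mu^\varepsilon_{\tilde\omega})}^2 = \int d\mu^\varepsilon_{\tilde\omega}(x)\, \varphi(x)^2\, g(\tau_{x/\varepsilon}\tilde\omega)^2,
\]
we see that $\varphi^2 \in C_c(\mathbb{R}^d)$ and $g^2 = g\cdot g \in L^1(\mathcal{P}_0)$ (since $g \in \mathcal{G}$ satisfies $\|g\|_{L^2(\mathcal{P}_0)}<+\infty$). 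Moreover $\tilde\omega \in \mathcal{A}[g^2]$ by property (S1) of $\Omega_{\rm typ}$. Hence Proposition \ref{prop_ergodico} gives
\[
\lim_{\varepsilon\downarrow 0} \|v_\varepsilon\|_{L^2(\mu^\varepsilon_{\tilde\omega})}^2 = \int dx\, m\, \varphi(x)^2\, \mathbb{E}_0[g^2] < +\infty,
\]
which in particular establishes the required boundedness.

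Next, I would verify the convergence \eqref{rabarbaro}. Fix any test function $\varphi' \in C_c(\mathbb{R}^d)$ and any $g' \in \mathcal{G}$. Then
\[
\int d\mu^\varepsilon_{\tilde\omega}(x)\, v_\varepsilon(x)\, \varphi'(x)\, g'(\tau_{x/\varepsilon}\tilde\omega) = \int d\mu^\varepsilon_{\tilde\omega}(x)\, (\varphi\varphi')(x)\, (gg')(\tau_{x/\varepsilon}\tilde\omega).
\]
The test function $\varphi\varphi'$ lies in $C_c(\mathbb{R}^d)$, and by Schwarz $gg' \in L^1(\mathcal{P}_0)$; furthermore, by property (S1) we have $\tilde\omega \in \mathcal{A}[gg']$. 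A direct application of Proposition \ref{prop_ergodico} therefore yields
\[
\lim_{\varepsilon\downarrow 0} \int d\mu^\varepsilon_{\tilde\omega}(x)\, (\varphi\varphi')(x)\, (gg')(\tau_{x/\varepsilon}\tilde\omega) = \int dx\, m\, (\varphi\varphi')(x)\, \mathbb{E}_0[gg'],
\]
and the latter coincides with $\int d\mathcal{P}_0(\omega) \int dx\, m\, \varphi(x)g(\omega)\varphi'(x)g'(\omega)$, as required by \eqref{rabarbaro}.

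There is essentially no obstacle in this proof: the entire argument was anticipated when defining $\Omega_{\rm typ}$ via the functional set $\mathcal{G}$ and the products $gg'$ appearing in (S1). The only point to notice is that the definition of $\Omega_{\rm typ}$ already guarantees simultaneous validity of the ergodic limit for every $g,g'$ ranging over the countable set $\mathcal{G}$, so no additional almost-sure intersection or measurability discussion is needed beyond invoking Proposition \ref{prop_ergodico}.
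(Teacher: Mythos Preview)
Your proof is correct and follows essentially the same approach as the paper: both arguments reduce the boundedness and the limit \eqref{rabarbaro} to Proposition~\ref{prop_ergodico} via property (S1) of $\O_{\rm typ}$, applied respectively to $g^2$ and to $gg'$. The only point the paper adds explicitly is the remark that $\varphi(x)g(\o)\in L^2(m\,dx\times\cP_0)$, which is needed for the limit to make sense in Definition~\ref{priscilla}; this is immediate from $g\in\cG\subset L^2(\cP_0)$ and $\varphi\in C_c(\bbR^d)$.
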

\begin{proof} Since $\tilde \o \in \O_{\rm typ}$ (cf.  (S1) in Def.~\ref{budda} and Prop. \ref{prop_ergodico}), we get $\lim_{\e\da 0}  \|v_\e\|_{L^2(\mu^\e_{\tilde \o})}^2= \int dx\, m \varphi(x)^2 \bbE[g^2]$, hence the family $\{v_\e\}$ is bounded in $L^2(\mu^\e_{\tilde \o})$.
 Since $g\in \cG\subset  L^2(\cP_0)$, we have $\varphi(x) g(\o) \in L^2(m dx \times \cP_0)$. Take $\varphi_1 \in C_c(\bbR^d)$ and $g_1\in \cG$. Since $\tilde \o \in \O_{\rm typ}$ (cf.  (S1) and Prop. \ref{prop_ergodico}), it holds $\int d \mu _{\tilde \o}^\e (x)  v_\e (x) \varphi _1(x) g_1 ( \t _{x/\e} \tilde{\o} ) \to\int dx\, m \varphi(x) \varphi_1(x) \bbE_0[g g_1]$.\end{proof}
\begin{Lemma}\label{alba}
Given $\tilde \o \in \O_{\rm typ}$, if $v_\e \stackrel{2}{\toup} v$ then 
\begin{equation}\label{acquario}
\varlimsup _{\e \da 0} \int d\mu^\e_{\tilde \o}(x)v_\e^2(x)   \geq \int d\cP_0(\o)\int dx\,m v(x, \o)^2\,.
\end{equation}
\end{Lemma}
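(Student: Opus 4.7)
The plan is the standard lower-semicontinuity argument for two-scale convergence: test the nonnegative expansion of $(v_\e - u_\e)^2$ against oscillating admissible functions $u_\e$, and then optimize over $u$. Concretely, let $\cU$ denote the linear span of functions $u(x,\o) = \sum_{k=1}^N \varphi_k(x) g_k(\o)$ with $\varphi_k \in C_c(\bbR^d)$ and $g_k \in \cG$, and associate to each such $u$ its oscillating lift $u_\e(x) := \sum_{k=1}^N \varphi_k(x) g_k(\t_{x/\e}\tilde \o)$.

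First I expand $\int (v_\e - u_\e)^2 \, d\mu^\e_{\tilde \o} \geq 0$ to obtain
$$\int v_\e^2 \, d\mu^\e_{\tilde \o} \;\geq\; 2 \int v_\e u_\e \, d\mu^\e_{\tilde \o} - \int u_\e^2 \, d\mu^\e_{\tilde \o}.$$
Since the $\varphi_k$ have compact support, both integrals on the right are finite sums of finite terms, so everything is well defined.

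Second, I pass to the limit in each right-hand term. By linearity in $u$ of the hypothesis \eqref{rabarbaro},
$$\int v_\e u_\e \, d\mu^\e_{\tilde \o} \;\longrightarrow\; \int dx \, m \int d\cP_0(\o)\, v(x,\o) u(x,\o).$$
For the quadratic term I expand $u_\e^2(x) = \sum_{j,k} \varphi_j(x)\varphi_k(x)(g_j g_k)(\t_{x/\e}\tilde \o)$. By Cauchy--Schwarz $g_j g_k \in L^1(\cP_0)$, and by item (S1) of Definition \ref{budda} the chosen $\tilde \o$ lies in $\cA[g_j g_k]$, so Proposition \ref{prop_ergodico} yields
$$\int u_\e^2 \, d\mu^\e_{\tilde \o} \;\longrightarrow\; \int dx \, m \int d\cP_0(\o)\, u(x,\o)^2.$$

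Third, combining these limits gives, for every $u \in \cU$,
$$\varlimsup_{\e \da 0} \int v_\e^2 \, d\mu^\e_{\tilde \o} \;\geq\; 2 \la v, u \ra_{m dx \times \cP_0} - \|u\|^2_{L^2(m dx \times \cP_0)}.$$
To finish, I use that $\cU$ is dense in $L^2(m dx \times \cP_0)$: $\cG$ is dense in $L^2(\cP_0)$ by construction, $C_c(\bbR^d)$ is dense in $L^2(m dx)$, and simple tensors span a dense subspace of a Hilbert space tensor product. Choosing $u^{(n)} \in \cU$ with $u^{(n)} \to v$ in $L^2(m dx \times \cP_0)$, the right-hand side converges to $2\|v\|^2 - \|v\|^2 = \|v\|^2$, yielding \eqref{acquario}.

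I do not expect a real conceptual obstacle: the set $\O_{\rm typ}$ was engineered via (S1) precisely so that the ergodic theorem applies simultaneously to all products $g_j g_k$ of elements of $\cG$, which together with tensor-product density in $L^2(m dx \times \cP_0)$ is the only nontrivial ingredient beyond the expansion of a perfect square.
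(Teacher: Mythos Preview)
Your proof is correct and follows essentially the same approach as the paper: expand $(v_\e - u_\e)^2 \geq 0$ for oscillating test functions $u_\e$ built from tensors $\varphi_k(x) g_k(\t_{x/\e}\tilde\o)$, pass to the limit in the cross term via \eqref{rabarbaro} and in the quadratic term via (S1) and Proposition~\ref{prop_ergodico}, then optimize using the density of such tensors in $L^2(m\,dx \times \cP_0)$. The paper's proof is organized by fixing a $\delta$-approximant $\Phi$ of $v$ from the outset rather than taking a sequence $u^{(n)}\to v$, but the argument is the same.
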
 
The proof is similar to the proof of \cite[Item~(iii),~p.~984]{Z}. We give it for completeness since our definition of 2-scale convergence involves the set $\cG$. 
\begin{proof} Since $\cG$ is dense in $L^2(\cP_0)$ and $C_c(\bbR^d)$ is dense in $L^2( dx)$, given $\d>0$  we can find functions $g_1,\dots, g_k \in \cG$, $\varphi_1, \dots, \varphi_k \in C_c(\bbR^d)$ and coefficients $a_1, \dots, a_k \in \bbR$ such that 
the norm of $v-\Phi$ in  $L^2 \bigl( m dx \times \cP_0 \bigr)$ is bounded by $\d$, where $\Phi(x,\o) :=  \sum_{i=1}^k a_i \varphi_i(x) g_i(\o)$. 
We have 
\begin{align}
& \int  d \mu _{\tilde \o}^\e (x)  v_\e (x)^2  \geq 
2 \int  d \mu _{\tilde \o}^\e (x) v_\e (x)  \Phi  (x ,  \t _{x/\e} \tilde{\o} )  -
\int  d \mu _{\tilde \o}^\e (x)
 \Phi  (x ,  \t _{x/\e} \tilde{\o} ) ^2  \label{raggio0}\\
& \int d \mu _{\tilde \o}^\e (x)  v_\e (x)  \Phi  (x ,  \t _{x/\e} \tilde{\o} )  =\sum_{i=1}^k a_i \int d \mu _{\tilde \o}^\e (x) v_\e (x)  \varphi_i (x)  g_i(  \t _{x/\e} \tilde{\o} )  \label{raggio1}\\
& \int d \mu _{\tilde \o}^\e (x) \Phi  (x ,  \t _{x/\e} \tilde{\o} ) ^2 = \sum_{i=1}^k \sum_{j=1}^k a_i a_j  \int  d \mu _{\tilde \o}^\e (x) (\varphi_i \varphi_j) (x)  (g_i g_j)(  \t _{x/\e} \tilde{\o} )  \,.\label{raggio2}
\end{align} 
We take the limit $\e\da 0$ in \eqref{raggio0}. We use that $v_\e \stackrel{2}{\toup} v$ and  $\tilde \o \in \O_{\rm typ}$ to deal with  \eqref{raggio1} (cf. Def.~\ref{priscilla})  and we use that $\o \in \cA[g_i g_j]$  to deal with \eqref{raggio2} (cf. (S1) in Def. \ref{budda} and Prop. \ref{prop_ergodico}). We then get
\begin{equation}
\begin{split}
\varlimsup _{\e \da 0}  & \int  d\mu^\e_{\tilde \o}(x) v_\e^2(x)  \\
& \geq 2 \int  d\cP_0(\o)\int   dx \, m\,v(x, \o) \Phi(x, \o) - \int d\cP_0(\o)\int   dx \, m\, \Phi(x, \o)^2\\
& \geq - \d +  \int  d\cP_0(\o)\int  dx \, m\,v(x,\o)^2\,.
\end{split}
\end{equation}
By the arbitrariness of $\d$, we get \eqref{acquario}.
\end{proof}

Using Lemmas \ref{medaglia} and \ref{alba} one gets the following characterization:
\begin{Lemma}\label{fantasia}
Given  $\tilde \o \in \O_{\rm typ}$, $v_\e \stackrel{2}{\to} v$ if  and only if $v_\e \stackrel{2}{\rightharpoonup} v$ and 
\begin{equation}\label{orchino}
\lim _{\e\da 0} \int_{\bbR^d} d\mu^\e_{\tilde \o}(x) v_\e(x)^2 = \int_\O d \cP_0(\o) \int _{\bbR^d} dx \,m v(x,\o)^2 \,.
\end{equation}
\end{Lemma}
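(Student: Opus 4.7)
The plan is to mimic the standard Hilbert-space characterization of strong convergence as weak convergence plus convergence of norms, adapted to the 2-scale framework. Throughout, $\tilde\o\in\O_{\rm typ}$ is fixed.

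\textbf{Forward direction.} Assume $v_\e\stackrel{2}{\to} v$. First I would check that strong 2-scale convergence implies weak 2-scale convergence: given $\varphi\in C_c(\bbR^d)$ and $g\in\cG$, Lemma \ref{medaglia} says the sequence $u_\e(x):=\varphi(x)g(\t_{x/\e}\tilde\o)$ satisfies $u_\e\stackrel{2}{\rightharpoonup}\varphi(x)g(\o)$, so by \eqref{gingsen} the testing integral converges to the correct limit, i.e. \eqref{rabarbaro} holds. The boundedness of $\{v_\e\}$ is part of the definition of strong 2-scale convergence, hence $v_\e\stackrel{2}{\rightharpoonup}v$. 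The norm convergence \eqref{orchino} then follows by plugging $u_\e:=v_\e$ into \eqref{gingsen}.

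\textbf{Reverse direction.} Assume $v_\e\stackrel{2}{\rightharpoonup}v$ and \eqref{orchino}. Let $u_\e\stackrel{2}{\rightharpoonup}u$; I must show $\int v_\e u_\e\,d\mu^\e_{\tilde\o}\to\int v\,u\,m\,dx\,d\cP_0$. Since $\cG$ is dense in $L^2(\cP_0)$ and $C_c(\bbR^d)$ is dense in $L^2(dx)$, given $\d>0$ I pick a finite combination $\Phi(x,\o)=\sum_{i=1}^k a_i\varphi_i(x)g_i(\o)$ with $\varphi_i\in C_c(\bbR^d)$, $g_i\in\cG$, $a_i\in\bbR$, and $\|v-\Phi\|_{L^2(m\,dx\times\cP_0)}<\d$. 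Set $\Phi_\e(x):=\Phi(x,\t_{x/\e}\tilde\o)$ and split
\[
\int v_\e u_\e\,d\mu^\e_{\tilde\o}=\int(v_\e-\Phi_\e)u_\e\,d\mu^\e_{\tilde\o}+\int\Phi_\e u_\e\,d\mu^\e_{\tilde\o}.
\]
The second term converges to $\int\Phi\,u\,m\,dx\,d\cP_0$ by linearity and the hypothesis $u_\e\stackrel{2}{\rightharpoonup}u$ tested against each $\varphi_i(x)g_i(\t_{x/\e}\tilde\o)$ (allowed since $g_i\in\cG$).

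\textbf{Main step: control of the remainder.} The obstacle is bounding the first term uniformly as $\e\da 0$, for which I need to show
\[
\lim_{\e\da 0}\|v_\e-\Phi_\e\|^2_{L^2(\mu^\e_{\tilde\o})}=\int d\cP_0(\o)\int dx\,m\,(v-\Phi)^2(x,\o).
\]
Expand the square: $\|v_\e-\Phi_\e\|^2=\|v_\e\|^2-2\int v_\e\Phi_\e\,d\mu^\e_{\tilde\o}+\|\Phi_\e\|^2$. The first piece converges by assumption \eqref{orchino}. The cross term converges by $v_\e\stackrel{2}{\rightharpoonup}v$ applied to the test function $\Phi_\e$ (again a finite combination of $\varphi_i g_i$). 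For $\|\Phi_\e\|^2=\sum_{i,j}a_ia_j\int(\varphi_i\varphi_j)(x)(g_ig_j)(\t_{x/\e}\tilde\o)\,d\mu^\e_{\tilde\o}(x)$, I apply Proposition \ref{prop_ergodico} term by term, using that $\tilde\o\in\cA[g_ig_j]$ by (S1) in Definition \ref{budda}; this gives the limit $\int d\cP_0\int m\,dx\,\Phi^2$. Combining, $\|v_\e-\Phi_\e\|^2\to\|v-\Phi\|^2_{L^2(m\,dx\times\cP_0)}\leq\d^2$.

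\textbf{Conclusion.} By Cauchy--Schwarz,
\[
\Bigl|\int(v_\e-\Phi_\e)u_\e\,d\mu^\e_{\tilde\o}\Bigr|\leq\|v_\e-\Phi_\e\|_{L^2(\mu^\e_{\tilde\o})}\|u_\e\|_{L^2(\mu^\e_{\tilde\o})},
\]
and since $\{u_\e\}$ is bounded, the $\varlimsup$ of the left-hand side is at most $C\d$. Likewise $|\int(v-\Phi)u\,m\,dx\,d\cP_0|\leq\d\|u\|_{L^2(m\,dx\times\cP_0)}$. Letting $\d\da 0$ yields the desired identity.

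The main obstacle is the convergence $\|\Phi_\e\|^2\to\|\Phi\|^2$, which requires precisely the Birkhoff-type theorem of Proposition \ref{prop_ergodico} applied to the products $g_ig_j$, and this is exactly why the set $\cG$ was built to be closed under pairwise products in the sense that $\cA[g_ig_j]\subset\O_{\rm typ}$ for all $g_i,g_j\in\cG$.
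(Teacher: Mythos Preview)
Your proof is correct. The forward direction matches the paper exactly. For the reverse direction, however, you and the paper take genuinely different routes.

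The paper argues by a polarization trick: along an arbitrary subsequence, extract a further subsequence so that both $\int v_{\e_k} u_{\e_k}\,d\mu^{\e_k}_{\tilde\o}$ and $\int u_{\e_k}^2\,d\mu^{\e_k}_{\tilde\o}$ converge, say to $\a$ and $\b$. Then apply Lemma~\ref{alba} to the combination $v_\e+t u_\e$ (which weakly 2-scale converges to $v+tu$), obtaining after expanding the square and using \eqref{orchino} the inequality $2t\a+t^2\b\geq 2t\la v,u\ra+t^2\|u\|^2$ for all $t\in\bbR$. Dividing by $t$ and sending $t\to 0^\pm$ forces $\a=\la v,u\ra$.

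Your approach instead approximates $v$ directly by simple functions $\Phi$ and shows $\|v_\e-\Phi_\e\|_{L^2(\mu^\e_{\tilde\o})}\to\|v-\Phi\|$, reducing everything to the ergodic convergence of $\|\Phi_\e\|^2$ via (S1). This is more constructive and avoids the subsequence extraction; in effect you are re-proving the content of Lemma~\ref{alba} inside the argument rather than invoking it as a black box. The paper's route is shorter once Lemma~\ref{alba} is in hand, while yours makes the dependence on (S1) and Proposition~\ref{prop_ergodico} more transparent. Both are standard strategies for upgrading ``weak plus norm'' to ``strong'' in a two-scale setting.
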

\begin{proof}
 If $v_\e \to v$, then
 $v_\e \stackrel{2}{\rightharpoonup} v$ by 
    Lemma \ref{medaglia}.  By then applying \eqref{gingsen} with $u_\e:=v_\e$, we get \eqref{orchino}. The opposite  implication corresponds to \cite[Item~(iv),~p.~984]{Z} and the proof there can be easily adapted to our setting due to Lemma \ref{alba}.
For completeness  we have provided it in 
 Appendix \ref{aggiuntina}. 
\end{proof}


\begin{Lemma}\label{compatto1}
 Let $\tilde \o\in \O_{\rm typ}$.    Then, given a bounded family of functions $v_\e\in L^2 ( \mu^\e_{\tilde{\o}})$,  there exists a subsequence $\{v_{\e_k}\}$ such that
 $ v_{\e_k}
  \stackrel{2}{\rightharpoonup}   v $ for some 
 $  v \in L^2(m dx \times \cP_0 )$ with $\|  v\|_{   L^2(m dx \times \cP_0 )}\leq \limsup_{\e\da 0} \|v_\e\|_{L^2( \mu^\e_{\tilde{\o}})}$.
 \end{Lemma}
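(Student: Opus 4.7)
My plan is to extract a weak limit via a Cantor diagonal argument applied to a countable family of test functions, then identify the limit by Riesz representation on $L^2(m\,dx\times\cP_0)$. Set $C:=\limsup_{\e\downarrow 0}\|v_\e\|_{L^2(\mu^\e_{\tilde\o})}<+\infty$ and fix a countable subset $D_c\subset C_c(\bbR^d)$ that is dense in the $L^2(dx)$ norm. Then $\{\varphi\cdot g:\varphi\in D_c,\,g\in\cG\}$ is a countable family whose linear span $V$ is dense in $L^2(m\,dx\times\cP_0)$, since $\cG$ is dense in $L^2(\cP_0)$ by construction and $D_c$ is dense in $L^2(dx)$.

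For each such pair consider $L_\e(\varphi,g):=\int d\mu^\e_{\tilde\o}(x)\,v_\e(x)\,\varphi(x)\,g(\t_{x/\e}\tilde\o)$. Cauchy--Schwarz bounds $|L_\e(\varphi,g)|$ by $\|v_\e\|_{L^2(\mu^\e_{\tilde\o})}\cdot\bigl(\int d\mu^\e_{\tilde\o}\,\varphi^2\, g(\t_{\cdot/\e}\tilde\o)^2\bigr)^{1/2}$. Since $\tilde\o\in\O_{\rm typ}$ satisfies (S1), Proposition \ref{prop_ergodico} applied to $g^2\in L^1(\cP_0)$ shows the second factor converges to $\|\varphi g\|_{L^2(m\,dx\times\cP_0)}$, so $\e\mapsto L_\e(\varphi,g)$ is bounded. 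A Cantor diagonal extraction then produces a subsequence $\e_k\downarrow 0$ along which $L_{\e_k}(\varphi,g)$ converges for every $(\varphi,g)$ in the countable family; call $\Lambda$ the resulting limit functional extended by linearity to $V$. For a finite combination $\Phi=\sum_i a_i\varphi_i g_i\in V$, the same Cauchy--Schwarz estimate combined with (S1) applied to each product $g_ig_j\in L^1(\cP_0)$ yields $|\Lambda(\Phi)|\leq C\|\Phi\|_{L^2(m\,dx\times\cP_0)}$ in the limit. By density of $V$, $\Lambda$ extends uniquely to a bounded linear functional on $L^2(m\,dx\times\cP_0)$ of norm at most $C$, and the Riesz representation theorem furnishes the desired $v\in L^2(m\,dx\times\cP_0)$ with $\|v\|_{L^2(m\,dx\times\cP_0)}\leq C$.

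It remains to verify \eqref{rabarbaro} for every $\varphi\in C_c(\bbR^d)$ and every $g\in\cG$, not only for $\varphi\in D_c$. Given such $\varphi$, I approximate it by $\varphi_n\in D_c$ with $\varphi_n\to\varphi$ in $L^2(dx)$ and write $L_{\e_k}(\varphi,g)-\Lambda(\varphi g)$ as the telescoping sum $L_{\e_k}((\varphi-\varphi_n)g)+(L_{\e_k}(\varphi_n,g)-\Lambda(\varphi_n g))+\Lambda((\varphi_n-\varphi)g)$. Letting first $k\to\infty$ with $n$ fixed kills the middle term by construction of $\Lambda$, while the outer two terms are each bounded, in the limit, by $C\|(\varphi-\varphi_n)g\|_{L^2(m\,dx\times\cP_0)}$ via the Cauchy--Schwarz/(S1) argument; both vanish as $n\to\infty$. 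The delicate point I expect to watch most carefully is precisely that elements of $\cG$ are not assumed bounded, which forces every continuity estimate on $L_\e$ to be controlled in $L^2$ norm rather than sup norm; this is why the weighted ergodic convergence of the products $g_ig_j$ granted by $\tilde\o\in\O_{\rm typ}$ through (S1) is the crucial ingredient, both in proving the bound $\|\Lambda\|\leq C$ and in the final approximation step.
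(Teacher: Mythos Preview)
Your proof is correct and follows essentially the same approach as the paper's: a diagonal extraction over a countable family of test functions $\varphi(x)g(\o)$, a bound $|\Lambda(\Phi)|\leq C\|\Phi\|_{L^2(m\,dx\times\cP_0)}$ obtained by expanding $\|\Phi(\cdot,\t_{\cdot/\e}\tilde\o)\|_{L^2(\mu^\e_{\tilde\o})}^2$ and applying (S1) to the products $g_ig_j$, then Riesz representation. The only cosmetic difference is in the passage from $\varphi\in D_c$ to general $\varphi\in C_c(\bbR^d)$: the paper equips its countable set $\cV$ with bump functions $\phi_N$ and approximates in uniform norm with supports contained in a fixed box, whereas you approximate in $L^2(dx)$ and control $L_{\e_k}((\varphi-\varphi_n)g)$ directly via Cauchy--Schwarz and Proposition~\ref{prop_ergodico} applied to $g^2$ with test function $(\varphi-\varphi_n)^2\in C_c(\bbR^d)$; both routes are valid and rely on exactly the same ingredient, namely $\tilde\o\in\cA[g^2]$ from (S1).
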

 The proof is similar to the proof of \cite[Prop.~2.2]{Z}. We give it for completeness since our definition of 2-scale convergence involves the set $\cG$. 
 
 \begin{proof}  Since $\{v_\e\}$ is bounded in $L^2(\mu_{\tilde \o}^\e)$, there exist $C, \e_0$ such that $\| v_\e\|  _{L^2(\mu_{\tilde \o}^\e)}\leq C $ for $\e \leq \e_0$. We fix a countable set $\cV \subset C_c(\bbR^d)$ such that $\cV$ is dense in $L^2(m dx)$.
We call $\cL$ the family of functions $\Phi$ of the form  $\Phi(x,\o) =  \sum_{i=1}^r a_i \varphi_i(x) g_i(\o)$, where   $r\in \bbN_+$, 
 $g_i \in \cG$, $\varphi_i \in \cV$ and  $a_i \in \bbQ$. Note that $\cL$ is a  dense subset  of $L^2( m dx \times \cP_0)$.  By Schwarz inequality we have  
 \begin{equation}\label{cocchi}
\Big| \int   d \mu _{\tilde \o}^\e (x)  v_\e (x)  \Phi  (x ,  \t _{x/\e} \tilde{\o} ) \Big | \leq  C 
\Big[ \int d \mu _{\tilde \o}^\e (x)  \Phi  (x ,  \t _{x/\e} \tilde{\o} )  ^2  \Big]^{1/2}\,.
 \end{equation}
 By expanding the square in the r.h.s.,  
 since  $\tilde \o \in \O_{\rm typ}$ (cf. (S1) in Def. \ref{budda} and Prop. \ref{prop_ergodico}),    we have 
 \begin{multline}\label{CL}
\lim_{\e \da 0}  \int   d \mu _{\tilde \o}^\e (x) \Phi  (x ,  \t _{x/\e} \tilde{\o} )  ^2  \\
= 
\sum_ i \sum_j a_i a_j \int dx\,m\, \varphi_i(x) \varphi_j(x) \bbE_0[ g_i g_j]= \| 
\Phi\|^2_{ L^2( m dx \times \cP_0)}\,.
\end{multline}
As a first application of \eqref{CL} we get that, for $\e$ small, the l.h.s. of \eqref{cocchi} is bounded uniformly in $\e$, hence it admits a convergent subsequence. 
  Since $\cL$ is  a  countable family, by a diagonal procedure we can extract a subsequence $\e_k \da 0 $ such that the limit 
 \[
 F( \Phi):=  \lim _{k \to \infty}   \int d \mu _{\tilde \o}^{\e_k} (x) 
 v_{\e_k} (x)  \Phi  (x ,  \t _{x/\e_k} \tilde{\o} )   \] exists for any $\Phi \in \cL$  and it satisfies $| F(\Phi)| \leq C \| \Phi \|_{ L^2( m dx \times \cP_0) }$ by \eqref{cocchi} and \eqref{CL}. 
 Since $\cL$ is  a dense subset of  $L^2( m dx \times \cP_0)$, by Riesz's representation theorem  there exists a unique $v\in L^2( m dx \times \cP_0)$ such that $F(\Phi)= \int d\cP_0(\o) \int dx \,m \, \Phi(x, \o) v(x,\o)$ for any $\Phi \in \cL$. We have $\| v\| _{ L^2( m dx \times \cP_0) }\leq C$.
 As $\Phi(x,\o):= \varphi(x) g(\o) $ - with $\varphi \in \cV $ and $b \in \cG$ - belongs to $\cL$, we get
 that \eqref{rabarbaro} is satisfied along the subsequence $\{\e _k\}$ for any  $\varphi \in \cV $, $b \in \cG$. 
 It remains to show that we can indeed take   $\varphi \in \cC_c(\bbR^d) $. To this aim we observe that we can take $\cV$ fulfilling the following properties: (i) for each $N\in \bbN_+$ $\cV$ contains a function  $\phi_N\in C_c(\bbR^d)$ with values in $[0,1]$ and equal to $1$ on $[-N,N]^d$; (ii)
 each $\varphi\in C_c(\bbR^d) $ can be approximated in uniform norm  by functions $\psi_n\in \cV$ such  that   $\psi_n$ has support inside $[-N,N]^d$, 
 where  $N=N(\varphi)$ is the minimal integer for which $\varphi$ has support inside 
 $[-N,N]^d$.
  By Schwarz inequality and the boundedness of $\{v_\e\}$  we can estimate
 \[
\Big |  \int d \mu _{\tilde \o}^\e (x)  v_\e (x) [\varphi (x)-\psi_n] g ( \t _{x/\e} \tilde{\o} )
 \Big|^2 \leq C^2 \|\varphi - \psi_n\|  _\infty ^2\int d \mu _{\tilde \o}^\e (x) \phi_N (x)
 g ( \t _{x/\e} \tilde{\o} )^2\,.
 \]
 Since $\tilde \o \in \O_{\rm typ}\subset  \cA[g^2]$ for all $g\in \cG$ (cf.  (S1) in Def. \ref{budda}), by Prop. \ref{prop_ergodico} the last integral converges as $\e \to \infty$ to $C':= \int dx \, m  \phi_N \bbE_0[g^2]$. 
In particular,  using also  that $\psi_n \in \cV$, along the subsequence $\{ \e_k\}$ we have  
 \be
 \begin{split}  & \varlimsup_{\e \da 0} \int d \mu _{\tilde \o}^\e (x)  v_\e (x) \varphi (x) g ( \t _{x/\e} \tilde{\o} )\\ & \leq C'  \|\varphi - \psi_n\|  _\infty 
 + \varlimsup_{\e \da 0}\int d \mu _{\tilde \o}^\e (x)  v_\e (x) \psi _n(x)g ( \t _{x/\e} \tilde{\o} ) \\
 &=  C'    \|\varphi - \psi_n\|  _\infty  + \int d\cP_0(\o) \int dx \,m \,  v(x,\o)  \psi_n(x) g(\o)\,.
 \end{split}
 \en
 We now take the limit $n\to \infty$.
 By dominated convergence we conclude that, along the subsequence $\{ \e_k\}$,
 \be 
 \varlimsup_{\e \da 0} \int d \mu _{\tilde \o}^\e (x)  v_\e (x) \varphi (x) g ( \t _{x/\e} \tilde{\o} ) \leq \int d\cP_0(\o) \int dx \,m \,   v(x,\o) \varphi(x) g(\o)\,.
 \en
 A similar result holds with the liminf, thus implying that \eqref{rabarbaro}  holds along the subsequence $\{ \e_k\}$ for any $\varphi \in  C_c(\bbR^d)$ and $g \in \cG$.
 \end{proof}

\subsection{2-scale convergence of  $w_\e \in L^2 (\nu^\e_{\tilde \o})$} 
 \begin{Definition}
Given  $\tilde \o\in \O_{\rm typ}$, a family $w_\e \in L^2( \nu ^\e_{\tilde \o})$ and  a function  $w \in L^2 \bigl( m dx \times d\nu\bigr)$, we say that \emph{$w_\e$ is weakly 2-scale convergent to $w$}, and write $  w_\e \stackrel{2}{\rightharpoonup}w   $,  if   $\{w_\e\}$ is bounded
 in $L^2( \nu ^\e_{\tilde \o})$, i.e. $
 \varlimsup_{\e \da 0} \|  w_\e\|_{L^2( \nu ^\e_{\tilde \o})}<+\infty$,
   and
\begin{multline}\label{yelena}
 \lim _{\e\downarrow 0} \int   d \nu_{\tilde \o}^\e (x,z) w_\e (x,z ) \varphi (x) b ( \t _{x/\e} \tilde{\o},z )\\
=\int  dx\,m \int d \nu (\o, z) w(x, \o,z ) \varphi (x) b (\o,z )  \,,
\end{multline}
for any $\varphi \in C_c (\bbR^d)$ and  any $b \in \cH $. 
\end{Definition}



\begin{Lemma}\label{compatto2} Let $\tilde \o\in \O_{\rm typ}$.  Then, given a bounded family of functions $w_\e\in L^2 ( \nu^\e_{\tilde{\o}})$,  there exists a subsequence $\{w_{\e_k}\}$   such that  $w_{\e_k} \stackrel{2}{\rightharpoonup} w$ for some 
 $ w \in L^2(  m\,dx \times \nu )$ with  $\|  w\|_{   L^2(m dx \times \nu )}\leq \limsup_{\e\da 0} \|w_\e\|_{L^2( \nu^\e_{\tilde{\o}})}$.
\end{Lemma}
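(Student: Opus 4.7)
The plan is to mirror the diagonal-extraction/Riesz-representation argument used for Lemma \ref{compatto1}, adapting the bookkeeping to the measure $\nu^\e_{\tilde\o}$ and to test tensors of the form $\varphi(x)b(\o,z)$. Fix a countable set $\cV\subset C_c(\bbR^d)$ dense in $L^2(m\,dx)$ (containing the envelope functions $\phi_N$ used in the proof of Lemma \ref{compatto1}), and let $\cL$ denote the $\bbQ$-linear span of functions $(x,\o,z)\mapsto \varphi(x)b(\o,z)$ with $\varphi\in\cV$ and $b\in\cH$. Since $\cH$ is dense in $L^2(\nu)$ and $\cV$ is dense in $L^2(m\,dx)$, the family $\cL$ is dense in $L^2(m\,dx\times\nu)$.

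The key estimate is the Schwarz bound: for $\Phi(x,\o,z)=\sum_i a_i\varphi_i(x)b_i(\o,z)\in\cL$,
\begin{equation*}
\Big|\int d\nu^\e_{\tilde\o}(x,z)\,w_\e(x,z)\,\Phi(x,\t_{x/\e}\tilde\o,z)\Big|\le \|w_\e\|_{L^2(\nu^\e_{\tilde\o})}\Big(\int d\nu^\e_{\tilde\o}\,\Phi(x,\t_{x/\e}\tilde\o,z)^2\Big)^{1/2}.
\end{equation*}
Expanding the square produces sums of integrals of the form $\int d\nu^\e_{\tilde\o}(x,z)(\varphi_i\varphi_j)(x)(b_ib_j)(\t_{x/\e}\tilde\o,z)$. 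Since $\tilde\o\in\cA_1[b_ib_j]$ by (S2), Lemma \ref{cavallo}(ii) lets me rewrite each such integral as $\int d\mu^\e_{\tilde\o}(x)(\varphi_i\varphi_j)(x)\widehat{b_ib_j}(\t_{x/\e}\tilde\o)$. Because $\tilde\o\in\cA[\widehat{b_ib_j}]$ by (S3), Proposition \ref{prop_ergodico} applies, yielding convergence to $\sum_{i,j}a_ia_j\int dx\,m(\varphi_i\varphi_j)(x)\bbE_0[\widehat{b_ib_j}]=\|\Phi\|_{L^2(m\,dx\times\nu)}^2$, where the last equality uses $\bbE_0[\widehat{b_ib_j}]=\int d\nu(\o,z)(b_ib_j)(\o,z)$ from Lemma \ref{cavallo}(i).

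Setting $C:=\varlimsup_{\e\da 0}\|w_\e\|_{L^2(\nu^\e_{\tilde\o})}$, the functional $F_\e(\Phi):=\int d\nu^\e_{\tilde\o}(x,z)w_\e(x,z)\Phi(x,\t_{x/\e}\tilde\o,z)$ is thus bounded in $\e$ for each $\Phi\in\cL$, with eventual bound $C\|\Phi\|_{L^2(m\,dx\times\nu)}+o(1)$. By Cantor diagonal extraction on the countable family $\cL$, I obtain a subsequence $\e_k\da 0$ along which $F_{\e_k}(\Phi)\to F(\Phi)$ for every $\Phi\in\cL$, with $|F(\Phi)|\le C\|\Phi\|_{L^2(m\,dx\times\nu)}$. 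Density of $\cL$ together with Riesz representation then furnishes a unique $w\in L^2(m\,dx\times\nu)$ with $\|w\|_{L^2(m\,dx\times\nu)}\le C$ such that $F(\Phi)=\int dx\,m\int d\nu(\o,z)\,w(x,\o,z)\Phi(x,\o,z)$.

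It remains to upgrade the convergence \eqref{yelena} from $\varphi\in\cV$ to arbitrary $\varphi\in C_c(\bbR^d)$. Given such a $\varphi$ with support in $[-N,N]^d$, I pick $\phi_N\in\cV$ equal to $1$ on that cube and approximate $\varphi$ uniformly by $\psi_n\in\cV$ supported in $[-N,N]^d$. The Schwarz bound together with Lemma \ref{cavallo}(ii) gives
\begin{equation*}
|F_\e((\varphi-\psi_n)\otimes b)|^2\le C_\e^2\|\varphi-\psi_n\|_\infty^2\int d\mu^\e_{\tilde\o}(x)\,\phi_N(x)\,\widehat{b^2}(\t_{x/\e}\tilde\o),
\end{equation*}
and (S3) applied to $\widehat{b^2}$ shows the last factor converges to $\int dx\,m\,\phi_N(x)\bbE_0[\widehat{b^2}]<+\infty$. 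Sending $\e=\e_k\da 0$ and then $n\to\infty$ produces \eqref{yelena} for all $\varphi\in C_c(\bbR^d)$ and $b\in\cH$. The main obstacle is the same one that made (S3) indispensable: after Schwarz one is forced to evaluate spatial means of $\widehat{b_ib_j}(\t_{x/\e}\tilde\o)$ rather than of bounded cylinder functions, and only the careful setup of $\O_{\rm typ}$ (closure of $\cH$ under products, together with (S2)–(S3)) guarantees that a purely pointwise ergodic theorem applies to $\tilde\o$.
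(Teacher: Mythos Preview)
Your argument is correct and follows essentially the same route as the paper's proof: Schwarz inequality, rewriting $\int d\nu^\e_{\tilde\o}\,(\varphi_i\varphi_j)(b_ib_j)$ as $\int d\mu^\e_{\tilde\o}\,(\varphi_i\varphi_j)\widehat{b_ib_j}$ via (S2) and Lemma~\ref{cavallo}, then applying Proposition~\ref{prop_ergodico} through (S3), followed by diagonal extraction and Riesz representation. One small wording issue: $\cH$ is not assumed closed under products; what you actually use (and what suffices) is that for $b,b'\in\cH$ the product $bb'$ lies in $L^1(\nu)$ and that $\O_{\rm typ}$ is built so that $\tilde\o\in\cA_1[bb']\cap\cA[\widehat{bb'}]$.
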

\begin{proof} 
The proof of Lemma \ref{compatto2} is similar  to the proof of Lemma \ref{compatto1}. We only give some comments on some new steps. One has to replace $L^2( m\, dx \times \cP_0)$ with $L^2(  m\,dx \times \nu )$.  Now  $\cL$ is  the family of functions $\Phi$ of the form  $\Phi(x,\o,z) =  \sum_{i=1}^r a_i \varphi_i(x) b_i(\o,z)$, where   $r\in \bbN_+$, 
 $b_i \in \cH $, $\varphi \in \cV$ and  $a_i \in \bbQ$. Above $\cV$ is a countable dense subset of $L^2(mdx)$ given by functions on $C_c(\bbR^d)$.  Then $\cL$ is a countable dense subset of $L^2(  m\,dx \times \nu )$. 
Due to  Def. \ref{artic}, Lemma \ref{cavallo} and since $\tilde \o \in \O_{\rm typ}$ (cf. (S2) in Def. \ref{budda}), we can write
 \begin{multline}
 \int d \nu _{\tilde \o}^\e (x,z)   \Phi  (x ,  \t _{x/\e} \tilde{\o},z  )  ^2  \\ = \sum _i \sum_j a_i a_j 
 \int d \nu _{\tilde \o}^\e (x,z)  \varphi_i(x) \varphi_j (x) b_i (  \t _{x/\e} \tilde{\o},z) b_j (  \t _{x/\e} \tilde{\o},z)\\
 = \sum _i \sum_j a_i a_j  \int d \mu _{\tilde \o}^\e (x) \varphi_i(x) \varphi_j (x) \widehat{b_i b_j}(  \t _{x/\e} \tilde{\o})
  \,.
  \end{multline}
 Since $\tilde \o \in \O_{\rm typ}$ (cf. (S3) in Def. \ref{budda})  by Prop. \ref{prop_ergodico} we have 
   \begin{multline}
\lim_{\e \da 0}  \int  d\nu _{\tilde \o}^\e (x,z)   \Phi  (x ,  \t _{x/\e} \tilde{\o},z  )  ^2   \\
= 
\sum_ i \sum_j a_i a_j \int _{\bbR^d} dx\,m\, \varphi_i(x) \varphi_j(x) \bbE_0[ \widehat{b_i b_j}]= \| 
\Phi\|^2_{ L^2( m dx \times \nu)}\,.
\end{multline}
Above, to get the last identity,  we have used that $\bbE_0[ \widehat{b_i b_j}]= \int d \nu  b_i b_j$.
\end{proof}




\section{Cut-off for functions  $v_\e\in L^2(\mu^\e_{\tilde \o})$}\label{cut-off1}

We recall that  $\bbN_+$ denotes the  set of positive integers. Recall \eqref{taglio}.
\begin{Lemma}\label{lemma1}
Let $\tilde \o \in \O_{\rm typ}$ and let $\{v_\e\}$ be a family of functions such that   $v_\e\in L^2(\mu^\e_{\tilde \o})$ and 
$\varlimsup_{\e\da 0} \| v_\e\|_{L^2(\mu^\e_{\tilde \o})}<+\infty$. Then  there exist functions $v, v_M\in L^2 (m dx \times \cP_0) $ with $M$ varying in $\bbN_+$ such that 
\begin{itemize}
\item[(i)]  $v_\e \stackrel{2}{ \toup}  v$ and $[v_\e ]_M\stackrel{2}{ \toup}  v_M$ for all $M\in \bbN_+$, along a subsequence $\{\e_k\}$;
\item[(ii)] for any $\varphi \in C_c(\bbR^d) $ and $u\in \cG$ it holds
\begin{multline}\label{queen1}
\lim_{M \to \infty}\int dx \,m \int d\cP_0 (\o) v_M(x,\o) \varphi (x) u(\o)\\
=
\int dx\,  m \int d \cP_0(\o) v(x,\o) \varphi(x) u(\o)\,.
\end{multline}
\end{itemize}
\end{Lemma}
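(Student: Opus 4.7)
The plan is to combine 2-scale compactness with a diagonal extraction for part (i), and to establish part (ii) by truncating $u\in\cG$ and exploiting an elementary cubic pointwise bound.

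For (i), I would first apply Lemma~\ref{compatto1} to produce a subsequence along which $v_\e\stackrel{2}{\toup} v$ for some $v\in L^2(m\,dx\times\cP_0)$. Since $|[v_\e]_M|\le |v_\e|$ keeps $\{[v_\e]_M\}_\e$ bounded in $L^2(\mu^\e_{\tilde\o})$, Lemma~\ref{compatto1} also yields, for every fixed $M$, a 2-scale limit $v_M$. A standard diagonal extraction over the countable index set $\bbN_+$ then produces a single subsequence $\{\e_k\}$ along which $v_{\e_k}\stackrel{2}{\toup} v$ and $[v_{\e_k}]_M\stackrel{2}{\toup} v_M$ simultaneously for every $M$, with the uniform bound $\|v_M\|_{L^2(m\,dx\times\cP_0)}\le C:=\varlimsup_k\|v_{\e_k}\|_{L^2(\mu^{\e_k}_{\tilde\o})}$.

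For (ii), I would fix $\varphi\in C_c(\bbR^d)$ and $u\in\cG$ and split
\[
\int(v-v_M)\varphi\, u\, dx\,m\,d\cP_0 \;=\; \int(v-v_M)\varphi\,[u]_N\, dx\,m\,d\cP_0 \;+\; \int(v-v_M)\varphi\,(u-[u]_N)\,dx\,m\,d\cP_0.
\]
The second piece is controlled by Cauchy--Schwarz: since $\|v\|_{L^2},\|v_M\|_{L^2}\le C$, it is bounded by $2C\|\varphi\|_{L^2(m\,dx)}\|u-[u]_N\|_{L^2(\cP_0)}$, which tends to $0$ as $N\to\infty$ uniformly in $M$. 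For the first piece, $[u]_N\in\cG$ by the closure of $\cG$ under truncation, so the 2-scale convergences of $v_{\e_k}$ and $[v_{\e_k}]_M$ identify it with $-\lim_k\int d\mu^{\e_k}_{\tilde\o}\,(v_{\e_k}-[v_{\e_k}]_M)\,\varphi(x)\,[u]_N(\t_{x/\e_k}\tilde\o)$. Using the pointwise trick $|v_{\e_k}-[v_{\e_k}]_M|\le |v_{\e_k}|\mathds{1}_{\{|v_{\e_k}|>M\}}\le v_{\e_k}^2/M$ together with $|[u]_N|\le N$, this is bounded in modulus by $(N\|\varphi\|_\infty/M)\,\varlimsup_k\|v_{\e_k}\|_{L^2(\mu^{\e_k}_{\tilde\o})}^2\le NC^2\|\varphi\|_\infty/M$. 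Given $\delta>0$, I would choose $N$ large to make the second piece $<\delta/2$, and then send $M\to\infty$ with $N$ fixed to bring the first piece under $\delta/2$.

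The main obstacle is the absence of any uniform higher-moment or equi-integrability control on $\{v_\e\}$: this prevents a direct dominated-convergence argument exchanging $\lim_\e$ and $\lim_M$ in the 2-scale integrals. The key idea circumventing it is the elementary inequality $|a|\mathds{1}_{\{|a|>M\}}\le a^2/M$, which converts the missing higher moments into a $1/M$ factor as soon as a uniformly bounded test function is available---and uniform boundedness is secured precisely by first truncating $u$ to $[u]_N$, which is legal because $\cG$ is closed under truncation.
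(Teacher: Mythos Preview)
Your proposal is correct and follows essentially the same route as the paper: compactness plus a diagonal extraction for (i), and for (ii) the same three-term splitting via the truncation $[u]_N\in\cG$, with Cauchy--Schwarz controlling the $(u-[u]_N)$ piece and the pointwise bound $|v_\e-[v_\e]_M|\le v_\e^2/M$ handling the truncated piece. Apart from a harmless sign slip in the displayed $-\lim_k$ (no minus is needed), your argument matches the paper's Claims~\ref{cipolla1} and~\ref{cipolla2} line by line.
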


\begin{proof}
Without loss, we assume that 
 $\| v_\e\|_{L^2(\mu^\e_{\tilde \o})}\leq C_0 <+\infty$ for all $\e$. 
We set $v^\e_M:= [v_\e]_M$. Since  $\| v^\e_M\|_{L^2(\mu^\e_{\tilde \o})}\leq \|v_\e\|_{L^2(\mu^\e_{\tilde \o})}\leq C_0$, Item (i) follows from Lemma \ref{compatto1} and a diagonal procedure.

Just  to simplify the notation, we assume that the $2$-scale convergence in Item (i) takes place for $\e\da 0$ (avoiding in this way to specify  continuously the subsequence $\{\e_k\}$).
Let us define $F( \bar v, \bar \varphi, \bar u ):= \int dx\,m \int \cP_0(d\o) \bar v (x,\o) \bar \varphi (x) \bar u (\o)$. Then Item  (ii) corresponds to  the limit
\begin{equation}
\label{pavimento}
\lim_{M\to \infty}  F(v_M , \varphi, u )= F(v, \varphi, u)\qquad \forall \varphi \in C_c (\bbR^d)\,,\; \forall u \in \cG\,.
\end{equation}
We fix  functions $\varphi, u$ as in \eqref{pavimento} and set $u_k:=[u]_k$ for all $k \in \bbN_+$. By definition of $
\cG$, we have $u_k \in \cG$ for all $k$ (see Section \ref{topo}).

\begin{Claim}\label{cipolla1}
For each $k,M\in \bbN_+$ it holds
\begin{align}
& | F(v,\varphi, u) - F(v, \varphi, u_k)| \leq C_0 \|\varphi \|_{L^2(mdx)} \|u-u_k\|_{L^2(\cP_0)}\,,\label{ricola1}\\
& | F(v_M,\varphi, u) - F(v_M, \varphi, u_k)| \leq C_0 \|\varphi \|_{L^2(mdx)} \|u-u_k\|_{L^2(\cP_0)}\,.\label{ricola2}
\end{align}
\end{Claim}
\begin{proof}
By Schwarz inequality 
\begin{align*}
& | F(v,\varphi, u) - F(v, \varphi, u_k)|= \Big| \int dx\, m \int d\cP_0 (\o) v(x,\o) \varphi (x) ( u-u_k) (
\o) \Big| \\
& \leq\|   v\| _{L^2 (m dx \times \cP_0) }   \|   \varphi (u-u_k) \| _{L^2 (m dx \times \cP_0) }  \,.
\end{align*}
To get \eqref{ricola1} it is then enough to apply Lemma \ref{alba} (or Lemma \ref{compatto1}) to bound $\|   v\| _{L^2 (m dx \times \cP_0) } $ by $ C_0$. The proof of \eqref{ricola2} is identical.
\end{proof}

\begin{Claim}\label{cipolla2}
For each $k,M\in \bbN_+$ it holds 
\begin{equation}\label{ricola3}
| F(v, \varphi, u_k)- F(v_M, \varphi, u_k)| \leq (k/M) \|\varphi \|_\infty C_0^2\,.
\end{equation}
\end{Claim}
\begin{proof} We note that $(v_\e -v^\e_M)(x)=0$ if $|v_\e(x)|\leq M$. Hence we can bound
\begin{equation}\label{viola} |v_\e - v^\e_M|(x) = |v_\e - v^\e_M|(x) \mathds{1}_{\{ |v_\e(x)|> M\}}\leq |v_\e - v^\e_M|(x)  \frac{ |v_\e(x)|}{M} \leq \frac{ v_\e(x)^2}{M} \,.
\end{equation}
We observe that $F(v, \varphi, u_k)=\lim _{\e\da 0} \int d \mu ^\e_{\tilde \o}(x) v_\e(x) \varphi (x) u_k (\t_{x/\e}\tilde \o) $,
   since $u_k\in \cG$ and $v_\e \stackrel{2}{\toup} v$.  A similar representation holds for $F(v_M, \varphi, u_k)$. As a consequence, and using \eqref{viola},  we get
\begin{multline*}
| F(v, \varphi, u_k)- F(v_M, \varphi, u_k)| \leq \varlimsup_{\e\da0} \int d \mu ^\e_{\tilde \o}(x)
\bigl|
(v_\e - v^\e_M)(x) \varphi(x) u_k (\t_{x/\e}\tilde \o) \bigr|\\
 \leq (k/M) \|\varphi\|_\infty \varlimsup_{\e\da 0}  \int d \mu ^\e_{\tilde \o}(x)  v_\e(x)^2\leq (k/M) \|\varphi \|_\infty C_0^2\,.
\end{multline*}
\end{proof}
We can finally conclude the proof of Lemma \ref{lemma1}. Given $\varphi\in C_c(\bbR^d)$ and $u\in \cG$, by applying Claims \ref{cipolla1} and \ref{cipolla2},  we can bound
\begin{multline}
 |F(v_M , \varphi, u )- F(v, \varphi, u)| \leq |F(v_M , \varphi, u )- F(v_M, \varphi, u_k)|\\
+|F(v_M , \varphi, u_k )- F(v, \varphi, u_k)|+|F(v , \varphi, u_k )- F(v, \varphi, u)| \\\leq
2 C_0 \|\varphi \|_{L^2(mdx)} \|u-u_k\|_{L^2(\cP_0)} +(k/M) \|\varphi \|_\infty C_0^2\,.
\end{multline}
The thesis then follows by taking first the limit $M\to \infty$ and afterwards the limit $k\to \infty$, and using   that $\lim_{k\to \infty} \|u-u_k\|_{L^2(\cP_0)}=0$.
\end{proof}


\section{Structure of the 2-scale weak limit  of a  bounded family in $H^1_{\o,\e}$: part I} \label{sec_risso} 
It is simple to check the following Leibniz rule for discrete gradient:
 \begin{equation}\label{leibniz} \nabla _\e  (fg)(x,z)
   =\nabla _\e  f (x, z ) g (x )+ f (x+\e z ) \nabla _\e g  ( x, z)
\end{equation}
where $f,g: \e \hat \o \to \bbR$.

The following  Proposition  \ref{risso}  is    the analogous of  \cite[Lemma 5.3]{ZP}. 

\begin{Proposition}\label{risso} Let $\tilde \o \in \O_{\rm typ}$. Let $\{v_\e\} $ be a  family of functions $v_\e \in H^1_{\tilde \o, \e} $ satisfying 
\begin{equation}\label{istria}
\limsup _{\e \da 0} \| v_\e\|_{L^2(\mu_{\tilde \o}^\e) } <+\infty\,,\qquad 
\limsup_{\e \da 0}  \|\nabla_\e v_\e\| _{L^2(\nu^\e_{\tilde \o})}  <+\infty \,.
\end{equation} Then, along a subsequence, we have that $v_\e \stackrel{2}{\toup} v$, where $v\in  L^2( m dx\times \cP_0)$ does not depend on $\o$: for $dx$--a.e. $x\in \bbR^d$  the function $\o \mapsto v(x,\o)$ is constant.
\end{Proposition}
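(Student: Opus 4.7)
The plan is to combine the compactness of $2$-scale convergence (Lemma~\ref{compatto1}) with the cutoff procedure of Section~\ref{cut-off1}, and to identify the weak $2$-scale limit by testing against the rich family $\cG_1\subset \cG$ of mean-zero functions $g_b = {\rm div}_* b$ with $b\in \cH_1$. Since $\cG_1$ is (by construction) dense in $\{w\in L^2(\cP_0):\bbE_0[w]=0\}$, showing that $\int v(x,\o)\,\varphi(x)\, g_b(\o)\, m\,dx\,d\cP_0(\o) = 0$ for all $\varphi \in C^1_c(\bbR^d)$ and all $b\in\cH_1$ will force $\o \mapsto v(x,\o)$ to be orthogonal to all mean-zero $L^2(\cP_0)$-functions, hence constant, for $dx$-a.e.~$x$.

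First I would invoke Lemma~\ref{lemma1} to pass to a subsequence $\{\e_k\}$ along which $v_\e \stackrel{2}{\rightharpoonup} v$ and $[v_\e]_M \stackrel{2}{\rightharpoonup} v_M$ simultaneously for every $M\in\bbN_+$, with the weak approximation property~\eqref{queen1}. The key step is to apply the integration-by-parts formula Lemma~\ref{lunetta} with the compactly supported function $u(x) := [v_\e]_M(x)\,\varphi(x)$; combined with the Leibniz rule~\eqref{leibniz}, this gives
\begin{align*}
\int d\mu^\e_{\tilde\o}(x)\, [v_\e]_M(x)\,\varphi(x)\, g_b(\t_{x/\e}\tilde\o)
&= -\e\!\int d\nu^\e_{\tilde\o}(x,z)\,\nabla_\e[v_\e]_M(x,z)\,\varphi(x)\, b(\t_{x/\e}\tilde\o,z) \\
&\quad -\e\!\int d\nu^\e_{\tilde\o}(x,z)\, [v_\e]_M(x+\e z)\,\nabla_\e\varphi(x,z)\, b(\t_{x/\e}\tilde\o,z).
\end{align*}

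Both terms on the right must vanish in the limit. For the first, Cauchy--Schwarz, the contraction $|\nabla_\e[v_\e]_M|\leq|\nabla_\e v_\e|$ from~\eqref{r101}, the uniform bound~\eqref{istria}, and the convergence $\|\varphi(x) b(\t_{x/\e}\tilde\o,z)\|^2_{L^2(\nu^\e_{\tilde\o})} = \int d\mu^\e_{\tilde\o}(x)\,\varphi(x)^2\,\widehat{b^2}(\t_{x/\e}\tilde\o) \to m\|b\|^2_{L^2(\nu)}\int\varphi^2\, dx$ (by item (S3) of Def.~\ref{budda} and Prop.~\ref{prop_ergodico}) yield a uniformly bounded Cauchy--Schwarz factor, which the prefactor $\e$ kills. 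For the second term, the cutoff furnishes $|[v_\e]_M(x+\e z)|\leq M$, and the mean-value bound gives $|\nabla_\e\varphi(x,z)|\leq \|\nabla\varphi\|_\infty|z|$; the symmetry of $\nu^\e_{\tilde\o}$ under $(x,z)\mapsto(x+\e z,-z)$ allows one to localize $x$ to a compact set $K\supset{\rm supp}(\varphi)$ up to a factor $2$, after which Cauchy--Schwarz reduces the task to controlling $\int d\mu^\e_{\tilde\o}\,\mathds{1}_K\,\l_2(\t_{x/\e}\tilde\o)$ and $\int d\mu^\e_{\tilde\o}\,\mathds{1}_K\,\widehat{\tilde b^2}(\t_{x/\e}\tilde\o)$, both of which converge to finite constants by (S9), (S3) and Prop.~\ref{prop_ergodico}. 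The prefactor $\e M$ then sends this term to zero for each fixed $M$.

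Taking $\e=\e_k \to 0$ on the left, using $[v_\e]_M \stackrel{2}{\rightharpoonup} v_M$ together with $g_b \in \cG_1\subset\cG$, yields $\int dx\,m\int d\cP_0(\o)\, v_M(x,\o)\,\varphi(x)\, g_b(\o) = 0$; sending $M\to\infty$ via~\eqref{queen1} of Lemma~\ref{lemma1}(ii) transfers the identity to $v$ in place of $v_M$. Density of $\cG_1$ in $\{w\in L^2(\cP_0):\bbE_0[w]=0\}$, the separability of $L^2(\cP_0)$ (Remark~\ref{separati}), and the arbitrariness of $\varphi\in C^1_c(\bbR^d)$ then imply that for $dx$-a.e.\ $x$, the function $\o\mapsto v(x,\o)$ is orthogonal in $L^2(\cP_0)$ to every mean-zero function, hence equals a constant. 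The main obstacle is precisely the second Leibniz term: $v_\e$ appears there at the shifted point $x+\e z$ with $z$ possibly very large, and the bare $L^2(\mu^\e_{\tilde\o})$-bound gives no usable control; the truncation $[v_\e]_M$ is indispensable to close the Cauchy--Schwarz estimate uniformly in $\e$ before the $\e$-prefactor acts, and only afterwards can one take $M\to\infty$ through the weak approximation~\eqref{queen1}.
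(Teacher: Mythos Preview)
Your proposal is correct and follows essentially the same strategy as the paper: reduce via Lemma~\ref{lemma1} to the truncated family $[v_\e]_M$, apply Lemma~\ref{lunetta} with the Leibniz rule, and show both resulting terms are $o(1)$ as $\e\da 0$. Two minor remarks: for the first term the paper instead passes to a further subsequence along which $\nabla_\e[v_\e]_M\stackrel{2}{\toup}w_M$ and uses \eqref{yelena} to get a finite limit for $C_1(\e)$, whereas your direct Cauchy--Schwarz bound is a small simplification; for the second term your citation of (S3) for $\widehat{\tilde b^2}$ should be (S10) (since $\tilde b$ need not lie in $\cH$ for $b\in\cH_1$), and the indicator $\mathds{1}_K$ should be replaced by a $C_c$ majorant $\phi$ to invoke Prop.~\ref{prop_ergodico}---exactly as the paper does via \eqref{paradiso}.
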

\begin{proof} 
Due to Lemma \ref{compatto1} we have that $v_\e \stackrel{2}{\toup} v \in  L^2( m dx\times \cP_0)$ along a subsequence $\{\e_k\}$.
Recall the definition of the functional sets $\cG_1, \cH_1$ given in Section \ref{topo}.
We  claim that $\forall \varphi \in C^1_c (\bbR^d)$ and $\forall \psi \in \cG_1$ it holds 
\begin{equation}\label{chiavetta}
\int dx\, m \int \cP_0(\o) v (x,\o) \varphi (x) \psi(\o)=0\,.
\end{equation}
Before proving our claim, let us explain how it leads to the thesis. Since $\varphi $ varies among $C^1_c(\bbR^d)$ while $\psi$ varies in a countable set,  \eqref{chiavetta} implies that,  $dx$--a.e.,  $ \int \cP_0(\o) v (x,\o)\psi(\o)
=0$ for any $\psi \in \cG_1$. We conclude that,  $dx$--a.e., $v(x,\cdot)$ is orthogonal in $L^2(\cP_0)$ to $\{ w \in L^2(\cP_0)\,:\, \bbE_0[ w]=0\}$ (due to the density of $\cG_1$), which is equivalent to the fact that $v(x,\o)= \bbE_0[ v(x, \cdot)]$ for $\cP_0$--a.a. $\o$.

It now remains to prove \eqref{chiavetta}. Since $\tilde\o \in \O_{\rm typ}$ and due to \eqref{istria}, along  a subsequence Items (i) and (ii) of Lemma \ref{lemma1} hold (we keep the same notation of Lemma \ref{lemma1}). Hence,  in oder to prove \eqref{chiavetta}, it is enough to prove for any $M$ that, given $\varphi \in C^1_c (\bbR^d)$ and $ \psi \in \cG_1$, 
\begin{equation}\label{chiavettaM}
\int dx  \,m \int \cP_0(\o) v_M (x,\o) \varphi (x) \psi(\o)=0\,.
\end{equation}
We write $v^\e_M:= [v_\e]_M$. Since $|\nabla _\e v^\e_M|\leq |\nabla_\e v_\e| $ (cf. \eqref{r101}),  by Lemma \ref{compatto2} (using \eqref{istria}) and a diagonal procedure,  at cost to refine the subsequence $\{\e_k\}$ we have for any $M$  that 
$\nabla_\e v^\e_M \stackrel{2}{\toup} w_M \in L^2( mdx \times \nu)$, along the subsequence $\{\e_k\}$. In what follows, we understand that the parameter  $\e$ varies in $\{\e_k\}$. Note in particular that, by \eqref{rabarbaro} and since  $\tilde \o \in \O_{\rm typ}$ and $\psi \in \cG_1\subset \cG$,
\begin{equation}\label{nord1}
\text{l.h.s. of }\eqref{chiavettaM}= \lim_{\e\da 0} \int d \mu^\e_{\tilde \o} (x) v_M^\e(x) \varphi (x) \psi( \t_{x/\e}\tilde \o) \,.
\end{equation}
Let us write $\psi= g_b$ with $b \in \cH_1$ (recall \eqref{lupetto}).  By Lemma \ref{lunetta},  since $\tilde \o \in \O_{\rm typ}$ (recall (S6)),   the r.h.s. of \eqref{nord1} equals the limit as $\e\da 0$ of 
\be \label{nord2}
 -\e \int d\nu ^\e_{\tilde \o} (x,z) \nabla_\e( v_M^\e \varphi ) (x,z) b (\t_{x/\e} \tilde{\o}, z)= -\e C_1(\e)+ \e C_2(\e)\,,
\en
where (due to \eqref{leibniz})
\begin{align*}
& C_1(\e):= \int d\nu ^\e_{\tilde \o} (x,z) \nabla_\e v_M^\e (x,z) \varphi(\e x) b (\t_{x/\e} \tilde{\o}, z)\,,\\
&  C_2(\e):=  \int d\nu ^\e_{\tilde \o} (x,z) v_M^\e ( x+\e z  ) \nabla_\e \varphi( x,z) b (\t_{x/\e} \tilde{\o}, z)   \,.
\end{align*}

Due to \eqref{nord1} and \eqref{nord2}, to get \eqref{chiavettaM} we only need to show that $\lim_{\e \da 0} \e C_1(\e)=0$ and $\lim_{\e \da 0} \e C_2(\e)=0$.
Since $\nabla_\e v^\e_M \stackrel{2}{\toup} w_M$ and $b \in \cH_1$, by \eqref{yelena} we have that 
\begin{equation}\label{nord3}
\lim_{\e \da0} C_1(\e) = \int dx \, m \int d\nu (\o, z) w_M (x, \o,z) \varphi ( x) b (\o, z)\,,
\en
which is finite, thus implying that $\lim_{\e\da 0} \e C_1(\e)=0$.

 We move to $C_2(\e)$.
Let $\ell$ be such that $\varphi (x)=0$ if $|x| \geq \ell$. Fix   $\phi \in C_c(\bbR^d)$ with values in $[0,1]$, such that $ \phi(x)=1$ for $|x| \leq \ell$ and $\phi(x)=0$ for $|x| \geq \ell+1$. Since $\nabla _\e \varphi(x,z)=0$ if $|x| \geq \ell$ and $|x+\e z|\geq \ell$, by   the mean value theorem we conclude that 
\be\label{paradiso}
\bigl | \nabla _\e \varphi(x,z) \bigr | \leq \| \nabla \varphi \|_\infty |z| \bigl( \phi(x)+ \phi(x+\e z) \bigr) \,.
\en
We apply the above bound and Schwarz inequality  to $C_2(\e)$ getting
\be\label{xar}
\begin{split}
|C_2(\e)| & \leq M \| \nabla \varphi \|_\infty \int d \nu ^\e _{\tilde \o} (x,z) |z|\,|  b (\t_{x/\e}\tilde \o, z) | \bigl( \phi(x)+ \phi(x+\e z) \bigr) \\
& \leq M \|\nabla \varphi \|_\infty  A_1(\e) ^{1/2} A_2 (\e) ^{1/2}\,,
\end{split}
\en
where  (see below for explanations)  
\begin{align*}
 A_1(\e):& =\int  \nu ^\e _{\tilde \o} (x,z) |z| ^2  \bigl( \phi(x)+ \phi(x+\e z)   \bigr)= 2\int  \nu ^\e _{\tilde \o} (x,z) |z| ^2   \phi(x)^2\,,\\
 A_2(\e):&=\int \nu ^\e _{\tilde \o} (x,z)   b (\t_{x/\e}\tilde \o, z)^2  \bigl( \phi(x) + \phi(x+\e z)  \bigr)\\& = 2  \int \nu ^\e _{\tilde \o} (x,z)( b^2+ \tilde b^2 ) (\t_{x/\e}\tilde \o, z) \phi(x)^2\,.
\end{align*}
To get the second identities in the above formulas for $A_1(\e)$ and $A_2(\e)$ we have applied 
 Lemma \ref{gattonaZ}--(i) to the forms $(\o,z)\mapsto |z|^2$ and  $(\o, z)\mapsto  b ^2(\o,z)$.

We now write  
\[ A_1(\e)= 2 \int d \mu ^\e _{\tilde \o}(x) \l_2( \t_{x/\e}\tilde \o) \phi(x)^2\,, \;A_2(\e)=  2 \int d \mu ^\e _{\tilde \o}(x)
\widehat{(  \tilde b^2+ b^2 )} (\t_{x/\e}\tilde \o) \phi(x)^2\,.\]
Note that the second identity follows from the fact that $\tilde \o \in \O_{\rm typ}$ (recall (S8)).
 At this point we use again that $\tilde \o \in \O_{\rm typ}$ (cf. (S7), (S9) and (S10)). Due to Prop. \ref{prop_ergodico} we conclude that $A_1 (\e), A_2(\e)$ have finite limits as $\e\da 0$, thus implying (cf. \eqref{xar}) that $\lim_{\e\da 0} \e C_2(\e)=0$. This concludes the proof of \eqref{chiavettaM}.
\end{proof}


\section{Cut-off for gradients $\nabla_\e v_\e$} \label{cut-off2}


\begin{Lemma}\label{lemma2}
Let $\tilde \o \in \O_{\rm typ}$ and let $\{v_\e\}$ be a family of functions  with    $v_\e\in H^1_{\tilde \o, \e} $, satisfying \eqref{istria}.
 Then  there exist functions $w, w_M\in L^2 (m dx \times \nu) $, with $M$ varying in $\bbN_+$, such that 
\begin{itemize}
\item[(i)] $\nabla_\e  v_\e \stackrel{2}{ \toup}  w$ and $\nabla_\e[ v_\e ]_M\stackrel{2}{ \toup}  w_M$ for all $M\in \bbN_+$;
\item[(ii)] for any $\varphi \in C^1_c(\bbR^d) $ and $b\in \cH$ it holds
\begin{multline}\label{queen2}
\lim_{M \to \infty}\int dx \,m \int d\nu  (\o,z) w_M(x,\o,z) \varphi (x) b(\o,z)\\
=
\int dx\,  m \int d \nu(\o,z) w(x,\o,z) \varphi(x) b(\o,z)\,.
\end{multline}
\end{itemize}
\end{Lemma}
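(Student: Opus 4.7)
The proof should parallel that of Lemma \ref{lemma1}, with $L^2(\nu)$ replacing $L^2(\cP_0)$, the test class $\cH$ replacing $\cG$, and $\nabla_\e[v_\e]_M$ replacing $[v_\e]_M$. For (i): the bound $|\nabla_\e[v_\e]_M|\le|\nabla_\e v_\e|$ from \eqref{r101} makes $\{\nabla_\e[v_\e]_M\}_\e$ bounded in $L^2(\nu^\e_{\tilde\o})$ uniformly in $M$; I would apply Lemma \ref{compatto2} to extract $w$ as a subsequential 2-scale weak limit of $\nabla_\e v_\e$, then apply it again to each $\{\nabla_\e[v_\e]_M\}_\e$ along that subsequence and diagonalize to produce $w_M$ along a common subsequence. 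Lemma \ref{compatto2} also gives $\|w\|_{L^2(m\,dx\times\nu)},\|w_M\|_{L^2(m\,dx\times\nu)}\le C_0:=\limsup_\e\|\nabla_\e v_\e\|_{L^2(\nu^\e_{\tilde\o})}$.

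For (ii), fix $\varphi\in C^1_c(\bbR^d)$, $b\in\cH$, and set $F(\bar w,\bar\varphi,\bar b):=\int dx\,m\int d\nu\,\bar w\,\bar\varphi\,\bar b$. The plan is to introduce a double truncation $b^{K,L}(\o,z):=[b]_K(\o,z)\mathds{1}_{\{|z|\le L\}}$, bounded by $K$ with $z$-support in $\overline{B(0,L)}$, noting that $\|b-b^{K,L}\|_{L^2(\nu)}\to 0$ as $K,L\to\infty$ by dominated convergence. Then, as in Lemma \ref{lemma1}(ii), the statement reduces to: (Claim 1) a Cauchy--Schwarz approximation $|F(\bar w,\varphi,b)-F(\bar w,\varphi,b^{K,L})|\le C_0\|\varphi\|_{L^2(m\,dx)}\|b-b^{K,L}\|_{L^2(\nu)}$ for $\bar w\in\{w,w_M\}$ (using the $L^2$-norm bound on $\bar w$), and (Claim 2) a cutoff estimate $\lim_{M\to\infty}|F(w,\varphi,b^{K,L})-F(w_M,\varphi,b^{K,L})|=0$ for each fixed $K,L$. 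Combining these via the triangle inequality, first taking $M\to\infty$ and then $K,L\to\infty$, gives \eqref{queen2}.

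The main obstacle is Claim 2. Starting from the pointwise bound $|\nabla_\e v_\e-\nabla_\e[v_\e]_M|(x,z)\le 2|\nabla_\e v_\e(x,z)|\mathds{1}_{\{|v_\e(x)|>M\}\cup\{|v_\e(x+\e z)|>M\}}$ given by \eqref{r101}, I would split the test against $\varphi(x)b^{K,L}(\t_{x/\e}\tilde\o,z)$ into the two alternatives and apply Cauchy--Schwarz in $L^2(\nu^\e_{\tilde\o})$. The $\{|v_\e(x)|>M\}$ piece is controlled by Chebyshev (giving $\mu^\e_{\tilde\o}(\{|v_\e|>M\})\le C_0^2/M^2$) combined with the ergodic convergence of $\int\mathds{1}_{\mathrm{supp}\,\varphi}\l_0^2(\t_{\cdot/\e}\tilde\o)\,d\mu^\e_{\tilde\o}$, valid since $\tilde\o\in\cA[\l_0^2]\subset\O_{\rm typ}$ by Assumption (A5). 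For the $\{|v_\e(x+\e z)|>M\}$ piece---where $\mathrm{supp}\,\varphi$ constrains $x$ while the large-value constraint involves $x+\e z$---I would invoke the $c_{x,y}=c_{y,x}$ symmetry of $\nu^\e_{\tilde\o}$ via Lemma \ref{gattonaZ}(i) to swap these coordinates; the truncation $\mathds{1}_{\{|z|\le L\}}$ is essential at this step since, after the swap, it forces $x\in\mathrm{supp}\,\varphi+\overline{B(0,L)}$ (for $\e\le 1$), reducing the bound to a Chebyshev/ergodic estimate over this enlarged compact set. This $z$-truncation is precisely the device needed to overcome the unbounded-jump-range obstruction flagged in the introduction, and it is why the test must be truncated both in value and in $z$.
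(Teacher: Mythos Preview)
Your outline for Item~(i) and Claim~1 matches the paper's argument exactly. The divergence is in Claim~2, where the paper and you take genuinely different routes to control the problematic piece carrying $\mathds{1}_{\{|v_\e(x+\e z)|>M\}}$.

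The paper uses only the value truncation $b_k:=[b]_k$, which lies in $\cH$ by construction. After the swap \eqref{micio2} the factor $\varphi(x+\e z)^2$ appears and the integral becomes $\e^d\sum_{y}|v_\e(\e y)|\sum_{a}c_{y,a}(\tilde\o)\varphi(\e a)^2$; applying Schwarz in the $y$-variable produces $\e^d\sum_{y}\bigl[\sum_{a}c_{y,a}\varphi(\e a)^2\bigr]^2$, which the paper bounds by $\|\varphi\|_\infty^2\,\e^d\sum_{a}\varphi(\e a)^2 F_*(\t_a\tilde\o)$ and controls via Assumption~(A6) and (S13). This yields $|H(w,\varphi,b_k)-H(w_M,\varphi,b_k)|\le (k/\sqrt{M})\,C_0^{3/2}C(\varphi)$.

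Your route via the additional $z$-truncation is cleaner in spirit and, if it worked, would avoid Assumption~(A6) altogether. But there is a real gap: the test function $b^{K,L}=[b]_K\,\mathds{1}_{\{|z|\le L\}}$ is not in $\cH$. The set $\cH$ is a fixed countable class, closed only under the value cutoff $b\mapsto[b]_M$ (see Section~\ref{topo}); there is no closure under $z$-truncation. Two-scale weak convergence is \emph{defined} by validity of \eqref{yelena} against $b\in\cH$, so you cannot write $F(w,\varphi,b^{K,L})=\lim_\e\int d\nu^\e_{\tilde\o}\,\nabla_\e v_\e\,\varphi\,b^{K,L}$ and likewise for $w_M$; without that representation your $\e$-level estimate on $\nabla_\e v_\e-\nabla_\e[v_\e]_M$ never connects to $F(w,\varphi,b^{K,L})-F(w_M,\varphi,b^{K,L})$. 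Extending \eqref{yelena} from $\cH$ to general $L^2(\nu)$-tests is not automatic either, since the needed ergodic limits $\int d\mu^\e_{\tilde\o}\,\varphi^2\,\widehat{(b-b_n)^2}(\t_{x/\e}\tilde\o)\to\|\varphi\|_{L^2(m\,dx)}^2\|b-b_n\|_{L^2(\nu)}^2$ require $\tilde\o\in\cA[\widehat{(b-b_n)^2}]$, which is only built into $\O_{\rm typ}$ for $b,b_n\in\cH$ via (S3).

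The gap is fixable: enlarge $\cH$ at the outset to include $b\,\mathds{1}_{\{|z|\le L\}}$ for $b\in\cH$, $L\in\bbN$ (still countable, still in $L^2(\nu)$), and verify that the corresponding (S2), (S3), (S8) conditions still give $\cP(\O_{\rm typ})=1$. With that modification your argument goes through and dispenses with (A6). As written, however, the paper's approach via $F_*$ is what closes the estimate within the given framework.
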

\begin{proof}
At cost to restrict to $\e$ small enough, we can assume that $ \| v_\e\|_{L^2(\mu_{\tilde \o}^\e) } \leq C_0$ and 
$ \|\nabla _\e v_\e\| _{L^2(\nu^\e_{\tilde \o})} \leq C_0$ for some $C_0<+\infty$ and all $\e>0$. Due to \eqref{r101}, the same holds respectively for $v^\e_M$ and $\nabla_\e v^\e_M$, for all $M\in \bbN_+$, where we have set $v^\e_M:= [ v_\e]_M$. In particular, by a diagonal procedure, due to Lemmas \ref{compatto1} and \ref{compatto2} along a subsequence  we have  that $v^\e_M \stackrel{2}{\toup} v_M$,  $v_\e  \stackrel{2}{\toup} v$, $\nabla_\e v^\e_M \stackrel{2}{\toup} w_M$ and $\nabla_\e v_\e \stackrel{2}{\toup} w$,  where $v_M,v\in L^2( m dx \times \cP_0)$, $w_M,w\in L^2( m dx \times \nu)$, simultaneously for all $M\in \bbN_+$. This proves in particular Item (i). We point out that we are not claiming that $v_M=[v]_M$, $w_M=[w]_M$. Moreover, from now on we restrict to $\e$ belonging to the above special subsequence without further mention.

\smallskip

We set $H(\bar w, \bar \varphi, \bar b):= \int dx m \int d\nu(\o,z) \bar w (x, \o, z) \bar \varphi (x) \bar b (\o,z)$. Then \eqref{queen2} corresponds to the limit $\lim _{M\to \infty} H(w_M, \varphi, b)=  H(w, \varphi, b)$. Here and below $b \in \cH$ and $\varphi \in C^1_c(\bbR^d)$. Recall that $b_k:=[b]_k \in \cH$ for any $k \in \bbN_+$ (see Section \ref{topo}).

\medskip

Reasoning exactly as in the proof of Claim \ref{cipolla1} we get the following bounds: 
\begin{Claim}\label{utti1}
For each $k,M\in \bbN_+$ it holds
\begin{align}
& | H(w,\varphi, b) - H(w, \varphi, b_k)| \leq C_0 \|\varphi \|_{L^2(mdx)} \|b-b_k\|_{L^2(\nu )}\,,\label{leoncino1}\\
& | H(w_M,\varphi, b) - H(w_M, \varphi, b_k)| \leq C_0 \|\varphi \|_{L^2(mdx)} \|b-b_k\|_{L^2(\nu)}\,.\label{leoncino2}
\end{align}
\end{Claim}

\begin{Claim}\label{utti2} For any $k \in \bbN_+$, it holds
\begin{equation}
| H(w, \varphi, b_k)- H( w_M, \varphi, b_k) | \leq \frac{k}{\sqrt{M}} C_0^{3/2} C(\varphi)\,,
\end{equation}
where $C(\varphi)$ is a positive constant depending only on $\varphi$.
\end{Claim}
\begin{proof} In what follows $C(\varphi)$ is a positive constant, depending at most on $\varphi$, which can change from line to line. We note that $\nabla_\e v_\e (x,z) = \nabla_\e v_M^\e(x,z)$ if $|v_\e(x)|\leq M$ and $|v_\e(x+\e z)|\leq M$. Moreover, by \eqref{r102}, we have  $ | \nabla _\e v_\e - \nabla_\e v_M^\e|\leq | \nabla _\e v_\e|$. Hence we can bound
\begin{equation}
\bigl| \nabla_\e v_\e - \nabla_\e v^\e_M\bigr| (x,z) \leq | \nabla_\e v_\e|(x,z)\bigl( \mathds{1} _{\{ |v_\e(x)|\geq M\}} + \mathds{1} _{\{ |v_\e(x+\e z) |\geq M\} }\bigr)\,.
\end{equation}
Due to the above bound  we can estimate (see comments below)
 \begin{equation}\label{mat}
\begin{split}
& | H(w, \varphi, b_k)- H(w_M, \varphi, b_k) | \\
&= \bigl| 
\lim_{\e \da 0} \int d \nu^\e_{\tilde \o} (x,z) \bigl( \nabla_\e v_\e - \nabla_\e v^\e_M \bigr) (x,z) \varphi(x) b_k( \t_{x/\e} \tilde \o, z) \bigr|\\
&\leq k\, \varlimsup_{\e \da 0} \int d \nu^\e_{\tilde \o} (x,z) | \nabla_\e v_\e|(x,z)\bigl( \mathds{1} _{\{ |v_\e(x)|\geq M\}}+ \mathds{1} _{\{ |v_\e(x+\e z) |\geq M\} }\bigr) |\varphi(x) |\,.
\end{split}
\end{equation}
Note that the identity in \eqref{mat}  follows  from \eqref{yelena} since $b_k \in \cH$ (recall that  $\tilde \o \in \O_{\rm typ}$,  $\nabla_\e v^\e_M \stackrel{2}{\toup} w_M$,  $\nabla_\e v_\e \stackrel{2}{\toup} w$).
By Schwarz inequality  we have 
\be\label{mat1}
 \int d \nu^\e_{\tilde \o} (x,z) | \nabla_\e v_\e|(x,z) \mathds{1} _{\{ |v_\e(x)|\geq M\}}  |\varphi(x) |\leq C_0 A(\e)^{1/2}\,,
\en
where, by applying a Chebyshev-like estimate and Schwarz inequality,
\begin{equation*}
\begin{split}
A(\e):& = \int  d \nu^\e_{\tilde \o} (x,z)  \mathds{1} _{\{ |v_\e(x)|\geq M\}} \varphi(x) ^2 
\leq  M^{-1} \int d \mu ^\e _{\tilde \o} (x) |v_\e (x)| \varphi (x) ^2 \l_0 (\t_{x/\e} \tilde \o)\\
& \leq M^{-1} \| v_\e\|_{ L^2(  \mu ^\e _{\tilde \o} )} \Big[   \int d \mu ^\e _{\tilde \o} (x)  \varphi(x)^4 \l_0 ^2 ( \t_{x/\e} \tilde \o) 
\Big]^{1/2}\,.
\end{split}
\end{equation*}
Since $\tilde \o \in \O_{\rm typ}$ (cf. (S11), (S12)  and Prop. \ref{prop_ergodico}),  $  \int d \mu ^\e _{\tilde \o} (x)  \varphi(x)^4 \l_0 ^2 ( \t_{x/\e} \tilde \o) $ has  finite limit as $\e \da 0$.
As a consequence, we get 
\be
\label{galley1}
\varlimsup_{\e \da 0} A(\e) \leq ( C_0/M) C(\varphi)\,.
\en
Reasoning as above we have 
\be\label{mat2}
 \int d \nu^\e_{\tilde \o} (x,z) | \nabla_\e v_\e|(x,z) \mathds{1} _{\{ |v_\e(x+ \e z )|\geq M\}}  |\varphi(x) |\leq C_0 B(\e)^{1/2}\,,
\en
where (applying also  \eqref{micio2} for the map $(\o,z) \mapsto 1$) 
\begin{equation*}
\begin{split}
B(\e):& = \int  d \nu^\e_{\tilde \o} (x,z)  \mathds{1} _{\{ |v_\e(x+ \e z )|\geq M\}} \varphi(x) ^2  \leq  \frac{1}{M} \int d \nu ^\e _{\tilde \o} (x,z ) |v_\e (x+ \e z )| \varphi (x) ^2\\
& =
 \frac{1}{M} \int d \nu ^\e _{\tilde \o} (x,z ) |v_\e (x )| \varphi (x+\e z) ^2=\frac{\e ^d}{M} \sum_{y \in  \widehat {\tilde \o}} | v_\e(\e y) | \sum_{a \in \widehat {\tilde \o} } c_{y,a}(\tilde \o)  \varphi (\e a ) ^2
  \,.\end{split}
\end{equation*}
Due to Schwarz inequality, we have therefore that $B(\e) \leq ( C_0/M) C(\e)^{1/2}$ where 
\begin{multline*}
C(\e):= \e^d \sum_{y \in \widehat{\tilde  \o}}\Big [ \sum_{a \in \widehat{\tilde \o}} c_{y,a}(\tilde \o)  \varphi (\e a ) ^2\Big]^2\\
\leq \| \varphi\|_\infty^2 \e^d \sum_{y \in \widehat{\tilde  \o}}\sum_{a \in \widehat{\tilde  \o}}\sum_{e \in \widehat{\tilde  \o}}
c_{y,a} (\tilde \o) \varphi (\e a)^2 c_{y,e}(\tilde \o) = \| \varphi \|_\infty^2 \e^d \sum_{a \in \widehat{\tilde  \o}} F_*( \t _a \tilde \o)\varphi ( \e a )^2 \,,\end{multline*}
where $ F_*(\o) := \int d \hat \o (y) \int d \hat \o (z) c_{0,y} (\o) c_{y,z}(\o)$ as in (A6).
The r.h.s. converges to a finite constant as $ \e \da 0$ since  $\tilde \o \in \O_{\rm typ}$ (recall (S13) and Prop. \ref{prop_ergodico}). We therefore conclude that $\varlimsup_{\e \da 0} C_\e \leq C(\varphi)$. Since $B(\e) \leq ( C_0/M) C(\e)^{1/2}$, we get that $\varlimsup_{\e \da 0} B(\e) \leq ( C_0/M) C(\varphi)$. Since the same holds for $A(\e)$ (cf. \eqref{galley1}), due to \eqref{mat}, \eqref{mat1} and \eqref{mat2} we get the claim.
\end{proof}

We can finally derive \eqref{queen2}, i.e. that $\lim _{M\to \infty} H(w_M, \varphi, b)=  H(w, \varphi, b)$.
By using Claims \ref{utti1} and \ref{utti2} we have 
\begin{equation*}
\begin{split}
| H(w_M, \varphi , b)- H(w, \varphi, b)| \leq |H( w_M, \varphi, b)- H( w_M , \varphi, b_k)| +\\
| H(w_M, \varphi, b_k)- H( w, \varphi, b_k)| + | H(w, \varphi, b_k)- H( w, \varphi, b)|\\
\leq C_0 C(\varphi) \| b-b_k\|_{L^2(\nu) }+ C_0 ^{3/2} C(\varphi) ( k/\sqrt{M} )\,.
\end{split}
\end{equation*}
At this point it is enough to take first the limit $M\to \infty$ and afterwards the limit $k\to \infty$ and to use that  $\lim_{k \to \infty}\| b-b_k\|_{L^2(\nu) }=0$.
\end{proof}


\section{Structure of the 2-scale weak limit  of a  bounded family in $H^1_{\o,\e}$: part II} \label{sec_oro}
The next result is the analogous of \cite[Lemma 5.4]{ZP}.
\begin{Proposition}\label{oro}
Let $\tilde \o \in \O_{\rm typ}$
 and let $\{v_\e\} $ be a  family of functions $v_\e \in H^1_{\tilde \o, \e} $ uniformly bounded in  $H^1_{\tilde \o, \e} $, i.e. satisfying \eqref{istria}.
Then, along a subsequence $\{ \e_k\}$, we have:
\begin{itemize}
\item[(i)]
$v_\e \stackrel{2}{\toup} v$, where $v\in  L^2(  m dx\times \cP_0)$ does not depend on $\o$.  Writing $v$ simply as $v(x)$ we have that  \rosso{$v\in H^1 _*( m dx )$};
\item[(ii)] $\nabla v_\e  (x,z) \stackrel{2}{\toup}\rosso{  \nabla_*  }v (x) \cdot z + v_1 (x,\o,z)$,
where $v_1\in L^2\bigl( \bbR^d, L^2_{\rm pot} (\nu)\bigr )$.
\end{itemize}
\end{Proposition}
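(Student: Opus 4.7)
I would first use the compactness results of Lemmas \ref{compatto1} and \ref{compatto2} (applied to $\{v_\e\}$, $\{v_\e^M := [v_\e]_M\}$, $\{\nabla_\e v_\e\}$, $\{\nabla_\e v_\e^M\}$ for each $M \in \bbN_+$) together with a diagonal argument to extract a common subsequence along which $v_\e \stackrel{2}{\rightharpoonup} v$, $v_\e^M \stackrel{2}{\rightharpoonup} v_M$, $\nabla_\e v_\e \stackrel{2}{\rightharpoonup} w$, $\nabla_\e v_\e^M \stackrel{2}{\rightharpoonup} w_M$ simultaneously for all $M$. Each family $\{v_\e^M\}$ is bounded in $H^1_{\tilde\o,\e}$ (by \eqref{r101}), so Proposition \ref{risso} applies and forces $v$ and every $v_M$ to be $\o$-independent; I write them as $v(x)$, $v_M(x)$. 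The plan is then to identify $w$ by testing against $\varphi(x) b(\o, z)$ with $\varphi \in C_c^\infty(\bbR^d)$ and $b \in \cW$. Since $L^2_{\rm pot}(\nu) = L^2_{\rm sol}(\nu)^\perp$, proving that $w - z\cdot\nabla_* v$ pairs to zero against every solenoidal $b$ gives the decomposition claimed in Item (ii).

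\textbf{Discrete integration by parts.} Fix $b \in \cW$ and $\varphi \in C_c^\infty(\bbR^d)$. Since $\tilde\o \in \cA_d[b]$ by (S14) in Definition \ref{budda}, one has ${\rm div}_* b(\t_z \tilde\o) = 0$ for every $z \in \hat{\tilde\o}$. Applying Lemma \ref{lunetta} to $u := v_\e^M \varphi$ (compactly supported), the LHS of \eqref{sea} vanishes. Expanding the resulting identity via Leibniz \eqref{leibniz} and swapping $(a,a') \leftrightarrow (a',a)$ in the double sum (using $c_{a,a'} = c_{a',a}$ and the defining relation $b(\t_{a'}\tilde\o, a-a') = \tilde b(\t_a\tilde\o, a'-a)$) produces the key identity
\begin{equation}\label{plan:ibp}
\int d\nu^\e_{\tilde\o}(x,z)\, \nabla_\e v_\e^M(x,z)\, \varphi(x)\, b(\t_{x/\e}\tilde\o,z) = \int d\nu^\e_{\tilde\o}(x,z)\, v_\e^M(x)\, \nabla_\e\varphi(x,z)\, \tilde b(\t_{x/\e}\tilde\o,z).
\end{equation}
The LHS converges to $\int dx\, m \int d\nu\, w_M\, \varphi\, b$ by 2-scale weak convergence of $\nabla_\e v_\e^M$ (using $b \in \cW \subset \cH$). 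For the RHS I would split $\nabla_\e \varphi(x,z) = z \cdot \nabla \varphi(x) + R_\e(x,z)$ with $|R_\e| \leq C_\varphi \e |z|^2 (\phi(x) + \phi(x+\e z))$ by Taylor, and replace $\tilde b$ by its truncation $[\tilde b]_N$. The $R_\e$-contribution is $O(\e)$ via ergodic control on $\l_2$ (cf.\ (S9), (S7)), while the $\tilde b - [\tilde b]_N$ tail is $O(\sqrt{\nu(|b-[b]_N|^2)})$ uniformly in $\e$ by Cauchy-Schwarz, Lemma \ref{gattonaZ} and (S5). The surviving main term is handled by noting that $F_N^{(i)}(\o) := \int d\hat\o(z)\, c_{0,z}(\o)\, z_i\, [\tilde b]_N(\o,z)$ belongs to $\cG$ by the very construction of $\cG$ in Section \ref{topo} (since $\tilde b \in \cW$); hence by 2-scale convergence of $v_\e^M$, this term converges to $\int dx\, m\, v_M(x)\, \nabla\varphi(x) \cdot a_{[\tilde b]_N}$ with $a_{[\tilde b]_N} := \int d\nu\, z\, [\tilde b]_N$. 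Letting $N \to \infty$ and using the identity $\int d\nu\, \tilde b\, (c \cdot z) = -\int d\nu\, b\, (c \cdot z)$ for any $c \in \bbR^d$ (one line from Lemma \ref{lucertola1} with $k(\o, \t_z\o) := c_{0,z}(\o) b(\t_z\o, -z)(c\cdot z)$ together with covariance of $c_{\cdot,\cdot}$), the RHS of \eqref{plan:ibp} tends to $-\int dx\, m\, v_M(x)\, \nabla\varphi(x) \cdot a_b$, where $a_b := \int d\nu\, z\, b$.

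\textbf{Taking $M \to \infty$ and concluding.} Sending $M \to \infty$ in the resulting identity, Lemma \ref{lemma2} handles the LHS (test form $\varphi(x) b(\o,z) \in L^2(mdx \times \nu)$ with $b \in \cH$) and Lemma \ref{lemma1} handles the RHS (test function $\nabla\varphi(x) \cdot a_b \in L^2(mdx)$ paired with the constant $u \equiv 1 \in \cG$, recalling that $v$ and all $v_M$ are $\o$-independent). This yields
\begin{equation}\label{plan:final}
\int dx\, m \int d\nu(\o,z)\, w(x,\o,z)\, \varphi(x)\, b(\o,z) = -\int dx\, m\, v(x)\, \nabla\varphi(x) \cdot a_b
\end{equation}
for all $\varphi \in C_c^\infty(\bbR^d)$ and $b \in \cW$. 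Cauchy-Schwarz on the LHS bounds $|\int dx\, m\, v\, \nabla\varphi \cdot a_b|$ by $C(b) \|\varphi\|_{L^2(mdx)}$; since Corollary \ref{jack} identifies $\{a_b : b \in L^2_{\rm sol}(\nu)\}$ with $\text{span}\{e_1, \ldots, e_{d_*}\}$, Riesz representation gives $\partial_i v \in L^2(mdx)$ for $i = 1, \ldots, d_*$, so $v \in H^1_*(mdx)$, proving Item (i). Integrating by parts in the RHS of \eqref{plan:final} (using $a_b \in \text{span}\{e_1,\ldots,e_{d_*}\}$ to replace $\nabla$ by $\nabla_*$) rewrites it as $\int dx\, m\, \varphi(x) \int d\nu\, b(\o,z)\, (z \cdot \nabla_* v(x))$; hence
\begin{equation*}
\int dx\, m\, \varphi(x) \int d\nu(\o,z)\, [w(x,\o,z) - z \cdot \nabla_* v(x)]\, b(\o,z) = 0
\end{equation*}
for all such $\varphi, b$. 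Density of $\cW$ in $L^2_{\rm sol}(\nu)$ and arbitrariness of $\varphi$ force $v_1(x,\cdot,\cdot) := w(x,\cdot,\cdot) - z\cdot\nabla_* v(x)$ to lie in $L^2_{\rm sol}(\nu)^\perp = L^2_{\rm pot}(\nu)$ for a.e.\ $x$; since $w$ and $z\cdot\nabla_* v$ both belong to $L^2(mdx \times \nu)$, this gives $v_1 \in L^2(\bbR^d, L^2_{\rm pot}(\nu))$, completing Item (ii).

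\textbf{Main obstacle.} The delicate step will be the RHS limit in \eqref{plan:ibp}: the integrand involves $\tilde b$ (generally unbounded, only in $L^2(\nu)$) and the discrete gradient $\nabla_\e \varphi$, which is not a pure product-type test function. This forces three simultaneous truncations --- $v_\e \mapsto v_\e^M$, $\tilde b \mapsto [\tilde b]_N$, and $\nabla_\e \varphi \mapsto z \cdot \nabla \varphi(x)$ --- whose error terms must be controlled via the typicality conditions in Definition \ref{budda} and then recombined via Lemmas \ref{lemma1}, \ref{lemma2} in the order $\e \to 0$, $N \to \infty$, $M \to \infty$. The payoff of this careful bookkeeping is precisely the decomposition of $w$ dictated by the orthogonal splitting $L^2(\nu) = L^2_{\rm pot}(\nu) \oplus L^2_{\rm sol}(\nu)$.
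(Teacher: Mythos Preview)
Your proposal is correct and follows essentially the same approach as the paper: both establish the key identity (the paper's \eqref{kokeshi}) by truncating $v_\e \mapsto v_\e^M$, using the solenoidal property of $b \in \cW$ via (S14) and Lemma \ref{lunetta} to integrate by parts (your \eqref{plan:ibp} is the paper's \eqref{bruna2}), then truncating $\tilde b \mapsto [\tilde b]_N$ and linearizing $\nabla_\e\varphi \mapsto z\cdot\nabla\varphi$ with the same ergodic error controls, and finally passing $M \to \infty$ via Lemmas \ref{lemma1} and \ref{lemma2}. Your use of Corollary \ref{jack} plus a Riesz-type bound to get $v \in H^1_*(mdx)$ is equivalent to the paper's explicit identification $\partial_i v(x) = \int d\nu\, w(x,\cdot,\cdot)\, b_i$.
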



The property $v_1\in L^2\bigl( \bbR^d, L^2_{\rm pot} (\nu)\bigr )$ means that 
for $dx$--almost every $x$ in $\bbR^d$ the map $(\o,z)\mapsto v_1(x, \o, z) $ is a potential form, hence in  $L^2_{\rm pot} (\nu)$, moreover the map  $\bbR^d \ni x \to v_1(x, \cdot, \cdot) \in L^2_{\rm pot} (\nu)$ is measurable and 
\begin{equation}
\int  dx \| v_1(x, \cdot, \cdot)\|_{L_2(\nu) }^2=
\int dx  \int  \nu (\o,z) v_1 (x, \o, z)^2 <+\infty\,.
\end{equation}

\begin{proof}[Proof of Prop.~\ref{oro}] 
At cost to restrict to $\e$ small enough, we can assume that $ \| v_\e\|_{L^2(\mu_{\tilde \o}^\e) } \leq C_0$ and 
$ \|\nabla _\e v_\e\| _{L^2(\nu^\e_{\tilde \o})} \leq C_0$ for some $C_0<+\infty$ and all $\e>0$.  We can assume the same bounds for $v^\e_M:=[v_\e]_M$.  
Along a subsequence the $2$-scale convergences in Item (i) of Lemma \ref{lemma1} and in Item (i) of Lemma \ref{lemma2} take place.   By Lemmas \ref{compatto1} and \ref{compatto2}  the norms $\| v_M\|_{L^2(mdx \times \cP_0)}$, $\| v\|_{L^2(m dx \times \cP_0)}$, $\| w_M\|_{L^2(m dx \times \nu)}$ and $\| w\|_{L^2(m dx\times \nu)}$ are upper bounded by $C_0$. 

Due to Prop.~\ref{risso} $v=v(x)$ and $v_M=v_M(x)$. We claim that 
 for   each  solenoidal form $b  \in L^2_{\rm sol}(\nu) $ and each  function $\varphi \in C^2_c(\bbR^d)$, it holds 
 \begin{equation}\label{kokeshi}
 \int  dx   \varphi(x) \int  d \nu (\o ,z) w(x,\o,z) b(\o, z) =  
  -\int   dx  v(x) \nabla \varphi(x) \cdot \eta_b\,, 
\end{equation}
where  $\eta_b := \int d\nu (\o,z) z b(\o,z)$.
Note that $\eta_b$ is well defined since both $b$ and the map $(\o,z) \mapsto z$ are in $ L^2(\nu)$ (cf.  (A5)). Moreover, by  applying Lemma \ref{lucertola1} with $k(\o, \t_z \o):= c_{0,z} (\o ) z b(\o,z) $ (cf. (A3) and \eqref{SS3}), we get that $\eta_b = -\eta_{\tilde b}$ (cf. Def.~\ref{ometto}).

Before proving our Claim \eqref{kokeshi} we show how to conclude the proof of Prop.~\ref{oro} starting with Item (i).
\rosso{Due to Corollary \ref{jack} for each $i=1,\dots, d_*$ there exists  $b_i \in L^2_{\rm sol}(\nu)$ such that $\eta_{b_i}= e_i$},
$e_i$ being the $i$--th vector of the canonical basis of $\bbR^d$. Consider the measurable function 
  \begin{equation}
  g_i (x):=  \int   d \nu (\o ,z) w(x,\o,z) b_i(\o, z)\,, \qquad \rosso{ 1\leq i \leq d_*}\,.
  \end{equation}
We have that $g_i \in L^2(dx) $ since, by Schwarz inequality, 
\begin{multline}
\int g_i(x) ^2 dx =
   \int dx \left[ \int  d \nu (\o ,z) w(x,\o,z) b_i(\o, z)\right]^2\\
\leq  \|b _i \|^2_{L^2(\nu)}\int dx   \ \int  d \nu (\o ,z) w(x,\o,z) ^2 <\infty\,.
\end{multline}
   Moreover, by \eqref{kokeshi} we have that $\int dx \varphi(x) g_i(x) =  -\int   dx\, v(x) \partial_i \varphi(x) $ \rosso{for $1\leq i \leq d_*$}.  This proves that $v(x)\in \rosso{H^1_*(mdx)}$ and   $\partial_i v(x)= 
g_i(x)$ \rosso{ for $1\leq i \leq d_*$}.
  This concludes the proof of Item (i).
  
  We move to Item (ii) (always assuming 
  \eqref{kokeshi}). By Item (i) \rosso{and Corollary \ref{jack} implying that $\eta_b \in \text{span}\{e_1, \dots, e_{d_*}\} $ for all $b\in L^2_{\rm sol}(\nu)$},  we can  replace  the r.h.s. of \eqref{kokeshi}  by $ \int  dx  (\rosso{\nabla_*} v(x) \cdot \eta_b ) \varphi(x)$. Hence   \eqref{kokeshi} can be rewritten as 
 \begin{equation}\label{cocco}
 \int  dx  \varphi(x) \int   d \nu (\o ,z)
 \left[ w(x,\o,z) -\rosso{\nabla_*} v(x)  \cdot z
 \right] b(\o, z) =  
  0\,.
  \end{equation}
By the arbitrariness of $\varphi$ we conclude that $dx$--a.s. 
\begin{equation}\label{criceto13}
\int  d \nu (\o ,z)
 \left[ w(x,\o,z) -\rosso{\nabla_*} v(x)  \cdot z
 \right] b(\o, z) =  
  0\,, \qquad \forall b \in L^2 _{\rm sol}(\nu)\,.
\end{equation}
Let us now show that the map 
$w(x,\o,z) -\rosso{\nabla_*} v(x) \cdot z $ belongs to $L^2( dx, L^2(\nu) )$. Indeed,  we have 
$
\int dx \|w(x, \cdot, \cdot)\|_{L^2(\nu) }^2 =
 \| w\|^2_{ L^2( mdx \times d\nu)}<+\infty$ and also
 \begin{equation}
\int dx \|\rosso{\nabla_*} v(x) \cdot z \|_{L^2(\nu) }^2 \leq \int dx | \rosso{\nabla_*} v(x) |^2  \int d\nu (\o, z) |z|^2 
<\infty\,,
\end{equation}
  by Schwarz inequality
 and since $\rosso{\nabla_*} v\in L^2(dx)$ and $\bbE_0[\l_2]<\infty$ by (A5).

As the map 
$w(x,\o,z) -\rosso{\nabla_*} v(x) \cdot z $ belongs to $L^2( dx, L^2(\nu) )$, for $dx$--a.e. $x$ we have that the map 
  $(\o,z) \mapsto w(x,\o,z) -\rosso{\nabla_*} v(x) \cdot z$ belongs to $ L^2(\nu)$ and therefore, by \eqref{criceto13}, to   $ L^2_{\rm pot} (\nu)$.
This concludes the proof of Item (ii).

\smallskip
It remains to prove \eqref{kokeshi}. Here is a roadmap: (i) we reduce  \eqref{kokeshi}  to \eqref{kokeshiM}; (ii) we prove  \eqref{provence}; (iii) 
by \eqref{provence} we reduce 
\eqref{kokeshiM} to \eqref{tramonto}; (iv) we prove \eqref{cuoricino1}; (v) by \eqref{cuoricino1} we reduce \eqref{tramonto} to \eqref{tramontoA}; (vi) we prove \eqref{tramontoA}. 

Since both sides of \eqref{kokeshi} are continuous as functions of $b \in L^2_{\rm sol}(\nu)$, it is enough to prove it for $b\in \cW$ (see Section~\ref{topo}). We apply Lemma \ref{lemma2}--(ii) (recall that $b\in \cW\subset \cH$) to approximate the l.h.s. of \eqref{kokeshi} and Lemma \ref{lemma1}--(ii) with $u:=1\in \cG$ to approximate the r.h.s. of \eqref{kokeshi}. Then to prove \eqref{kokeshi}
it is enough to show that 
  \begin{equation}\label{kokeshiM}
 \int  dx \,m  \varphi(x) \int  d \nu (\o ,z) w_M(x,\o,z) b(\o, z) =  
  -\int   dx \,m v_M(x) \nabla \varphi(x) \cdot \eta_b\,, 
\end{equation}
 for any $\varphi \in C_c^2(\bbR^d)$, $b \in \cW$ and $M\in \bbN_+$. From now on $M$ is fixed.

 \smallskip
  Since $\tilde \o \in \O_{\rm typ}$, $\nabla_\e v^\e_M\stackrel{2}{\toup} w_M$ and $b \in \cW \subset \cH$ (cf.~\eqref{yelena}) we can write
  \be\label{bruna1}
  \text{l.h.s. of }\eqref{kokeshiM}= \lim _{\e \da 0} \int d \nu ^\e _{\tilde \o}(x,z) \nabla_\e v^\e _M(x,z) \varphi (x) b ( \t_{x/\e} \tilde \o, z)\,.
  \en
  Since $b \in L^2_{\rm sol}(\nu)$ and $\tilde \o \in \O_{\rm typ}$ (cf. (S14), Lemmata \ref{tav} and    \ref{lunetta}), we get
  \[
  \int d \nu ^\e _{\tilde \o}(x,z) \nabla_\e ( v^\e  _M \varphi ) (x,z)  b ( \t_{x/\e} \tilde \o, z)=0\,.
  \]
Using the above identity, \eqref{leibniz}  and finally \eqref{micio3} in Lemma \ref{gattonaZ} (as $\tilde \o \in \O_{\rm typ}$ and due to (S6)), we conclude that 
\be\label{bruna2}
\begin{split}
\int d \nu ^\e _{\tilde \o}(x,z) &\nabla_\e v^\e _M(x,z) \varphi (x) b ( \t_{x/\e} \tilde \o, z)\\
&=- \int d \nu ^\e _{\tilde \o}(x,z)  v^\e _M(x+ \e z) \nabla_\e \varphi (x,z) b ( \t_{x/\e} \tilde \o, z)
\\
&  =\int d \nu ^\e _{\tilde \o}(x,z)  v^\e _M(x) \nabla_\e \varphi (x,z) \tilde b ( \t_{x/\e} \tilde \o, z)\,.
\end{split}
\en
Up to now we have obtained that 
\be\label{alba1}
  \text{l.h.s. of }\eqref{kokeshiM}= \lim _{\e \da 0} \int d \nu ^\e _{\tilde \o}(x,z)  v^\e _M(x) \nabla_\e \varphi (x,z) \tilde b ( \t_{x/\e} \tilde \o, z)\,.
    \en
We now set $\tilde b_k:= [\tilde b ]_k= \widetilde{b_k}$. 
We  want to prove that 
\be\label{provence}
\varlimsup_{k \uparrow \infty} \varlimsup  _{\e\da 0}
 \bigl| \int d \nu ^\e _{\tilde \o}(x,z)  v^\e _M(x) \nabla_\e \varphi (x,z)( \tilde b- \tilde b_k) ( \t_{x/\e} \tilde \o, z)\bigr|
 =0\,.
\en
To this aim let $\ell$ be such that $\varphi (x)=0$ if $|x| \geq \ell$. Fix   $\phi \in C_c(\bbR^d)$ with values in $[0,1]$, such that $ \phi(x)=1$ for $|x| \leq \ell$ and $\phi(x)=0$ for $|x| \geq \ell+1$. 
  Using \eqref{paradiso} and Schwarz inequality  we can bound
\be \label{acqua}
\begin{split}
& \bigl| \int d \nu ^\e _{\tilde \o}(x,z)  v^\e _M(x) \nabla_\e \varphi (x,z)( \tilde b- \tilde b_k) ( \t_{x/\e} \tilde \o, z)\bigr|
\\ 
&\leq  M \| \nabla \varphi \|_\infty \int d \nu ^\e _{\tilde \o}(x,z) |z| \bigl( \phi (x) + \phi (x+ \e z) \bigr) | \tilde b- \tilde b_k| ( \t_{x/\e} \tilde \o, z)\\
& \leq M \| \nabla \varphi \|_\infty [ 2 A(\e) ] ^{1/2} [ B(\e,k)+ C(\e,k)]^{1/2}\,
\end{split}
\en
where (using  \eqref{micio2} in Lemma \ref{gattonaZ}  for $A(\e)$ and $C(\e)$)
\begin{align*}
 A(\e): &= \int d \nu ^\e _{\tilde \o}(x,z) |z|^2 \phi(x)= \int d \nu ^\e _{\tilde \o}(x,z) |z|^2 \phi(x+ \e z)\,,\\
 B(\e,k): & =  \int d \nu ^\e _{\tilde \o}(x,z)  ( \tilde b- \tilde b_k)^2 ( \t _{x/\e} \tilde \o, z) \phi (x) \,,\\
 C(\e,k):& =  \int d \nu ^\e _{\tilde \o}(x,z)  ( \tilde b- \tilde b_k)^2 ( \t _{x/\e} \tilde \o, z) \phi (x+\e z)\\
& =  \int d \nu ^\e _{\tilde \o}(x,z)  ( b-   b_k)^2 ( \t _{x/\e} \tilde \o, z) \phi (x)\,.
\end{align*}
 Due to (S7), (S9) and Prop.~\ref{prop_ergodico}   $A(\e)= \int d \mu^\e_{\tilde \o} (x) \phi(x) \l_2 ( \t_{x/\e} \tilde \o) $ has finite limit as $\e\da 0$. 
Hence to get \eqref{provence} we only need to show that $\lim_{k \uparrow  \infty, \e \da 0 } B(\e,k)= \lim _{k \uparrow  \infty, \e \da 0 }C(\e, k)=0$. We can write
$
B(\e,k)= 
\int d \mu^\e _{\tilde \o} (x) \phi(x)
\widehat{d}^2  (\t_{x/\e} \tilde \o )$ where $ d:=   | \tilde b- \tilde b_k|
$. Since $\tilde b \in \cW$ for any $b\in \cW$ (see Section \ref{topo}),
due to (S5), (S8),  the property $\tilde \o \in \O_{\rm typ}$ and  Prop.~\ref{prop_ergodico}, we conclude that
$
\lim_{\e \da 0} B(\e,k)= \int dx \, m \phi (x)  \|  \tilde b- \tilde b_k\|_{L^2(\nu)}^2 $ (cf. Lemma \ref{cavallo}).
Similarly we get that $
\lim_{\e \da 0} C(\e,k)= \int dx \, m \phi (x)  \|   b-  b_k\|_{L^2(\nu)}^2 $.  As the above limits go to zero as $k\to \infty$, we get  \eqref{provence}.

Due to \eqref{alba1}, \eqref{provence} and since, by Schwarz inequality, $\lim _{k \to \infty} \eta_{\tilde b_k}= \eta _{\tilde b}= - \eta_b$, to prove \eqref{kokeshiM} we only need to show, for fixed $M,k$,  that 
 \begin{equation}\label{tramonto}
 \lim _{\e \da 0} \int d \nu ^\e _{\tilde \o}(x,z)  v^\e _M(x) \nabla_\e \varphi (x,z) \tilde b _k( \t_{x/\e} \tilde \o, z)=
 \int   dx \, m v_M(x) \nabla \varphi(x) \cdot \eta_{\tilde b_k}\,.
 \end{equation}
 To prove \eqref{tramonto} we first show that 
 \begin{equation}\label{cuoricino1} \lim_{\e\da 0} \Big |
  \int d \nu ^\e _{\tilde \o}(x,z)  v^\e _M(x) \bigl[ \nabla_\e \varphi (x,z) -\nabla \varphi(x) \cdot z\bigr]\tilde b _k( \t_{x/\e} \tilde \o, z)
  \Big |=0\,.
 \end{equation}
 Since $\|v^\e _M\|_\infty \leq M $ and $\| \tilde b_k \| _\infty \leq k$, it is enough to show that 
 \be \label{cuoricino2}
 \lim _{\e\da 0}  \int d \nu ^\e _{\tilde \o}(x,z) \bigl| \nabla_\e \varphi (x,z) -\nabla \varphi(x) \cdot z\bigr| =0\,.
 \en
Since $\varphi \in C_c^2(\bbR^d)$,  by Taylor expansion we have $\nabla_\e \varphi (x,z) -\nabla \varphi(x) \cdot z=\frac{1}{2} \sum_{i,j} \partial^2 _{ij} \varphi(\, \z_\e(x,z)\,) z_i z_j \e$, where $\z_\e(x,z)$ is a point between $x$ and $x+\e z$. Moreover we note that 
$\nabla_\e \varphi (x,z) -\nabla \varphi(x) \cdot z=0$ if $|x| \geq \ell$ and $|x+\e z| \geq \ell$. All these observations imply that 
\be \label{mirra}
\bigl| \nabla _\e \varphi (x,z) - \nabla \varphi (x) \cdot z\bigr | \leq \e C(\varphi) |z|^2 \bigl( \phi(x) + \phi(x+\e z) \bigr)\,.
\en
Due to \eqref{micio2}  we can write
\[ \int d \nu ^\e _{\tilde \o}(x,z)  |z|^2  \phi(x+\e z)=
 \int d \nu ^\e _{\tilde \o}(x,z)  |z|^2  \phi(x)= \int d \mu^\e_{\tilde \o} (x) \phi(x) \l_2 ( \t_{x/\e} \tilde \o)\,.
 \]
 Due to (S7) and (S9)  we conclude that the  above r.h.s. has a finite limite as $\e \da 0$.  Due to \eqref{mirra}, we finally get \eqref{cuoricino2} and hence
 \eqref{cuoricino1}.
 
 Having \eqref{cuoricino1}, to get \eqref{tramonto} it is enough to show that 
  \begin{equation}\label{tramontoA}
 \lim _{\e \da 0} \int d \nu ^\e _{\tilde \o}(x,z)  v^\e _M(x) \nabla \varphi (x) \cdot z \tilde b _k( \t_{x/\e} \tilde \o, z)=
 \int   dx \,m v_M(x) \nabla \varphi(x) \cdot \eta_{\tilde b_k}\,.
 \end{equation}
 To this aim we observe that
 \be\label{favorita1}
\int d \nu^\e_{\tilde \o} (x,z) v_M^\e (x) \partial _i \varphi (x) z_i \tilde{b}_k (\t_{x/\e} \tilde\o, z)=
\int d \mu^\e_{\tilde \o}(x)  v^\e_M(x) \partial _i \varphi (x) u_k( \t_{x/\e} \tilde \o)\,,
\en
where $u_k(\o):= 
\int d \hat{\o} (z) c_{0,z}(\o) z_i \tilde{b}_k (\t_{x/\e} \tilde \o, z)$.
Since $\tilde \o \in \O_{\rm typ}$, $v^\e_M\stackrel{2}{\toup} v_M$ and $u_k \in \cG$ (cf. \eqref{nonfamale} and recall that $\tilde b \in \cW$  $\forall b\in \cW$), by \eqref{rabarbaro} we conclude that 
 \be\label{favorita2}
 \begin{split}
 \lim_{\e \da 0}
 \int d \mu^\e_{\tilde \o}(x)  v^\e_M(x) \partial _i \varphi (x) u_k( \t_{x/\e} \tilde \o)& = \int dx \, m v_M(x) \partial_i \varphi (x) 
 \int d\cP_0(\o) u_k (\o)\\
 & =  \int dx \, m v_M(x) \partial_i \varphi (x) 
 (\eta_{\tilde b_k} \cdot e_i)\,,
 \end{split}
 \en
 $e_1, \dots, e_d$ being the canonical basis of $\bbR^d$. Our target \eqref{tramontoA} then follows as a byproduct of \eqref{favorita1} and \eqref{favorita2}.
 \end{proof}
\section{Proof of Theorem \ref{teo1}}\label{robot}

Without loss of generality, we prove Theorem \ref{teo1}
 with $\l=1$ to simplify the notation.
Due to Prop.~\ref{stemino} we only need to prove Items (i), (ii) and (iii). 
  Some arguments below are taken from \cite{ZP}, others are intrinsic to long jumps.
 We start with two results  (Lemmas \ref{ascoli} and \ref{blocco}) concerning the amorphous gradient $\nabla_\e\varphi$ for $\varphi \in C_c(\bbR^d)$.
 
\begin{Lemma}\label{ascoli} Let $\o \in \O_{\rm typ}$.
 Then 
 $\varlimsup_{\e\da 0}\|\nabla_\e \varphi \|_{L^2(\nu_\o ^\e) }<\infty$ for any $\varphi \in C^1_c (\bbR^d)$.
\end{Lemma}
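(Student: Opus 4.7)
The plan is to bound $|\nabla_\e \varphi(x,z)|$ pointwise via the Lipschitz character of $\varphi \in C^1_c(\bbR^d)$ combined with a cut-off (just as in the derivation of \eqref{paradiso}), then reduce the resulting integral to an ergodic average of $\l_2$ which is controlled by Proposition \ref{prop_ergodico} thanks to $\o \in \O_{\rm typ}$.

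More precisely, fix $\ell > 0$ such that $\varphi(x)=0$ for $|x|\geq \ell$, and fix $\phi \in C_c(\bbR^d)$ with values in $[0,1]$, equal to $1$ on $\{|x|\leq \ell\}$ and vanishing on $\{|x|\geq \ell+1\}$. Setting $L:=\|\nabla \varphi\|_\infty$, since $\nabla_\e \varphi(x,z)=0$ whenever both $x$ and $x+\e z$ lie outside the support of $\varphi$, the mean value theorem yields, as in \eqref{paradiso},
\begin{equation*}
|\nabla_\e \varphi(x,z)| \leq L |z|\bigl(\phi(x)+\phi(x+\e z)\bigr).
\end{equation*}
Squaring and using $(a+b)^2\leq 2(a^2+b^2)$, we get
\begin{equation*}
\|\nabla_\e \varphi\|^2_{L^2(\nu_\o^\e)} \leq 2 L^2\!\int d\nu_\o^\e(x,z)\, |z|^2\phi(x)^2 + 2L^2\!\int d\nu_\o^\e(x,z)\,|z|^2\phi(x+\e z)^2.
\end{equation*}

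Next, applying Lemma \ref{gattonaZ}--(i) to the Borel form $b(\o,z):=|z|^2$ (for which $\tilde b=b$ on $\hat\o$) with the choice of auxiliary functions $\phi^2$ and $1$, the second term equals the first, so
\begin{equation*}
\|\nabla_\e \varphi\|^2_{L^2(\nu_\o^\e)} \leq 4 L^2 \int d\nu_\o^\e(x,z)\, |z|^2\phi(x)^2.
\end{equation*}
Since $\o \in \O_{\rm typ}$ implies (by (S7)) that $\l_2(\t_a \o)<+\infty$ for every $a\in\hat\o$, the covariance in Assumption (A4) together with \eqref{nunu} and \eqref{organic} gives
\begin{equation*}
\int d\nu_\o^\e(x,z)\, |z|^2\phi(x)^2 = \int d\mu_\o^\e(x)\,\phi(x)^2\, \l_2(\t_{x/\e}\o).
\end{equation*}

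Finally, (A5) ensures $\l_2 \in L^1(\cP_0)$ and (S9) ensures $\o \in \cA[\l_2]$, so Proposition \ref{prop_ergodico} yields
\begin{equation*}
\lim_{\e\da 0} \int d\mu_\o^\e(x)\,\phi(x)^2\,\l_2(\t_{x/\e}\o) = m\,\bbE_0[\l_2] \int dx\,\phi(x)^2 <+\infty,
\end{equation*}
which concludes the proof with $\varlimsup_{\e\da 0} \|\nabla_\e\varphi\|^2_{L^2(\nu_\o^\e)} \leq 4 L^2 m\, \bbE_0[\l_2]\int dx\,\phi(x)^2$. There is no real obstacle here: the only point requiring some care is the symmetrization of $\phi(x+\e z)^2$ into $\phi(x)^2$ via Lemma \ref{gattonaZ}--(i), which is essential since the contribution from long jumps $z$ is not absorbed by the localization of $\varphi$ alone.
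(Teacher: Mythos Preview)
Your proof is correct and follows essentially the same approach as the paper: the pointwise bound \eqref{paradiso}, the symmetrization via \eqref{micio2} (Lemma~\ref{gattonaZ}--(i)) to reduce $\phi(x+\e z)$ to $\phi(x)$, the rewriting as a $\mu^\e_\o$--average of $\l_2$, and the ergodic limit via Proposition~\ref{prop_ergodico} using (S7) and (S9). The only cosmetic difference is that the paper bounds $(\phi(x)+\phi(x+\e z))^2\le 2(\phi(x)+\phi(x+\e z))$ directly (since $0\le\phi\le1$) rather than splitting the square into $\phi(x)^2+\phi(x+\e z)^2$ as you do.
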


\begin{proof} 
Let $\phi$ be as in \eqref{paradiso}. By \eqref{paradiso} and since $ \o \in \O_{\rm typ}$ 
 (apply \eqref{micio2} with $b(\o,z):=|z|^2$), we get
\begin{equation*}
\begin{split}
\| \nabla_\e \varphi \|_{L^2(\nu^\e_\o)}^2 &  \leq C(\varphi) \int d \nu^\e_\o (x,z)  |z|^2\bigl( \phi(x)+ \phi(x+\e z) \bigr) \\
&=
2C(\varphi) \int d \nu^\e_\o (x,z)  |z|^2 \phi(x) = 2 C(\varphi) \int d \mu^\e_\o(x)\phi(x) \l_2( \t_{x/\e} \o)\,.
\end{split}
\end{equation*}
The thesis then follows from Prop. \ref{prop_ergodico}  (recall  (S7) and  (S9)).
\end{proof}

\begin{Lemma}\label{blocco} Given $ \o \in \O_{\rm typ}$ and $\varphi \in C_c^2(\bbR^d)$  it holds 
\begin{equation}\label{football}
\lim  _{\e \da 0}  \int d \nu^\e_{\o}(x,z)  \bigl[\nabla_\e \varphi (x,z) - \nabla  \varphi (x) \cdot z \bigr]^2   =0\,.
\end{equation}
\end{Lemma}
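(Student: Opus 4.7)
The plan is to combine a second-order Taylor expansion with a cut-off on the jump length $|z|$. The two-scale machinery of Section \ref{topo} gives control of the limit of $L^1$--integrals against $\mu^\e_\o$ of functions like $\l_2(\t_{x/\e}\o)$ and its truncated versions, which is exactly what will be needed once the Taylor bound is turned into a bound of the form $|z|^2$ times something.

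\smallskip

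First I would record the pointwise bound
\[
\bigl| \nabla_\e \varphi(x,z) - \nabla \varphi(x)\cdot z \bigr|^2 \;\leq\; C(\varphi) \,\bigl( \e^2 |z|^4 \wedge |z|^2 \bigr) \, \bigl( \phi(x)+\phi(x+\e z) \bigr)\,,
\]
where $C(\varphi)$ depends only on $\| \nabla\varphi\|_\infty$ and $\|D^2\varphi\|_\infty$ and $\phi\in C_c(\bbR^d)$ takes values in $[0,1]$ and equals $1$ on a neighbourhood of $\mathrm{supp}(\varphi)$. The $\e^2|z|^4$ factor comes from Taylor's theorem applied to $\varphi\in C^2_c(\bbR^d)$, as in \eqref{mirra}; the $|z|^2$ factor comes from the cruder estimate  $|\nabla_\e\varphi - \nabla\varphi\cdot z|\leq 2\|\nabla\varphi\|_\infty |z|$; and the cut-off $\phi(x)+\phi(x+\e z)$ is put in because the difference vanishes whenever both $x$ and $x+\e z$ lie outside $\mathrm{supp}(\varphi)$.

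\smallskip

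Next I would fix an integer $R\geq 1$ and split the integral in \eqref{football} along $|z|\leq R$ and $|z|>R$. On $\{|z|\leq R\}$, I use the Taylor term together with $|z|^4\leq R^2|z|^2$ to bound the contribution by
\[
\tfrac{\e^2 R^2}{4}\|D^2\varphi\|_\infty^2\int d\nu^\e_\o(x,z)\,|z|^2\bigl(\phi(x)+\phi(x+\e z)\bigr)\,.
\]
Applying \eqref{micio2} to the form $(\o,z)\mapsto |z|^2$, which is $\nu$--integrable by (A5), the integral equals $2\int d\mu^\e_\o(x)\phi(x)\l_2(\t_{x/\e}\o)$, and this has a finite limit as $\e\da 0$ by Proposition \ref{prop_ergodico} since $\o\in\cA[\l_2]$ (use (S7) and (S9)). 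Thus the short-range part is $O(\e^2 R^2)\to 0$ as $\e\da 0$ for $R$ fixed.

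\smallskip

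On $\{|z|>R\}$ I use instead the $|z|^2$ factor in the bound and obtain
\[
\int_{|z|>R} d\nu^\e_\o(x,z) \bigl| \nabla_\e\varphi-\nabla\varphi\cdot z\bigr|^2 \;\leq\; 4\|\nabla\varphi\|_\infty^2 \int d\nu^\e_\o(x,z)\,|z|^2 \mathds{1}_{\{|z|>R\}}\bigl( \phi(x)+\phi(x+\e z)\bigr)\,.
\]
Applying \eqref{micio2} once more, this equals $8\|\nabla\varphi\|_\infty^2\int d\mu^\e_\o(x)\,\phi(x)\, h_R(\t_{x/\e}\o)$ with $h_R$ as in (S16). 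Since $\o\in\cA[h_R]$, Proposition \ref{prop_ergodico} gives
\[
\varlimsup_{\e\da 0}\int d\nu^\e_\o(x,z)\bigl|\nabla_\e\varphi-\nabla\varphi\cdot z\bigr|^2 \;\leq\; 8m\|\nabla\varphi\|_\infty^2\,\bbE_0[h_R]\int \phi(x)\,dx\,.
\]
The only remaining point — and the only place where the long jumps really matter — is to let $R\to\infty$. Since $0\leq h_R\leq \l_2\in L^1(\cP_0)$ and $h_R(\o)\da 0$ whenever $\l_2(\o)<\infty$, dominated convergence yields $\bbE_0[h_R]\to 0$, concluding the proof.
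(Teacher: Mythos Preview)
Your proof is correct and follows essentially the same route as the paper: a short/long-range split on $|z|$, the Taylor bound \eqref{mirra} on the short range, the crude $|z|$ bound on the long range reduced via \eqref{micio2} to an integral of $h_R$ (cf.~(S16)), and then $R\to\infty$ by dominated convergence. The only cosmetic difference is that on the short range you estimate $|z|^4\leq R^2|z|^2$ and appeal to $\l_2$ via (S7), (S9), whereas the paper estimates $|z|^4\leq \ell^4$ and appeals to $\l_0$ via (S12), (S17); both are valid.
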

\begin{proof}
 Let $\ell$ be as such that $\varphi (x)=0$ if $|x| \geq \ell$. Fix   $\phi \in C_c(\bbR^d)$ with values in $[0,1]$, such that $ \phi(x)=1$ for $|x| \leq \ell$ and $\phi(x)=0$ for $|x| \geq \ell+1$. 
Recall  \eqref{paradiso}. 
The  upper bound given by   \eqref{paradiso}  with $\nabla_\e\varphi (x,z)$ replaced by  $\nabla \varphi (x) \cdot z $ is also true. We will apply the above bounds for $|z| \geq  \ell$.  On the other hand, we apply \eqref{mirra} for $|z|<\ell$.
As a result,  we  can bound
\be \label{piano}
 \int d \nu^\e_{ \o}(x,z)  \bigl[\nabla_\e \varphi (x,z) - \nabla  \varphi (x) \cdot z \bigr]^2   \leq 
C(\varphi) [A(\e, \ell )+B(\e, \ell)]\,,
\en
 where (cf. \eqref{micio2}) 
\begin{align*}
 A(\e, \ell ):& =   \int d \nu^\e_{ \o}(x,z) |z|^2  (\phi(x)  + \phi (x+ \e z)  )  \mathds{1}_{\{ |z| \geq  \ell\}} \\
& = 2 \int d \nu^\e_{ \o}(x,z) |z|^2  \phi(x) \mathds{1}_{\{ |z| \geq  \ell\}}  =2 \int d \mu ^\e_{ \o} (x)  \phi(x) h_\ell  (\t_{x/\e} \o) \,,  \\
h_\ell(\o):&= \int d \hat \o (z) c_{0,z}(\o) |z|^2  \mathds{1}_{\{ |z| \geq  \ell\}} \,, \\
 B(\e, \ell):& = \e ^2 \ell^4 \int d \nu^\e_{ \o}(x,z) (\phi(x)  + \phi (x+ \e z)  )  \\
& =2 \e^2  \ell^4 \int d \nu^\e_{ \o}(x,z)  \phi(x)  = 2  \e^2 \ell^4 \int d \mu ^\e_{ \o} (x)  \phi(x) \l_0 (\t_{x/\e} \o)
  \,.
\end{align*}
We now apply Prop. \ref{prop_ergodico}.
Due to (S7) and (S16),
    $\lim_{\e \da 0}  \int d \mu ^\e_{ \o} (x)  \phi(x) h_\ell  (\t_{x/\e} \o) = \int dx\,m \phi(x) \bbE_0[h_\ell]$.  We then get  that $\lim_{\ell\uparrow  \infty,\e \da 0} A(\e, \ell)=0$ by dominated convergence
    and (A5).
Due to (S12) and  (S17) the integral $ \int d \mu ^\e_{ \o} (x)  \phi(x) \l_0 (\t_{x/\e} \o)$ converges to $ \int dx\,m \phi(x) \bbE_0[\l_0]$ as $\e \da 0$. As a consequence,  $\lim_{\e \da 0} B(\e, \ell)=0$. Coming back to \eqref{piano} we finally get \eqref{football}.
\end{proof}

\smallskip

From now on we denote by $\tilde \o$ the environment in $\O_{\rm typ}$ for which we want to prove Items (i), (ii) and (iii) of Theorem \ref{teo1}. 

\smallskip

\noindent
$\bullet$ {\bf Convergence of solutions}. We start by proving Item (i).

\smallskip

We consider \eqref{limite1}.  We recall that the  weak solution $u_\e$ satisfies (cf. \eqref{salvoep})
\begin{equation}\label{salvoepino}
\frac{1}{2} \la \nabla_\e v, \nabla _\e u_\e \ra _{\nu_{\tilde \o} ^\e}+ \la v, u_\e \ra _{\mu_{\tilde \o}^\e}= \la v, f \ra_{\mu_{\tilde \o}^\e}\qquad \forall v \in H^1_{{\tilde \o}, \e}\,.
\end{equation} 
 Due to \eqref{salvoepino} with $v:=u_\e$  we get that $\|u_\e  \|_{L^2(\mu_{\tilde \o}^\e)}^2 \leq \la u_\e , f_\e\ra_{\mu_{\tilde \o}^\e}$ and therefore $ \|u_\e\| _{L^2(\mu^\e_{\tilde \o})}\leq \|f_\e\| _{L^2(\mu^\e_{\tilde \o})}$ by Schwarz inequality. Hence,  it holds (cf. \eqref{salvoepino})   
$
\frac{1}{2} \| \nabla_\e u_\e  \| _{L^2(\nu^\e_{\tilde \o})} ^2 \leq \| f_\e \| _{ L^2(\mu^\e_{\tilde \o})}^2$.
Since $f_\e \toup f$, the family $\{f_\e\}$ is bounded and therefore there exists $C>0$ such that, for $\e$ small enough as we assume below, 
\begin{equation}\label{salvezza}
\|u_\e  \|_{L^2(\mu_{\tilde \o}^\e)} \leq C \,, \qquad  \| \nabla_\e u_\e  \| _{L^2(\nu^\e_{\tilde \o})}  \leq C \,.
\end{equation}
Due to \eqref{salvezza} and  by \rosso{Proposition} \ref{oro}, 
 along a subsequence we have:
\begin{itemize}
\item[(i)]
$u_\e \stackrel{2}{\toup} u$, where $u$  is of the form $u=u(x)$ and  \rosso{$u\in H^1 _*(m dx )$};
\item[(ii)] $\nabla _\e u_\e  (x,z) \stackrel{2}{\toup} w(x,\o,z):= \rosso{\nabla_* }u  (x)\cdot z + u_1(x,\o,z)$,  
$u_1\in  L^2\bigl( \bbR^d, L^2_{\rm pot} (\nu)\bigr )$.
\end{itemize}

 \begin{Claim}\label{vanity} For $dx$--a.e. $x\in \bbR^d$ it holds 
\begin{equation}\label{mattacchione}
\int d \nu(\o, z)  w(x,\o, z)  z= 2 D \rosso{\nabla_*} u (x)\,.
\end{equation}
 \end{Claim}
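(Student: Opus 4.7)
The plan is to prove Claim \ref{vanity} by showing that the potential component $u_1(x,\cdot,\cdot)$ from Proposition \ref{oro}(ii) coincides with the minimizer $v^{\nabla_* u(x)}$ in the variational formula for $D$. Equivalently, I would force $w(x,\cdot,\cdot)\in L^2_{\rm sol}(\nu)$ for $dx$-a.e.~$x$: since $w(x,\o,z) = u_{\nabla_* u(x)}(\o,z) + u_1(x,\o,z)$ with $u_1(x,\cdot,\cdot)\in L^2_{\rm pot}(\nu)$, the characterization \eqref{jung} would then yield $u_1(x,\cdot,\cdot) = v^{\nabla_* u(x)}$ for a.e.\ $x$, and identity \eqref{solare789} would immediately give $\int d\nu(\o,z)\, z\, w(x,\o,z) = 2D\nabla_* u(x)$.

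To achieve this I would test the weak formulation \eqref{salvoepino} against a vanishing oscillating corrector. For arbitrary $\psi\in C^1_c(\bbR^d)$ and $g\in\cG_2$ (bounded, with $\{\nabla g : g\in\cG_2\}$ dense in $L^2_{\rm pot}(\nu)$), set $v_\e(x) := \e\,\psi(x)\, g(\t_{x/\e}\tilde\o)$ on $\e\hat{\tilde\o}$. A discrete Leibniz computation based on \eqref{leibniz} gives
\[
\nabla_\e v_\e(x,z) = \psi(x+\e z)\, \nabla g(\t_{x/\e}\tilde\o,z) + \e\, g(\t_{x/\e}\tilde\o)\, \nabla_\e\psi(x,z),
\]
and $v_\e\in H^1_{\tilde\o,\e}$ since $v_\e$ has compact support. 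Plugging $v_\e$ into \eqref{salvoepino} and letting $\e\da 0$, the zeroth-order terms $\la v_\e,u_\e\ra_{\mu^\e_{\tilde\o}}$ and $\la v_\e,f_\e\ra_{\mu^\e_{\tilde\o}}$ both vanish: $\|v_\e\|_{L^2(\mu^\e_{\tilde\o})}=O(\e)$ by Proposition \ref{prop_ergodico} and the boundedness of $g$, while $u_\e$, $f_\e$ are bounded in $L^2(\mu^\e_{\tilde\o})$.

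In the gradient pairing $\tfrac12\la\nabla_\e v_\e,\nabla_\e u_\e\ra_{\nu^\e_{\tilde\o}}$, the remainder $\e\, g(\t_{x/\e}\tilde\o)\nabla_\e\psi(x,z)$ contributes $O(\e)$ by Lemma \ref{ascoli}, \eqref{istria} and $\|g\|_\infty<\infty$. In the leading summand, a Cauchy--Schwarz estimate using the uniform bound $|\nabla g|\leq 2\|g\|_\infty$ lets me replace $\psi(x+\e z)$ by $\psi(x)$ modulo an error of order $\e\|\nabla_\e\psi\|_{L^2(\nu^\e_{\tilde\o})}=O(\e)$. Since $\nabla g\in\cH_2\subset\cH$, the remaining integral converges by weak 2-scale convergence $\nabla_\e u_\e\stackrel{2}{\toup}w$ to
\[
\tfrac{m}{2}\int dx\,\psi(x)\int d\nu(\o,z)\,\nabla g(\o,z)\, w(x,\o,z)=0.
\]
By the arbitrariness of $\psi\in C^1_c(\bbR^d)$ and density of $\{\nabla g : g\in\cG_2\}$ in $L^2_{\rm pot}(\nu)$, I conclude $w(x,\cdot,\cdot)\in L^2_{\rm sol}(\nu)$ for $dx$-a.e.~$x$, which closes the argument via the roadmap in the first paragraph.

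The main delicate point will be taming the Taylor remainder $\psi(x+\e z)-\psi(x)$ and the $\e$-correction term in $\nabla_\e v_\e$ in the presence of arbitrarily long jumps. This is achieved here without any cut-off (in contrast with Sections \ref{cut-off1}--\ref{cut-off2}) because $g\in\cG_2$ is bounded (yielding $|\nabla g|\leq 2\|g\|_\infty$ uniformly in $z$) and because the typical-environment properties (S7), (S9), (S12), (S17) make $\|\nabla_\e\psi\|_{L^2(\nu^\e_{\tilde\o})}$ bounded via Lemma \ref{ascoli}.
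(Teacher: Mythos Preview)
Your proof is correct and follows essentially the same strategy as the paper: test \eqref{salvoepino} with the oscillating corrector $v_\e(x)=\e\psi(x)g(\t_{x/\e}\tilde\o)$ for $g\in\cG_2$, show that the surviving term forces $w(x,\cdot,\cdot)\in L^2_{\rm sol}(\nu)$, and then invoke \eqref{jung} and \eqref{solare789}. The only notable difference is that you apply the Leibniz rule \eqref{leibniz} with the roles of $\psi$ and $g(\t_{\cdot/\e}\tilde\o)$ swapped relative to the paper's \eqref{aquilotto}: this puts the shift on $\psi$ rather than on $g$, so your remainder terms are controlled directly by Cauchy--Schwarz using $|g|,|\nabla g|\leq 2\|g\|_\infty$ and Lemma~\ref{ascoli}, whereas the paper's arrangement produces the factor $g(\t_{z+x/\e}\tilde\o)$ and hence requires the Taylor estimate \eqref{ghiaccio} (Lemma~\ref{blocco}, needing $\varphi\in C^2_c$) together with the functional class $\cH_3$ to pass to the 2-scale limit. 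Your variant is therefore slightly more economical here---$C^1_c$ test functions suffice and neither Lemma~\ref{blocco} nor $\cH_3$ is invoked---though of course the paper needs those tools elsewhere (e.g.\ in \eqref{cioccorana} and Section~\ref{sec_oro}).
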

 \begin{proof}[Proof of Claim \ref{vanity}]
We apply \eqref{salvoepino} to the test function $v(x):= \e \varphi (x) g(\t_{x/\e}\tilde \o)$, where 
$\varphi \in C^2_c(\bbR^d)$ and $g \in \cG_2$ (cf. Section \ref{topo}). Recall that $\cG_2$ is given by bounded functions.   We claim that  $v\in H^1_{\tilde \o, \e}$. Being bounded and with compact support, $v\in L^2( \mu ^\e_{\tilde \o})$. Let us bound $\|\nabla_\e v\|_{L^2( \nu^\e_{\tilde \o}) }$.  Due to \eqref{leibniz} we have
 \begin{equation}\label{aquilotto}
\nabla_\e  v (x,z)
= \e \nabla _\e \varphi (x,z) g(\t_{z+x/\e}\tilde \o)+  \varphi(x) \nabla  g ( \t_{x/\e} \tilde \o, z)\,.
\end{equation}
In the above formula, 
the gradient $\nabla g$ is the one defined in \eqref{cantone}.
By Lemma \ref{ascoli}, since $\tilde \o \in \O_{\rm typ}$ and $g$ is bounded, the map   $(x,z)\mapsto  \e \nabla _\e \varphi (x,z) g(\t_{x/\e}\tilde \o)$ is in $ L^2( \nu ^\e_{\tilde \o})$. On the other hand, $\|\varphi(x) \nabla  g ( \t_{x/\e} \tilde \o, z)\|_{ L^2( \nu ^\e_{\tilde \o})}^2$ is bounded by $2\|g\|_\infty \int d\mu^\e_{\tilde \o} (x) \varphi(x)^2 \l_0 ( \t_{x/\e} \tilde \o)$, which converges to a finite number since $\tilde \o \in \O_{\rm typ}$ (cf.  (S12), (S17) and Prop.~\ref{prop_ergodico}). This complete the proof that  $v\in H^1_{\tilde \o, \e}$.

 Due to \eqref{aquilotto},
\eqref{salvoepino} can be rewritten as 
\begin{equation}\label{cioccolata}
\begin{split}
& \frac{\e}{2}\int d  \nu ^\e_{\tilde \o} (x,z)  \nabla _\e \varphi (x,z) g(\t_{z+x/\e}\tilde \o) \nabla _\e u_\e (x,z) +\\
&  \frac{1}{2}\int d  \nu ^\e_{\tilde \o}  (x,z)  \varphi(x) \nabla g ( \t_{x/\e} \tilde \o, z)\nabla_\e  u_\e (x,z) +\\
&\e \int d\mu^\e_{\tilde \o} (x)  \varphi (x) g(\t_{x/\e}\tilde \o)u_\e(x) =\e \int d\mu^\e_{\tilde \o} (x)  \varphi (x) g(\t_{x/\e}\tilde \o)f_\e(x)\,.
\end{split}
\end{equation}
Since the families of functions $\{u_\e(x) \}$, $\{f_\e(x)\}$,  $\{ \varphi (x) g(\t_{x/\e}\tilde \o)\}$ are bounded families in    $L^2(\mu^\e_{\tilde \o})$,  the expressions in the third line of \eqref{cioccolata} go to zero as $\e \da 0$.

We now claim that 
\be \label{ghiaccio}
\lim  _{\e \da 0}  \int d \nu^\e_{\tilde \o}(x,z)   \nabla _\e u_\e (x,z)  \bigl[\nabla_\e \varphi (x,z) - \nabla  \varphi (x) \cdot z \bigr] g (\t_{z+x/\e}\tilde \o) =0\,.
\en
This follows by  using that $\|g\|_\infty<+\infty$, applying Schwarz inequality and afterwards Lemma \ref{blocco} (recall that  $\|  \nabla _\e u_\e \| _{ L^2 ( \nu^\e_{\tilde \o})}\leq C$).
The above limit \eqref{ghiaccio},  the 2-scale convergence $\nabla_\e u_\e\stackrel{2}{ \toup }w$ and the fact that \eqref{yelena} holds for all functions in $\cH_3\subset \cH$ (cf. Section \ref{topo}), imply that 
\begin{multline}\label{bisonte}
\lim  _{\e \da 0}  \int d \nu^\e_{\tilde \o}(x,z)   \nabla_\e u_\e (x,z)  \nabla _\e \varphi (x,z)   g (\t_{z+x/\e}\tilde \o)=\\
\lim  _{\e \da 0}  \int d \nu^\e_{\tilde \o}(x,z)   \nabla_\e u_\e (x,z)  \nabla  \varphi (x)\cdot z   g (\t_{z+ x/\e}\tilde \o)=\\
\int dx \, m \int d \nu( \o,z) w(x,\o,z) \nabla \varphi(x) \cdot z g(\t_z \o) \,.
\end{multline}
Due to \eqref{bisonte} also the expression in the first line of \eqref{cioccolata} goes to zero as $\e\downarrow 0$. We conclude therefore that also the   expression in the second  line of \eqref{cioccolata} goes to zero as $\e\downarrow 0$. Hence 
\[\lim_{\e \da 0}\int d  \nu ^\e_{\tilde \o}  (x,z) \nabla_\e  u_\e (x,z)  \varphi(x) \nabla g ( \t_{x/\e} \tilde \o, z)=0\,.\]
Due to the 2-scale convergence $\nabla_\e  u_\e\stackrel{2}{ \toup} w$  and since \eqref{yelena} holds for all gradients $\nabla g$, $g\in \cG_2$ (since $\cH_2\subset \cH$), we conclude that 
\begin{equation*}
\int dx \, m \varphi(x) \int d\nu (\o, z) w(x,\o,z) \nabla g (\o,z) =0\,.
\end{equation*}
Since $\{\nabla g \,:\, g \in \cG_2\}$ is dense in $L^2_{\rm pot}(\nu)$,  the above identity implies that, for $dx$--a.e. $x$, the map  $(\o,z) \mapsto w(x,\o,z)$ belongs to $L^2_{\rm sol}(\nu)$.
 On the other hand, we know that 
$w(x,\o,z)= \rosso{\nabla_*  }u (x) \cdot z + u_1(x,\o,z)$, 
where $u_1\in L^2\bigl( \bbR^d, L^2_{\rm pot} (\nu)\bigr )$. Hence, by \eqref{jung}, for $dx$--a.e. $x$ we have that 
\[ u_1 (x,\cdot,\cdot ) = v^a \,, \qquad a:=\rosso{ \nabla _* }u (x)\,. \]
As a consequence (using also \eqref{solare789}),  for $dx$--a.e. $x$, we have
\begin{equation*}
\int d \nu(\o, z)  w(x,\o, z)  z= \int d \nu(\o,z) z [  \rosso{\nabla_*} u (x)\cdot z + v ^{\rosso{ \nabla _*} u (x)}(\o,z)]= 2 D \rosso{\nabla_*} u (x)\,.
\end{equation*}
This concludes the proof of  Claim \ref{vanity}.
\end{proof}
We now reapply \eqref{salvoepino} but with $v(x):= \varphi(x)$.
We get
\begin{equation}\label{cioccorana}
 \frac{1}{2}\int d  \nu ^\e_{\tilde \o} (x,z) \nabla _\e \varphi (x,z) \nabla _\e u_\e (x,z) +
  \int d\mu^\e_{\tilde \o} (x)  \varphi (x) u_\e(x) =\int d\mu^\e_{\tilde \o} (x)  \varphi (x) f_\e(x)\,.
\end{equation}
Let us analyze the first term in \eqref{cioccorana}.
By  \eqref{ghiaccio} which holds also with $g\equiv 1$,   the expression $ \int d  \nu ^\e_{\tilde \o} (x,z) \nabla_\e \varphi (x,z) \nabla_\e  u_\e (x,z) $ equals  $ \int d \nu^\e_{\tilde \o}(x,z)   \nabla _\e u_\e (x,z) \nabla  \varphi (x) \cdot z +o(1)$ as $\e\da 0$. Since the function  $(\o,z) \mapsto z_i$  is in $\cH$ and since $\tilde \o \in \O_{\rm typ}$, by  the 2-scale convergence  $\nabla _\e u_\e
 \stackrel{2}{\toup} w$ we obtain that 
\begin{equation*}
\lim_{\e \da 0} \int d  \nu ^\e_{\tilde \o} (x,z) \nabla_\e \varphi (x,z) \nabla _\e u_\e (x,z)  =
\int dx \, m \int d \nu(\o, z)  w(x,\o, z) \nabla  \varphi (x)\cdot z\,.
\end{equation*}
To treat the second and third terms in \eqref{cioccorana} we use  that   $u_\e \stackrel{2}{\toup} u$ with  $u=u(x)$, $1\in \cG$,   and that $f_\e {\toup} f$.  Due to the above observations, by taking the limit $\e \da 0$ in \eqref{cioccorana} we get 
\begin{multline}
\frac{1}{2}\int dx \, m \nabla \varphi (x)\cdot \int d \nu(\o, z)  w(x,\o, z)  z+\\
\int dx\, m\varphi(x) u(x) =\int dx\, m  \varphi(x) f(x)
\,.\end{multline}
Due to  \eqref{mattacchione} the above identity reads 
\begin{equation}\label{basta180}
 \int dx \nabla  \varphi (x) \cdot D\rosso{ \nabla_*} u (x)
+\\
\int dx \varphi(x) u(x) =\int dx  \varphi(x) f(x)\,,
\end{equation}
i.e. 
 $u$ is a weak solution of \eqref{eq2} \rosso{(recall  that $C_c^\infty (\bbR^d) $ is dense in $H^1_*(m dx)$ and note  that $\nabla \varphi$ in \eqref{basta180} can be replaced by $\nabla_* \varphi$ due to Warning \ref{divido})}. This concludes the proof of \eqref{limite1}. 

It remains to prove \eqref{limite2}. It is enough to apply the same arguments of \cite[Proof of Thm.~6.1]{ZP}.   Since $f_\e\to f$ we have $f_\e\toup f$ and therefore, by \eqref{limite1}, we have $u_\e \toup u$. This implies that $v_\e \toup v$ (again by \eqref{limite1}), where $v_\e$ and $v$ are respectively the weak solution of $-\bbL_\o^\e v_\e+  v_\e =u_\e$ and $-\rosso{\nabla_*\cdot  D  \nabla_*} v+  v  =u$. By taking the scalar product of the weak version of \eqref{eq1} with $v_\e$ (as in \eqref{salvoep}), the scalar product of the weak version of \eqref{eq2} with $v$ (as in \eqref{delizia}), the scalar product of  the weak version of  $-\bbL^\e_\o v_\e+  v_\e =u_\e$ with $u_\e$ and the scalar product of the weak version of  $-\rosso{\nabla_* \cdot D \nabla_*} v+  v  =u$ with $u$ and comparing the resulting expressions, we get
\begin{equation}\label{adria}
\la u_\e, u_\e \ra _{\mu^\e_\o} = \la v_\e, f_\e \ra _{\mu^\e_\o} \,, \qquad \int  u(x)  ^2   dx=  \int  f(x)v(x)  dx\,.
\end{equation}
Since $f_\e \to f$ and $v_\e \toup v$ we get that $\la v_\e, f_\e \ra _{\mu^\e_\o}\to \int v(x) f(x)  m dx$. Hence, by  \eqref{adria}, we conclude that
$\lim_{\e\da 0}  \la u_\e, u_\e \ra _{\mu^\e_\o}=  \int  u(x)^2 m  dx$. The last limit and the weak convergence $u_\e \toup u$ imply the strong convergence $u_\e \to u$ by Remark \ref{forte}. This concludes the proof of \eqref{limite2} and therefore of Theorem \ref{teo1}--(i).

\medskip

\noindent
$\bullet$ {\bf Convergence of flows}. We prove now \eqref{limite3} in   Item (ii), i.e. $\nabla_\e u_\e \toup \rosso{\nabla_* }u$. By \eqref{salvezza} the analogous of bound \eqref{recinto_grad} is satisfied. Suppose that $f_\e \toup f$. Take $\varphi \in C_c^1(\bbR^d)$, then $\la \varphi, f_\e \ra_{ \mu_{\tilde\o}^\e }\to \la \varphi, f \ra_{m dx }$. By Item (i) we know that $u_\e \toup u$ and therefore $\la \varphi, u_\e \ra_{\mu_{\tilde \o}^\e }\to \la \varphi, u \ra_{m dx }$. The above convergences and  \eqref{salvoepino} with $v$ given by  $\varphi $ restricted to $\e \widehat{ \tilde \o}$ (by  Lemma \ref{ascoli} $v\in H^1_{\tilde \o,\e}$), we conclude that \[
\lim _{\e\da 0} \frac{1}{2} \la \nabla_\e \varphi, \nabla _\e u_\e\ra_{ \nu^\e _{\tilde \o} } = \lim_{\e \da 0} \Big[
\la \varphi, f_\e \ra_{ \mu_{\tilde \o}^\e }-\la \varphi, u_\e \ra_{\mu_{\tilde \o}^\e }\Big]= \la  \varphi, f-u  \ra_{ m dx }\,.\]
Due to \eqref{eq2} and \eqref{delizia}, the r.h.s. equals $\int dx \, m D(x) \rosso{\nabla_* }\varphi(x) \rosso{\cdot\nabla_*} u(x) $.
This proves the analogous of \eqref{deboluccio_grad} and therefore  \eqref{limite3}.

Take now $f_\e \to f$. Then, by \eqref{limite2}, $u_\e \to u$.  Reasoning as above we get  that, given $g_\e \in H^1_{ \tilde \o,\e}$ and $g\in \rosso{H^1_*(m dx)}$ with  $L^2(\mu^\e_{\tilde \o}) \ni g_\e \toup g\in L^2(m dx)$, it holds 
\[
\lim _{\e\da 0} \frac{1}{2} \la \nabla_\e g_\e, \nabla _\e u_\e\ra_{\nu^\e _{\tilde \o} } = \lim_{\e \da 0} \Big[
\la g_\e , f_\e \ra_{\mu_{\tilde \o}^\e}-\la \varphi, u_\e \ra_{\mu_{\tilde\o}^\e}\Big]=
 \la  g, f-u   \ra_{m dx }\,.\]
 Since \rosso{$g\in H^1_*(m dx)$}, due to \eqref{eq2}, the r.h.s. equals $ \int dx \, m D(x) \rosso{\cdot \nabla_*} g(x) \rosso{\nabla_*} u(x) $.
This proves \eqref{limite4}.

\medskip

\noindent
$\bullet$ {\bf Convergence of energies}. We prove  Item (iii). Since $f_\e \to f$, we have $u_\e \to u$ by \eqref{limite2} and  $\nabla_\e u_\e \to \rosso{\nabla_*} u $ by \eqref{limite4}.  It is enough to apply \eqref{fortezza_grad} with $g_\e := u_\e$ and $g:= u$ and one gets \eqref{limite5}.


\section{Proof of Theorem \ref{teo2}}\label{dim_teo2}
The limit \eqref{marvel0} follows from Remark \ref{gelatino}, \eqref{limite2} in Theorem \ref{teo1} and \cite[Thm.~9.2]{ZP}.
To treat  \eqref{marvel1}  and \eqref{marvel2}
%
we need the following fact:
\begin{Lemma}\label{pre_teo2} 
Suppose that Condition (i) or Condition (ii) in Theorem \ref{teo2} is satisfied. Fix 
a weakly decreasing  function $\psi: [0,+\infty) \to [0,+\infty)$,  such that $\bbR^d \ni x \mapsto \psi(|x|) \in [0,\infty)$ is  Riemann integrable.
Then
$\cP$--a.s.  it holds 
\be \label{claudio2}
 \lim _{\ell \uparrow \infty} \varlimsup_{\e \da 0} \int d \mu^\e _\o (x)\psi(|x|)  \mathds{1}_{\{ |x| \geq \ell\}}=0\,. \en
\end{Lemma}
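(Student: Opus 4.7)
My plan is to dominate the integral by a deterministic Riemann sum weighted by the cell counts $N_k(\o)$. Partition $\bbR^d=\bigsqcup_{k\in\bbZ^d}(k+[0,1)^d)$; for $a\in \hat\o\cap(k+[0,1)^d)$ one has $|a|\in[(|k|-\sqrt d)_+,|k|+\sqrt d]$, so by monotonicity of $\psi$, provided $\e\leq \ell/(4\sqrt d)$,
\[
\int \psi(|x|)\,\mathds{1}_{\{|x|\geq\ell\}}\,d\mu^\e_\o(x) \;\leq\; \e^d\sum_{k\in\bbZ^d}\Phi_\ell(\e k)\,N_k(\o),\qquad \Phi_\ell(u):=\psi(|u|/2)\,\mathds{1}_{\{|u|\geq\ell/2\}}.
\]
The function $\Phi_\ell$ is bounded, radially monotone, and Riemann integrable on $\bbR^d$, with $\int\Phi_\ell\,du=2^d\int_{\{|z|\geq\ell/4\}}\psi(|z|)\,dz$ vanishing as $\ell\to\infty$; the deterministic sum $\e^d\sum_k\Phi_\ell(\e k)$ converges as $\e\da0$ to $\int\Phi_\ell\,du$.

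Under Condition (i), the uniform bound $N_k(\o)\leq C(\o)$ gives $\e^d\sum_k\Phi_\ell(\e k)N_k(\o)\leq C(\o)\,\e^d\sum_k\Phi_\ell(\e k)$, so $\varlimsup_{\e\da 0}$ of the integral is at most $C(\o)\int\Phi_\ell\,du$, which tends to $0$ as $\ell\to\infty$. Here $\O_\sharp$ is the full-measure set on which $\sup_k N_k(\o)<\infty$.

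Under Condition (ii), I would analyze the random Riemann sum $S_\e(\o):=\e^d\sum_k\Phi_\ell(\e k)N_k(\o)$ in $L^2(\cP)$. Stationarity gives $\bbE[S_\e]=m\,\e^d\sum_k\Phi_\ell(\e k)\to m\int\Phi_\ell\,du$, and
\[
\mathrm{Var}(S_\e)\;=\;\e^{2d}\sum_{k,k'}\Phi_\ell(\e k)\,\Phi_\ell(\e k')\,\mathrm{Cov}(N_k,N_{k'})
\]
is bounded by $O(\e^d)$ on the diagonal and, off-diagonal, by $O\bigl(\e^r\iint\Phi_\ell(u)\Phi_\ell(v)|u-v|^{-r}du\,dv\bigr)$, with $r=1$ if $d\geq 2$ and $r=1+\a$ if $d=1$. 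The double integral is finite in both cases: for $d\geq 2$ one uses local integrability of $|w|^{-1}$ in dimension $\geq 2$ together with $\Phi_\ell\in L^1\cap L^\infty$, and for $d=1$ the exponent $r>1=d$ gives direct summability. Thus $\mathrm{Var}(S_\e)=O(\e^{\min(r,d)})$, summable along the dyadic sequence $\e_n=2^{-n}$, and Borel--Cantelli produces a $\cP$-full-measure Borel set $\O_\sharp$ on which $S_{\e_n}(\o)\to m\int\Phi_\ell\,du$.

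The main obstacle will be promoting this convergence along $\{\e_n\}$ to a bound on $\varlimsup_{\e\da 0}S_\e$ over arbitrary $\e$. I would exploit monotonicity of $\Phi_\ell$ in $|u|$: for $\e\in[\e_{n+1},\e_n]$ each summand $\e^d\Phi_\ell(\e k)$ is dominated by a dimensional constant times $\e_{n+1}^d\Phi_{\ell/2}(\e_{n+1}k')$ for a nearby lattice point $k'$, giving $S_\e\leq C_d\,S'_{\e_{n+1}}$, where $S'$ is the Riemann sum built from $\Phi_{\ell/2}$ rather than $\Phi_\ell$. Applying Borel--Cantelli with $\ell/2$ in place of $\ell$ (after intersecting the corresponding full-measure sets over a countable collection of values of $\ell$) then yields $\varlimsup_{\e\da 0}S_\e\leq C_d\,m\int\Phi_{\ell/2}\,du$ on a single $\O_\sharp$, and letting $\ell\to\infty$ concludes via the integrability of $\psi(|\cdot|)$ on $\bbR^d$.
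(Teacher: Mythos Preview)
Your approach is essentially the same as the paper's: dominate the integral by a cell-weighted Riemann sum $\e^d\sum_k \Phi(\e k)N_k$, dispose of Condition~(i) trivially, and under Condition~(ii) control the variance of the sum, apply Borel--Cantelli along the dyadic sequence, and use the monotonicity of $\psi$ to pass to arbitrary $\e$. The paper performs the monotonicity reduction \emph{before} the variance estimate (writing $X_{\e,\ell}\le 2^d Y_{r,\ell}$ with $r=r(\e)$ dyadic), so no change of $\ell$ to $\ell/2$ is needed, but this is only a cosmetic difference.

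One genuine wrinkle: your sentence ``The double integral is finite in both cases \dots\ for $d=1$ the exponent $r>1=d$ gives direct summability'' is not right as stated. For $d=1$ and $r=1+\a>1$ the kernel $|u-v|^{-(1+\a)}$ is \emph{not} locally integrable, so $\iint\Phi_\ell(u)\Phi_\ell(v)|u-v|^{-(1+\a)}\,du\,dv=\infty$ and the bound ``off-diagonal $=O(\e^{r}\cdot[\text{integral}])$'' fails. What does work in $d=1$ is bypassing the integral entirely: since $\sum_{j\ne 0}|j|^{-(1+\a)}<\infty$, one bounds $\e^2\sum_{k\ne k'}\Phi_\ell(\e k)\Phi_\ell(\e k')|k-k'|^{-(1+\a)}\le \|\Phi_\ell\|_\infty\bigl(\sum_{j\ne0}|j|^{-(1+\a)}\bigr)\cdot\e^2\sum_k\Phi_\ell(\e k)=O(\e)$, matching your claimed $O(\e^{\min(r,d)})$. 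The paper handles this by an explicit near/far split of the off-diagonal sum (terms with $|k-k'|<r$ versus $|k-k'|\ge r$); you should do the same or rephrase the $d=1$ line accordingly. With that fix the argument is complete.
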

Thanks to \eqref{marvel0} and  Lemma \ref{pre_teo2} (proved below)  applied to the function $\psi(r):=1/(1+ r^{d+1})$,  the limits \eqref{marvel1}  and \eqref{marvel2} follow   by  the same arguments used in the proofs of Corollary 2.5 and Lemma 6.1  in \cite[Sections~6,7]{F1}. We give the proof for \eqref{marvel1}, since  \eqref{marvel2} easily follows from \eqref{marvel1} as in the proof  of Corollary 2.5 in \cite{F1} due to some estimates presented below. Set $h(x):= P_t f(x)$ and $h_\o^\e(x):= P^\e_{\o,t} f(x)$.
To derive \eqref{marvel1} it is enough to show that $ \lim_{\e \da 0}  \| h \|^2 _{L^2(\mu^\e_\o)}$, $ \lim_{\e \da 0}   \la h_\o ^\e , h \ra _{\mu^\e_\o}$ and $\lim _{\e \da 0}  \| h _\o ^\e \|^2 _{L^2(\mu^\e_\o)}$ equal $\|h\|^2_{L^2(m dx )}$.
To this aim we fix $C>0$ such that $|h(x)| \leq C \psi(|x|)$ $\forall x \in \bbR^d$ (\rosso{note} that $h$ decays exponentially fast).
  For each $\ell >0$ fix a function $g_\ell \in C_c(\bbR^d)$ such that $|g_\ell(x)| \leq C \psi(|x|)$ $\forall x \in \bbR^d$ and  $g_\ell (x)=h(x)$ $ \forall x\in \bbR^d$ with $|x|\leq \ell$.
Then
\be\label{ranetta76}
\| g_\ell - h\|^2_{L^2(\mu_\o^\e)} 
\leq 2 C^2  \|\psi\|_\infty \int d \mu^\e _\o (x)\psi(|x|)  \mathds{1}_{\{ |x| \geq \ell\}}\\
\en
Since 
\begin{multline}\label{yuyu}
\big| \| h \|^2 _{L^2(\mu^\e_\o)}- \|h\|^2_{L^2(m dx )} \big| \leq 
\big |  \| h \|^2 _{L^2(\mu^\e_\o)}-  \| g_\ell \|^2 _{L^2(\mu^\e_\o)}\big |\\
 + \big|  \| g_\ell \|^2 _{L^2(\mu^\e_\o)}- \| g_\ell \|^2 _{L^2(mdx)}\big|+  \big |  \| g_\ell \|^2 _{L^2(m dx )}-  \| h \|^2 _{L^2(m dx )}\big|\,,
\end{multline}
we get $ \lim_{\e \da 0}  \| h \|^2 _{L^2(\mu^\e_\o)}= \|h\|^2_{L^2(m dx )}$: the first term in the r.h.s. of \eqref{yuyu} can be controlled by \eqref{claudio2} and \eqref{ranetta76}, the second term by Prop.~\ref{prop_ergodico} as $1\in \cG$ (recall (S1) and that $g_\ell \in C_c(\bbR^d)$), while the third term trivially goes to zero.

To get  that $ \lim_{\e \da 0}   \la h_\o ^\e , h \ra _{\mu^\e_\o} = \|h\|^2_{L^2(m dx )} $,  we note that $\big|  \la h_\o ^\e , h \ra _{\mu^\e_\o}-  \la h_\o ^\e , g_\ell \ra _{\mu^\e_\o}\big|\leq \|h_\o^\e\|_{L^2(\mu_\o^\e)} \|h - g_\ell \|_{L^2(\mu^\e_\o)}$ goes to zero as $\e\da 0,\ell\uparrow \infty$ since $\{h_\o^\e\}$ is bounded in $L^2(\mu^\e_\o)$ (cf.  \eqref{marvel0}) and due to  \eqref{claudio2} and \eqref{ranetta76}, while $\lim_{\e\da0}  \la h_\o ^\e , g_\ell \ra _{\mu^\e_\o}= \la h, g_\ell \ra_{m dx}$ since $h_\o^\e \to h$ (cf. \eqref{marvel0})  and trivially $\lim_{\ell \uparrow \infty} \la h, g_\ell \ra_{m dx}= \la h, h \ra_{m dx}$.
Finally, the limit $ \lim _{\e \da 0}  \| h _\o ^\e \|^2 _{L^2(\mu^\e_\o)}=  \|h\|^2_{L^2(m dx )} $ follows from  \eqref{marvel0} and Remark \ref{forte}.
We have therefore proved \eqref{marvel1}.

%

\begin{proof}[Proof of Lemma \ref{pre_teo2}] 
To simplify the notation we prove a slightly different version of \eqref{claudio2}, the method can be easily adapted to \eqref{claudio2}. In particular, we now  prove that $\cP$--a.s. it holds 
\be\label{claudio3}
 \lim _{\ell \uparrow \infty} \varlimsup_{\e \da 0}X_{\e,\ell}=0\; \text{ where }\; X_{\e, \ell} (\o):=  \e^d \sum_{
\substack{ k\in \bbZ^d:|k| \geq  \ell/\e } }\psi( | \e k| ) N_k\,.
\en
Trivially Condition (i) implies \eqref{claudio3}. Let us suppose that Condition (ii) is satisfied.
Given $\e\in(0,1)$ let $r=r(\e) $ be  the positive integer of the form $2^a$, $a 
\in \bbN$, such that $r^{-1} \leq \e <2 r^{-1}$.  Then, since $\psi$ is weakly decreasing,  
\be \label{cenere}
X_{\e, \ell} (\o) \leq2^d    Y_{r, \ell} (\o) \;\text{ where } \;   Y_{r, \ell} (\o):=r^{-d} \sum_{
\substack{ k\in \bbZ^d: |k| \geq  r \ell/2 } } \psi(| k/r|)N_k\,.
\en
In particular, to get \eqref{claudio3} it is enough to show that,  $\cP$--a.s,   $\lim _{\ell \uparrow \infty} \varlimsup_{r  \uparrow \infty }Y_{r,\ell}=0$, where $r$ varies in $\G:=\{2^0,2^1,2^2,\dots\}$.  From now on we understand that $r \in \G$. Since $\bbE[N_k]=m$ and since $\psi(|x|)$ is Riemann integrable,  we have
\be\label{fabietto1}
\lim_{r\uparrow \infty } \bbE[Y_{r, \ell}] = z_\ell:=m \int  \psi( |x|)   \mathds{1}_{\{|x| \geq \ell/2\}}dx <\infty\,.
\en
We now estimate the variance of $Y_{r, \ell}$. Due to the stationarity of $\cP$ and since $\bbE[N_0^2]<+\infty$, it holds $\sup_{k \in\bbZ^d} \text{Var} (N_k)<+\infty$.
 We let $\g:= \a$ if $d=1$ and $\g:=0$ if $d \geq 2$.  By Condition (ii) we have, for some fixed constant $C_1>0$,  
\begin{align*}
\text{Var} (Y_{r, \ell})& \leq C_1  r^{-2d} \sum_{
\substack{ k\in \bbZ^d:\\|k| \geq  r \ell/2 } }\;
\sum_{
\substack{ k'\in \bbZ^d:\\|k'| \geq  r \ell/2, } }
 \left[|k-k'|^{-1-\g}\mathds{1}_{k\not =k'}+ \mathds{1}_{k=k'}  \right] \psi (| k/r|) \psi( | k'/r|) \\
 &=: I_0(r,\ell)+ I_1(r,\ell)+I_2(r,\ell) \,,
\end{align*}
where $I_0(r, \ell)$, $I_1(r,\ell)$ and $I_2(r,\ell)$  denote   the contribution from addenda as above  respectively with  (a) $k=k'$,  (b)    $|k-k'|\geq r$ and (c) $1\leq |k-k'|<r$.
Then we have 
\begin{align}
&    \lim _{r \uparrow \infty}   r ^{d} I_0(r,\ell)=C_1  \int _{|x| \geq \ell/2} \psi(|x|)^2 dx <+\infty\,, \label{vino0} \\
&  \lim _{r \uparrow \infty}   r ^{1+\g} I_1(r,\ell)= C_1 \int _{|x| \geq \ell/2}dx \int _{|y| \geq \ell/2} dy \frac{ \mathds{1}_{\{ |x-y|\geq 1\} } }{  |x-y|^{1+\g}}\psi(| x| )\psi (|y|)  <+\infty \,. \label{vino1}
\end{align}
To control $I_2(r, \ell)$  we observe that 
\[
 \sum _{\substack{v \in \bbZ^d\,:\\ 1\leq |v|_\infty \leq c r} }|v|_\infty ^{-1-\g } \leq C' \sum _{n=1}^{ c r } n^{d-2-\g}\leq \begin{cases}
 C'' r^{d-1} & \text{ if } d\geq 2\\
 C'' & \text{ if } d=1 
 \end{cases}
  \,.
\]
The above bound implies for $r$ large   that 
\be\label{vino20}
\begin{split}
I_2(r, \ell) & \leq   C_1 \|\psi\|_\infty   r^{-2d} \sum_{
\substack{ k\in \bbZ^d:\\|k| \geq  r \ell/2 } }\psi( | k/r| )
\sum_{
\substack{ k'\in \bbZ^d:\\ 1 \leq |k-k'| 	\leq r  }}
 |k-k'|^{-1-\g}\\
& \leq
 C_2 r^{-1 }\int_{|x| \geq \ell/2} \psi( | x| ) dx \,.
 \end{split}
\en
Due to \eqref{vino0}, \eqref{vino1} and \eqref{vino20},  $\text{Var} (Y_{r, \ell})\leq C_3(\ell) r^{-1}$ for $ r\geq C_4(\ell)$. 
Now we write explicitly $r= 2^j$. 
 By Markov's inequality, we have  for $j\geq C_5(\ell)$ that \[   \cP( |Y_{2^j, \ell}-\bbE[Y_{2^j, \ell}]| \geq 1/j ) \leq j^2 \text{Var} (Y_{2^j, \ell})\leq C_3(\ell) j^2  2^{-j}\,.
\]
Since the last term is summable among $j$, by Borel--Cantelli lemma we conclude that, $\cP$--a.s., 
$
|Y_{2^j, \ell}-\bbE[Y_{2^j, \ell}]| \leq  1/j $ for  all $ \ell \geq 1$ and $ j\geq C_6(\ell, \o)$.
This proves that,  $\cP$--a.s., $\lim _{r\uparrow \infty, r \in \G}Y_{r, \ell}= z_\ell$ (cf. \eqref{fabietto1}). Since $\lim_{\ell \uparrow \infty}z_\ell=0$, we get that $\lim _{\ell \uparrow \infty} \lim_{r  \uparrow \infty,r \in \G }Y_{r,\ell}=0$, $\cP$--a.s.
\end{proof}

\section{Proof of Theorem \ref{teo3}}\label{ida}
Note that Assumptions (A1), (A2), (A3) are automatically satisfied.
By extending the probability space, 
given $\o$   we  associate to each unordered pair of site $\{x,y\}$  in $\hat \o$
 a Poisson process $\bigl( N_{x,y}(t)\bigr)_{t\geq 0}$ with intensity $c_{x,y}(\o)$,  such that $N_{x,y}(\cdot)$ are independent processes when varying the pair $\{x,y\}$.
  Note that $N_{x,y}(t) = N_{y,x}(t)$.
   We write $\cK:=\bigl( N_{x,y}(\cdot ) \bigr)$ for the above family of Poisson processes and denote by $\bbP_\o$ the associated law.  We denote by $\bbP$  the annealed law of the pair $(\o, \cK)$, defined as $\bbP:= \int d\cP(\o) \bbP_\o$.

\begin{Lemma}\label{igro1}
 There exists $t_0>0$ such that for $\bbP$--a.a.  $(\o,\cK)$ the undirected  graph $\cG _{t_0}(\o,\cK)$ with vertex set $\hat \o$ and edges $\{ \{x,y\}\,:\, x\not =y \text{ in }\hat \o\,,\; N_{x,y}(t_0)\geq 1\}$ has only connected components with finite 
cardinality.
\end{Lemma}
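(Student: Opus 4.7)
The plan is to stochastically dominate $\cG_{t_0}(\o,\cK)$ by a Bernoulli random connection model on the Poisson point process $\hat\o$ and then prove subcriticality of the latter, for sufficiently small $t_0$, by a Galton--Watson branching-process comparison. First I would exploit the bound $c_{x,y}(\o)\leq g(|x-y|)$ by coupling each Poisson process $N_{x,y}$ (of rate $c_{x,y}(\o)$) with a Poisson process $\tilde N_{x,y}$ of rate $g(|x-y|)$ so that $N_{x,y}(t)\leq \tilde N_{x,y}(t)$ pathwise, with the family $(\tilde N_{x,y})$ independent of $\o$. Let $\tilde\cG_{t_0}(\o)$ be the graph on $\hat\o$ with edge set $\{\{x,y\}:\tilde N_{x,y}(t_0)\geq 1\}$; then $\cG_{t_0}(\o,\cK)\subset \tilde\cG_{t_0}(\o)$, and given $\hat\o$ the edges of $\tilde\cG_{t_0}$ are independent Bernoulli of parameter $q(x-y):=1-e^{-t_0 g(|x-y|)}$. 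It thus suffices to show that, for some $t_0>0$, the graph $\tilde\cG_{t_0}(\o)$ has only finite connected components almost surely.

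Setting $G:=\int_{\bbR^d}g(|x|)\,dx<\infty$, I would choose $t_0>0$ such that $\mu:=m\,t_0\,G<1$; since $1-e^{-a}\leq a$ this forces $m\int q(x)\,dx\leq \mu<1$. By Slivnyak's theorem, under the Palm distribution of $\hat\o$ at $0$ the remaining points form a Poisson process of intensity $m$, and I would explore the cluster $C(0)$ of the origin in $\tilde\cG_{t_0}$ by breadth-first search, revealing one frontier vertex at a time. Conditioning on $\hat\o$ on the (countable) already-explored set leaves the point process on its complement Poisson of intensity $m$, independent of the past; combined with the fact that $q$-thinning of a Poisson process is again Poisson, the number of new (previously unseen) neighbors contributed by each frontier vertex is $\operatorname{Poisson}$ with parameter at most $\mu$, independent of the history. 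Comparison with a Galton--Watson branching process with $\operatorname{Poisson}(\mu)$ offspring then yields $|C(0)|<\infty$ almost surely under the joint Palm-and-edge law.

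To pass from this Palm statement to the finiteness of every component, I would apply Campbell's formula: for each bounded Borel set $B\subset\bbR^d$,
\[ \bbE\Big[\sum_{x\in\hat\o\cap B}\mathds{1}_{\{|C(x)|=\infty\}}\Big]=m\,|B|\cdot \bbP^*\bigl(|C(0)|=\infty\bigr)=0, \]
where $\bbP^*$ denotes the joint Palm-and-edge law. Hence almost surely no point of $\hat\o\cap B$ lies in an infinite cluster; letting $B\uparrow\bbR^d$ shows that $\tilde\cG_{t_0}(\o)$, and a fortiori $\cG_{t_0}(\o,\cK)$, has only finite connected components.

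The main obstacle will be in the Galton--Watson domination step: one must verify that the offspring counts produced by different frontier vertices are dominated by genuinely independent $\operatorname{Poisson}(\mu)$ variables despite the fact that all of them share the ambient Poisson process $\hat\o$. The standard fix is a sequential exposure argument combined with the Poisson restriction (Slivnyak) property, which guarantees that each time a new frontier vertex is probed, the yet-to-be-discovered points of $\hat\o$ still form a fresh Poisson process of intensity $m$, independent of everything revealed so far.
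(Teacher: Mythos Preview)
Your proposal is correct and follows essentially the same route as the paper: dominate $\cG_{t_0}$ by the random connection model on the Poisson point process $\hat\o$ with connection function bounded by $t_0\,g(|x-y|)$, choose $t_0$ so that $m\,t_0\int g(|x|)\,dx<1$, and invoke the branching-process/Slivnyak argument to conclude subcriticality. The only difference is presentational: the paper simply cites \cite[Theorem~6.1]{MR} for the last step, whereas you spell out the BFS/Galton--Watson comparison and the Campbell passage from the Palm statement to all clusters.
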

\begin{proof}
Note that   $\bbP_\o ( N_{x,y}(t)\rosso{\geq} 1)= 1- e^{-c_{x,y}(\o)t} \leq 1- \exp\{-g(|x-y|) t\}\leq  C_1 g(|x-y|)  t$ for some fixed $C_1>0$ if we take $t\leq 1$ (since $g$ is bounded).  We restrict to $t$ small enough  such that $C_1\|g\|_\infty  t <1$ and $t\leq 1$.
Consider  the random connection model \cite{MR} where first $\hat \o $ is sampled according to a Poisson point  process with intensity $m$, afterwards   an edge between    $x\not =y$ in $ \hat{\o}$ is created  with probability $C_1 g(|x-y|)  t $.  
Due to our initial estimate and the independence of the processes $N_{x,y}$,  one can couple the above random connection model with the previous process $(\o, \cK)$ with law $\bbP$ in a way that   the graph in the random connection model contains the graph $\cG _{t}(\o,\cK)$.
We choose $t=t_0$ small enough to have $ m C_1 t_0 \int _{\bbR^d} dx g(|x|) <1$. 
The above bound and the branching process argument in the proof of \cite[Theorem~6.1]{MR} (cf. (6.3) there) imply that   a.s. the  random connection model has only connected components with finite cardinality. 
\end{proof}

From now on $t_0$ will be as in Lemma \ref{igro1}. 
Due to the loss of memory of Poisson point processes,  from Lemma \ref{igro1}  we get the following:
\begin{Corollary}\label{lancia}
For $\bbP$--a.a.  $(\o,\cK) $ and for each $r \in \bbN$  the undirected  graph $\cG ^r_{t_0}(\o,\cK)$ with vertex set $\hat \o$ and edges $\{ \{x,y\}\,:\, x\not =y \text{ in }\hat \o\,,\; N_{x,y}((r+1) t_0)> N_{x,y} (r t_0) \}$ has only connected components with finite 
cardinality.
\end{Corollary}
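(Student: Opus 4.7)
The plan is to reduce the claim for arbitrary $r\in \bbN$ to the case $r=0$ already established in Lemma \ref{igro1}, via the strong Markov/stationary-increment property of Poisson processes, and then take a countable intersection over $r$.

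Fix $r\in\bbN$. First I would define the shifted family of counting processes
\[
\tilde N^{(r)}_{x,y}(t):= N_{x,y}(rt_0+t)-N_{x,y}(rt_0),\qquad t\ge 0,
\]
indexed by unordered pairs $\{x,y\}\subset \hat\o$. By the stationarity and independence of Poisson increments, conditionally on $\o$ the collection $\tilde\cK^{(r)}:=(\tilde N^{(r)}_{x,y}(\cdot))$ is again a family of independent Poisson processes on $[0,\infty)$ indexed by the unordered pairs in $\hat\o$, with the same respective intensities $c_{x,y}(\o)$ as the original $\cK$. Hence the joint law of $(\o,\tilde\cK^{(r)})$ equals the law $\bbP$ of $(\o,\cK)$. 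By definition, the graph $\cG^{r}_{t_0}(\o,\cK)$ coincides with the graph obtained from $(\o,\tilde\cK^{(r)})$ by the recipe of Lemma \ref{igro1} at time $t_0$. Therefore Lemma \ref{igro1}, applied to $(\o,\tilde\cK^{(r)})$, yields that for $\bbP$-a.a.\ $(\o,\cK)$ the graph $\cG^{r}_{t_0}(\o,\cK)$ has only connected components with finite cardinality.

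Call $B_r\subset \O\times \{\text{Poisson configurations}\}$ the corresponding full-measure event. Since $\bbN$ is countable, $B:=\bigcap_{r\in\bbN}B_r$ still satisfies $\bbP(B)=1$, and on $B$ the conclusion of the corollary holds simultaneously for every $r\in\bbN$.

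The only nontrivial step is the identification of the law of $(\o,\tilde\cK^{(r)})$ with $\bbP$: it requires that the reshuffling is done \emph{conditionally on $\o$} and that we exploit the fact that the intensities $c_{x,y}(\o)$ are functions of $\o$ only, so that the shift in time does not affect the randomness of the environment. No other obstacle is present, since Lemma \ref{igro1} provides the full-measure statement at a single fixed time window, and countable additivity does the rest.
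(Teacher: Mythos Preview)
Your proof is correct and follows essentially the same approach as the paper, which simply states that the corollary follows from Lemma \ref{igro1} ``due to the loss of memory of Poisson point processes''; you have spelled out exactly what that one-line justification means, including the countable intersection over $r\in\bbN$.
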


By the graphical representation of the exclusion process and Harris' percolation argument \cite{Du}, we conclude that, for $\cP$--a.a. $\o$,  the exclusion process is well defined a.s. for all times $t\geq 0$. We explain in detail this issue.   
Take such a good $(\o,\cK)$ fulfilling the property stated in Corollary \ref{lancia}.  Given a particle configuration  $\eta(0)\in \{0,1\}^{\hat \o}$ we define  the deterministic  trajectory $\bigl(\eta(t)[ \o,\cK]\bigr)_{t \geq 0 }$  starting at $\eta(0)$ by an iterative procedure.  Suppose  the trajectory has been defined up to time $r t_0$, $r\in \bbN$. Let $\cC$ be any connected component of $\cG^r_{t_0}(\o,\cK)$ and let 
 \begin{multline*}
  \{s_1<s_2< \cdots <s_k\} =\\
  \bigl \{s \,: N_{x,y}(s) = N_{x,y}(s-)+1\,, \;\{x,y\} \text{ bond in } \cC, \; r t_0  <s \leq (r+1) t_0\bigr\}\,.
 \end{multline*}
 Since $\cC$ is finite, the l.h.s. is indeed  a finite set. The local evolution $\eta(t)_z[ \o,\cK] $ with $z \in \cC$ and  $r t_0 < t  \leq (r+1) t_0$ is described as follows. 
 Start with $\eta(rt_0)[ \o,\cK]$ as configuration at time $r t_0$ in $\cC$. At time $s_1$ exchange the values between $\eta_x$ and $\eta_y$ if $N_{x,y}(s_1)= N_{x,y}(s_1-)+1$ and $\{x,y\}$ is an edge  in $\cC$ (there is exactly one such edge, a.s.).  Repeat the same operation orderly for times $s_2,s_3, \dots , s_k$. Then move to another connected component of  $\cG_{t_0}^r(\o,\cK)$ and so on.  This procedure defines $ \eta(t)[\o,\cK]_{ r t_0< t \leq (r+1) t_0}$.
It is standard to check that for $\cP$--a.a. $\o$  the random trajectory  $\bigl(  \eta(t)[\o, \cK]\bigr)_{t\geq 0}$ (where the randomness comes from $\cK$) is an exclusion process on $\hat \o$ with initial configuration $\eta(0)$ and formal generator  \eqref{ringo}.

\medskip

Due to  \eqref{marvel2}  in Theorem \ref{teo2} (Condition (ii) is satisfied in the present context), Lemma 
\ref{igro2} below and since $ \int  \varphi(x) \rho(x,t) dx= \int\rho_0(x) P_t \varphi(x) dx $,
 to prove \eqref{pasqualino} we only need to show that 
\be\label{natalino00}
\lim_{\e\da 0} \mathfrak{m}_\e \Big(\Big|  \e^d \sum_{x\in \hat \o} \eta_x(0) P_t  \varphi (\e x) - \int _{\bbR^d} \rho_0(x) P_t \varphi(x)  dx\Big| >\d 
\Big)=0\,.
\en
Since $P_t\varphi$ is a \rosso{continuous} function decaying fast to infinity, \eqref{natalino00} follows  by approximating $P_t\varphi$ with functions of compact supports and using both \eqref{marzolino} and \eqref{claudio2}. At this point, to complete the proof of Theorem \ref{teo3} it remains to prove the following:


\begin{Lemma}\label{igro2}
For $\cP$--a.a. $\o$ the following holds. Fix $\d,t>0$ and $\varphi\in C_c(\bbR^d)$ and let $\mathfrak{n}_\e$ be an $\e$--parametrized family of probabililty measures on $\{0,1\}^{\hat \o}$. Then 
\be\label{vigorsol}
\varlimsup_{\e\da 0} \bbP_{\o, \mathfrak{n}_\e} \Big(\Big| \e^d \sum_{x\in \hat \o} \varphi (\e x) \eta_x ( \e^{-2} t)- 
\e^d \sum_{x\in \hat \o} \eta_x(0) P_{\o, t}^\e  \varphi (\e x) \Big| >\d \Big)=0\,.
\en
\end{Lemma}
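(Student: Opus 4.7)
The plan is to express $Z_\e := \e^d \sum_x \varphi(\e x)\eta_x(\e^{-2}t) - \e^d \sum_y \eta_y(0) P^\e_{\o,t}\varphi(\e y)$ as the terminal value of a Dynkin martingale with explicitly computable quadratic variation, and then close the estimate with Chebyshev's inequality. Set $\tilde{\eta}(s):=\eta(\e^{-2}s)$, which has generator $\cL^\e_\o = \e^{-2}\cL_\o$, and introduce the time-dependent linear observable
\[
F_\e(s,\eta) \;:=\; \e^d \sum_{x\in\hat\o} \varphi^\e_s(\e x)\,\eta_x, \qquad \varphi^\e_s := P^\e_{\o,t-s}\varphi, \quad s\in[0,t].
\]
Using $c_{x,y}=c_{y,x}$ and a relabelling argument, the nonlinear contribution in $\cL^\e_\o F_\e$ cancels and one gets $\cL^\e_\o F_\e(s,\cdot)(\eta) = \e^d\sum_x \eta_x (\bbL^\e_\o \varphi^\e_s)(\e x)$. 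Combined with the backward equation $\partial_s\varphi^\e_s = -\bbL^\e_\o\varphi^\e_s$, this gives $(\partial_s+\cL^\e_\o)F_\e \equiv 0$, so $M_\e(s) := F_\e(s,\tilde{\eta}(s)) - F_\e(0,\tilde{\eta}(0))$ is a $\bbP_{\o,\mathfrak{n}_\e}$-martingale. At $s=t$ this yields $M_\e(t) = Z_\e$ (so in particular $\bbE_{\o,\eta(0)}[Z_\e]=0$, which reflects the classical self-duality of symmetric exclusion).

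The predictable quadratic variation is $\langle M_\e\rangle_s = \int_0^s \Gamma^\e(F_\e(u,\cdot))(\tilde{\eta}(u))\,du$. Since $F_\e(\eta^{x,y}) - F_\e(\eta) = \e^d(\varphi^\e_u(\e y) - \varphi^\e_u(\e x))(\eta_x - \eta_y)$, and since $\eta_x(1-\eta_y)(\eta_x-\eta_y)^2 = \eta_x(1-\eta_y)$, one gets
\[
\Gamma^\e(F_\e(u,\cdot))(\eta) \;\leq\; \e^{2d-2}\sum_{x,y\in\hat\o} c_{x,y}(\o)\bigl(\varphi^\e_u(\e y)-\varphi^\e_u(\e x)\bigr)^2 \;=\; \e^d\,\|\nabla_\e \varphi^\e_u\|^2_{L^2(\nu^\e_\o)},
\]
where the last identity comes directly from the definitions of $\nu^\e_\o$ and $\nabla_\e$ in \eqref{nunu}--\eqref{ricola}. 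By the standard energy identity for the self-adjoint semigroup $P^\e_{\o,\cdot}$ on $L^2(\mu^\e_\o)$,
\[
\int_0^t \|\nabla_\e \varphi^\e_u\|^2_{L^2(\nu^\e_\o)}\,du \;=\; \|\varphi\|^2_{L^2(\mu^\e_\o)} - \|P^\e_{\o,t}\varphi\|^2_{L^2(\mu^\e_\o)} \;\leq\; \|\varphi\|^2_{L^2(\mu^\e_\o)}.
\]
For $\cP$-a.a.\ $\o$ one has $\o\in\O_{\rm typ}$, and Proposition \ref{prop_ergodico} applied to $g\equiv 1\in\cG$ gives $\|\varphi\|^2_{L^2(\mu^\e_\o)}\to m\|\varphi\|^2_{L^2(dx)}$ as $\e\da 0$, so the right-hand side is bounded uniformly in $\e$.

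Putting these bounds together,
\[
\bbE_{\o,\mathfrak{n}_\e}\bigl[ Z_\e^2 \bigr] \;=\; \bbE_{\o,\mathfrak{n}_\e}\bigl[\langle M_\e\rangle_t\bigr] \;\leq\; \e^d \,\|\varphi\|^2_{L^2(\mu^\e_\o)} \;\leq\; C(\varphi,\o)\,\e^d,
\]
and Chebyshev's inequality yields $\bbP_{\o,\mathfrak{n}_\e}(|Z_\e|>\d) \leq C\e^d/\d^2\to 0$, which is \eqref{vigorsol}. The main technical obstacle is justifying Dynkin's formula rigorously for $F_\e$: although $\varphi$ is compactly supported, $\varphi^\e_s = P^\e_{\o,t-s}\varphi$ generally is not, so the sum defining $F_\e(s,\eta)$ need not converge absolutely. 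The remedy is a cut-off: multiply $\varphi^\e_s$ by a smooth bump $\chi_L$ supported in a large ball, for which the martingale argument is legitimate thanks to the finite-cluster graphical construction of Lemma \ref{igro1} and Corollary \ref{lancia} (so only finitely many exchanges affect the truncated sum on $[0,t]$); one then lets $L\to\infty$, controlling the boundary error with the tail estimate of Lemma \ref{pre_teo2} applied to a fast-decaying majorant of $P^\e_{\o,t-s}\varphi$.
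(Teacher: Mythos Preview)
Your core strategy is correct and arrives at exactly the same terminal estimate as the paper: both show $\bbE_{\o,\mathfrak{n}_\e}[Z_\e^2]\leq \tfrac{\e^d}{4}\|\varphi\|^2_{L^2(\mu^\e_\o)}$, then invoke Chebyshev. The paper, however, does not use the Dynkin martingale for the time-dependent linear observable $F_\e(s,\eta)$ directly. Instead it writes down Nagy's pathwise duality representation $\eta_x(t)=\sum_y p_\o(t,x,y)\eta_y(0)+\sum_y\int_0^t p_\o(t-s,x,y)\,dM_y(s)$ (with $dM_y$ built from the Poisson clocks $N_{y,z}$), sums against $\varphi(\e x)$, and bounds the resulting martingale term $\cR_\e$; the energy identity $\frac{d}{ds}\|P^\e_{\o,s}\varphi\|^2=-\|\nabla_\e P^\e_{\o,s}\varphi\|^2_{L^2(\nu^\e_\o)}$ is then the same as yours. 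Your Dynkin/carr\'e-du-champ route is cleaner and avoids the lengthy pathwise verification the paper carries out.

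The one point where your plan deviates substantively is the handling of the infinite sums. Multiplying $\varphi^\e_s$ by a bump $\chi_L$ destroys the identity $(\partial_s+\cL^\e_\o)F_\e=0$, so the truncated process is no longer a martingale and you would have to control an extra drift; moreover no uniform-in-$\e$ decay of $P^\e_{\o,t-s}\varphi$ is available in this paper, so Lemma~\ref{pre_teo2} cannot be applied to a ``fast-decaying majorant'' as you suggest. The paper sidesteps this by truncating the \emph{initial configuration} instead: using Corollary~\ref{lancia} it first shows that one may replace $\mathfrak{n}_\e$ by $\bar{\mathfrak{n}}_\e$ supported on configurations with all particles inside a finite ball $B(\ell_\e)$ (with $\ell_\e$ chosen $\e$-by-$\e$, large enough that (a) the relevant clusters of the graphical construction stay inside $B(\ell_\e)$ with high probability and (b) the $L^1(\mu^\e_\o)$-tail of $P^\e_{\o,t}\varphi$ outside $B(\ell_\e)$ is $\leq\d/2$). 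Once the number of particles is finite, $F_\e(s,\eta)=\e^d\sum_x\varphi^\e_s(\e x)\eta_x$ is a finite sum for every $s$, and your Dynkin argument goes through verbatim with no further cutoff. Swapping your test-function truncation for this initial-configuration truncation makes your proof complete.
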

\begin{proof} 
We think of  the exclusion process  as built according to the graphical construction described above, after sampling $\eta(0)$ with distribution $\mathfrak{n}_\e$. We fix   $\o \in \O_{\rm typ} \cap \O_\sharp$  (cf. Theorem \ref{teo2}) such that  $(\o, \cK)$ fulfills the property in Corollary \ref{lancia} for $\bbP_\o$--a.a. $\cK$ (this takes place for $\cP$--a.a. $\o$).
Given  $x\in \hat \o$, $r \in \bbN$, we denote by  $\cC_r(x)$ the connected component of $x$ in the graph $\cG_{t_0} ^r (\o, \cK)$. Fix $t \in (r t_0, (r+1) t_0]$. Due to  the above graphical construction, if we know $(\o, \cK)$, then to determine $ \eta_x(t)[\o, \cK]$  we only need to know $\eta _z (r t_0)[\o, \cK]$ with $z \in \cC_r(x)$. By iterating the above argument 
we conclude that, knowing  $(\o, \cK)$, the value of $ \eta_x(t)[\o, \cK]$ is determined by $\eta_z (0)$ as $z$ varies in the finite set 
\[ Q_r(x):=\cup_{z_r \in \cC_r (x) } \cup _{z_{r-1} \in \cC_{r-1}(z_r) } \cdots  \cup _{ z_1\in \cC_1(z_2)}   \cC_0(z_1)\,.\]
Suppose that $\varphi $ has support in the ball  $B(\ell)$ of radius $\ell$ centered at the origin.
Then, by the above considerations,    for  $\ell_\e$  depending on $\e$  large enough  we have 
\[ \bbP_\o ( A_\e) \leq \e\; \text{ where }\;  A_\e:=\Big\{\cK\,:\, \cup _{r \in \bbN \cap [0,\e ^{-2} t/t_0]}  \cup _{\substack{x \in  \hat \o: \\  \e |x| \leq \ell }} Q_r(x)  \subset B(\ell_\e) \Big\}\,.
\]
Note that, when the event $A_\e$ takes place, the value  $ \e^d \sum_{x\in \hat \o} \varphi (\e x) \eta_x ( \e^{-2} t)$ depends on $\eta(0)$ only through $\eta_z(0)$ with $ z \in \hat \o \cap B(\ell_\e)$. Hence, if we replace $\eta(0)$ by  removing all particles at sites $z \in \hat \o \setminus   B(\ell_\e)$, the value $ \e^d \sum_{x\in \hat \o} \varphi (\e x) \eta_x ( \e^{-2} t)$  remains unchanged.

On the other hand, due to \eqref{claudio2} and since $P_t \varphi$ decays fast to infinity, we have  that  $\varlimsup_{\ell \uparrow \infty, \e \da 0}  \e^d \sum_{x\in \hat \o}  P _t   \varphi (\e x)  \mathds{1}
 _{\{| \e x | > \ell\}}$=0. In particular, for $\e\leq\e_0(\d)$ and  by taking  $\ell _\e $ large enough,    it holds $ \e^d \sum_{x\in \hat \o: |x | \geq \ell_\e } \eta_x(0) P_{\o, t}^\e  \varphi (\e x) \leq \d/2$  for any initial configuration $\eta(0)$.

Call $\overline{\mathfrak{n}}_\e$ the probability measure on $\{0,1\}^{\hat \o}$ obtained as follows: sample $\eta(0)$ with law $\mathfrak{n}_\e$, afterwards set the particle number equal to zero at any site $x \in \hat \o$ with $|x| > \ell_\e$. By the above considerations, to get \eqref{vigorsol} it is enough to prove the same limit with $\mathfrak{n}_\e$ replaced by $\overline{\mathfrak{n}}_\e$ and with $\d$ replaced by $\d/2$. This implies  that, in order to prove Lemma \ref{igro2}, we can restrict  (as we do) to probability measures $\mathfrak{n}_\e$ such that  $\mathfrak{n}_\e \bigl(  \eta_x (0)=0 \; \forall x \in \hat \o \setminus B(\ell_\e)\bigr)=1$.

 The key observation now, going back to \cite{N} and proved below, is  that the symmetry of the jump rates implies the following pathwise representation
for each $x \in \hat \o$:
\be\label{razzo}
\eta_x(t)= \sum_{y\in \hat \o} p_\o(t,x,y )\eta_y(0) + \sum _{y \in \hat \o} \int _0^t p_\o(t-s,x,y) dM_{y}  (s)\,,
\en
where  $p_\o(t,x,y)$ is the probability to be at $y$ for a random walk on $\hat \o$  with jump probability rates $c_{a,b}(\o)$ and starting at $x$,  and   $M_{y}(\cdot)$'s are   martingales defined by
\be\label{deprimo}
dM_{y} (t) :=\sum_{z \in \hat\o}  (\eta_z- \eta_y)(t-) dA_{y,z}(t)  \,, \qquad A_{y,z}(t):= N_{y,z}(t) - c_{y,z}(\o) t\,.
\en
Recall that for $\o \in \O_{\rm typ}$ it holds $ \sum _{y\in \hat\o}c_{x,y}(\o)=\l_0(\t_x\o) <+\infty$ for all $x \in\hat \o$ (see the discussion before Theorem \ref{teo2}). Note moreover that $N_x(t):= \sum_{y\in \hat \o}N_{x,y}(t) $, $t\geq 0$,  is a Poisson process with intensity 
$ \sum _{y\in \hat\o}c_{x,y}(\o)$. Hence, we can restrict to environments $\o$ such $N_x(t)<+\infty$  for all $t\geq 0$ and 
 $ \sum _{y\in \hat\o}c_{x,y}(\o)<+\infty$ for all $x\in\hat\o$. As a consequence, $( M_x(t) )_{t\geq 0}$  in \eqref{deprimo} is well defined. 

 The above identity \eqref{razzo} is well posed since 
we start with a configuration $\eta(0)$ having a finite number of particles. Indeed, the first series  in the r.h.s. is trivially  finite. We now show that  the second series in the r.h.s. is absolutely convergent. Call $D= D(\o,\cK)$ the random set of points  $y\in \hat \o$ such that $\eta_y(s)=1$ for some $s\in [0,t]$. By the graphical construction $D$ is a finite set. 
We also note that  if $ | (\eta_z- \eta_y)(s-)| $  is nonzero then $y$ or $z$ must belong to $D$.
 Then, setting $u=t-s$, we have 
\[ \sum _{y,z \in \hat \o} p_\o(u ,x,y)  |(\eta_z- \eta_y)(s-)|c_{y,z}(\o) \leq 
\sum _{y\in D}\sum_{z\in \hat \o}    c_{y,z}(\o)+\sum _{y \in \hat \o}\sum_{z\in D}   c_{y,z}(\o)
\]
and the r.h.s.  is upper bounded by the finite constant $2 \sum_{y\in D}\l_0(\t_y \o)$. 
 Since $N_{y,z}(s)=N_{z,y}(s)$,  we can bound
\be
\begin{split}
 \sum _{y \in \hat \o} \int _0^t p_\o(t-s,x,y) \sum_{z \in \hat\o} | (\eta_z- \eta_y)(s-)|  dN_{y,z}(s)\leq 2 \sum _{v\in D} N_v(t) <+\infty\,.
\end{split}
\en
This completes the proof that the r.h.s. of \eqref{razzo} is well defined.

We now verify \eqref{razzo} (the proof is different from the one in \cite{N}, which does not adapt well to our setting).   Let $0<\t_1<\t_2< \cdots$ be the jump times for the path $\eta(s)= \eta(s)[\o, \cK]$,  given $\eta(0)$.  Let $y_i,z_i$ be  such  that $N_{y_i,z_i}(\t_i)= N_{y_i,z_i}(\t_i-)+1$. 
 We fix   $t\in [\t_n , \t_{n+1})$. We set $t_0:=0$, $t_{n+1}:=t$ and $t_i:= \t_i$ for all $i=1,\dots, n$.
Then
\be\label{pollicino}
\sum _{y \in \hat \o} \int _0^t p_\o(t-s,x,y) dM_{y}  (s)=A_1-A_2\,,
\en
where
\begin{align*}
& A_1:=\sum_{ i=1}^n  c_i\,, \;\; c_i:=   \bigl[ p_\o(t-t_i,x,y_i )- p_\o(t-t_i,x,z_i )\bigr](\eta_{z_i}(t_{i-1}) -\eta_{y_i} (t_{i-1}) )
\\
& A_2:= \sum_{i=0}^n \sum _{y \in \hat \o} \int _{t_i}^{t_{i+1}}  p_\o(t-s,x,y)  \sum_{z\in \hat\o} c_{y,z} \bigl( \eta_z(t_i)- \eta_y(t_i) \bigr) ds \,.
  \end{align*}
Recall the notation introduced before Theorem \ref{teo2}. We write $\bbE_x^\o$ for the expectation w.r.t. the law of  the random walk $X_t:= X^1_{\o,t}$ on $\hat \o$ with formal generator $\bbL_\o^1$, starting at $x$.
 Since $\bbL^1_\o f (y)=  \sum_{z\in \hat\o} c_{zy} \bigl( \eta_z(t_i)- \eta_y(t_i) \bigr)$ where $f: \hat \o\to \bbR$ is given by $f(a):= \eta_a(t_i)$, we have 
   \begin{equation*}
   \begin{split} A_2& =  \sum_{i=0}^n \int_{t- t_{i+1}}^{t-t_i} \frac{d}{ds} \bbE_{x}^\o [\eta_{X_s}(t_i)]=
    \sum_{i=0}^n  \left ( \bbE_{x}^\o [\eta_{X_{t-t_{i} }}(t_i)]-\bbE_{x}^\o [\eta_{X_{t-t_{i+1} }}(t_i)]  \right) \,.
   \end{split}
   \end{equation*}
Since, for $ 1\leq i \leq n$,  $\eta_a(t_{i})= \eta_a(t_{i-1})+  \d_{a,y_i} (\eta_{z_i} (t_{i-1}) - \eta_{y_i}(t_{i-1}) ) -\d_{a,z_i} (\eta_{z_i} (t_{i-1}) - \eta_{y_i}(t_{i-1}) )$, we have for  $ 1\leq i \leq n $ that 
\[
\bbE_{x}^\o [\eta_{X_{t-t_{i} }}(t_{i})]= \bbE_{x}^\o [\eta_{X_{t-t_{i} }}(t_{i-1})]+c_i\,.
\]
Hence we can write 
\begin{equation*}
\begin{split}A_2& = \bbE_{x}^\o [\eta_{X_{t}}(0)]+\sum_{i=1}^n (\bbE_{x}^\o [\eta_{X_{t-t_{i} }}(t_{i-1})]+c_i) -\sum_{i=0}^n  \bbE_{x}^\o [\eta_{X_{t-t_{i+1} }}(t_i)] \\
& = \bbE_{x}^\o [\eta_{X_{t}}(0)]-\bbE_{x}^\o [\eta_{X_{0}}(t_n)]+A_1= \sum_{y\in \hat \o} p_\o(t,x,y )\eta_y(0)-\eta_x(t_n)+A_1.
 \end{split}
 \end{equation*}The above identity and \eqref{pollicino} implies \eqref{razzo}. To justify    the  above manipulations of series,  we point out that $\eta(t_i) $ is uniformly bounded and it has only a finite number of nonzero entries and that $\l_0(\t_x \o)<+\infty$ for all $ x\in \hat \o$.

\medskip

We now denote by $ \bbE_{\o, \mathfrak{n}_\e} $ the expectation w.r.t. $ \bbP_{\o, \mathfrak{n}_\e} $.  Due to \eqref{razzo} and the symmetry $p_\o(t,x,y)= p_\o(t,y,x)$,  in order to conclude the proof of Lemma \ref{igro2} it is enough to show that 
\be
\lim _{\e\da 0} \bbE_{\o, \mathfrak{n}_\e} \Big[ \Big(  \e^d \sum_{x\in \hat \o} \varphi (\e x)  \sum _{y \in \hat \o} \int _0^{ \e^{-2} t }  p_\o(\e^{-2} t-s,x,y) dM_{y}  (s)
\Big)^2\Big]=0\,.
\en
Due to \eqref{deprimo}, we can rewrite the expression inside  the  $(\cdot)$--brackets as 
\begin{multline*}
\cR_\e:=   \frac{\e^d}{2} \sum_{x\in \hat \o}  \sum _{y \in \hat \o}  \sum _{z \in \hat \o} 
 \varphi (\e x)  \cdot \\
 \int _0^{\e^{-2} t} ( \eta_z - \eta_y)(s-) \bigl( p_\o(\e^{-2} t-s,x,y) - p_\o(\e^{-2} t-s, x,z)\bigr) d A_{y,z} (s) \,. 
\end{multline*}
Hence, similarly to \cite{N}, we get  (using the symmetry of $p_\o (s, \cdot, \cdot)$)
\begin{equation*}
\begin{split}
\bbE_{\o, \mathfrak{n}_\e} \big[\cR_\e^2\big]&\leq \frac{\e^{2d}}{4}\sum _{y \in \hat \o  }  \sum _{z \in \hat \o}\int _0^{\e^{-2} t} c_{y,z}(\o)  \bigl( P^1_{\o, s} \varphi(\e y)- P^1_{\o, s} \varphi(\e z)\bigr)^2ds\\
&= \frac{\e^d}{2} \int _0^{ t} \la P^\e _{\o, s} \varphi, - \bbL_\o ^\e P^\e _{\o, s} \varphi \ra _{L^2 (\mu_\o ^\e)}= -\frac{\e^d}{4}  \int _0^{ t} \frac{d}{ds} \| P^\e _{\o, s} \varphi\|^2_{L^2(\mu_\o^\e)}ds\\
& =  \frac{\e^d}{4} \| P^\e _{\o, 0} \varphi\|^2_{L^2(\mu_\o^\e)}-\frac{ \e^d}{4} \| P^\e _{\o, t} \varphi\|^2_{L^2(\mu_\o^\e)}  \leq \frac{ \e^d}{4} \| \varphi \|^2_{L^2(\mu_\o^\e)}\stackrel{\e \to 0}{\longrightarrow}0\,.
\end{split}
\end{equation*}
\end{proof}

\appendix
 
\section{Supplementary proofs}\label{dimo}

\subsection{Proof of Lemma \ref{re}}
For  Item (i)  see \cite[Exercise~12.4.2]{DV}. Item (ii) follows from stationarity and it is standard \cite{DV}.
We prove Item (iii).
Call  $A=\{\o \in \O_0\,:\, \t_x \o\not =\o \;\forall  x\in \hat\o\setminus\{0\}\}$.
Call $\tilde A:= \{\o\in \O\,:\, \t_z \o \in A\; \forall z \in \hat \o\}$ and observe that $\tilde A$ equals the  event appearing in (A3), i.e. $\tilde A= \{\o\,:\, \t_x \o \not = \t_y\o \text{ for all $x\not=y$ in $\hat \o$}\}$. Then  (A3) and \eqref{SS3} are equivalent due to Lemma \ref{matteo}.

We consider now Item (iv).  Let \eqref{spugna} be verified.
Calling $N_a:=\hat\o(a+[0,1]^d)$, it holds $F_*(\o)\leq \sum _{u\in \bbZ^d}\sum_{v\in \bbZ^d}  N_u N_v h(|u|) h(|u-v|) $.
If $x\in [0,1]^d$, then $F_*(\t_x \o)\leq \sum _{u\in \bbZ^d}\sum_{v\in \bbZ^d}  M_u M_v h(|u|) h(|u-v|) $, where $M_a:=\hat\o(a+[0,2]^d)$. By applying Campbell's identity   \eqref{campanello} with $f(x, \o) := \mathds{1}_{[0,1]^d}(x) F_*(\o)$, we get
\be\label{rollino}
\bbE_0[F_*]\leq m^{-1} \sum _{u\in \bbZ^d}\sum_{v\in \bbZ^d}  h(|u|) h(|u-v|) \bbE[ N_0  M_u M_v]\,.
\en
We use that $abc\leq C(a^3+b^3+c^3)$ for some $C>0$ and for any $a,b,c\geq 0$, we apply the first bound in \eqref{spugna} and stationarity to get that
 $\bbE[ N_0  ^3]$, $ \bbE[  M_u ^3]$, $ \bbE[  M_v^3]$ are uniformly bounded, and afterwards  we apply the second bound in \eqref{spugna} to conclude that the r.h.s. of \eqref{rollino} is bounded.
 
We prove Item (v)
for  Mott v.r.h. 
By \cite[Lemma 2]{FSS}, $\l_0 \in L^k(\cP_0)$ if and only if  $\bbE \bigl[ \hat \o ( [0,1]^d ) ^{k+1} \bigr]<+\infty$. The proof provided there remains true when substituting $\l_0$ by any function $f $  such that  $|f(\o) | \leq C \int d\hat \o (x) e^{-c|x|} $ with $C,c>0$. As $f$ we can take also $f=\l_1$ and $f=\l_2$. We therefore conclude that for Mott v.r.h. Assumption (A5) is equivalent to the bound $\bbE \bigl[ \hat \o  ([0,1]^d ) ^{3} \bigr]<+\infty$. The above bound implies  (A6)  due to Item (iv).
  The check of the other statements in Item (v) is trivial.

Finally, 
Item (vi) follows from \cite[Lemma 1--(ii)]{FSS}.

\subsection{Proof of Prop.~\ref{prop_ergodico}} It is enough to consider the case $g\geq 0$. As in   \cite[Cor.~7.2]{T},  given $g$ and $\varphi$ as in Prop.~\ref{prop_ergodico},  \eqref{limitone} 
 holds for $\cP$--a.a.    $\o\in \O$
(for a self-contained proof of a stronger result see  
 Lemma \ref{sorrisino}).   
In particular, we have  $\cP(\cA_{g, \varphi})=1$, where $\cA_{g, \varphi}:= 
\{ \o \in \O \,:\, \eqref{limitone} \text{ is fulfilled}\}$.
We define  $\cA[g ]:= \cap _{\varphi \in C_c(\bbR^d)}   \cA_{g, \varphi}$.
  We fix a countable subset $\cK\subset C_c(\bbR^d)$, dense in $C_c(\bbR^d)$ w.r.t. the uniform norm. 
 For any $n\in \bbN$  we fix a continuous function $\psi_n$ with values in $[0,1]$ such that $\psi_n(x)=1$ for  $x\in [-n,n]^d$ and $\psi(x)=0$ for $x\not \in [-n-1,n+1]^d$. Given $\varphi\in C_c(\bbR^d)$  we let  $N$ be the smallest integer $n$ such that the support of $\varphi$ is contained in $[-n,n]^d$ and, fixed $\e>0$, we take 
  $h\in \cK$ with $\| \varphi - h \|_\infty \leq \d$.  Then 
\begin{equation}\label{ruskone} (h(x) -\d) \psi_N(x) \leq   \varphi (x) \leq (h(x) +\d) \psi_N(x) \,.
\end{equation}
Since
$\int  (h(x) \pm \d) \psi_N(x) dx = \int \varphi(x) dx + O(  \d N^d) $, \eqref{ruskone} and the fact that $g\geq 0$ imply that 
$\cA[g] = \cap _{ h \in \cK} \cap _{n=1}^\infty \left(  \cA_{g, h \psi_n} \cap \cA_{g, \psi_n}  \right)$.
Being a countable intersection of Borel sets with $\cP$--probability equal to  $1$, $\cA[g]$ is Borel and $\cP\bigl(\cA[g]\bigr)=1$.

It remains to show that $\t_y\o \in \cA[g]$ if $\o \in \cA[g]$ and $y\in \bbR^d$.  Fix $\varphi \in C_c (\bbR^d)$.
 We have
 \begin{equation}\label{carretto1}
 \int  d\mu_{\t_y \o}^\e  (x)  \varphi (x ) g(\t_{x/\e}  \t_y\o )
 =\e^d \sum_{a\in  \hat \o}  \varphi (\e a-\e y ) g (\t_{a} \o)   \,.
 \end{equation}
Given $\d>0$ we  take $\rho \in(0,1)$ such that the oscillation of $\varphi$ is bounded by $\d$ in any box with sides of length at most $\rho$.
We can suppose $\e$ small enough such that $|\e y| < \r$. Then we can bound
\begin{equation}\label{carretto2}
 \bigl(\varphi (\e a) - \d\bigr)\psi_{N+1}(\e a)  \leq \varphi (\e a-\e y )  \leq \bigl(\varphi (\e a) + \d\bigr)\psi_{N+1}(\e a) \,.
\end{equation}
As a byproduct of \eqref{carretto1} and \eqref{carretto2} we have 
\begin{multline}\label{zizzone}
    \int  d\mu_{  \o}^\e  (x)  \bigl( \varphi (x ) -\d \bigr) \psi_{N+1} (x) g(\t_{x/\e}  \o )
\leq  \int  d\mu_{\t_y \o}^\e  (x)  \varphi (x ) g(\t_{x/\e}  \t_y\o )
\\
\leq  \int  d\mu_{  \o}^\e  (x)  \bigl( \varphi (x ) +\d \bigr) \psi_{N+1} (x) g(\t_{x/\e}  \o )\,.
\end{multline}
By taking  the limit $\e\da 0$ and  using that $\o \in \cA[g]$ to treat the first and third terms in \eqref{zizzone}, and afterwards taking the limit $\d\da0$, we get that the second term converges as $\e\da 0$ to $m \int \varphi(x) dx \bbE_0[g]$. This concludes the proof that $\t_y \o \in \cA[g]$.

\subsection{Proof of Lemma \ref{dis}}
Since   $H_{\o, \e}$ and  $H^1_{\o, \e} $ are isomorphic, it is enough to focus on  $H_{\o, \e}$. Take a sequence $(v_n, \nabla _\e v_n)$ in $H_{\o, \e}$ converging to $(v,g)$ in $L^2( \mu _\o ^\e) \times L^2(\nu_\o ^\e)$. Since  $\mu _\o ^\e$ is an atomic measure, we have that $v_n(\e x) \to v(\e x)$ for any $x\in \hat \o$. This implies that 
  $\nabla_\e v_n (\e x, z) \to \nabla_\e v (\e x , z)$ for any $x\in \hat \o$ and  $z\in \widehat{\t_x \o}$,  therefore $\nabla_\e v_n \to \nabla _\e v $ $\nu_\o^\e$--a.s.   On the other hand, since $\nabla_\e v_n \to g$ in  $L^2(\nu_\o ^\e)$, at cost to extract a subsequence we have that $\nabla_\e v_n\to g$ $\nu_\o^\e$--a.s. By the uniqueness of the a.s. limit it must be $g=\nabla_\e v$ $\nu_\o^\e$--a.s.

 \section{Supplementary proofs for the arxiv version}\label{aggiuntina} 
We collect in this section some proofs that can be obtained by  adapting proofs  present in other references, but given in a different context (and notation). This section appears only in this  arxiv version.

\subsection{Integration to  the proof of Prop. \ref{prop_ergodico}} We prove here that, given $g$ and  $\varphi$ as in Prop. \ref{prop_ergodico},  \eqref{limitone} holds $\cP$--a.s.  (cf. Lemma \ref{sorrisino}).

  Following \cite[Def.~10.2.I]{DV} a sequence $(A_n)$ of bounded Borel sets in $\bbR^d$ is called a \emph{convex averaging sequence} if 
 \begin{itemize}
 \item[(i)] each $A_n$ is convex;
 \item[(ii)] $A_n \subset A_{n+1}$ for $n=1,2,\dots$,
 \item[(iii)] $r(A_n)\to \infty$ as $n\to \infty$, where $r(A)$ is the supremum among $r\geq 0$ such that $A$ contains a ball of radius $r$.
 \end{itemize} 
  Since by Assumption (A1)  $\cP$ is ergodic  we 
 have the following ergodic result \cite[Prop.~12.2.VI]{DV}: given a nonnegative Borel function $g: \O_0\to [0,\infty)$ and given a convex averaging sequence $(A_n)$ it holds 
 \begin{equation}\label{lattone1}
 \lim_{n \to \infty} \frac{1}{ \ell (A_n) } \int_{A_n }  d\hat{\o}(x) \, g(\t_x \o) = m \,\bbE_0[ g ]\qquad \cP\text{--a.s.}\,,
 \end{equation}
where  $\ell(\cdot)$ denotes the Lebesgue measure. We point out that  \cite[Prop.~12.2.VI]{DV} refers to non--marked point processes, but it can be   generalized to  marked point processes.  


\begin{Lemma}\label{asterix1}
 Let  $g: \O_0\to \bbR$ be a Borel   function  with $\bbE_0[g]<\infty$.
Then there exists a Borel set    $\cB_g\subset \O$ with $\cP(\cB_g)=1$ such that the following holds  for any $\o \in \cB_g$: if  $A=\prod_{i=1}^d (a_i, b_i]$ with $a_i < b_i $ and $a_i,b_i \in \bbQ$,  then 
\begin{equation}\label{lattone2}
\lim_{t\to +\infty} \frac{1}{ t^d } \int _{ t A} d \hat \o(x) g(\t_x \o) = m\, \ell(A) \bbE_0[g]\,.\end{equation}
\end{Lemma}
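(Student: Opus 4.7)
The plan is to bootstrap the ergodic statement \eqref{lattone1}, which is already known for convex averaging sequences along integers, into a joint statement valid simultaneously for every rational box and every real scaling parameter $t$.

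First I would reduce to the case $g \geq 0$ by writing $g = g^+ - g^-$ (both have finite $\cP_0$-expectation). Next, I would single out a countable collection $\mathcal{R}$ of ``base'' rational boxes that contain $0$ in their interior: concretely, boxes of the form $A = \prod_{i=1}^d (a_i, b_i]$ with $a_i, b_i \in \bbQ$ and $a_i < 0 < b_i$. For such $A$, the sequence $(nA)_{n \in \bbN}$ is genuinely a convex averaging sequence: each $nA$ is convex, $nA \subset (n+1)A$ because $0 \in A$ and $A$ is convex, and $r(nA) = n\, r(A) \to \infty$. Applying \eqref{lattone1} we obtain, outside a $\cP$-null set $\cN_A$,
\begin{equation}\label{pianoA}
\lim_{n \to \infty} \frac{1}{n^d} \int_{nA} d\hat\o(x)\, g(\t_x \o) = m\, \ell(A)\, \bbE_0[g].
\end{equation}
Set $\cB_g^{(1)} := \O \setminus \bigcup_{A \in \mathcal{R}} \cN_A$; this is a Borel set of full $\cP$-measure on which \eqref{pianoA} holds for every $A \in \mathcal{R}$ simultaneously.

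The next step is to replace the integer sequence $n \to \infty$ with the continuous limit $t \to \infty$, still for $A \in \mathcal{R}$. Here the monotonicity is crucial: since $0$ lies in the interior of $A$, for any real $t$ with $n \leq t < n+1$ one has $nA \subset tA \subset (n+1)A$. Using $g \geq 0$,
\begin{equation}
\frac{n^d}{t^d}\cdot \frac{1}{n^d}\int_{nA} g(\t_x\o)\, d\hat\o(x)
\;\leq\; \frac{1}{t^d}\int_{tA} g(\t_x\o)\, d\hat\o(x)
\;\leq\; \frac{(n+1)^d}{t^d}\cdot \frac{1}{(n+1)^d}\int_{(n+1)A} g(\t_x\o)\, d\hat\o(x),
\end{equation}
and the prefactors $n^d/t^d$ and $(n+1)^d/t^d$ both tend to $1$. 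Combined with \eqref{pianoA} this yields \eqref{lattone2} for every $A \in \mathcal{R}$ and every $\o \in \cB_g^{(1)}$.

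The last step is to pass from boxes containing the origin to arbitrary rational boxes. Any rational box $A = \prod_i (a_i, b_i]$ can be written, by a signed sum of $2^d$ terms coming from the telescoping identity $\mathbf{1}_{(a,b]} = \mathbf{1}_{(c,b]} - \mathbf{1}_{(c,a]}$ applied coordinate-wise for $c < \min_i a_i$ chosen rational, as
\begin{equation}
\mathbf{1}_A(x) \;=\; \sum_{\sigma} \varepsilon_\sigma\, \mathbf{1}_{A_\sigma}(x),
\end{equation}
where each $A_\sigma \in \mathcal{R}$ and $\varepsilon_\sigma \in \{+1,-1\}$. Scaling commutes with this decomposition ($\mathbf{1}_{tA} = \sum_\sigma \varepsilon_\sigma \mathbf{1}_{tA_\sigma}$) and $\ell(A) = \sum_\sigma \varepsilon_\sigma \ell(A_\sigma)$, so \eqref{lattone2} for general rational $A$ follows by linearity from its validity on each $A_\sigma$. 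Defining $\cB_g$ as the intersection of $\cB_g^{(1)}$ with the analogous set for $g^-$ (when we removed the sign assumption) gives the desired Borel set with $\cP(\cB_g)=1$.

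The only mildly delicate point is ensuring the convex averaging hypothesis; once the ``base'' boxes are chosen to contain $0$ in their interior this is automatic, and the countability of $\mathcal{R}$ makes the simultaneous almost sure statement free. Everything else is a sandwich argument and a finite signed-sum decomposition.
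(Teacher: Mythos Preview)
Your scaffold matches the paper's: reduce to $g\geq 0$; get the limit along integer dilations for a countable base family via the convex averaging ergodic theorem \eqref{lattone1}; upgrade to real $t$ by a monotone sandwich; then reach every rational box by a finite signed combination. The paper carries this out with orthant--anchored boxes $\prod_i(0,b_i]$ (and their reflections) rather than boxes containing the origin, but that is a cosmetic difference.

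The genuine gap is in your last step. With a single anchor $c<\min_i a_i$, the pieces $A_\sigma=\prod_i(c,e_i^\sigma]$ lie in $\mathcal R$ only when every right endpoint $e_i^\sigma$ is positive, i.e.\ only when $a_i>0$ for all $i$ (and $c<0$). Already in dimension one, for $A=(-2,-1]$ and $c=-3$ your recipe gives $(-3,-1]$ and $(-3,-2]$, neither containing $0$; the dilations $n(-3,-1]=(-3n,-n]$ are not nested and \eqref{lattone1} does not apply. More structurally, any finite signed sum $\sum_k\varepsilon_k\mathbf 1_{I_k}$ with each $I_k$ containing $0$ in its interior is constant on a neighbourhood of $0$, so it can never equal $\mathbf 1_{(0,b]}$; boxes with a face through the origin are therefore unreachable by your decomposition. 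The repair is small: telescope coordinate by coordinate with the anchor on the correct side of each interval (use $c_i<0$ when $a_i>0$, and $\mathbf 1_{(a_i,b_i]}=\mathbf 1_{(a_i,d_i]}-\mathbf 1_{(b_i,d_i]}$ with rational $d_i>0$ when $b_i<0$), together with a further sandwich for the countably many boxes touching a coordinate hyperplane; or, equivalently and as the paper does, take orthant--anchored boxes as the base family and reach a general rational box by first partitioning it along the coordinate hyperplanes.
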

\begin{proof}
We give the proof for $d=2$, the general case is similar.  Without loss we can assume $g\geq 0$.
 We first take $A= \prod_{i=1}^2 (0, b_i]$ with $b_i \geq 0$. We have that $( n A)$ is a convex averaging sequence. In particular, by \eqref{lattone1}, the limit \eqref{lattone2} holds $\cP$--a.s. when restricting to integer numbers  $t$. Since $t\mapsto  \int _{ t A} d \hat \o(x) g(\t_x \o)$ is non--decreasing, it is immediate to recover \eqref{lattone2} for general $t$.

Trivially if \eqref{lattone2} holds $\cP$--a.s. for $A=A_1$ and for $A=A_2$ with $A_1\subset A_2$, then it holds $\cP$--a.s. for $A=A_2\setminus A_1$. As a consequence, if we have $0\leq a_i < b_i$,  then \eqref{lattone2} holds $\cP$ a.s. when $A$ is one of the following sets
\begin{align*}
&  (0,b_1]\times (a_2,b_2]= (0,b_1]\times (0,b_2]\,  \setminus \,
 (0,b_1]\times (0,a_2]\,, \\
 &  (0,a_1]\times (a_2,b_2]=  (0,a_1]\times (0,b_2]\, \setminus \, (0,a_1]\times (0,a_2]\,, \\
 & (a_1,b_1]\times (a_2,b_2]= (0,b_1]\times (a_2,b_2]\,\setminus\,   (0,a_1]\times (a_2,b_2]\,.
 \end{align*}
 Hence we have  proved that \eqref{lattone2} holds $\cP$--a.s. for $A= \prod_{i=1}^2 (a_i, b_i]$ if $0 \leq a_i \leq b_i$ for $i=1,2$. The same arguments allow to get that  \eqref{lattone2} holds $\cP$--a.s. when $A$ is any box $\prod_{i=1}^2 (a_i, b_i]$  contained in a quadrant (we say that $A$ is good).  
 Since any generic box  $A=\prod_{i=1}^2 (a_i, b_i]$  can be partitioned into four good boxes,  we get that \eqref{lattone2} holds $\cP$--a.s. for the box $A$. Since the boxes   $A=\prod_{i=1}^2(a_i, b_i]$ with rational $a_i,b_i$ are countable, we get the thesis.
\end{proof}

\begin{Lemma}\label{sorrisino}
The limit \eqref{limitone} holds  for any $\varphi\in C_c(\bbR^d)$ and $\o \in \cB_{\max\{g,0\}} \cap \cB_{\max\{-g,0\}}$ (cf. Lemma
\ref{asterix1}).
\end{Lemma}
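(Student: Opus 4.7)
\medskip

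The plan is to reduce to the case $g\geq 0$ and then sandwich $\varphi$ between step functions on rational boxes.

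First, writing $g = g^+ - g^-$ with $g^+=\max\{g,0\}$ and $g^- = \max\{-g,0\}$, and noting that both $\bbE_0[g^+]$ and $\bbE_0[g^-]$ are finite since $g\in L^1(\cP_0)$, it suffices by linearity to prove \eqref{limitone} for nonnegative $g$ and $\o\in \cB_g$. I fix therefore $g\geq 0$ and $\o \in \cB_g$, and fix $\varphi\in C_c(\bbR^d)$. Without loss I may also reduce to $\varphi\geq 0$ by splitting $\varphi = \varphi^+ - \varphi^-$ (the sandwich argument below will deliver both inequalities). Let $B$ be a closed rational box containing the support of $\varphi$.

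Next, by uniform continuity of $\varphi$, for every $\d>0$ I can partition $B$ into finitely many disjoint half-open boxes $A_1,\dots, A_N$ with rational vertices and define two step functions
\begin{equation*}
\psi_- (x) := \sum_{j=1}^N \Big(\inf_{A_j} \varphi\Big) \mathds{1}_{A_j}(x)\,, \qquad \psi_+(x) := \sum_{j=1}^N \Big(\sup_{A_j} \varphi\Big) \mathds{1}_{A_j}(x)\,,
\end{equation*}
satisfying $\psi_-\leq \varphi \leq \psi_+$ pointwise on $B$ and $\|\psi_+ - \psi_-\|_\infty \leq \d$, so that $\int (\psi_+-\psi_-)\,dx \leq \d\, \ell(B)$. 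Since $g\geq 0$, these pointwise inequalities propagate to
\begin{equation*}
\int d\mu^\e_\o(x)\, \psi_-(x) g(\t_{x/\e}\o) \,\leq\, \int d\mu^\e_\o(x)\, \varphi(x) g(\t_{x/\e}\o) \,\leq\, \int d\mu^\e_\o(x)\, \psi_+(x) g(\t_{x/\e}\o)\,.
\end{equation*}

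Now I invoke Lemma \ref{asterix1}: for each rational box $A = \prod_i(a_i,b_i]$ and each $\o\in \cB_g$, the rescaling $\e = 1/t$ translates \eqref{lattone2} into
\begin{equation*}
\lim_{\e\da 0} \int d\mu^\e_\o(x)\, \mathds{1}_A(x) g(\t_{x/\e}\o) = \e^d \sum_{a\in \hat\o: \e a \in A} g(\t_a\o) \xrightarrow[\e\da 0]{} m\,\ell(A)\,\bbE_0[g]\,.
\end{equation*}
Summing the finitely many contributions indexed by $j=1,\dots,N$, I get $\lim_{\e\da 0}\int d\mu^\e_\o\, \psi_\pm\, g(\t_{\cdot/\e}\o) = m\,\bbE_0[g]\int \psi_\pm \, dx$. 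Hence
\begin{equation*}
m\,\bbE_0[g]\!\!\int \psi_- dx \,\leq\, \varliminf_{\e\da 0} \!\!\int\!\!\varphi\, g(\t_{\cdot/\e}\o)\,d\mu^\e_\o \,\leq\, \varlimsup_{\e\da 0}\!\! \int\!\! \varphi\, g(\t_{\cdot/\e}\o)\,d\mu^\e_\o \,\leq\, m\,\bbE_0[g]\!\!\int \psi_+\, dx\,,
\end{equation*}
and the two extreme terms differ by at most $m\,\bbE_0[g]\cdot \d\,\ell(B)$. Letting $\d\da 0$ yields \eqref{limitone}.

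The only mild obstacle is bookkeeping the sign: the sandwich argument requires $g\geq 0$, which is why one first splits $g=g^+-g^-$ and uses the hypothesis $\o\in \cB_{g^+}\cap \cB_{g^-}$. Everything else is a routine Riemann-integration approximation once Lemma \ref{asterix1} provides the limit on rational boxes.
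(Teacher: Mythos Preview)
Your proof is correct and follows essentially the same route as the paper's: reduce to $g\geq 0$, sandwich $\varphi$ between step functions built on a finite rational-box partition of a box containing the support, apply Lemma~\ref{asterix1} on each box, and let the mesh go to zero. The only cosmetic difference is your side remark about reducing to $\varphi\geq 0$, which is unnecessary (the sandwich $\psi_-\leq\varphi\leq\psi_+$ needs only $g\geq 0$, as you implicitly note), but harmless.
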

\begin{proof} Without loss, we take $g \geq 0$.
Since $\varphi$ is uniformly continuous, given $\d>0$ we  take $\rho \in(0,1)$ such that the oscillation of $\varphi$ is bounded by $\d$ in any box with sides of length at most $\rho$. Let $N=N(\varphi)$ be the smallest integer $n$ such that the support of $\varphi$ is contained in $(-n,n]^d$. 
 We partition  $(-N,N]^d$ by  a finite family of disjoint boxes  $A_i$, $i=1,\dots, k$, with sides of  length at most  $\rho$, extremes in $\bbQ^d$ and of the  form  $\prod_{i=1}^d (a_i,b_i]$. Call $m_i := \inf_{A_i} \varphi$ and $M_i :=\sup_{A_i} \varphi$. Then, it holds
\[
\sum_{i =1}^k m_i \e ^d \int _{\frac{A_i }{\e}}  d \hat\o(x) g(\t_x\o) \leq \int_{\bbR^d} \mu_\o^\e  (x)  \varphi (x ) g(\t_{x/\e} \o )
\leq \sum_{i =1}^k M_i \e ^d \int _{\frac{A_i }{\e}}  d \hat\o(x) g(\t_x\o) \,.
\]
By applying Lemma \ref{asterix1}  we get that, for $\o \in \cB_g$,   the l.h.s. converges as $\e\da 0$ to 
$\bigl[\int _{\bbR^d} \varphi (x) m dx+ O(m \d (2N)^d) \bigr]  \cdot \bbE_0[g] $. The same holds for the r.h.s. By the arbitrariness of $\d$ we conclude that  \eqref{limitone} holds  for all $\o \in \cB_g$ and all $\varphi\in C_c(\bbR^d)$.
\end{proof}

\subsection{Proof of Lemma \ref{minerale}} 
%
%
Let $u\in H^1_{\rm env}$ be the weak solution of  equation $-{\rm div} \nabla u + u=\z $ in  $H^1_{\rm env} $. This means that,
for any $v \in H^1_{\rm env} $,   it holds
\begin{equation}\label{lorenzo_esteso}
\int   \nabla u \nabla v  \, d\nu + \int   u v \,d\cP_0 = \int \z v  \,d\cP_0\,.
\end{equation}
Since $H^1_{\rm env} $ is a Hilbert space, by the Lax--Milgram theorem,  there exists a unique weak solution $u \in H^1 _{\rm env} $.  
By taking $v:=u$  in \eqref{lorenzo_esteso} we get 
\begin{equation}\label{ialta} \int  |\nabla u| ^2 d\nu + \int  |u|^2  d\cP_0 = \int  \z u d \cP_0 \,.
\end{equation}
Hence it holds $\bbE_0 [u^2] \leq \bbE_0[u \z]$, which implies (by Schwarz  inequality) that 
$\bbE_0[u ^2] \leq \bbE_0[\z^2]$. We also observe that  \eqref{lorenzo_esteso}   can be written as 
\begin{equation}\label{fantasmi}  \int    (u-\z) v d\cP_0= -\int  \nabla u \nabla v d\nu \qquad \forall v \in H^1_{\rm env}\,.
\end{equation}
Take $v$ bounded and measurable. Then $v\in H^1_{\rm env} $. By Lemma \ref{ponte} we get 
\[
 -\int  \nabla u \nabla v d\nu=\int  {\rm div}\,(\nabla u) v  d\cP_0 \,.
 \]
 Hence, by \eqref{fantasmi}, for each $v$ bounded and measurable it must be $ \int (u-\z) v d\cP_0 = \int {\rm div}(\nabla u) \, v d\cP_0 $. This implies that  ${\rm div} (\nabla u)=u-\z \in L^2(\cP_0)$.
 Since 
 $u\in H^1_{\rm env}$,
by the assumptions on $\z$ we get that $\bbE_0 [ (u-\z) \z] =0$. The last identity implies that  $\bbE_0 [ u \z] =\bbE_0[\z^2]$ and that 
$0\leq \bbE_0[ (\z-u)^2] = - \bbE_0[ u (\z-u) ]$. Hence we have  $\bbE_0 [ u \z] =\bbE_0[\z^2]$ and $\bbE_0[ u^2] \geq \bbE_0[ u \z]$. We conclude that  $\bbE_0[ u^2] \geq \bbE_0 [ u \z]= \bbE_0[\z^2]$. We  have already proved that $\bbE_0[ u^2] \leq \bbE_0[\z^2]$. Hence, we get that $\bbE_0[ u^2] = \bbE_0[\z^2]=\bbE_0 [ u \z] $.  Using this last identity in \eqref{ialta} we conclude that 
$\int |\nabla u| ^2 d\nu =0$.   By \eqref{fantasmi} and using that $\nabla u=0$ $\nu$--a.s.,  for any $v \in H^1_{\rm env}$  it holds
$ \int u v d \cP_0 = \int \z v d \cP_0$. By taking $v$ varying  among the bounded Borel functions, we conclude that $u=\z$ in $L^2(\cP_0)$. As $u=\z$ $\cP_0$--a.s. and $u\in H^1_{\rm env}$ we conclude that $\z=u$ in $ H^1_{\rm env}$ and $\| \nabla \z\|_{L^2(\nu)}= \| \nabla u \|_{L^2(\nu)}=0$.

\subsection{Proof of Lemma \ref{fantasia} (continuation)}
We need to prove \eqref{gingsen}  whenever  $u_\e \stackrel{2}{\rightharpoonup} u$. 
It is enough to show that, for any sequence $\e_n \da 0$, there exists a subsequence $\e_{k_n}$ such that \eqref{gingsen} holds for $\e$ varying in $\{\e_{k_n}\}$.
Since $\{u_\e\}$ and $\{v_\e\}$ are bounded families in $ L^2(\mu^\e _{\tilde \o})$, there exists $C>0$ such that $\int_{\bbR^d}d\mu^\e_{\tilde \o}(x)  v_\e(x) u_\e(x) $ and $\int_{\bbR^d} d\mu^\e_{\tilde \o}(x) u_\e(x)^2 $ are in $[-C,C]$  for $\e $ small enough. By compactness, there exists a subsequence $\e_{k_n}$ such  that 
\begin{equation}\label{kylo0}
\lim _{k \to\infty} \int   d\mu^{\e_k}
_{\tilde \o}(x) v_{\e_k}(x) u_{\e_k}(x)=\a\,, \qquad \lim _{k \to\infty} \int   d\mu^{\e_k}
_{\tilde \o}(x)  u_{\e_k}(x)^2 =\b\,,
\end{equation}
for suitable $\a, \b\in [-C,C]$.  Given $t\in \bbR$, since $v_\e+ t u_\e \stackrel{2}{\toup} v+t u $, by Lemma \ref{alba} we have 
\begin{equation}\label{kylo1}
\varlimsup _{k \to \infty} \int   d\mu^{\e_k}
_{\tilde \o}(x) \left( v_{\e_k}(x)+ t  u_{\e_k}(x)\right)^2 \geq \int  d\cP_0(\o) \int  dx \,m \, (v+t u) ^2(x,\o)\,.
\end{equation}
By expanding the square in the l.h.s.  and using  \eqref{orchino} and \eqref{kylo0}, we get
\[ 2 t \a + t^2 \b \geq 2 t  \int  d\cP_0(\o) \int  dx \,m v(x,\o) u(x, \o)+ t^2  \int   d\cP_0(\o) \int  dx \,m u(x, \o)^2\,.
\]
Dividing by $t$ and afterwards taking the limits $t\to 0^+$ and $t\to 0^-$, we get that $\a=  \int   d\cP_0(\o) \int dx \,m \,v(x,\o) u(x, \o)$, which corresponds to \eqref{gingsen}.


\bigskip

{\bf Acknowledgements}: I thank  Andrey Piatnitski for useful discussions.  I warmly thank Annibale Faggionato and Bruna Tecchio for their nice hospitality in Codroipo (Italy), where  part of this work has been completed.

\end{document}